\documentclass[12pt,oneside,reqno]{amsart}
\usepackage{color}
\usepackage{esint,amssymb}
\usepackage{graphicx}
\usepackage{MnSymbol}
\usepackage{mathtools}
\usepackage{microtype}
\usepackage{amsmath}
\usepackage[colorlinks=true, pdfstartview=FitV, linkcolor=blue, citecolor=blue, urlcolor=blue,pagebackref=false]{hyperref}
\usepackage[margin=1in]{geometry}
\usepackage{slashed}
\usepackage[normalem]{ulem}
\usepackage{relsize}
\usepackage{cancel}
\usepackage{xifthen}
\usepackage{verbatim}
\DeclareFontEncoding{LS1}{}{}
\DeclareFontSubstitution{LS1}{stix}{m}{n}
\DeclareSymbolFont{stixsymbols}{LS1}{stixscr}{m}{n}
\SetSymbolFont{stixsymbols}{bold}{LS1}{stixscr}{b}{n}
\DeclareMathSymbol{\kay}{\mathalpha}{stixsymbols}{"6B}
\definecolor{darkgreen}{rgb}{0,0.5,0}
\definecolor{darkblue}{rgb}{0,0,0.7}
\definecolor{darkred}{rgb}{0.9,0.1,0.1}

\newcommand{\pfstep}[1]{\smallskip \noindent {\it #1.}}

\newtheorem{theorem}{Theorem}
\newtheorem{proposition}[theorem]{Proposition}
\newtheorem{lemma}[theorem]{Lemma}
\newtheorem{corollary}[theorem]{Corollary}

\theoremstyle{definition}
\newtheorem{remark}[theorem]{Remark}

\newtheorem{definition}[theorem]{Definition}

\newcommand{\cref}[1]{Corollary~\ref{c.#1}}

\numberwithin{equation}{section}
\numberwithin{theorem}{section}

\newcommand{\R}{\mathbb{R}}

\newcommand{\eqs}{=_{\scalebox{.6}{\mbox{S}}}}

\newcommand{\ep}{\varepsilon}

\newcommand{\test}[1][]{%
\ifthenelse{\equal{#1}{}}{omitted}{given}%
}

\renewcommand{\d}[1]{\ensuremath{\operatorname{d}\!{#1}}}
\newcommand{\norm}[1]{\left\lVert {#1} \right\rVert}

\newcommand{\jap}[1]{\left\langle {#1} \right\rangle}
\newcommand{\brk}[1]{\left\langle {#1} \right\rangle}

\renewcommand{\bar}{\overline}
\renewcommand{\tilde}{\widetilde}

\renewcommand{\part}{\partial}

\newcommand{\red}[1]{{\color{red} #1}}

\usepackage{dsfont}

\newcommand{\calD}{\mathcal D}
\newcommand{\calE}{\mathcal E}

\newcommand{\calI}{\mathcal I}
\newcommand{\calJ}{\mathcal J}

\newcommand{\calL}{\mathcal L}

\newcommand{\calT}{\mathcal T}

\newcommand{\calX}{\mathcal X}
\newcommand{\calY}{\mathcal Y}
\newcommand{\calZ}{\mathcal Z}

\def\f {\frac}
\def\rd {\partial}
\def\ls {\lesssim}
\def\de {\delta}
\def\i {\infty}
\def\alp {\alpha}
\def\bt {\beta}

\def\ep {\epsilon}

\def\om {\omega}

\newcommand{\ud}{\mathrm{d}}

\def\RR {\mathbb R}
\def\bbR {\mathbb R}

\def\srd {\slashed{\rd}}

\begin{document}

\title[Linear stability of traveling Maxwellians]{Linear stability of traveling Maxwellians for \\Landau equation with very soft potentials}
\begin{abstract}
	Consider the Landau equation with a Coulomb potential linearized around traveling Maxwellians. We prove that the microscopic part of the linear solution decays with an inverse polynomial rate, while the macroscopic part remains bounded but in general does not decay. In particular, the long-time dynamics are different from the linearization around either (1) traveling Maxwellians for moderately soft or hard potentials or (2) global (non-traveling) Maxwellians (for any potentials).
\end{abstract}

\author[Sanchit Chaturvedi]{Sanchit Chaturvedi}
\address[Sanchit Chaturvedi]{Cornell University, Department of Mathematics, 310 Malott Hall, Ithaca,
NY 14853, USA}
\email{sc3653@cornell.edu}
\author[Jonathan Luk]{Jonathan Luk}
\address[Jonathan Luk]{Department of Mathematics, Stanford University, 450 Jane Stanford Way, Stanford, CA 94305, USA}
\email{jluk@stanford.edu}

\maketitle

\vspace{-2ex}
\begin{center}{\it\large Dedicated to Professor Yan Guo, on the occasion of his 60th birthday}
\end{center}

\section{Introduction}
Consider the Landau equation on the whole space, i.e., the particle density $F: [0,T) \times \bbR^3_x \times \bbR^3_v \to \bbR_{\geq 0}$ satisfies the equation
\begin{equation}\label{eq:Landau}
\begin{split}
\rd_tF + v_i\rd_{x_i} F&=Q(F,F),\\
F(0,x,v)&=F_0(x,v),
\end{split}
\end{equation}
where 
\begin{equation}\label{eq:Q}
Q(F,G) = \rd_{v_i}\int_{\R^3}\phi_{ij}(v-v_*)[F(v_*)\rd_{v_j}G(v)-G(v)\rd_{v_j} F(v_*)]\ud v_*
\end{equation}
and $\phi$ is the following anisotropic matrix
\begin{equation}\label{eq:phi}
\phi_{ij}(y)=\left(\delta_{ij}-\f{y_i y_j}{|y|^2}\right)|y|^{2+\gamma}.
\end{equation}
Here, and below, we use the Einstein summation convention where repeated lower case Latin indices are summed over $i,j=1,2,3$. For the remainder of the paper, we fix $\gamma = -3$, which is the physical Coulomb case.

Our goal is to consider the linear stability of the traveling Maxwellian 
\begin{equation}\label{eq:mu}
\mu=e^{-\alpha|v|^2-\beta|x-tv|^2}
\end{equation}
for some $\alp,\bt>0$. Observe that $F = \mu$ is an explicit solution to \eqref{eq:Landau} with initial data $F_0(x,v) = e^{-\alp|v|^2 - \bt |x|^2}$ localized in phase space. 

To our knowledge, the stability of the traveling Maxwellian \eqref{eq:mu} has not been previously studied for the Landau equation. However, it has been studied for the Boltzmann equation in both cutoff \cite{AlGa09, BaGaGoLe16, Go97, To88} and non-cutoff cases \cite{HeLi2023}. In particular, the non-cutoff Boltzmann equation shares many similarities with the Landau equation. Regardless of whether the kernel is cutoff or non-cutoff, all the previous works concern the moderately soft potentials, Maxwellian molecules, and hard potentials cases (corresponding to $\gamma \in (-2,0)$, $\gamma = 0$ and $\gamma \in (0,1]$, respectively for \eqref{eq:phi} in the Landau case). The restriction is related to decay properties of solutions. Under such a restriction, the dispersion from the transport operator $\rd_t + v_i \rd_{x_i}$ dominates the dynamics and the collision term can be viewed as perturbative from the point of view of long-time asymptotics. As a result the traveling Maxwellian is orbitally stable but not asymptotically stable, and as $t\to \infty$, the solution approaches a solution to the linear transport equation which is in general not a traveling Maxwellian. This is most striking in the hard sphere Boltzmann case, where a full scattering theory in a neighborhood of the traveling Maxwellians (under an additional smallness assumption) has been established \cite{BaGaGoLe16}. In other words, not only are traveling Maxwellians not asymptotically stable, but in fact any nearby solutions to the transport equation can be achieved as a scattering state as $t\to \infty$. Due to the dominance of the transport dynamics, the mathematical study of the stability of the traveling Maxwellian is closely related to that of the stability of vacuum $f_{\mathrm{vac}} \equiv 0$, as has been explicitly pointed out in \cite{HeLi2023}. In view of the works \cite{sC2020a, jL2019} for the stability of vacuum for the Landau equation when $\gamma > -2$, one expects that traveling Maxwellians are also orbitally but not asymptotically stable for this equation.

By contrast, the dynamics is drastically different in a neighborhood of a global (non-traveling) Maxwellian, i.e., \eqref{eq:mu} with $\alp >0$ but $\bt =0$. In that case, the collisional term is no longer perturbative but instead drives the dynamics. For the Landau equation in the full range $\gamma \in [-3,1]$, as well as for the Boltzmann equation, the entropic dissipation from the collisional term gives a decay mechanism and the global Maxwellians are in fact nonlinearly asymptotically stable (see \cite{AMUXY12.3, AMUXY12, AMUXY12.2, Guo02, Ukai74} and further references in Section~\ref{sec:stability.glo.Max}).

We now return to our case of interest, i.e., the stability of traveling Maxwellian (\eqref{eq:mu} with $\alp,\bt>0$) for $\gamma = -3$. Due to the very soft potential, the techniques in \cite{AlGa09, BaGaGoLe16, sC2020a, HeLi2023, jL2019, Go97, To88} which treat the equation as a perturbation of the linear transport equation are no longer applicable, as one would need to deal with terms which are not time-integrable. In fact, we show that the \textbf{linearized dynamics is different both from the $\gamma \in (-2,1]$ case and the global Maxwellian case}. Specifically, we show that the linear perturbation converges as $t\to \infty$ to a solution to the linear transport equation; in general it does not converge to $0$. Moreover, after decomposing the solution into the macroscopic and microscopic parts, we prove that the microscopic part decays inverse polynomially in time. In other words, most of the solution decays due to entropic dissipation, but there is a residue part for which entropic dissipation is too weak and the dynamics is still dominated by transport.

Compared to the linearized perturbations of traveling Maxwellians for $\gamma > -2$, the difference we see in our case is that the linearized collisional term plays an important role so that the microscopic part of the solution decays. This is also different from the case of linearized perturbations of global (non-traveling) Maxwellians. In that case, the collisional term likewise causes the microscopic part to decay, but then the macroscopic part also decays through its interaction with the microscopic part. (At a technical level, this follows from suitable elliptic estimates.) However, in our case, the interaction between the microscopic and macroscopic parts is too weak so that the macroscopic part in general does not decay.

To precisely describe our result, we first write the perturbation around traveling Maxwellian $\mu$ as 
$$F=\mu+\sqrt{\mu}f.$$
Then \eqref{eq:Landau} reduces to the following equation for $f$
\begin{equation}\label{eq:Landau-f}
\rd_tf+v_i\rd_{x_i}f+L f=\Gamma(f,f),
\end{equation} 
where 
\begin{equation}\label{eq:L}
\begin{split}
- Lf&=\mu^{-\f 12}Q(\mu, \mu^{\f 12} f)+\mu^{-\f 12} Q(\mu^{\f 12} f,\mu)=:A f+Kf
\end{split}
\end{equation}
and 
\begin{equation}\label{eq:Gamma}
\Gamma=\mu^{-\f 12} Q(\mu^{\f 12} f,\mu^{\f 12} f).
\end{equation}

Dropping the nonlinear terms, we will study the linear equation
\begin{equation}\label{eq:linear.intro}
\rd_tf+v_i\rd_{x_i}f+L f=0.
\end{equation} 
For every $t,x$, the operator $L$ has a $5$-dimensional kernel given by 
\begin{equation}\label{eq:L.ker.intro}
	\mathrm{ker}(L) = \mathrm{span}\{\sqrt{\mu}, z_i \sqrt{\mu}, |z|^2 \sqrt{\mu} \}
\end{equation}
for 
\begin{equation}
	z = \sqrt{\alpha+\beta t^2}v - \f{x\beta t}{\sqrt{\alpha+\beta t^2}}.
\end{equation}
The kernel \eqref{eq:L.ker.intro} plays an important role for the dynamics. To capture this, denote by $\Pi$ the orthogonal projection to $\mathrm{ker}(L)$ with respect to $\ud v$. We will further denote $\Pi f = a(t,x) \sqrt{\mu} + b_i(t,x) z_i \sqrt{\mu} + c(t,x) |z|^2 \sqrt{\mu}$; $\Pi f$ will be called the macroscopic part while $(I - \Pi)f$ will be called the microscopic part.

For the remainder of the paper, we write $a \ls b$ if there exists $c>0$ such that $a\leq cb$. The constant $c>0$ typically depends only on $\alp$, $\bt$ and $K_{0}$, but will be specified in the context. We also denote $\jap{y} = (1+|y|^2)^{\f 12}$. The following is our main theorem concerning the linear equation:
\begin{theorem}\label{thm:main}
Let $t_0\geq 1$ and $K_{0} \in \mathbb Z_{\geq 3}$. Fix the background solution $\mu$ in \eqref{eq:mu} and consider the initial value problem for the linearized equation \eqref{eq:linear.intro} about $\mu$ with initial data
\begin{equation}
	f\restriction_{t=t_0} = f_{t_0}
\end{equation}
for some smooth $f_{t_0}:\bbR^3_x \times \bbR^3_v \to \bbR$. For every $K \in \mathbb Z_{\geq 3}$, define the initial energy $E_K$ by \eqref{eq:EK.def} below. Assume $f_{t_0}$ verifies
$$E_{K_{0}} <\infty.$$

Then the solution exists globally in the time interval $[t_0,\infty)$ and the following holds for some implicit constants which depend only on $\alp$, $\bt$ and $K_{0}$ (and are in particular independent of $t_0$):
\begin{enumerate}
\item \label{item:part.1}(Boundedness of weighted energy) For $\alp, \bt >0$ as in \eqref{eq:mu} and $\overline{\bt}$ sufficiently small depending on $\alp$, $\bt$, the following weighted energy is bounded:
\begin{align}
    \| e^{\f{\alp \bar{\bt} |x|^2}{2(\alp + \bar{\bt} t^2)}} \jap{x/t}^{K_{0}-1} f \|_{L^2_{x,v}} \ls E_{K_{0}}^{\f 12}, \label{eq:boundedness.main.thm} \\
    \| e^{\f 12 \alp |v|^2 + \f 12\bar{\bt} |x-tv|^2} \jap{x/t}^{K_{0}} f \|_{L^2_{x,v}} \ls t^{0.55} E_{K_{0}}^{\f 12} . \label{eq:growth.main.thm}
\end{align}
\item \label{item:part.2}(Decay of microscopic part) The microscopic part $(I - \Pi)f$ satisfies the following estimates
\begin{equation}\label{eq:decay.main.thm}
	\| \jap{z}^{-\f 12} \jap{x/t}^{K_{0}-2} (I - \Pi) f \|_{L^2_{x,v}}(t) \ls t^{-\f 12} E_{K_{0}}^{\f 12}.
\end{equation}
\item \label{item:part.3}(Improved decay of microscopic part in localized region) In the region $|x|\ls t$, the microscopic part $(I - \Pi)f$ obeys decay estimates with an improved rate. More precisely,
\begin{equation}\label{eq:improved.decay.main.thm}
	\| e^{-\f{\alp \bt |x|^2}{\alp + \bt t^2}} \jap{z}^{-1} \jap{x/t}^{K_{0}-3} (I - \Pi) f \|_{L^2_{x,v}}(t) \ls t^{-1} E_{K_{0}}^{\f 12}.
\end{equation}
\item \label{item:part.4}(Existence of limit as $t\to \infty$) Let $f^\sharp(t,x,v) = f(t,x+tv,v)$. Then, there exists $f^\sharp_\infty: \bbR^3_x \times \bbR^3_v\to \bbR$ such that 
\begin{equation}\label{eq:convergence.main.thm}
\| \jap{x}^{-\f 12} (f^\sharp(t,x,v) - f^\sharp_\infty (x,v)) \|_{L^2_{x,v}}(t) \ls t^{-\f 12} E_{3}^{\f 12}.
\end{equation}
\item \label{item:part.5}(Generic non-decay of the macroscopic part) For $t_0$ sufficiently large, there exists a large class of choices of $f_{t_0}$ such that the limit $f^\sharp_\infty$ is not identically $0$.
\end{enumerate}
\end{theorem}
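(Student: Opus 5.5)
The plan is to work in the adapted variables $z$ (with the local "temperature" $\alpha+\beta t^2$) and to exploit the fact that the traveling Maxwellian is a global Maxwellian in the velocity variable $z$ at each $(t,x)$. Its density is $e^{-\f{\alpha\beta|x|^2}{\alpha+\beta t^2}}$ and its temperature is $\sim t^{-2}$.

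\textbf{Coercivity in $z$.} Rescaling $v\mapsto z$, the operator $L$ at time $t$ equals (local density)$\times$(temperature)$^{(\gamma+3)/2}\cdot t^{-2}$-type factors times the standard linearized Landau operator $L_0$ in $z$. Since $\gamma=-3$, I expect the net prefactor to be
\[
\sim e^{-\f{\alpha\beta|x|^2}{\alpha+\beta t^2}}\, t^{-1}.
\]
This is borderline non-integrable, which is exactly why the macroscopic part does not decay. I would take from the classical theory (Guo) the coercivity
\[
\langle L_0 g,g\rangle\gtrsim \|\jap{z}^{-3/2}(I-\Pi)g\|_{\sigma}^2 .
\]

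\textbf{Energy estimate.} I multiply the linear equation by $f$ times weights $\jap{x/t}^{2K}$ and the Gaussian weight $e^{\alpha\bar\beta|x|^2/(\alpha+\bar\beta t^2)}$. These weights are almost transported: their commutator with $\partial_t+v\cdot\nabla_x$ has a good sign or is controlled, which is where $\bar\beta$ small is used. This gives boundedness \eqref{eq:boundedness.main.thm} and the dissipation bound
\[
\int_{t_0}^\infty t^{-1}\|e^{-c|x|^2/t^2}\jap{z}^{-3/2}\jap{x/t}^{K}(I-\Pi)f\|^2\,\ud t\lesssim E_K .
\]
The velocity-weighted bound \eqref{eq:growth.main.thm} should follow similarly. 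The weight $e^{\alpha|v|^2+\bar\beta|x-tv|^2}$ loses a slight growth $t^{0.55}$ when commuted with $L$, and I absorb this using the dissipation.

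\textbf{Decay of $(I-\Pi)f$.} Time-integrated dissipation with weight $t^{-1}$ alone does not give pointwise decay. Instead I would commute with the vector fields $t\partial_x+\partial_v$ (which commute with transport), or equivalently with $\partial_z$, and apply the energy estimate to $t\,\|(I-\Pi)f\|^2$. Concretely, I derive an evolution equation for the microscopic part:
\[
\partial_t (I-\Pi)f + (I-\Pi)\big(v\cdot\nabla_x f\big) + Lf = [\Pi,\partial_t+v\cdot\nabla_x]f .
\]
The commutator terms are $O(t^{-1})$ times macroscopic quantities; they carry $1/t$ because $z$ depends on $t$ and $x$ at scale $t$. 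A weighted Grönwall/ODE argument for $y(t)=\|(I-\Pi)f\|^2$, of the form
\[
y'+c\,t^{-1}\,(\text{dissip}) \le C t^{-1}\|\cdot\|\cdot\text{bounded},
\]
combined with losing a $\jap{z}$ weight (interpolation with the higher-weighted bound), gives $t^{-1/2}$ in \eqref{eq:decay.main.thm}. In the region $|x|\lesssim t$, where the coercivity prefactor is really $t^{-1}$ with no Gaussian loss, a second iteration improves this to $t^{-1}$, giving \eqref{eq:improved.decay.main.thm}. Each iteration costs one power of $\jap{x/t}$ and $\jap z$, which explains the decreasing exponents $K_0-1,K_0-2,K_0-3$. \textbf{This is the main obstacle:} controlling $(I-\Pi)(v\cdot\nabla_x\Pi f)$ requires $x$-derivatives of $a,b,c$. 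I would handle these through the commuted vector fields, whose energies are also bounded, so each such term costs $t^{-1}$.

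\textbf{Limit and non-decay.} In the profile variables, $f^\sharp$ satisfies $\partial_t f^\sharp=-(Lf)^\sharp$. Hence $f^\sharp(t)-f^\sharp(s)=-\int_s^t (Lf)^\sharp$. Since $L$ annihilates $\Pi f$, $Lf=L(I-\Pi)f$, and $\|\jap{\cdot}^{-1/2}L(I-\Pi)f\|\lesssim t^{-1}\cdot$(weighted microscopic norm). Using Cauchy–Schwarz with the dissipation bound then gives a Cauchy estimate with rate $t^{-1/2}$ for $\|\jap x^{-1/2}(\cdot)\|$, which proves \eqref{eq:convergence.main.thm}. For non-decay, take $f_{t_0}=\Pi f_{t_0}$ (e.g.\ $a\sqrt\mu$) with $t_0$ large. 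Then $\|f^\sharp_\infty-f^\sharp_{t_0}\|\lesssim t_0^{-1/2}E^{1/2}$ by the same estimate started from $t_0$, whereas $\|\jap x^{-1/2}f^\sharp_{t_0}\|\gtrsim E^{1/2}$ for such data. For $t_0$ large this forces $f^\sharp_\infty\neq0$, and the construction yields an open cone of such data.
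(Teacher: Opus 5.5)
\textbf{There is a genuine gap.} The proposal rests on an incorrect scaling heuristic, and the two quantitative steps that depend on it, the decay of $(I-\Pi)f$ and the rate in part (4), do not go through.

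\textbf{The scaling is wrong.} For $\gamma=-3$ the Landau kernel is invariant under $v\mapsto z$. Two $v$-derivatives contribute $(\alpha+\beta t^2)$, the factor $|v-v_*|^{-1}$ contributes $(\alpha+\beta t^2)^{1/2}$, and $dv_*$ contributes $(\alpha+\beta t^2)^{-3/2}$. Hence $L$ is exactly $e^{-\alpha\beta|x|^2/(\alpha+\beta t^2)}$ times the normalized operator $L_1$ in $z$, with no $t^{-1}$ (Proposition~\ref{prop:normalize}). In $L^2_v$ the coercivity $\langle f,Lf\rangle\gtrsim\|(I-\Pi)f\|_{\Delta_v}^2$ therefore has an $O(1)$ rate. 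Non-decay of $\Pi f$ is not caused by weak collisions. It is caused by weak micro--macro coupling, since $(I-\Pi)T\Pi f=O(t^{-2})(|Yb|+|Yc|)$.

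\textbf{Consequence for the limit step.} Your bound $\|\langle\cdot\rangle^{-1/2}L(I-\Pi)f\|\lesssim t^{-1}\cdot(\text{microscopic norm})$ fails for two reasons. There is no $t^{-1}$ factor. Also, $L$ is second order in $v$, so $\|Lg\|_{L^2}$ is not controlled by the first-order, degenerate dissipation norm you have integrated in time. Time-integrability of $\|Lf\|$ is precisely what has to be proved. The paper obtains it from $\int_{t_0}^\infty t^2\|Lf\|^2_{\calD_{x,v}(0,0,1)}\,dt\lesssim E_3$ combined with Cauchy--Schwarz against $\int t^{-2}$.

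\textbf{The decay mechanism is missing.} An ODE for $y=\|(I-\Pi)f\|^2$ cannot give decay.
\begin{itemize}
\item The dissipation controls only $e^{-\alpha\beta|x|^2/(\alpha+\beta t^2)}\|\jap{z}^{-1/2}(I-\Pi)f\|^2$ plus derivative terms. This does not dominate $y$, so computing $\frac{d}{dt}(ty)$ leaves the term $y$, which is not known to be integrable in time.
\item Inserting a $v$-dependent weight such as $\jap{z}^{-1}$ destroys coercivity, because it mixes the non-decaying $\Pi f$ with $(I-\Pi)f$.
\item With your own heuristics, $y'+ct^{-1}y\le Ct^{-1}$ gives only boundedness, not decay.
\end{itemize}

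\textbf{What the paper does instead.} It propagates $t\langle w^2f,Lf\rangle$ and $t^2\|wLf\|^2$ with $v$-independent weights $w=w_{(0,\kay,1)}$.
\begin{itemize}
\item Since $\langle w^2 f,Lf\rangle$ is comparable to $\|(I-\Pi)f\|^2_{\calD}$, boundedness of the first energy gives the $t^{-1/2}$ rate.
\item Differentiating produces good terms $-t\|wLf\|^2$ and $-t^2\|Lf\|^2_{\calD}$. The first of these absorbs the term $2t\|wLf\|^2$ that appears when differentiating the second energy.
\item The error terms are $Tw^2$, $(I-\Pi)T\Pi f$, and the commutator $[T,L]$. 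The commutator is $O(t^{-2})$ but costs a $\jap{z}$ weight and a $Y$-derivative (Proposition~\ref{prop:comm-L-T}).
\item Paying for these requires the $Y_lf$ energies, with commutator estimates for $[Y_l,A]$, $[Y_l,K]$, $[Y_l,\Pi]$ and a refined micro/macro splitting. It also requires the growing Gaussian-in-$v$ energies ($t^{-1.1}\|w_{(1,\cdot,0)}f\|^2$) to convert $\jap{z}$ weights via Lemma~\ref{lem:weight.compare}.
\end{itemize}

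\textbf{The $t^{-1}$ rate.} This is not a ``second iteration'' of the same argument. It needs the elliptic bound $e^{-\alpha\beta|x|^2/(\alpha+\beta t^2)}\|\jap{z}^{-1}(I-\Pi)g\|_{L^2_v}\lesssim\|Lg\|_{L^2_v}$. The paper proves this from anisotropic spherical-coordinate estimates for $\bar A_1$ together with a compactness/Fredholm argument, and then combines it with the bound on $t^2\|wLf\|^2$.

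\textbf{What does match.} Your non-decay argument follows the paper: purely macroscopic data at large $t_0$, so that $Lf_{t_0}=0$ and $E_{K_0}$ is uniform in $t_0$. However, it inherits the gap, because it relies on the $t^{-1/2}$ rate from part (4).
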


The proofs of the five parts can be found in Proposition~\ref{prop:f.final}, Theorem~\ref{thm:microscopic}, Theorem~\ref{thm:microscopic.improved}, Theorem~\ref{thm:limit.def}, and Theorem~\ref{thm:nonvanishing}, respectively.



\begin{remark}
    Our theorem is only stated for $\gamma = -3$, which is the physical case. However, we expect that the phenomenon holds more generally for very soft potentials. In particular, a version of the theorem should in principle hold for the range $\gamma\in [-3,-2)$ (with a different $t$ rate).
\end{remark}

\begin{remark}\label{rmk:weight.improved.intro}
    Even though \eqref{eq:growth.main.thm} is not a boundedness estimate (as it grows with $t$), it is a crucial step in establishing the other bounds. Moreover, we included the bound here since it encodes some information not implied by the other estimates. For instance, interpolating this with \eqref{eq:decay.main.thm}, one can control higher $\jap{z}$ moments and obtain for every $\ell \in \mathbb Z_{\geq 0}$ and $\ep >0$,
    \begin{equation}\label{eq:weight.improved.intro}
        \| \jap{z}^{\ell} \jap{x/t}^{K_{0}-2} (I - \Pi) f \|_{L^2_{x,v}}(t) \ls_{\ep,\ell} t^{-\f 12+\ep} E_{K_{0}}^{\f 12}.
    \end{equation}
    We will prove this in Proposition~\ref{prop:weight.improved}.
\end{remark}

\begin{remark}[Estimates for higher derivatives]
	We expect higher derivative estimates similar to those in Theorem~\ref{thm:main}, e.g., for higher derivatives taken with respect to $\rd_x$ and $t \rd_x + \rd_v$, can also be proven with our methods. In fact, in order to obtain our results, it is already necessary to simultaneously prove some higher-order estimates with respect to $t \rd_x + \rd_v$. 
\end{remark}

\begin{remark}[Nonlinear dynamics]
Our theorem is purely about the linear dynamics. However, in view of the non-decay of the macroscopic parts of $f$, one expects the question of nonlinear stability --- or instability --- to hinge upon the precise structure of the nonlinear terms.
\end{remark}

\begin{remark}[Relation to the stability of vacuum for very soft potentials]
Our result does not directly address the question of stability of vacuum for very soft potentials for the Landau equation with $\gamma = -3$. However, our result applies in particular to small traveling Maxwellians backgrounds to show a new decay mechanism. Thus even if the stability of vacuum were true in this case, the proof would be different from \cite{jL2019, sC2020a} in that the dynamics are not dominated by transport alone as $t\to \infty$.
\end{remark}

\subsection{Ideas of the proof}

\subsubsection{Basic energy identity}

Solutions to the equation \eqref{eq:linear.intro} admit a basic energy identity
\begin{equation}\label{eq:energy.intro}
\f 12 \|f\|_{L^2_{x,v}}^2(t) + \int_{t_0}^t \langle f, Lf \rangle_{L^2_{x,v}}(s) \ud s = \f 12 \|f\|_{L^2_{x,v}}^2(t_0).
\end{equation}
The bulk term is non-negative, and in fact controls $(I-\Pi) f$ with degenerate weights. In particular, 
\begin{equation}\label{eq:coercive.intro}
\langle f, Lf \rangle_{L^2_v}(s) \gtrsim \| e^{-\f{\alp\bt|x|^2}{2(\alp+\bt t^2)}} \jap{z}^{-\f 12} (I-\Pi) f \|_{L^2_{v}}^2 + t^{-2} \| e^{-\f{\alp\bt|x|^2}{2(\alp+\bt t^2)}} \jap{z}^{-\f 32} \nabla_v (I-\Pi) f \|_{L^2_{v}}^2,
\end{equation}
where $z = \sqrt{\alp + \bt t^2} v - \f{x \bt t}{\sqrt{\alp + \bt t^2}}$. (In fact sharper estimates hold; see Lemma~\ref{lem:lower-bound-L}.) In particular, the combination of \eqref{eq:energy.intro} and \eqref{eq:coercive.intro} already suggests that $(I-\Pi)f$ obeys some weak averaged decay bound with degeneration for $|x| \gtrsim t$. To proceed, our first goal is to upgrade \eqref{eq:energy.intro} to conclude stronger transport-type estimates for $f$.

\subsubsection{Propagation of transport estimates}\label{sec:intro.transport.est}

Because the full solution $f$ does not decay, the best one can hope for is to prove estimates consistent with those capturing dispersion of the linear transport equation. A natural $L^2$-based norm for solutions $f_{\mathrm{lin}}$ to the linear transport equation
\begin{equation}\label{eq:linear.transport}
	\rd_t f_{\mathrm{lin}} + v_i \rd_{x_i} f_{\mathrm{lin}} = 0
\end{equation}
is 
\begin{equation}\label{eq:transport.norm}
	\| \jap{x-tv}^\ell \jap{v}^{j} e^{\mathfrak a|v|^2} e^{\mathfrak b |x-tv|^2} f_{\mathrm{lin}}\|_{L^2_{x,v}},
\end{equation}
which is independent of time. This is simply due to the facts that $T := \rd_t + v_i \rd_{x_i}$ satisfies $T v =0$ and $T(x-tv) = 0$.

There are various issues in propagating the norm \eqref{eq:transport.norm} for solutions to \eqref{eq:linear.intro}. 
\begin{enumerate}
\item \label{item:issue.1} There is a limit on $\mathfrak a$ and $\mathfrak b$ imposed by the collisional operator. This is already the case in the stability theory for global (non-traveling) Maxwellians, where one cannot put arbitrary Gaussian weights in $v$ in the energies; see \cite{StGu08}.
\item \label{item:issue.2} More seriously, whenever there is a weight $w$ depending on $v$ (such as $\jap{x-tv}^\ell$, $\jap{v}^\kay$, and/or $e^{\mathfrak a|v|^2} e^{\mathfrak b |x-tv|^2}$), $\langle w^2 f, Lf \rangle$ is no longer coercive as it mixes $\Pi f$ and $(I-\Pi)f$. Combined with the fact that $\Pi f$ does not decay (as we will prove in part \eqref{item:part.5} of Theorem~\ref{thm:main}), we in fact cannot expect energies with these weights to be bounded. 
\end{enumerate}

A crucial observation is as follows. After using the triangle inequality, the boundedness of \eqref{eq:transport.norm} implies, for a suitable set of indices, the boundedness of
\begin{equation}\label{eq:transport.weak.norm}
    \| \jap{x/t}^\kay e^{\mathfrak a \mathfrak b |x|^2/(\mathfrak a + \mathfrak b t^2)} f_{\mathrm{lin}}\|_{L^2_{x,v}}.
\end{equation}
The $|x|/t$ weights are less natural from the point of view of the transport equation, and indeed the norm \eqref{eq:transport.weak.norm} is not conserved for solutions to \eqref{eq:linear.transport}. Nonetheless, because the weights are $v$-independent, we can apply the coercivity estimate \eqref{eq:coercive.intro}, and it is now possible to bound solutions to \eqref{eq:linear.intro} in the norm \eqref{eq:transport.weak.norm} uniformly in $t$; see the main estimate \eqref{eq:boundedness.main.thm}:
\begin{equation}\label{eq:boundedness.main.thm.2}
    \| e^{\f{\alp \bar{\bt} |x|^2}{2(\alp + \bar{\bt} t^2)}} \jap{x/t}^{K_{0}-1} f \|_{L^2_{x,v}} \ls E_{K_{0}}^{\f 12}.    
\end{equation}

As we already mentioned, even for the linear transport equation, the norm \eqref{eq:transport.weak.norm} is not conserved. Thus, when we apply \eqref{eq:energy.intro} to derive \eqref{eq:boundedness.main.thm.2}, the estimate does not close on its own. Instead, we need to simultaneously bound a hierarchy of norms in addition to \eqref{eq:boundedness.main.thm.2}; in particular, we combine this with \eqref{eq:growth.main.thm}:
\begin{equation}\label{eq:growth.main.thm.2}
    \| e^{\f 12 \alp |v|^2 + \f 12\bar{\bt} |x-tv|^2} \jap{x/t}^{K_{0}} f \|_{L^2_{x,v}} \ls t^{0.55} E_{K_{0}}^{\f 12}. 
\end{equation}
Observe that the norm in \eqref{eq:growth.main.thm.2} is allowed to grow, and as we argued above in issue \eqref{item:issue.2}, this growth is necessary. Nonetheless, importantly, when \eqref{eq:growth.main.thm.2} is used in the derivation of \eqref{eq:boundedness.main.thm.2}, there are extra polynomially decaying $t$-weights so that \eqref{eq:growth.main.thm.2} is still useful to control the error terms.

Without getting into too many details at this point, we make two additional comments. First, observe that \eqref{eq:boundedness.main.thm.2} contains the weight $\jap{x/t}^{K_{0}-1}$ while \eqref{eq:growth.main.thm.2} contains the weight $\jap{x/t}^{K_{0}}$. In other words, one needs to be careful with the moment loss when these estimates are combined. In fact, we need to simultaneously also bound $\|  \jap{x/t}^{K_{0}} f \|_{L^2_{x,v}} \ls E_{K_{0}}^{\f 12}$ when deriving \eqref{eq:boundedness.main.thm.2} and \eqref{eq:growth.main.thm.2}. Second, the exponential weights $e^{\f{\alp \bar{\bt} |x|^2}{2(\alp + \bar{\bt} t^2)}}$ and $e^{\f 12 \alp |v|^2 + \f 12\bar{\bt} |x-tv|^2}$ are chosen judiciously. As mentioned in issue \eqref{item:issue.1} above, $\alp$ and $\bar{\bt}$ cannot be arbitrary. The choice that $\bar{\bt}$ is small will ensure that we can control the collision term in the presence of these weights. It turns out that these weights are also sufficient in the subsequent arguments. In particular, it is important that for any choice of $\bar{\bt} >0$, the weight $e^{\f{\alp \bar{\bt} |x|^2}{2(\alp + \bar{\bt} t^2)}}$ is comparable to the weight $e^{\f{\alp \bt |x|^2}{2(\alp + \bt t^2)}}$ that naturally occurs in the collisional kernel (see \eqref{eq:coercive.intro}) in the region $\{|x|\leq t^2\}$.

\subsubsection{Estimates for $Lf$}

To obtain the remaining parts of the theorem, the key is to control $Lf$. Since $L\Pi f = 0$, the term $Lf$ only depends on the microscopic part of $f$; proving decay of $Lf$ is therefore consistent with non-decay of the macroscopic part. Once we prove sufficiently strong decay for $Lf$, we obtain the other parts of the theorem as follows.
\begin{enumerate}
    \item Recalling the equation \eqref{eq:linear.intro}, the integrable-in-time decay of $Lf$ implies that as $t\to \infty$, the solution behaves like a solution to the transport equation. This proves part \eqref{item:part.4} of Theorem~\ref{thm:main}. (To show part \eqref{item:part.5} of the theorem, i.e., the non-vanishing of the limit, we prescribe data at large $t_0$ corresponding to non-trivial macroscopic part but trivial microscopic part. Using the proven estimates we can deduce that the limit is also non-trivial if $t_0$ is sufficiently large.)
    \item We prove elliptic estimates that control $(I-\Pi)f$ by $Lf$. The decay of $Lf$ then implies the decay of $(I-\Pi)f$, proving parts \eqref{item:part.2} and \eqref{item:part.3} of Theorem~\ref{thm:main}.
\end{enumerate}

The relevant energies that we will control are 
$$\calE_{L,1}(t):= t \langle e^{\f{\alp \bar{\bt}|x|^2}{\alp + \bar{\bt} t^2}} \jap{x/t}^{2(K_{0}-2)} f, Lf \rangle_{L^2_{x,v}},\quad \calE_{L,2}:= t^2 \| e^{\f{\alp \bar{\bt}|x|^2}{2(\alp + \bar{\bt} t^2)}} \jap{x/t}^{K_{0}-3} Lf\|_{L^2_{x,v}}^2.$$

Modulo commutator terms (which are important but to be discussed below), the main contribution in 
\begin{align}
    \f{\ud}{\ud t} \calE_{L,1}(t) = &\:\langle e^{\f{\alp \bar{\bt}|x|^2}{\alp + \bar{\bt} t^2}} \jap{x/t}^{2(K_{0}-2)} f, Lf \rangle_{L^2_{x,v}} - 2t\| e^{\f{\alp \bar{\bt}|x|^2}{2(\alp + \bar{\bt} t^2)}} \jap{x/t}^{(K_{0}-2)}  Lf \|_{L^2_{x,v}}^2 + \cdots ,\label{eq:EL.1} \\
    \f{\ud}{\ud t} \calE_{L,2}(t) = &\:\ 2t \| e^{\f{\alp \bar{\bt}|x|^2}{2(\alp + \bar{\bt} t^2)}} \jap{x/t}^{K_{0}-3}  Lf \|_{L^2_{x,v}}^2 - 2t^2\langle e^{\f{\alp \bar{\bt}|x|^2}{\alp + \bar{\bt} t^2}} \jap{x/t}^{2(K_{0}-3)} Lf, L^2f \rangle_{L^2_{x,v}} + \cdots . \label{eq:EL.2}
\end{align}
In both \eqref{eq:EL.1} and \eqref{eq:EL.2}, the second terms have a favorable sign; for \eqref{eq:EL.2}, the sign comes from \eqref{eq:coercive.intro} (with $f$ replaced by $e^{\f{\alp \bar{\bt}|x|^2}{2(\alp + \bar{\bt} t^2)}} \jap{x/t}^{K_{0}-3}  Lf$). In \eqref{eq:EL.1}, the first term can be bounded by the analogue of \eqref{eq:energy.intro} with $e^{\f{\alp \bar{\bt}|x|^2}{2(\alp + \bar{\bt} t^2)}} \jap{x/t}^{(K_{0}-2)}$ weights and is thus already controlled by the arguments in Section~\ref{sec:intro.transport.est}. On the other hand, in \eqref{eq:EL.2}, the first term can be controlled by the good term in \eqref{eq:EL.1}. Thus, as long as we control the error terms, we can bound $\calE_{L,1}(t)$ and $\calE_{L,2}(t)$ to prove decay for $Lf$. Notice that the good term on the right-hand side of \eqref{eq:EL.2} gives that $Lf$ is, at least in an $L^2_t$-averaged sense, integrable in time, justifying what we said above.

The main difficulty in this strategy is to control the error terms. For this, we need to (a) carefully decompose the solution into the macroscopic and microscopic parts and (b) control the commutator $[L,T]$. The decomposition is necessary in view of the fact that the macroscopic parts do not decay. For the commutator $[L,T]$, we will be able to show that it is perturbative, but among other things, this relies on stronger transport estimates. In particular, we will need to first control $Y_l f$ for $Y_l = t \rd_{x_l} + \rd_{v_l}$ and to show analogues of \eqref{eq:boundedness.main.thm.2} and \eqref{eq:growth.main.thm.2} with $f$ replaced by $Y_l f$. (We observe that since $[T,Y_l] = 0$, for the linear transport equation, the norm \eqref{eq:transport.norm} is also conserved for $f_{\mathrm{lin}}$ replaced by $Y_lf_{\mathrm{lin}}$.) In this process, we will in turn need to control the commutator $[Y_l, L]$. It turns out that all of the commutator terms involved either have sufficient decay, or otherwise can be controlled by the microscopic terms, allowing one to finally bound $\calE_{L,1}$ and $\calE_{L,2}$.

\subsection{Related works}
In this section, we give a non-exhaustive list of works on the Landau equation and the Boltzmann equation. Global regularity remains a wide open problem except for spatially homogeneous data. For general large data, there are local results as well as results on continuation criteria. There is also a large body of literature on the construction of global solutions which are perturbations of global or traveling Maxwellians, or of vacuum.

\subsubsection{The spatially homogeneous case}

In the spatially homogeneous case, global regularity has been recently proven for very soft potentials in the breakthrough works of Guillen--Silvestre \cite{GuSi2025} for Landau and Imbert--Silvestre--Villani \cite{ImSiVi2026} for noncutoff Boltzmann. The case of other values of $\gamma$ has been understood earlier; see \cite{DeVi00, DeVi00.2, mGnG2019, Si17, Wu14} for the Landau equation and \cite{lA1972.1, lA1972.2, tC1957, lDcM2009, nF2006, nFcM2009, lbH2012, gTcV1999, sMbW1999} for the Boltzmann equation.

\subsubsection{Local existence and continuation criteria for general data} Surprisingly, the local existence for the Landau equation with general data is quite recent \cite{HeSn17,HeSnTa17, cHsSaT2019,sC2023}.

For spatially inhomogeneous solutions, there has been a lot of activity in proving regularity of solutions to the Landau equation assuming a priori pointwise control of hydrodynamic quantities (the mass density, energy density and entropy density); see \cite{CaSiSn18,GoImMoVa16, ImSi2021, ImSi2022, OuSi2024}. Particular types of singularities, such as self-similar singularities and hydrodynamic implosion type singularities, have been ruled out by Bedrossian--Gualdani--Snelson \cite{jBmpGsS2022} (for $\gamma \in [-3, -2]$) and Golding--Henderson--Silvestre \cite{GoHeSi2026} (for $\gamma \in [-3,1]$), respectively. The latter work also obtained a refined continuation criterion based on weighted control for hydrodynamic quantities. We note also that hydrodynamic implosion type singularities have been recently constructed for ``very hard'' potentials ($\gamma\in(\sqrt{3},2]$) \cite{bedrossian2026finite}.

\subsubsection{Stability of global Maxwellians}\label{sec:stability.glo.Max} This is a dissipation-dominated regime. The problem of stability of global Maxwellians has a rich history with initial works focusing on the spatially homogeneous regimes. The first result regarding stability of these equilibria for cutoff Boltzmann in spatially inhomogeneous settings goes back to work of Ukai \cite{Ukai74}.

Stability of Maxwellians for a long-range model was first proven in the seminal work of Guo \cite{Guo02}. He used a nonlinear energy method to prove global stability of Maxwellians for the Landau equation on a torus. The result has been extended and generalized in various directions, including other collisional models \cite{AMUXY12.3,AMUXY12,AMUXY12.2,jwJrmS2022,GrSt11, Guo02.2,Guo03.2,Guo03,Guo12,StGu04,StGu06,StGu08,Str12}, more general function spaces (including very low-regularity settings) \cite{CaMi17, CaTrWu17, CaTrWuErratum17, rjDsqLsSrmS2021, wGmpGaL2024, cHsSaT2025}, and weakly collisional settings \cite{BeCoDo2024, BeZhZi2025, VPL}.

\subsubsection{Stability of vacuum and traveling Maxwellians} This is a dispersion dominated regime. Illner--Shinbrot, in \cite{IlSh84}, were the first ones to prove stability of vacuum result for cutoff Boltzmann with hard spheres interaction. Toscani in \cite{To87} proved that one can find data near vacuum for cutoff Boltzmann with hard potentials such that the global solutions converge neither to a traveling Maxwellian nor back to vacuum. For more results on stability of vacuum for cutoff Boltzmann see \cite{BeTo85, Ha85, ToBe84, Po88, To86, Guo01,Ar11, HeJi17}. 

For long-range models, the second author in \cite{jL2019} proved stability of vacuum for Landau equations with moderately soft potentials. The proof proceeds via an energy approach combined with a vector field method and a special null structure of the nonlinear collisional term. Since then the first author proved stability of vacuum for Landau equations with hard potentials in \cite{sC2020a} and for Boltzmann equations with moderately soft potentials in \cite{sC2020b}.

The stability of traveling Maxwellians for moderately soft potentials and hard potentials follow a similar story. Dispersion is the main mechanism and one can find data such that the solution does not converge back to the traveling Maxwellian. In fact, a whole scattering theory is known for the cutoff Boltzmann case thanks to \cite{BaGaGoLe16}. See also \cite{AlGa09, Go97, To88} for more results pertaining to stability of traveling Maxwellians for cutoff Boltzmann. Recently, He--Li in \cite{HeLi2023}, proved a similar result in the case of noncutoff Boltzmann with moderately soft potentials. We emphasize that the situation is quite different at least linearly for very soft potentials and that is the focus of this paper.

\subsubsection{Commuting vector field method in kinetic models} The commuting vector field method has been widely used to capture dispersion in collisionless models including Vlasov--Poisson, Einstein--Vlasov and Vlasov--Maxwell systems in \cite{Bi17,FaJoSm17.1, Sm16, Wa18.1, Wa18.2, Wa18.3, Wo18, hLmT20, Ta17}. The first time the vector field method was used for a collisional model was in the work of the second author in \cite{jL2019} and has been used for stability of vacuum in works of first author \cite{sC2020a, sC2020b}. 

In \cite{VPL}, the two authors with Nguyen, married Guo's energy method with hypocoercivity and the commuting vector field method to simultaneously understand dissipation, hypocoercivity and Landau damping. Many papers including two by the authors have used this method to capture Landau damping and phase mixing since; see for example \cite{BeCoDo2024, BeZhZi2025, sCjL2022, sCjL2026}. 

\subsection{Outline of the paper} The remainder of the paper is structured as follows. In \textbf{Section~\ref{sec:notations}}, we first introduce the necessary notations. In \textbf{Section~\ref{sec:collision}}, we prove estimates for the Landau collisional kernel. In \textbf{Section~\ref{sec:eng-est-lin}}, we prove the basic energy estimates. In \textbf{Section~\ref{sec:commutator}}, we estimate the various commutators. 

The next few sections prove the main estimates. In \textbf{Section~\ref{sec:lower.order.energy}}, we prove the estimates for $f$ and $Yf$. In \textbf{Section~\ref{sec:L.energy}}, we prove energy estimates involving $Lf$. Using these bounds, we prove in \textbf{Section~\ref{sec:limit}} that a $t\to \infty$ limit exists and is in general non-trivial. Finally, in \textbf{Section~\ref{sec:decay}}, we use elliptic estimates to prove decay of the microscopic part of the solution.

\subsection*{Acknowledgement} SC acknowledges support by the Simons Foundation Award 1141490. JL gratefully acknowledges the support of a National Science Foundation Grant under DMS-2304445. 

\subsection*{AI Disclosure} We used Anthropic Claude to proofread earlier versions of the paper. The paper is completely written by the authors and the mathematical content is the authors' own.

\section{Notations}\label{sec:notations}

We introduce the notations we use in the remainder of the paper.


\subsection*{The linear operator}

The linear operator $L$ can be written as 
\begin{equation}
    L = - A - K
\end{equation}
as in \eqref{eq:L}. We further introduce the notations 
\begin{equation}\label{eq:sigma}
\begin{split}
\sigma_{ij}=\phi_{ij}*\mu,\quad
\sigma_i =\sigma_{ij}z_j,
\end{split}
\end{equation}
with
\begin{equation}\label{eq:def-z}
z:=\sqrt{\alpha+\beta t^2}v - \f{x\beta t}{\sqrt{\alpha+\beta t^2}}.
\end{equation}
The notation is important as the linear operator $A$ can be written as
\begin{equation}
    A g=\rd_{v_i}(\sigma_{ij}\rd_{v_j}g)+\sqrt{\alpha+\beta t^2}(\rd_{v_i}\sigma_i) g-(\alpha+\beta t^2)\sigma_{ij}z_iz_j g,
\end{equation}
which will be proven in Lemma~\ref{lem:A}. For $z$ in \eqref{eq:def-z}, we will also denote the projection to the direction of $z$ as follows
\begin{equation}\label{eq:proj-spacetime}
P_{z} g:= \f{[z\cdot g]z}{|z|^2}.
\end{equation}
This will be relevant for the analysis of the linear operator; see for instance Lemma~\ref{lem:sigma-bounds}.

\subsection*{Vector fields}

The following vector fields will play important roles:
\begin{align}
T := &\:\rd_t + v_i \rd_{x_i} \\
Y_l := &\: t \rd_{x_l} + \rd_{v_l},\quad l=1,2,3.
\end{align}

We will also use the following norms for vector fields:
\begin{equation}
    |\rd_v g|^2:= \sum_{i=1}^3 |\rd_{v_i} g|^2,\quad |Y g|^2 := \sum_{i=1}^3 |Y_i g|^2.
\end{equation}

\subsection*{Weight functions}

A key ingredient of our proof is to use a hierarchy of weight functions. We define
\begin{equation}\label{eq:M}
w_{(\vartheta,\kay,\iota)}^2(t,x,v) = \jap{\frac{|x|}{t}}^{2\kay} \exp(B t^{-0.1}) \exp(\vartheta (\alpha|v|^2+ \bar{\bt}(t)|x-tv|^2)) \exp\Big(\frac{\iota \alpha\bar{\bt}(t) |x|^2}{\alpha+\bar{\bt}(t) t^2}\Big).
\end{equation}
Here, $\kay \in \mathbb Z_{\geq 0}$, $\vartheta \in \{0,1\}$ and $\iota \in \{0,1\}$. Moreover, only one of $\vartheta$ and $\iota$ will be chosen to be $=1$. The parameters $\alp$, $\bt$ are as in \eqref{eq:mu}, while $B$ is large (to be fixed at the end of Section~\ref{sec:eng-est-lin}) and 
\begin{equation}
    \bar{\bt}(t)=\beta_0(1+t^{-0.1}),
\end{equation}
where $\bt_0 \in (0, \bt/6)$ will be chosen to be small (to be fixed after the proof of Proposition~\ref{prop:weighted-A-bound}). Notice in particular that $\bar{\bt} < \f 13\bt$.

Moreover, it is often useful to only focus on part of the weights and define
\begin{equation}\label{eq:wv}
w_v^2(t,x,v) = \exp( \alpha|v|^2+ \bar{\bt}(t)|x-tv|^2).
\end{equation}

For convenience, we will often write $w_{(\vartheta,\kay,\iota)}^2 = w_{(\vartheta,\kay,\iota)}^2(t,x,v)$, $w_v^2 = w_v^2(t,x,v)$ and $\bar{\bt} = \bar{\bt}(t)$.

\subsection*{Macroscopic and microscopic parts}
\begin{definition}\label{def:macro-quant}
Define $\Pi g$ to be the orthogonal projection of $g$ to $\mathrm{span}\{ \sqrt{\mu}, z_i \sqrt{\mu}, |z|^2 \sqrt{\mu}\}$ with respect to $\ud v$. Denote
$$\Pi g=a^g(t,x) \sqrt{\mu}+b_i^g(t,x) z_i \sqrt{\mu}+c^g(t,x)|z|^2\sqrt{\mu}.$$
\end{definition}

\subsection*{Norms}

The collisional operator is naturally associated with an anisotropic dissipation norm. We introduce both a weighted and an unweighted version.
\begin{definition}\label{def:dissipation-norm}
Define
\begin{align*}
\norm{g}_{\Delta_v(0)}^2&=  e^{-\f{\alpha \beta |x|^2}{\alpha+\beta t^2}}\left[ t^{-2}\norm{\jap{z}^{-3/2} P_{z}\rd_{v}g}^2_{L^2_v} + \norm{\jap{z}^{-1/2}\Big(t^{-1} |(I-P_{z})\rd_{v}g| + |g|\Big)}^2_{L^2_v} \right]
\end{align*}
and
\begin{align*}
\norm{g}_{\Delta_v(1)}^2&=  e^{-\f{\alpha \beta |x|^2}{\alpha+\beta t^2}}\left[t^{-2}\norm{w_v\jap{z}^{-3/2} P_{z}\rd_{v}g }^2_{L^2_v} + \norm{w_v\jap{z}^{-1/2} \Big(t^{-1} |(I-P_{z})\rd_{v}g| + |g|\Big)}^2_{L^2_v}\right],
\end{align*}
where $z$ is as in \eqref{eq:def-z}. We will also denote $\Delta_v = \Delta_v(0)$.
\end{definition}

We define also the following norms in both $x$ and $v$.
\begin{definition}\label{def:calD}
    Define 
    \begin{equation}
        \| g\|_{\calD_{x,v}(\vartheta,\kay,\iota)} := \Big\| \| w_{(\vartheta,\kay,\iota)} g \|_{\Delta_v(0)} \Big\|_{L^2_x}.
    \end{equation}
\end{definition}

\subsection*{Initial energy}
For any $K \in \mathbb Z_{\geq 3}$, define the initial energy by
\begin{equation}\label{eq:EK.def}
\begin{split}
    E_K = &\: \sum_{|\om|\leq 1} \Big(\| w_{(0,K-|\om|,0)} Y^\om f_{t_0} \|_{L^2_{x,v}}^2 + t_0^{-1.1} \| w_{(1,K-|\om|,0)} Y^\om f_{t_0} \|_{L^2_{x,v}}^2 + \| w_{(0,K-|\om|-1,1)} Y^\om f_{t_0} \|_{L^2_{x,v}}^2 \Big) \\
    &\: + \Big( t_0 \langle w^2_{(0,K-2,1)} f_{t_0} , L f_{t_0} \rangle_{L^2_{x,v}} + t_0^2 \|w_{(0,K-3,1)} Lf_{t_0} \|_{L^2_{x,v}}^2\Big).
\end{split}
\end{equation}

\section{Landau collisional operator}\label{sec:collision}

From this section onwards, we begin the proof of the main theorem. In particular, we will from now on assume that $t \geq t_0 \geq 1$. 

In this section, unless explicitly stated otherwise, all implicit constants will be allowed to depend on $\alp$, $\bt$ and $\bar{\bt}$.

\subsection{Representation formulas}

We now prove representation formulas for $L$. In analyzing the collision operator, it is convenient to introduce the notation $g_* = g(v_*)$ and $z_* = \sqrt{\alp + \bt t^2} v_* - \f{x \bt t}{\sqrt{\alp + \bt t^2}}$.
\begin{lemma}\label{lem:A}
The operator $A$ takes the following form
\begin{equation}\label{eq:A}
\begin{split}
A g&=\rd_{v_i}(\sigma_{ij}\rd_{v_j}g)+\sqrt{\alpha+\beta t^2}(\rd_{v_i}\sigma_i) g-(\alpha+\beta t^2)\sigma_{ij}z_iz_j g,
\end{split}
\end{equation}
\end{lemma}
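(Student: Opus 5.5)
The plan is a direct computation. The key observation is that the gradient of the traveling Maxwellian in $v$ is a multiple of $z$. First I compute $\rd_{v_j}\mu$. From $\mu = e^{-\alp|v|^2-\bt|x-tv|^2}$ we get
\begin{equation*}
\rd_{v_j}\mu = \big(-2\alp v_j + 2\bt t(x_j - t v_j)\big)\mu = -2\big((\alp+\bt t^2)v_j - \bt t x_j\big)\mu = -2\sqrt{\alp+\bt t^2}\, z_j\,\mu,
\end{equation*}
using \eqref{eq:def-z}. Consequently $\rd_{v_j}\mu^{\f12} = -\sqrt{\alp+\bt t^2}\, z_j \mu^{\f12}$.

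Second, I simplify $Q(\mu,G)$ for a general $G$, writing $Q(\mu,G) = \rd_{v_i}\big(\sigma_{ij}\rd_{v_j}G - G\,\int \phi_{ij}(v-v_*)\rd_{v_j}\mu_*\,\ud v_*\big)$. By the first step, the last integral equals $-2\sqrt{\alp+\bt t^2}\int\phi_{ij}(v-v_*) z_{*j}\mu_*\,\ud v_*$. Now $z_* = z - \sqrt{\alp+\bt t^2}(v-v_*)$, and the projection structure of \eqref{eq:phi} gives $\phi_{ij}(y)y_j = 0$. Hence the $v-v_*$ contribution vanishes and $\int\phi_{ij}(v-v_*)z_{*j}\mu_*\,\ud v_* = \sigma_{ij}z_j = \sigma_i$. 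This yields
\begin{equation*}
Q(\mu,G) = \rd_{v_i}\big(\sigma_{ij}\rd_{v_j}G + 2\sqrt{\alp+\bt t^2}\,\sigma_i G\big).
\end{equation*}
I expect this step to be the only nontrivial point: one must notice that $z_*$ may be replaced by $z$ inside the convolution. That replacement is exactly the null property $\phi_{ij}(y)y_j=0$.

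Third, I substitute $G=\mu^{\f12}g$ and conjugate. Using $\rd_{v_j}(\mu^{\f12}g) = \mu^{\f12}\big(\rd_{v_j}g - \sqrt{\alp+\bt t^2}z_jg\big)$, the flux inside the divergence becomes $\mu^{\f12}H_i$ with
\begin{equation*}
H_i := \sigma_{ij}\rd_{v_j}g + \sqrt{\alp+\bt t^2}\,\sigma_i g.
\end{equation*}
Then $\mu^{-\f12}\rd_{v_i}(\mu^{\f12}H_i) = \rd_{v_i}H_i - \sqrt{\alp+\bt t^2}\,z_iH_i$. Expanding, $\rd_{v_i}H_i$ contributes $\rd_{v_i}(\sigma_{ij}\rd_{v_j}g) + \sqrt{\alp+\bt t^2}\big((\rd_{v_i}\sigma_i)g + \sigma_i\rd_{v_i}g\big)$. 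The term $-\sqrt{\alp+\bt t^2}z_iH_i$ contributes $-\sqrt{\alp+\bt t^2}\,z_i\sigma_{ij}\rd_{v_j}g - (\alp+\bt t^2)\sigma_{ij}z_iz_jg$.

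Finally, since $\sigma_{ij}$ is symmetric, $\sigma_i\rd_{v_i}g = \sigma_{ij}z_j\rd_{v_i}g = z_i\sigma_{ij}\rd_{v_j}g$, so the two first-order terms cancel. What remains is precisely \eqref{eq:A}, which completes the proof.
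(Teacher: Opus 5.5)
Your proof is correct and is essentially the paper's argument. Both express $\nabla_v\mu$ through $z$, use the null property $\phi_{ij}(y)y_j=0$ to replace $z_*$ by $z$ inside the convolution (producing $\sigma_i=\sigma_{ij}z_j$), and then conjugate by $\mu^{\f12}$, where the first-order cross terms cancel by symmetry of $\sigma_{ij}$. The only difference is cosmetic: the paper factors out the $v$-independent $e^{-\f{\alp\bt|x|^2}{\alp+\bt t^2}}$ and works with $e^{-|z|^2}$, while you work directly with $\mu$.
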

\begin{proof}
Recall from \eqref{eq:L} that $A g=\mu^{-1/2}Q(\mu,\mu^{1/2}g)$.
To proceed, we use that 
$$\mu = e^{-\alpha|v|^2}e^{-\beta |x-tv|^2}=e^{-\f{\alpha\beta |x|^2}{\alpha+\beta t^2}}e^{-|z|^2}.$$

Using the definition, that the factor $e^{-\f{\alpha\beta |x|^2}{\alpha+\beta t^2}}$ is independent of $v$ and that $$\rd_{v_l} z_m = \sqrt{\alp + \bt t^2} \de_{lm},\quad  \phi_{ij}(u)u_j=0,$$ we get
\begin{align*}
A g&=e^{-\f{\alpha\beta |x|^2}{\alpha+\beta t^2}}e^{|z|^2/2}\rd_{v_i}\left\{[\phi_{ij}*e^{-|z|^2}]e^{-|z|^2/2}\left[\rd_{v_j}g-\sqrt{\alpha+\beta t^2}z_j g\right]\right\}\\
&\qquad +2e^{-\f{\alpha\beta |x|^2}{\alpha+\beta t^2}}\sqrt{\alpha+\beta t^2}e^{|z|^2/2}\rd_{v_i}\left\{[\phi_{ij}*(z_j e^{-|z|^2})]e^{-|z|^2/2} g\right\} \\
&=e^{-\f{\alpha\beta |x|^2}{\alpha+\beta t^2}}e^{|z|^2/2}\rd_{v_i}\left\{[\phi_{ij}*e^{-|z|^2}]e^{-|z|^2/2}\left[\rd_{v_j}g-\sqrt{\alpha+\beta t^2}z_j g\right]\right\}\\
&\qquad +2e^{-\f{\alpha\beta |x|^2}{\alpha+\beta t^2}}\sqrt{\alpha+\beta t^2}e^{|z|^2/2}\rd_{v_i}\left\{[\phi_{ij}* e^{-|z|^2}]e^{-|z|^2/2}z_j g\right\} \\
&=e^{-\f{\alpha\beta |x|^2}{\alpha+\beta t^2}}e^{|z|^2/2}\rd_{v_i}\left\{[\phi_{ij}*e^{-|z|^2}]e^{-|z|^2/2}\left[\rd_{v_j}g+\sqrt{\alpha+\beta t^2}z_j g\right]\right\} \\
&=e^{-\f{\alpha\beta |x|^2}{\alpha+\beta t^2}}\left(\rd_{v_i}\{[\phi_{ij}*e^{-|z|^2}]\rd_{v_j}g\}+e^{|z|^2/2}\rd_{v_i}(e^{-|z|^2/2})\{[\phi_{ij}*e^{-|z|^2}]\rd_{v_j}g\}\right)\\
&\qquad +e^{-\f{\alpha\beta |x|^2}{\alpha+\beta t^2}}\sqrt{\alpha+\beta t^2}\left(\rd_{v_i}\left\{[\phi_{ij}*e^{-|z|^2}]z_j g\right\}+e^{|z|^2/2}\rd_{v_i}(e^{-|z|^2/2})\left\{[\phi_{ij}*e^{-|z|^2}]z_j g\right\}\right)\\
&=e^{-\f{\alpha\beta |x|^2}{\alpha+\beta t^2}}\left\{\rd_{v_i}([\phi_{ij}*e^{-|z|^2}]\rd_{v_j}g) + \sqrt{\alpha+\beta t^2}\rd_{v_i}[(\phi_{ij}*e^{-|z|^2}) z_j]g-(\alpha+\beta t^2)(\phi_{ij}*e^{-|z|^2})z_iz_j g\right\}\\
&=\rd_{v_i}(\sigma_{ij}\rd_{v_j}g)+\sqrt{\alpha+\beta t^2}\rd_{v_i}\sigma_i g-(\alpha+\beta t^2)\sigma_{ij}z_iz_j g.
\end{align*} \qedhere
\end{proof}

\begin{lemma}
	The operator $K$ takes the following form:
		\begin{equation}\label{eq:K.representation}
\begin{split}
 Kg = &\: 8\pi  \mu g + 4(\alp + \bt t^2) \mu^{\f 12}   \int_{\R^3}\Big( \phi_{ij}(v-v_*) z_{*i} z_{*j} - |v-v_*|^{-1} \Big) \mu_*^{\f 12} {g}_* \ud v_*.
\end{split}
\end{equation}
\end{lemma}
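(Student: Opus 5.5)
We compute directly from the definition $Kg = \mu^{-\f 12} Q(\mu^{\f 12} g, \mu)$ in \eqref{eq:L}, in the same spirit as the proof of Lemma~\ref{lem:A}. By \eqref{eq:Q},
$$Kg = \mu^{-\f 12}\rd_{v_i}\int_{\R^3}\phi_{ij}(v-v_*)\Big[(\mu^{\f 12}g)_*\,\rd_{v_j}\mu - \mu\,\rd_{v_{*j}}(\mu^{\f 12}g)_*\Big]\ud v_*.$$
We will use three facts. First, $\rd_{v_j}\mu = -2\sqrt{\alp+\bt t^2}\,z_j\,\mu$, which follows from $\mu = e^{-\f{\alp\bt|x|^2}{\alp+\bt t^2}}e^{-|z|^2}$ as in Lemma~\ref{lem:A}. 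Second, $\phi_{ij}(y)y_j=0$. Since $z - z_* = \sqrt{\alp+\bt t^2}\,(v-v_*)$, this lets us replace $z_j$ by $z_{*j}$ whenever it is contracted with $\phi_{ij}(v-v_*)$. Third, for $\gamma=-3$ we have $b_i(y) := \rd_{y_j}\phi_{ij}(y) = -2y_i|y|^{-3}$, and hence $\rd_{y_i} b_i = -8\pi\delta_0$ in the sense of distributions.

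The steps are as follows.

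\begin{enumerate}
\item Integrate by parts in $v_*$ in the second term. Since $\rd_{v_{*j}}[\phi_{ij}(v-v_*)] = -b_i(v-v_*)$, this term becomes $-\mu\int b_i(v-v_*)(\mu^{\f 12}g)_*\ud v_*$.

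\item In the first term, insert the formula for $\rd_{v_j}\mu$ and replace $z_j$ by $z_{*j}$. This gives
$$Kg = \mu^{-\f 12}\rd_{v_i}\Big\{\mu\int\Big[-2\sqrt{\alp+\bt t^2}\,\phi_{ij}(v-v_*)z_{*j} - b_i(v-v_*)\Big](\mu^{\f 12}g)_*\ud v_*\Big\}.$$

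\item Distribute the outer $\rd_{v_i}$ over three factors.
\begin{itemize}
\item When it hits $\mu$, it produces $-2\sqrt{\alp+\bt t^2}\,z_i\,\mu$. Combined with the $\phi$-term, and using the second fact again to turn $z_i$ into $z_{*i}$, this yields $4(\alp+\bt t^2)\,\phi_{ij}z_{*i}z_{*j}$.
\item When it hits $\phi_{ij}(v-v_*)$, it produces $b_j(v-v_*)$.
\item When it hits $b_i(v-v_*)$, it produces the delta mass. After multiplying by $\mu^{-\f 12}\mu(\mu^{\f 12}g)$, this gives exactly $8\pi\mu g$.
\end{itemize}

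\item Collect the remaining cross terms. They are $-2\sqrt{\alp+\bt t^2}\,b_j(v-v_*)z_{*j}$ from the $\phi$-derivative and $+2\sqrt{\alp+\bt t^2}\,z_i\,b_i(v-v_*)$ from $\rd_i\mu$ acting on the $b$-term. Together they give
$$2\sqrt{\alp+\bt t^2}\,b_i(v-v_*)(z-z_*)_i = 2(\alp+\bt t^2)\,b_i(v-v_*)(v-v_*)_i = -4(\alp+\bt t^2)|v-v_*|^{-1}.$$
Pulling out $\mu^{-\f 12}\mu = \mu^{\f 12}$ produces \eqref{eq:K.representation}.
\end{enumerate}

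The main obstacle is justifying the distributional identity $\rd_i b_i = -8\pi\delta$ against the singular kernel, together with the integration by parts near $v_*=v$. We handle this by first taking $g$ Schwartz. We then excise a ball $|v-v_*|<\ep$, use that $\phi_{ij}=O(|y|^{-1})$ and $b=O(|y|^{-2})$ are locally integrable, and let $\ep\to0$. The boundary flux of $b$ through the sphere gives the $8\pi$. The remaining work is routine bookkeeping of signs.
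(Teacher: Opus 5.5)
Your computation is correct and is essentially the paper's proof. Both expand $\mu^{-\f12}Q(\mu^{\f12}g,\mu)$ directly, using $\phi_{ij}(y)y_j=0$ to trade $z$ for $z_*$, the distributional identity $\rd_{y_i}\rd_{y_j}\phi_{ij}=-8\pi\delta$, and $y_j\rd_{y_i}\phi_{ij}(y)=-2|y|^{-1}$ to produce the $|v-v_*|^{-1}$ term; the only difference is cosmetic ordering, since you integrate by parts in $v_*$ before distributing the outer $\rd_{v_i}$, whereas the paper first collapses the expression to $-\mu^{-\f12}\rd_{v_i}\{\mu\,\phi_{ij}*(\mu^{\f12}[\rd_{v_j}g+\sqrt{\alp+\bt t^2}\,z_jg])\}$ and integrates by parts afterwards.
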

\begin{proof}
Recalling from \eqref{eq:L} that $Kg = \mu^{-\f 12} Q(\mu^{\f 12} g,\mu)$, and proceeding as in for $A g$, we get
\begin{equation}\label{eq:K}
    \begin{split}
        K g&= - e^{-\f{\alpha\beta |x|^2}{\alpha+\beta t^2}}e^{|z|^2/2}\rd_{v_i}\left\{2\sqrt{\alpha+\beta t^2}z_j e^{-|z|^2} \left[\phi_{ij}*(e^{-|z|^2/2}g)\right]\right\}\\
&\qquad - e^{-\f{\alpha\beta |x|^2}{\alpha+\beta t^2}}e^{|z|^2/2}\rd_{v_i}\left\{  e^{-|z|^2}[\phi_{ij}*(e^{-|z|^2/2}[\rd_{v_j}g-\sqrt{\alpha+\beta t^2}z_j g])] \right\} \\
&=- e^{-\f{\alpha\beta |x|^2}{\alpha+\beta t^2}}e^{|z|^2/2}\rd_{v_i}\left\{ e^{-|z|^2} [\phi_{ij}*(e^{-|z|^2/2}[\rd_{v_j}g+\sqrt{\alpha+\beta t^2}z_j g])] \right\} \\
&=- \mu^{-\f 12}\rd_{v_i}\left\{ \mu \left[\phi_{ij}*(\mu^{\f 12}[\rd_{v_j}g+\sqrt{\alpha+\beta t^2}z_j g])\right]\right\},
    \end{split}
\end{equation}
where we have used $ (v-v_*)_j \phi_{ij}(v- v_*) =0$.
	Expanding the $\rd_{v_i}$ derivative, we thus obtain
\begin{equation}
\begin{split}
 Kg=&\: - \mu^{\f 12}\int_{\R^3}\rd_{v_i} \phi_{ij}(v-v_*)\mu^{\f 12}(v_*) [\rd_{v_{*j}}{g}_*+\sqrt{\alp+\bt t^2} z_{*j}{g}_*]\ud v_*\\
&\:+ 2 \sqrt{\alp + \bt t^2} z_i \mu^{\f 12} \int_{\R^3}\phi_{ij}(v-v_*)\mu^{\f 12}(v_*)[\rd_{v_{*j}}{g}_*+\sqrt{\alp+\bt t^2} z_{*j}{g}_*]\ud v_*.
\end{split}
\end{equation}
Integrating by parts away the $\rd_{v_{*j}}$, and noting that $\rd_{y_i} \rd_{y_j} \phi_{ij}(y) = -8\pi \de$ as distributions (where $\de$ is the Dirac delta), we obtain
\begin{equation}
\begin{split}
 Kg=&\: 8\pi  \mu g +  \mu^{\f 12}\int_{\R^3}\rd_{v_i} \phi_{ij}(v-v_*)\rd_{v_{*j}} \mu^{\f 12}_* {g}_* \ud v_*\\
&\:- \mu^{\f 12}\int_{\R^3}\rd_{v_i} \phi_{ij}(v-v_*)\mu^{\f 12}_* \sqrt{\alp + \bt t^2} z_{*j} {g}_*\ud v_*\\
&\: - 2 \sqrt{\alp + \bt t^2} z_i \mu^{\f 12} \int_{\R^3}\rd_{v_{*j}} \phi_{ij}(v-v_*)\mu^{\f 12}_* {g}_* \ud v_*\\
&\: - 2 \sqrt{\alp + \bt t^2} z_i \mu^{\f 12} \int_{\R^3}\phi_{ij}(v-v_*)(\rd_{v_{*j}} \mu^{\f 12}_*) {g}_* \ud v_*\\
&\:+  2 \sqrt{\alp + \bt t^2} z_i \mu^{\f 12} \int_{\R^3}\phi_{ij}(v-v_*)\mu^{\f 12}_* \sqrt{\alp + \bt t^2} z_{*j} {g}_*\ud v_* \\
=&\: 8\pi  \mu g +2  \sqrt{\alp + \bt t^2} \mu^{\f 12}\int_{\R^3}\rd_{v_i} \phi_{ij}(v-v_*) (z_j - z_{*j}) \mu^{\f 12}_* {g}_* \ud v_* \\
&\: + 4 (\alp + \bt t^2)  \mu^{\f 12} \int_{\R^3}\phi_{ij}(v-v_*)z_{*i}z_{*j} \mu^{\f 12}_* {g}_* \ud v_*.
\end{split}
\end{equation}
Observe that $y_j \rd_{y_i} \phi_{ij}(y) = -\f 2{|y|}$. Hence,
$$\rd_{v_i} \phi_{ij}(v-v_*) (z_j - z_{*j}) = \sqrt{\alp + \bt t^2} \rd_{v_i} \phi_{ij}(v-v_*) (v_j - v_{*j}) = -\f{2\sqrt{\alp + \bt t^2} }{|v-v_*|},$$
which gives the conclusion. \qedhere
\end{proof}

\subsection{Auxiliary results}
We first prove an auxiliary result.
\begin{lemma}\label{lem:int-exp}
Let $\zeta \geq 0$, $a > 0$. Let $h:\mathbb R^3\to [0,\infty)$ be an $L^1_{\mathrm{loc}}$ function such that $h(y) \ls |y|^{-\zeta}$ for $|y|\geq 1$. Then, the following holds for all $c \in \bbR^3$, 
$$ 
\int_{\R^3} h(y) e^{-a|y-c|^2} \ud y \ls \jap{c}^{-\zeta},$$
where the implicit constant may depend on $h$ and $a$ but is independent of $c$. In particular, for $\zeta \in [0,3)$,
$$\int_{\R^3} \f{e^{-a|y-c|^2}}{|y|^\zeta}\ud y \ls \jap{c}^{-\zeta}.$$
\end{lemma}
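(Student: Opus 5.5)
The plan is to split into the two regimes $|c|\leq 2$ and $|c|\geq 2$.

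\emph{Case $|c|\leq 2$.} Here $\jap{c}^{-\zeta}$ is bounded below by a positive constant, so it suffices to show that the integral is bounded uniformly in $c$. Split $\bbR^3$ into $\{|y|\leq 1\}$ and $\{|y|>1\}$.
On $\{|y|\leq 1\}$ we use $e^{-a|y-c|^2}\leq 1$ and $h\in L^1_{\mathrm{loc}}$, so this piece is bounded by $\int_{|y|\leq 1} h$.
On $\{|y|>1\}$ we use $h\ls |y|^{-\zeta}\leq 1$ (since $\zeta\geq 0$), so this piece is bounded by $\int e^{-a|y-c|^2}\ud y = (\pi/a)^{3/2}$.

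\emph{Case $|c|\geq 2$.} Split $\bbR^3$ into $\{|y|\geq |c|/2\}$ and $\{|y|<|c|/2\}$.
On $\{|y|\geq |c|/2\}$ we have $|y|\geq 1$, so $h(y)\ls |y|^{-\zeta}\ls |c|^{-\zeta}$. Integrating the Gaussian then gives a contribution $\ls |c|^{-\zeta}\sim\jap{c}^{-\zeta}$.
On $\{|y|<|c|/2\}$ we have $|y-c|\geq |c|/2$, so $e^{-a|y-c|^2}\leq e^{-a|c|^2/4}$. It remains to bound $\int_{|y|<|c|/2} h$, split as follows.
\begin{itemize}
\item The part over $\{|y|\leq 1\}$ is bounded by $\int_{|y|\leq 1} h<\infty$.
\item The part over $\{1<|y|<|c|/2\}$ is at most $\int_{1<|y|<|c|/2} |y|^{-\zeta}\ud y\ls \jap{c}^{3}$, again since $\zeta\geq 0$.
\end{itemize}
So this region contributes $\ls \jap{c}^3 e^{-a|c|^2/4}\ls \jap{c}^{-\zeta}$, because Gaussian decay beats any polynomial.

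\emph{The ``in particular'' statement.} Take $h(y)=|y|^{-\zeta}$. For $\zeta\in[0,3)$ this is locally integrable near the origin, and it satisfies the decay hypothesis with equality, so the first part applies.

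No step is a real obstacle. The only point needing care is that local integrability is used exactly in the near-origin region, and the constant depends on $h$ only through $\int_{|y|\leq1}h$ and the constant in the decay bound.
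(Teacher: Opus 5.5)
Your proof is correct and follows essentially the same route as the paper. You bound the integral uniformly for bounded $|c|$ using local integrability plus a Gaussian integral; for large $|c|$ you split into $\{|y|\le |c|/2\}$, where $e^{-a|c|^2/4}$ beats the $O(|c|^3)$ integral of $h$, and $\{|y|>|c|/2\}$, where $h\lesssim |c|^{-\zeta}$. The differences are only cosmetic: a threshold of $2$ instead of $10$, using $e^{-a|y-c|^2}\le 1$ near the origin instead of the paper's shifted-Gaussian bound, and spelling out the ``in particular'' case, which the paper leaves implicit.
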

\begin{proof}
If $|c|\leq 10$, we have $e^{-a|y-c|^2} \leq e^{-\f a2|y|^2+3a|c|^2}$ so that $h(y)e^{-a|y-c|^2} \leq \begin{cases} e^{3a|c|^2} h(y) & \hbox{if $|y|\leq 1$} \\ e^{-\f a2|y|^2+3a|c|^2} & \hbox{if $|y| \geq 1$} \end{cases}$, which clearly implies the lemma. Thus, we focus on the case $|c|\geq 10$.
\begin{align*}
\int_{\R^3} h(y) e^{-a|y-c|^2} \ud y&= \int_{|y|\leq |c|/2} h(y) e^{-a|y-c|^2} \ud y+ \int_{|y| > |c|/2} h(y) e^{-a|y-c|^2} \ud y\\
&\lesssim e^{-\f{a|c|^2}{10}}\int_{|y|\leq |c|/2} h(y) \ud y +|c|^{-\zeta}\int_{|y|> |c|/2} e^{-a|y-c|^2}\ud y\\
&\lesssim e^{-\f{a|c|^2}{10}}|c|^{3}+ |c|^{-\zeta} \lesssim \jap{c}^{-\zeta}.
\end{align*}

\end{proof}

\begin{proposition}\label{prop:aux-integral}
For any $\mathfrak{a}, \mathfrak{b}>0$, we have
\begin{align}
\int_{\R^3}e^{-\mathfrak{a}|v_*|^2}e^{-\mathfrak{b}|x-tv_*|^2}\d v_* =&\:  \pi^{3/2} (\mathfrak{a}+\mathfrak{b}t^2)^{-3/2} e^{-\f{\mathfrak{ab} |x|^2}{\mathfrak{a}+\mathfrak{b} t^2}}, \label{eq:int-no-phi} \\
\int_{\R^3}|v-v_*|^{-1} e^{-\mathfrak{a}|v_*|^2}e^{-\mathfrak{b}|x-tv_*|^2}\ud v_*\lesssim &\: e^{-\f{\mathfrak{ab} |x|^2}{\mathfrak{a}+\mathfrak{b} t^2}}t^{-2}\jap{\sqrt{\mathfrak{a}+\mathfrak{b}t^2}v-\f{x\mathfrak{b}t}{\sqrt{\mathfrak{a}+\mathfrak{b}t^2}}}^{-1}, \label{eq:int-phi} 
\end{align}
where the implicit constants depend only on $\mathfrak{a}, \mathfrak{b}$. Moreover, the same estimates hold even if there are polynomial factors of $\sqrt{\mathfrak{a}+\mathfrak{b}t^2}v_*-\f{x\mathfrak{b}t}{\sqrt{\mathfrak{a}+\mathfrak{b}t^2}}$ in the integral (with implicit constants also depending on the polynomial).
\end{proposition}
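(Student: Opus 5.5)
The plan is to complete the square in $v_*$, exactly as was done for $\mu$ in the proof of Lemma~\ref{lem:A}. Writing
$$\mathfrak{a}|v_*|^2+\mathfrak{b}|x-tv_*|^2 = \Big|\sqrt{\mathfrak{a}+\mathfrak{b}t^2}\,v_*-\f{x\mathfrak{b}t}{\sqrt{\mathfrak{a}+\mathfrak{b}t^2}}\Big|^2 + \f{\mathfrak{ab}|x|^2}{\mathfrak{a}+\mathfrak{b}t^2},$$
and denoting by $\tilde z_*$ the first vector on the right, the exponential factors as $e^{-\f{\mathfrak{ab}|x|^2}{\mathfrak{a}+\mathfrak{b}t^2}}e^{-|\tilde z_*|^2}$. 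The $x$-dependent factor is independent of $v_*$ and can be pulled out of every integral. Then \eqref{eq:int-no-phi} follows by the change of variables $u=\tilde z_*$, with $\ud v_* = (\mathfrak{a}+\mathfrak{b}t^2)^{-3/2}\ud u$, together with the Gaussian integral $\int e^{-|u|^2}\ud u=\pi^{3/2}$.

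For \eqref{eq:int-phi}, I use the same substitution. Since $u-\tilde z = \sqrt{\mathfrak{a}+\mathfrak{b}t^2}\,(v_*-v)$, where $\tilde z$ is the analogous expression in $v$, we get
$$|v-v_*|^{-1} = \sqrt{\mathfrak{a}+\mathfrak{b}t^2}\,|u-\tilde z|^{-1}.$$
The integral therefore becomes
$$e^{-\f{\mathfrak{ab}|x|^2}{\mathfrak{a}+\mathfrak{b}t^2}}(\mathfrak{a}+\mathfrak{b}t^2)^{-1}\int_{\R^3}|u-\tilde z|^{-1}e^{-|u|^2}\ud u.$$
Applying Lemma~\ref{lem:int-exp} with $y=u-\tilde z$, $c=-\tilde z$, $\zeta=1$ and $a=1$ bounds the last integral by $\jap{\tilde z}^{-1}$. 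Since $t\geq 1$, we have $(\mathfrak{a}+\mathfrak{b}t^2)^{-1}\ls t^{-2}$ with constants depending on $\mathfrak{a},\mathfrak{b}$, which gives \eqref{eq:int-phi}.

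For the polynomial factors, after the substitution the extra factor is a polynomial $p(u)$. For \eqref{eq:int-no-phi}, $\int |p(u)|e^{-|u|^2}\ud u$ is a finite constant, so the statement holds as an upper bound. For \eqref{eq:int-phi}, I bound $|p(u)|e^{-|u|^2}\ls_p e^{-|u|^2/2}$ and apply Lemma~\ref{lem:int-exp} with $a=1/2$.

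I expect no step to be a real obstacle. The only points needing care are bookkeeping the Jacobian power $(\mathfrak{a}+\mathfrak{b}t^2)^{-3/2}$ against the factor $\sqrt{\mathfrak{a}+\mathfrak{b}t^2}$ coming from $|v-v_*|^{-1}$, and noting that the identity \eqref{eq:int-no-phi} becomes only an estimate once polynomial weights are present.
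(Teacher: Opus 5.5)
Your proposal is correct and matches the paper's proof: complete the square, change variables, and apply Lemma~\ref{lem:int-exp} with $\zeta=1$ and $c=-\tilde z$, absorbing the polynomial weights into a Gaussian with halved exponent. The only differences are cosmetic: you substitute $u=\tilde z_*$ where the paper uses $y=\sqrt{\mathfrak{a}+\mathfrak{b}t^2}\,(v_*-v)$, and these differ only by a translation; also $(\mathfrak{a}+\mathfrak{b}t^2)^{-1}\le \mathfrak{b}^{-1}t^{-2}$ holds for every $t>0$, so you do not need $t\geq 1$.
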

\begin{proof}
We first note that 
$$e^{-\mathfrak{a}|v_*|^2}e^{-\mathfrak{b}|x-tv_*|^2}=e^{-\f{\mathfrak{ab}|x|^2}{\mathfrak{a}+\mathfrak{b}t^2}}e^{-\left|\sqrt{\mathfrak{a}+\mathfrak{b}t^2}v_*-\f{x\mathfrak{b}t}{\sqrt{\mathfrak{a}+\mathfrak{b}t^2}}\right|^2}.$$

To prove \eqref{eq:int-no-phi} we use the change of coordinates $y=\sqrt{\mathfrak{a}+\mathfrak{b}t^2}v_*-\f{x\mathfrak{b}t}{\sqrt{\mathfrak{a}+\mathfrak{b}t^2}}$
\begin{equation}\label{eq:int-no-phi.1}
\begin{split}
e^{-\f{\mathfrak{ab}|x|^2}{\mathfrak{a}+\mathfrak{b}t^2}}\int_{\R^3}e^{-\left|\sqrt{\mathfrak{a}+\mathfrak{b}t^2}v_*-\f{x\mathfrak{b}t}{\sqrt{\mathfrak{a}+\mathfrak{b}t^2}}\right|^2}\ud v_*&=(\mathfrak{a}+\mathfrak{b}t^2)^{-3/2}e^{-\f{\mathfrak{ab}|x|^2}{\mathfrak{a}+\mathfrak{b}t^2}}\int_{\R^3}e^{-|y|^2}\ud y\\
&=\pi^{3/2}(\mathfrak{a}+\mathfrak{b}t^2)^{-3/2}e^{-\f{\mathfrak{ab}|x|^2}{\mathfrak{a}+\mathfrak{b}t^2}}.
\end{split}
\end{equation}

For \eqref{eq:int-phi}, we have 
\begin{equation}
\begin{split}
\int_{\R^3}& |v-v_*|^{-1} e^{-\mathfrak{a}|v_*|^2}e^{-\mathfrak{b}|x-tv_*|^2}\d v_*\\
&= e^{-\f{\mathfrak{ab}|x|^2}{\mathfrak{a}+\mathfrak{b}t^2}}\int_{\R^3}e^{-\left|\sqrt{\mathfrak{a}+\mathfrak{b}t^2}v_*-\f{x\mathfrak{b}t}{\sqrt{\mathfrak{a}+\mathfrak{b}t^2}}\right|^2}\cdot\f{1}{|v-v_*|}\ud v_*\\
&=e^{-\f{\mathfrak{ab}|x|^2}{\mathfrak{a}+\mathfrak{b}t^2}}\int_{\R^3}e^{-\left|\sqrt{\mathfrak{a}+\mathfrak{b}t^2}(v_*-v)+\sqrt{\mathfrak{a}+\mathfrak{b}t^2}v-\f{x\mathfrak{b}t}{\sqrt{\mathfrak{a}+\mathfrak{b}t^2}}\right|^2}\cdot\f{1}{|v-v_*|}\ud v_*.
\end{split}
\end{equation}
Now we use the change of coordinates $y=(v_*-v)\sqrt{\mathfrak{a}+\mathfrak{b}t^2}$ and use Lemma~\ref{lem:int-exp} to get \eqref{eq:int-phi} as follows
\begin{equation}\label{eq:int-phi.1}
\begin{split}
e^{-\f{\mathfrak{ab}|x|^2}{\mathfrak{a}+\mathfrak{b}t^2}}&\int_{\R^3}e^{-\left|\sqrt{\mathfrak{a}+\mathfrak{b}t^2}(v_*-v)+\sqrt{\mathfrak{a}+\mathfrak{b}t^2}v-\f{x\mathfrak{b}t}{\sqrt{\mathfrak{a}+\mathfrak{b}t^2}}\right|^2}\cdot\f{1}{|v-v_*|}\ud v_*\\
&=(\mathfrak{a}+\mathfrak{b}t^2)^{-1}e^{-\f{\mathfrak{ab}|x|^2}{\mathfrak{a}+\mathfrak{b}t^2}}\int_{\R^3}e^{-\left|y+\sqrt{\mathfrak{a}+\mathfrak{b}t^2}v-\f{x\mathfrak{b}t}{\sqrt{\mathfrak{a}+\mathfrak{b}t^2}}\right|^2}\cdot\f{1}{|y|}\ud y\\
&\lesssim (\mathfrak{a}+\mathfrak{b}t^2)^{-1}e^{-\f{\mathfrak{ab}|x|^2}{\mathfrak{a}+\mathfrak{b}t^2}}\jap{\f{xt\mathfrak{b}}{\sqrt{\mathfrak{a}+\mathfrak{b}t^2}}-\sqrt{\mathfrak{a}+\mathfrak{b}t^2}v}^{-1}.
\end{split}
\end{equation}

Finally, suppose there are additional polynomial factors of $\mathfrak z_* :=\sqrt{\mathfrak{a}+\mathfrak{b}t^2}v_*-\f{x\mathfrak{b}t}{\sqrt{\mathfrak{a}+\mathfrak{b}t^2}}$, say, we consider $\int_{\R^3}e^{-\mathfrak{a}|v_*|^2}e^{-\mathfrak{b}|x-tv_*|^2} |\mathfrak z_*|^k \d v_*$ or 
$\int_{\R^3}|v-v_*|^{-1} e^{-\mathfrak{a}|v_*|^2}e^{-\mathfrak{b}|x-tv_*|^2} |\mathfrak z_*|^k \d v_*$ instead of \eqref{eq:int-no-phi}, \eqref{eq:int-phi}. Then in \eqref{eq:int-no-phi.1} and \eqref{eq:int-phi.1}, we will instead have integrals
$$\int_{\R^3}e^{-|y|^2} |y|^k \ud y,\quad \int_{\R^3}e^{-\left|y+\sqrt{\mathfrak{a}+\mathfrak{b}t^2}v-\f{x\mathfrak{b}t}{\sqrt{\mathfrak{a}+\mathfrak{b}t^2}}\right|^2} \Big| y+\sqrt{\mathfrak{a}+\mathfrak{b}t^2}v-\f{x\mathfrak{b}t}{\sqrt{\mathfrak{a}+\mathfrak{b}t^2}}\Big|^k \cdot\f{1}{|y|}\ud y,$$
which satisfy the same bound as before and we can dominate them up to a constant by 
$$\int_{\R^3}e^{-\f 12|y|^2} \ud y,\quad \int_{\R^3}e^{-\f 12\left|y+\sqrt{\mathfrak{a}+\mathfrak{b}t^2}v-\f{x\mathfrak{b}t}{\sqrt{\mathfrak{a}+\mathfrak{b}t^2}}\right|^2}  \cdot\f{1}{|y|}\ud y.$$
\qedhere
\end{proof}

\subsection{Preliminaries about weight functions}

\begin{lemma}\label{lem:z.x-tv.com}
The following estimates hold:
\begin{equation}
	\jap{z} \ls  \jap{x-tv} + \jap{\f x{t^2}},\quad  \jap{x-tv} \ls \jap{z} + \jap{\f x{t^2}}.
\end{equation}
In particular,
\begin{equation}\label{eq:z.x-tv.com.restricted}
	\jap{z} \ls  \jap{x-tv} \ls \jap{z} \quad \hbox{whenever $|x| \leq t^2$}.
\end{equation}
\end{lemma}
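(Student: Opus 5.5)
The plan is to write $z$ as a rescaled version of $x-tv$ plus a small correction, using only the triangle inequality. Set $\lambda := \sqrt{\alp+\bt t^2}$, so that
\begin{equation*}
z = \lambda v - \f{x\bt t}{\lambda}.
\end{equation*}
The observation is that $z$ is close to a multiple of $-(x-tv)$. Since $t\ge 1$, we have $\lambda \approx t$ with constants depending only on $\alp,\bt$.

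The key algebraic step is the identity
\begin{equation*}
z = -\f{\lambda}{t}(x-tv) + x\Big(\f{\lambda}{t} - \f{\bt t}{\lambda}\Big),
\qquad
\f{\lambda}{t} - \f{\bt t}{\lambda} = \f{\lambda^2 - \bt t^2}{t\lambda} = \f{\alp}{t\lambda}.
\end{equation*}
It can be checked directly by expanding $-\f{\lambda}{t}(x - tv) = \lambda v - \f{\lambda}{t}x$. Consequently
\begin{equation*}
z = -\f{\lambda}{t}(x-tv) + \f{\alp x}{t\lambda}.
\end{equation*}

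We now bound each term. Because $\lambda/t \in [\sqrt{\bt}, \sqrt{\alp+\bt}]$ for $t\ge1$, the first term has size comparable to $|x-tv|$. Because $t\lambda \gtrsim t^2$, the second term is bounded by $\ls |x|/t^2$.

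The triangle inequality then gives $|z| \ls |x-tv| + |x/t^2|$. Hence $\jap{z} \ls \jap{x-tv} + \jap{x/t^2}$.

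Conversely, solving the identity for $x-tv$ gives
\begin{equation*}
x-tv = -\f{t}{\lambda}\Big(z - \f{\alp x}{t\lambda}\Big).
\end{equation*}
Here $t/\lambda \ls 1$, so $|x-tv| \ls |z| + |x|/t^2$, which yields the second estimate.

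For \eqref{eq:z.x-tv.com.restricted}, note that $|x|\le t^2$ gives $\jap{x/t^2} \le \sqrt2 \le \sqrt2\,\jap{y}$ for any $y$. The $\jap{x/t^2}$ term is therefore absorbed into the other side in each inequality.

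There is no real obstacle. The only point requiring care is finding the exact identity for $z$, in particular checking that the coefficient of $x$ is $\alp/(t\lambda) = O(t^{-2})$ rather than $O(1)$. That is where the cancellation $\lambda^2 - \bt t^2 = \alp$ enters.
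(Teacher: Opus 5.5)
Your proof is correct and matches the paper's argument: your identity $z = -\frac{\lambda}{t}(x-tv) + \frac{\alpha x}{t\lambda}$ is the paper's identity $\sqrt{\alpha+\beta t^2}\, z + (\frac{\alpha}{t} + \beta t)(x-tv) = \frac{\alpha x}{t}$ divided by $\lambda$, since $\frac{\alpha}{t}+\beta t = \frac{\lambda^2}{t}$. From there, both arguments conclude by the triangle inequality using $t \geq 1$.
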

\begin{proof}
	This is immediate from
	$\sqrt{\alpha+\beta t^2} z + (\f \alp t+ \bt t) (x-tv) = \f{\alp x}{t}.$ \qedhere
\end{proof}

\begin{lemma}\label{lem:weight.compare}
The following holds for any $\kay,\ell \in \mathbb Z_{\geq 0}$:
\begin{equation}
	\jap{z}^\ell w_{(0,\kay,1)} \ls w_{(1,\kay,0)}  \jap{\f{x}{t^2}}^\ell.
\end{equation}
In particular,
\begin{equation}
	\jap{z}^\ell w_{(0,\kay,1)} \ls w_{(1,\kay+\ell,0)}.
\end{equation}
\end{lemma}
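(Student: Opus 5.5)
Both weights share the factors $\jap{x/t}^{2\kay}$ and $\exp(Bt^{-0.1})$, so we only need to compare the Gaussian parts. After squaring and cancelling the common factors, the claim reduces to
\begin{equation*}
\jap{z}^{2\ell}\exp\Big(\frac{\alpha\bar{\bt}|x|^2}{\alpha+\bar{\bt}t^2}\Big)\ls \exp\big(\alpha|v|^2+\bar{\bt}|x-tv|^2\big)\jap{\tfrac{x}{t^2}}^{2\ell}.
\end{equation*}

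The key identity is the completion of the square used in Lemma~\ref{lem:A} and Proposition~\ref{prop:aux-integral}, now with $\bt$ replaced by $\bar{\bt}$:
\begin{equation*}
\alpha|v|^2+\bar{\bt}|x-tv|^2=\frac{\alpha\bar{\bt}|x|^2}{\alpha+\bar{\bt}t^2}+|\bar z|^2,\qquad \bar z:=\sqrt{\alpha+\bar{\bt}t^2}\,v-\frac{x\bar{\bt}t}{\sqrt{\alpha+\bar{\bt}t^2}}.
\end{equation*}
The exponential factors involving $|x|^2$ cancel exactly. It therefore suffices to show
\begin{equation*}
\jap{z}^{2\ell}\ls e^{|\bar z|^2}\jap{x/t^2}^{2\ell}.
\end{equation*}

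To prove this, I first compare $z$ with $\bar z$ through $x-tv$, exactly as in Lemma~\ref{lem:z.x-tv.com}. The identity in its proof holds with any $\bt$, in particular with $\bar{\bt}$, since $\bar{\bt}(t)\in[\bt_0,2\bt_0]$ is uniformly bounded above and below. This gives
\begin{equation*}
\jap{z}\ls \jap{x-tv}+\jap{x/t^2},\qquad \jap{x-tv}\ls \jap{\bar z}+\jap{x/t^2},
\end{equation*}
and hence $\jap{z}\ls \jap{\bar z}\jap{x/t^2}$.

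Raising this to the power $2\ell$ and using $\jap{\bar z}^{2\ell}\ls_\ell e^{|\bar z|^2}$ proves the first inequality. The one point needing care is that the implicit constants in Lemma~\ref{lem:z.x-tv.com} must stay uniform when $\bt$ is replaced by the time-dependent $\bar{\bt}(t)$; this holds because $\bar{\bt}$ is bounded above and below. I expect no real obstacle here.

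For the ``in particular'' statement, I would show $\jap{x/t^2}\ls \jap{x/t}$, which holds because $t\geq 1$. Then
\begin{equation*}
\jap{x/t^2}^{\ell}\,w_{(1,\kay,0)}\ls w_{(1,\kay+\ell,0)},
\end{equation*}
since the extra factor is absorbed into $\jap{x/t}^{\kay+\ell}$. This completes the plan.
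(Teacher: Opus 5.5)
Your proof is correct and follows the paper's argument. In both, completing the square gives $w_{(0,\kay,1)} = w_{(1,\kay,0)}e^{-\frac12|\bar z|^2}$; one then shows $\jap{z}\lesssim \jap{\bar z}+\jap{x/t^2}$ and absorbs the polynomial factor into the Gaussian. The only cosmetic difference is that the paper compares $z$ with $\bar z$ directly, via $\big|z-\frac{\sqrt{\alpha+\beta t^2}}{\sqrt{\alpha+\bar{\beta} t^2}}\,\bar z\big|\lesssim |x|/t^2$, whereas you pass through $x-tv$ by applying Lemma~\ref{lem:z.x-tv.com} with both $\beta$ and $\bar{\beta}$; the constants stay uniform because $\bar{\beta}\geq \beta_0$.
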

\begin{proof}
First observe, by writing out the weight functions, that
\begin{equation}\label{eq:weight.compare.prelim}
	w_{(0,\kay,1)} = w_{(1,\kay,0)} e^{-\f 12 |\sqrt{\alp + \bar{\bt}t^2} v - \f{\bar{\bt} tx}{\sqrt{\alp + \bar{\bt} t^2}}|^2}.
\end{equation}
To put in extra polynomial powers of $\jap{z}^\ell$, observe that 
\begin{equation}
\begin{split}
	&\: \Big|z -  \f{\sqrt{\alp + \bt t^2}}{\sqrt{\alp + \bar{\bt}t^2}}(\sqrt{\alp + \bar{\bt}t^2} v - \f{\bar{\bt} tx}{\sqrt{\alp + \bar{\bt} t^2}})\Big| 
	=  \Big| \f{\bt tx}{\sqrt{\alp + \bt t^2}}- \f{\bar{\bt} \sqrt{\alp + \bt t^2} tx}{\alp + \bar{\bt} t^2}\Big| \\
	= &\: \f{\Big(\bt (\alp + \bar{\bt} t^2) - \bar{\bt} (\alp + \bt t^2) \Big) t x}{\sqrt{\alp + \bt t^2} (\alp + \bar{\bt} t^2) } = \f{(\bt - \bar{\bt}) \alp  t x}{\sqrt{\alp + \bt t^2} (\alp + \bar{\bt} t^2) } \ls \f{|x|}{t^2}.
\end{split}
\end{equation}
Thus 
$$\jap{z} \ls |\sqrt{\alp + \bar{\bt}t^2} v - \f{\bar{\bt} tx}{\sqrt{\alp + \bar{\bt} t^2}}| + \jap{\f{x}{t^2}}.$$
Now, polynomials in $|\sqrt{\alp + \bar{\bt}t^2} v - \f{\bar{\bt} tx}{\sqrt{\alp + \bar{\bt} t^2}}|$ can be dominated by the exponential factor in \eqref{eq:weight.compare.prelim}, and the result follows. \qedhere
\end{proof}

\subsection{Rescaling arguments}

\begin{proposition}\label{prop:normalize}
Let $L_1$ denote the linearized operator about the global Maxwellian $\mu_1 = e^{-|v|^2}$, i.e.,
$$- L_1g=\mu_1^{-1/2}Q(\mu_1, \mu^{1/2}_1 g)+\mu^{-1/2}_1 Q(\mu^{1/2}_1 g,\mu_1).$$
Then 
\begin{equation}
L g(t,x,v) = e^{-\f{\alp\bt|x|^2}{\alp + \bt t^2}} (L_1 \widetilde{g}) \Big(t,x,\sqrt{\alp+\bt t^2} (v- \f{\bt t x}{\alp + \bt t^2})\Big),
\end{equation}
where
\begin{equation}
\widetilde{g}(t,x,w) = g \Big(t,x,\f{w}{\sqrt{\alp+\bt t^2}} + \f{\bt t x}{\alp + \bt t^2} \Big).
\end{equation}
\end{proposition}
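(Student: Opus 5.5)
The plan is a change of variables in $v$ at fixed $(t,x)$, together with the homogeneity of the Coulomb kernel. Set $\lambda = \sqrt{\alp+\bt t^2}$ and $v_0 = \f{\bt t x}{\alp+\bt t^2}$, so that $z = \lambda(v - v_0)$ and $\mu = e^{-\f{\alp\bt|x|^2}{\alp+\bt t^2}} e^{-|z|^2}$ (as in the proof of Lemma~\ref{lem:A}). Write $C_x := e^{-\f{\alp\bt|x|^2}{\alp+\bt t^2}}$, which is independent of $v$. Then $\mu(v) = C_x\, \mu_1(\lambda(v-v_0))$ and $g(v) = \widetilde g(\lambda(v-v_0))$. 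Since $L$ acts only in $v$, the claim is a pointwise identity in $v$ for each fixed $(t,x)$.

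The key step is a scaling law for $Q$. Let $F(v) = F_1(\lambda(v-v_0))$ and $G(v) = G_1(\lambda(v-v_0))$. In \eqref{eq:Q}, substitute $w_* = \lambda(v_*-v_0)$, so that $\ud v_* = \lambda^{-3}\ud w_*$. For $\gamma=-3$ the kernel is homogeneous of degree $-1$, i.e.\ $\phi_{ij}(y/\lambda) = \lambda\,\phi_{ij}(y)$, which gives $\phi_{ij}(v-v_*) = \lambda\,\phi_{ij}(w-w_*)$ with $w = \lambda(v-v_0)$. Each of the two $v$-derivatives produces a factor $\lambda$. Collecting powers, $\lambda^{-3}\cdot\lambda\cdot\lambda^{2} = 1$, so
\[
Q(F,G)(v) = Q(F_1,G_1)\big(\lambda(v-v_0)\big).
\]
This uses the translation invariance of $Q$ in $v$ and its bilinearity. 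Checking this exponent count is the only real computation, and I expect it to be the main (though mild) point to get right.

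Next I apply the scaling law to the two terms of \eqref{eq:L}.

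For the first term, $\mu^{\f12} g = C_x^{\f12}\,(\mu_1^{\f12}\widetilde g)(\lambda(v-v_0))$. By bilinearity, $\mu^{-\f12}Q(\mu,\mu^{\f12}g)$ carries the constant factor $C_x\cdot C_x^{\f12}\cdot C_x^{-\f12} = C_x$, and by the scaling law it equals $C_x\,\mu_1^{-\f12}Q(\mu_1,\mu_1^{\f12}\widetilde g)$ evaluated at $\lambda(v-v_0)$.

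The second term, $\mu^{-\f12}Q(\mu^{\f12}g,\mu)$, works out identically with the same factor $C_x$.

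Summing the two terms gives $Lg(t,x,v) = C_x\,(L_1\widetilde g)(t,x,\lambda(v-v_0))$. Finally, $\lambda(v-v_0) = \sqrt{\alp+\bt t^2}\,(v - \f{\bt t x}{\alp+\bt t^2})$, and inverting the change of variables recovers exactly the stated formula for $\widetilde g$. This proves the proposition.
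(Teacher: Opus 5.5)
Your proposal is correct and follows the paper's own argument. The paper factors $\mu = e^{-\f{\alp\bt|x|^2}{\alp+\bt t^2}}\,e^{-|z|^2}$ and says only that the rest is a straightforward change of variables; you carry that out explicitly, using the degree $-1$ homogeneity of $\phi_{ij}$ at $\gamma=-3$ to show the powers of $\sqrt{\alp+\bt t^2}$ cancel, and checking that each of the two terms of $L$ picks up exactly one factor of $e^{-\f{\alp\bt|x|^2}{\alp+\bt t^2}}$.
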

\begin{proof}
Write
$$\mu = e^{-\f{\alp\bt|x|^2}{\alp + \bt t^2}} e^{-\left|\sqrt{\alp + \bt t^2}v-\f{\bt t x}{\sqrt{\alp + \bt t^2}}\right|^2}$$
and then carry out a straightforward change of variables. \qedhere
\end{proof}

Using a similar rescaling argument, we also obtain a precise description of $\sigma_{ij}$:
\begin{lemma}\label{lem:sigma-bounds}
The functions $\sigma_{ij}$ defined as in \eqref{eq:sigma} are smooth and satisfy
\begin{equation}\label{eq:sigma.decomposition}
\sigma_{ij}=\f{e^{-\f{\alpha \beta |x|^2}{\alpha+\beta t^2}}}{\alp + \bt t^{2}}\left[\lambda_1(t,x,v)\f{z_iz_j}{|z|^2}+\lambda_2(t,x,v)\left(\delta_{ij}-\f{z_iz_j}{|z|^2}\right)\right]
\end{equation}
where $\lambda_1(t,x,v) = \lambda^1_1(z),\,\lambda_2(t,x,v) = \lambda^1_2(z) >0$ with $\lambda^1_1(v)$, $\lambda^1_2(v)$ given by 
\begin{align}
\lambda^1_1(v) = &\: \int_{\R^3} |u|^{-1} \Big( 1 - \f{u_1^2}{|u|^2} \Big) e^{-((u_1-|v|)^2+ u_2^2+ u_3^2)}  \, \ud u, \label{eq:sigma.decomposition.DL.1} \\
\lambda^1_2(v) = &\: \int_{\R^3} |u|^{-1} \Big( 1 - \f{u_2^2+u_3^2}{2|u|^2} \Big) e^{-((u_1-|v|)^2+ u_2^2+ u_3^2)}  \, \ud u. \label{eq:sigma.decomposition.DL.2}
\end{align}
In particular, $\lambda_1$, $\lambda_2$ are functions of $z$ alone and there exist $c_1, c_2 >0$ such that as $|z|\to \infty$
$$\jap{z}^{3} \lambda_1\to c_{1} ,\qquad \jap{z} \lambda_2\to c_{2} .$$
In particular,
\begin{align}
    \sigma_{ij}g_ig_j=&\:\f{e^{-\f{\alpha \beta |x|^2}{\alpha+\beta t^2}}}{\alp + \bt t^{2}}[\lambda_1(t,x,v) |P_{z}g|^2+\lambda_2(t,x,v) | (I-P_{z})g|^2], \\
    \sigma_{ij} z_i z_j = &\: \f{e^{-\f{\alpha \beta |x|^2}{\alpha+\beta t^2}}}{\alp + \bt t^{2}}\lambda_1(t,x,v)|z|^2.
\end{align}
Moreover, the functions $\sigma_i$ defined as in \eqref{eq:sigma} are smooth and satisfy
\begin{equation}\label{eq:sigma.bound}
    |\rd_{v}^\gamma \sigma_i| \ls e^{-\f{\alpha \beta |x|^2}{\alpha+\beta t^2}}t^{-2+|\gamma|} \jap{z}^{-1-|\gamma|}.
\end{equation}

\end{lemma}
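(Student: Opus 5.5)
The plan is a change of variables in the convolution $\sigma_{ij}=\phi_{ij}*\mu$ that reduces everything to the classical Landau matrix $\phi*e^{-|\cdot|^2}$ about the standard Gaussian. By the factorization used in Lemma~\ref{lem:A},
$$\mu(v_*)=e^{-\f{\alp\bt|x|^2}{\alp+\bt t^2}}e^{-|z_*|^2},\qquad z_*=\sqrt{\alp+\bt t^2}\,v_*-\f{x\bt t}{\sqrt{\alp+\bt t^2}}.$$
We substitute $v_*\mapsto z_*$, so that $\ud v_*=(\alp+\bt t^2)^{-3/2}\ud z_*$ and $v-v_*=(z-z_*)/\sqrt{\alp+\bt t^2}$.

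Since $\gamma=-3$, the kernel satisfies $\phi_{ij}(\lambda y)=\lambda^{-1}\phi_{ij}(y)$. Hence $\phi_{ij}(v-v_*)=\sqrt{\alp+\bt t^2}\,\phi_{ij}(z-z_*)$, and
$$\sigma_{ij}=\f{e^{-\f{\alp\bt|x|^2}{\alp+\bt t^2}}}{\alp+\bt t^2}\,\Sigma_{ij}(z),\qquad \Sigma_{ij}(w):=\int_{\R^3}\phi_{ij}(w-u)e^{-|u|^2}\ud u .$$
This produces the prefactor in \eqref{eq:sigma.decomposition}. Smoothness of $\Sigma$ follows by writing it as $\int\phi_{ij}(u)e^{-|w-u|^2}\ud u$ and differentiating the Gaussian, since $\phi\in L^1_{\mathrm{loc}}$ has at most linear growth.

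Next we use rotation invariance: $\Sigma(Rw)=R\Sigma(w)R^T$. Taking $R$ to fix $w$, $\Sigma(w)$ commutes with every rotation about the $w$-axis. So it has the form $\lambda_1 \f{w\otimes w}{|w|^2}+\lambda_2(I-\f{w\otimes w}{|w|^2})$, with $\lambda_i$ depending only on $|w|$. Rotating $w$ to $|w|e_1$, and using $\phi$ even and the substitution $u\to$ shifted variable, gives $\lambda_1=\Sigma_{11}(|w|e_1)$ and $\lambda_2=\tfrac12(\Sigma_{22}+\Sigma_{33})(|w|e_1)$. Since $\phi_{ij}(u)=|u|^{-1}(\delta_{ij}-u_iu_j/|u|^2)$, these are exactly \eqref{eq:sigma.decomposition.DL.1} and \eqref{eq:sigma.decomposition.DL.2}. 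Positivity is clear because the integrands are nonnegative and not identically zero. The identities for $\sigma_{ij}g_ig_j$ and $\sigma_{ij}z_iz_j$ follow immediately.

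For the asymptotics, apply Lemma~\ref{lem:int-exp} heuristically: the Gaussian concentrates at $u\approx |v|e_1$, where $|u|^{-1}\approx |v|^{-1}$ and the angular factor of $\lambda_2$ is $\approx 1$. This gives $\jap{z}\lambda_2\to\pi^{3/2}$. For $\lambda_1$, the factor $1-u_1^2/|u|^2=(u_2^2+u_3^2)/|u|^2$ is of size $|v|^{-2}$ near the concentration point. Expanding, the contribution near the peak is $\approx|v|^{-3}\int(u_2^2+u_3^2)e^{-|u'|^2}=\pi^{3/2}|v|^{-3}$. The region $|u|\le|v|/2$ is exponentially small. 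One has to justify the limit by dominated convergence after the shift $u=|v|e_1+u'$. This step, extracting the exact $|v|^{-3}$ rate for $\lambda_1$ with a genuine limit rather than just a bound, is the main computational point.

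Finally, for \eqref{eq:sigma.bound} we write $\sigma_i=\sigma_{ij}z_j=\f{e^{-\f{\alp\bt|x|^2}{\alp+\bt t^2}}}{\alp+\bt t^2}\lambda_1(z)z_i$, using the decomposition. Each $\rd_v$ equals $\sqrt{\alp+\bt t^2}\,\rd_z$, which gives the $t^{|\gamma|}$ factor. It then remains to show $|\rd_w^\gamma(\lambda_1(w)w)|\ls\jap{w}^{-2-|\gamma|}$. Smoothness handles bounded $w$. For large $w$, note $\lambda_1(w)w_i=\Sigma_{ij}(w)w_j=\int\phi_{ij}(w-u)u_je^{-|u|^2}\ud u$, using $\phi_{ij}(w-u)(w-u)_j=0$. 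We differentiate this under the integral with derivatives falling on $\phi$, which has decay $|w-u|^{-1-k}$ after $k$ derivatives. Integrals against the Gaussian tails then give $\jap{w}^{-1-k}$ by Lemma~\ref{lem:int-exp}. This falls one power short, so to obtain $\jap{w}^{-2-k}$ I would instead Taylor expand $\phi(w-u)$ in $u$. The zeroth order term vanishes, since $\int u_je^{-|u|^2}=0$. The first-order term is $\rd\phi\sim|w|^{-2}$, and $k$ further derivatives give $|w|^{-2-k}$. The remainder is controlled by splitting $|u|<|w|/2$ from its complement, where the complement is Gaussian-small.
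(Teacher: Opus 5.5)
Your proposal is correct, and its first step is exactly the paper's. The substitution $v_*\mapsto z_*$ together with the degree $-1$ homogeneity of $\phi$ gives $\sigma_{ij}=(\alpha+\beta t^2)^{-1}e^{-\frac{\alpha\beta|x|^2}{\alpha+\beta t^2}}\,\Sigma_{ij}(z)$, where $\Sigma_{ij}=\sigma^1_{ij}$ is the Landau matrix of $e^{-|v|^2}$; this is the paper's comparison of \eqref{eq:sigma-integral} with \eqref{eq:sigma1-integral}.

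The routes differ after that. The paper imports the eigen-decomposition, the formulas \eqref{eq:sigma.decomposition.DL.1}--\eqref{eq:sigma.decomposition.DL.2}, the asymptotics of $\lambda^1_1,\lambda^1_2$, and (details omitted) the bound \eqref{eq:sigma.bound} from \cite[Lemma~4]{StGu08}, tracking only normalization constants. You prove all of these by hand:
\begin{itemize}
\item rotation equivariance $\Sigma(Rw)=R\Sigma(w)R^T$ for the eigenstructure;
\item evaluation at $w=|w|e_1$ for the integral formulas;
\item dominated convergence after the shift $u=|w|e_1+u'$ for the limits;
\item the cancellation $\int u_je^{-|u|^2}\,\mathrm{d}u=0$ for $\sigma_i$.
\end{itemize}
The citation is shorter. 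Your route is self-contained, identifies $c_1=c_2=\pi^{3/2}$, and gives the stronger estimate $|\partial_w^\gamma(\lambda^1_1(w)w)|\lesssim\langle w\rangle^{-2-|\gamma|}$, i.e.\ an extra factor $\langle z\rangle^{-1}$ in \eqref{eq:sigma.bound}.

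Two small points on the last step.

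First, since $(\alpha+\beta t^2)^{-1+|\gamma|/2}\sim t^{-2+|\gamma|}$, the statement only needs $|\partial_w^\gamma(\lambda^1_1(w)w)|\lesssim\langle w\rangle^{-1-|\gamma|}$. So the estimate you called ``one power short'' was already enough.

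Second, in either version you cannot let two or more derivatives fall on $\phi$ near the singularity $u=w$. The function $|y|^{-1-k}$ is not locally integrable for $k\ge 2$, so Lemma~\ref{lem:int-exp} does not apply there. On the ``Gaussian-small'' region $|u|\gtrsim|w|$, rewrite the piece as $\int\phi_{ij}(y)\,(w-y)_j e^{-|w-y|^2}\,\mathrm{d}y$ and put the derivatives on the smooth factor, using a smooth $w$-dependent cutoff so no boundary terms arise. This is routine.

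Alternatively, integrating $u_je^{-|u|^2}=-\frac12\partial_{u_j}e^{-|u|^2}$ by parts and using $\partial_j\phi_{ij}(y)=-2y_i|y|^{-3}$ gives
\[
\lambda^1_1(w)w_i=\int\frac{(w-u)_i}{|w-u|^3}e^{-|u|^2}\,\mathrm{d}u .
\]
Newton's theorem for the radial density $e^{-|u|^2}$ then yields the closed form $\lambda^1_1(w)=|w|^{-3}\int_{|u|<|w|}e^{-|u|^2}\,\mathrm{d}u$. Both the limit $|w|^3\lambda^1_1(w)\to\pi^{3/2}$ and the derivative bound follow immediately from this.
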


\begin{proof}
Recall from \eqref{eq:phi} and the definition of $\sigma_{ij},$
\begin{align*}
\sigma_{ij}=\int_{\R^3}\left(\delta_{ij}-\f{(v-v_*)_i(v-v_*)_j}{|v-v_*|^2}\right)|v-v_*|^{-1} e^{-\alpha|v_*|^2}e^{-\beta|x-tv_*|^2}\ud v_*.
\end{align*}            
Next we write
$$e^{-\alpha |v_*|^2}e^{-\beta |x-tv_*|^2}=e^{-\f{\alpha\beta |x|^2}{\alpha+\beta t^2}}e^{-\left|\sqrt{\alpha+\beta t^2}(v_*-v)+\sqrt{\alpha+\beta t^2}v-\f{x\beta t}{\sqrt{\alpha+\beta t^2}}\right|^2}.$$
Then, using the change of coordinates $w=\sqrt{\alpha+\beta t^2}(v_*-v)$, we get 
\begin{equation}\label{eq:sigma-integral}
\sigma_{ij}=\f{1}{\alpha+\beta t^2}e^{-\f{\alpha\beta |x|^2}{\alpha+\beta t^2}}\int_{\R^3} \left(\delta_{ij}-\f{w_iw_j}{|w|^2}\right)|w|^{-1} e^{-\left|w-\left(\f{x\beta t}{\sqrt{\alpha+\beta t^2}}-\sqrt{\alpha+\beta t^2}v\right)\right|^2}\ud w.
\end{equation}

We now compare this with the corresponding quantity in the normalized case
\begin{equation}\label{eq:sigma1-integral}
    \begin{split}
        \sigma_{ij}^1:= &\: \int_{\R^3}\left(\delta_{ij}-\f{(v-v_*)_i(v-v_*)_j}{|v-v_*|^2}\right)|v-v_*|^{-1} e^{-|v_*|^2}\ud v_* \\
        = &\: \int_{\R^3}\left(\delta_{ij}-\f{w_iw_j}{|w|^2}\right)|w|^{-1} e^{-|w+v|^2}\ud w.
    \end{split}
\end{equation}
Therefore, by comparing \eqref{eq:sigma-integral} with \eqref{eq:sigma1-integral}, we obtain the representation formula \eqref{eq:sigma.decomposition} after using the formulas near the end of \cite[Lemma~4]{StGu08} (and taking into account the different normalization constants). The needed estimates then follow from those in \cite[Lemma~4]{StGu08}. Finally, \eqref{eq:sigma.bound} can be proven with a similar rescaling argument after using \cite[Lemma~4]{StGu08}; we omit the details. \qedhere

\end{proof}

In view of Lemma~\ref{lem:sigma-bounds}, it is easy to check that the norms in Definition~\ref{def:dissipation-norm} can be equivalently defined as follows (after also using Poincar\'e's inequality on a finite $z$ region):
\begin{lemma}\label{lem:dissipation.norm.equivalence}
    Define
    \begin{align}
        \| g\|_{\tilde{\Delta}_v(0)}^2 = &\: \int_{\R^3} \sigma_{ij} \rd_{v_i} g \rd_{v_j} g \ud v + (\alp + \bt t^2) \int_{\R^3} \sigma_{ij} z_i z_j g^2 \ud v, \\
        \| g\|_{\tilde{\Delta}_v(1)}^2 = &\: \int_{\R^3} w_v^2\sigma_{ij} \rd_{v_i} g \rd_{v_j} g \ud v + (\alp + \bt t^2) \int_{\R^3} w_v^2\sigma_{ij} z_i z_j g^2 \ud v.
    \end{align}
    Then, for $\vartheta \in \{0,1\}$,
    \begin{equation}
        \| g\|_{\tilde{\Delta}_v(\vartheta)} \ls \| g\|_{\Delta_v(\vartheta)} \ls \| g\|_{\tilde{\Delta}_v(\vartheta)}.
    \end{equation}
\end{lemma}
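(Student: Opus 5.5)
The plan is to reduce everything to pointwise comparisons in the variable $z$, using the spectral description of $\sigma_{ij}$ in Lemma~\ref{lem:sigma-bounds}, and then to handle the degeneration at $z=0$ with a Poincar\'e/Hardy-type argument on a bounded $z$-region. By Lemma~\ref{lem:sigma-bounds},
\begin{equation*}
\sigma_{ij}\rd_{v_i}g\rd_{v_j}g=\f{e^{-\f{\alp\bt|x|^2}{\alp+\bt t^2}}}{\alp+\bt t^2}\big[\lambda_1|P_z\rd_v g|^2+\lambda_2|(I-P_z)\rd_v g|^2\big],\qquad (\alp+\bt t^2)\sigma_{ij}z_iz_jg^2=e^{-\f{\alp\bt|x|^2}{\alp+\bt t^2}}\lambda_1|z|^2g^2 .
\end{equation*}
Since $\lambda_1,\lambda_2$ are positive, continuous functions of $z$ with $\jap{z}^3\lambda_1\to c_1$ and $\jap{z}\lambda_2\to c_2$, we have $\lambda_1\approx\jap{z}^{-3}$ and $\lambda_2\approx\jap{z}^{-1}$ on all of $\R^3$. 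Together with $(\alp+\bt t^2)^{-1}\approx t^{-2}$ for $t\geq 1$, this gives, with or without the factor $w_v^2$ (which plays no role pointwise), that the two gradient terms of $\|g\|_{\tilde\Delta_v(\vartheta)}^2$ are comparable to the gradient terms in $\|g\|_{\Delta_v(\vartheta)}^2$. It also gives $(\alp+\bt t^2)\sigma_{ij}z_iz_jg^2\approx e^{-\f{\alp\bt|x|^2}{\alp+\bt t^2}}\jap{z}^{-3}|z|^2g^2$. The zeroth order terms therefore agree on $\{|z|\geq 1\}$, and the bound $\|g\|_{\tilde\Delta_v}\ls\|g\|_{\Delta_v}$ follows immediately.

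For the reverse inequality it remains to control $\int_{|z|\leq1}w_v^{2\vartheta}g^2\,\ud v$. I would change variables to $z$; then $\rd_v=\sqrt{\alp+\bt t^2}\,\rd_z$, so $t^{-2}|\rd_vg|^2\approx|\rd_zg|^2$, and on $\{|z|\leq 3\}$ we have $\jap{z}\approx1$. The target estimate is
\begin{equation*}
\int_{|z|\leq1}w_v^{2\vartheta}g^2\,\ud z\ls\int_{2\leq|z|\leq3}w_v^{2\vartheta}g^2\,\ud z+\int_{|z|\leq3}w_v^{2\vartheta}|\rd_zg|^2\,\ud z .
\end{equation*}
Both terms on the right are already bounded by $\|g\|_{\tilde\Delta_v(\vartheta)}^2$ (up to the common factor $e^{-\f{\alp\bt|x|^2}{\alp+\bt t^2}}$). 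For $\vartheta=0$ this is the standard argument: write $g(z)=g(z+re)-\int_0^r e\cdot\rd_zg(z+se)\,\ud s$, then average over $e\in S^2$ and $r\in[2,3]$, and use Cauchy--Schwarz.

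The main obstacle is the weighted case $\vartheta=1$. The oscillation of $w_v$ over the ball $\{|z|\leq3\}$ is \emph{not} uniformly bounded in $(t,x)$: in $z$-coordinates, $q:=\log w_v^2=\alp|v|^2+\bar{\bt}|x-tv|^2$ is a convex quadratic with Hessian of size $O(1)$, but its gradient can be large when $|x|/t^2$ is large. To overcome this I would exploit convexity. For each $z$ with $|z|\leq1$, restrict the averaging to the directions $e$ in the half-sphere $\{e\cdot\nabla_zq(z)\geq0\}$. Along the ray $s\mapsto z+se$ the function $q$ is convex with nonnegative initial slope, so it is nondecreasing for $s\geq0$, and hence $w_v(z)\leq w_v(z+se)$ for every $s\geq0$. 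Multiplying the fundamental-theorem identity by $w_v(z)$ therefore gives
\begin{equation*}
w_v(z)|g(z)|\leq w_v(z+re)|g(z+re)|+\int_0^r w_v(z+se)|\rd_zg(z+se)|\,\ud s .
\end{equation*}
I then average over this half-sphere of directions (which has measure $2\pi$, independent of $z$) and over $r\in[2,3]$, and integrate over $|z|\leq 1$. Changing variables $(z,e,s)\mapsto y=z+se$ produces the usual bounded kernel $|z-y|^{-2}$ on a bounded set, and the weighted Poincar\'e bound above follows with constants independent of $t$ and $x$. Undoing the change of variables and reinserting the common factor $e^{-\f{\alp\bt|x|^2}{\alp+\bt t^2}}$ yields $\|g\|_{\Delta_v(\vartheta)}\ls\|g\|_{\tilde\Delta_v(\vartheta)}$ for both $\vartheta\in\{0,1\}$.
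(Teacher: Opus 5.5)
Your proposal is correct and is the same argument as the paper's, which only cites Lemma~\ref{lem:sigma-bounds} for the pointwise comparison of the two norms and a Poincar\'e inequality on a bounded $z$-region to recover $\int_{|z|\le 1} w_v^{2\vartheta} g^2$. Your convexity/half-sphere averaging adds something the paper leaves implicit: it shows the weighted ($\vartheta=1$) Poincar\'e constant is uniform in $(t,x)$ even when $|x|/t^2$ is large; the only slip is cosmetic, since with $|z|\le 1$ and $r\in[2,3]$ the points $z+re$ lie in $\{1\le|y|\le 4\}$ (and the gradient term lives on $\{|y|\le 4\}$), which is still away from $z=0$ and hence still controlled by $\|g\|_{\tilde{\Delta}_v(\vartheta)}$.
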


Next we give a lower bound for $\int_{\R^3}g Lg\ud v $.
\begin{lemma}\label{lem:lower-bound-L}
There is a constant $\delta_0 >0$ such that 
\begin{align*}
\int_{\R^3} g\cdot Lg\ud v\geq \delta_0 \| (I-\Pi) g\|_{\Delta_v}^2.
\end{align*}
\end{lemma}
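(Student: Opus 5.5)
The plan is to reduce to the classical coercivity estimate for the linearized Landau operator around the global Maxwellian $\mu_1=e^{-|v|^2}$, using the rescaling of Proposition~\ref{prop:normalize}, and then to check that the dissipation norm transforms correctly.

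\textbf{Step 1: rescaling.} Fix $(t,x)$ and set $\widetilde g(w)=g\big(\f{w}{\sqrt{\alp+\bt t^2}}+\f{\bt t x}{\alp+\bt t^2}\big)$. Under this change of variables $w=z$ and $\ud v=(\alp+\bt t^2)^{-3/2}\ud w$. Proposition~\ref{prop:normalize} then gives
\begin{equation*}
\int_{\R^3} g\, Lg\,\ud v=(\alp+\bt t^2)^{-3/2}e^{-\f{\alp\bt|x|^2}{\alp+\bt t^2}}\int_{\R^3}\widetilde g\, L_1\widetilde g\,\ud w .
\end{equation*}
Moreover $\sqrt{\mu}$ is a multiple of $e^{-|z|^2/2}$ for fixed $(t,x)$. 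Hence $\ker L$ is exactly the image of $\ker L_1=\mathrm{span}\{\sqrt{\mu_1},w_i\sqrt{\mu_1},|w|^2\sqrt{\mu_1}\}$, and orthogonality in $\ud v$ is the same as orthogonality in $\ud w$ because the Jacobian is constant. Therefore $\widetilde{\Pi g}=\Pi_1\widetilde g$, and so $\widetilde{(I-\Pi)g}=(I-\Pi_1)\widetilde g$.

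\textbf{Step 2: classical coercivity.} For $\gamma=-3$, Guo's estimate (e.g.\ \cite{Guo02}, \cite[Lemma~5]{StGu08}) gives $\int h L_1 h\,\ud w\geq\delta\,|(I-\Pi_1)h|_{\sigma_1}^2$. Here
\begin{equation*}
|h|_{\sigma_1}^2=\int\sigma^1_{ij}\rd_{w_i}h\rd_{w_j}h+\sigma^1_{ij}w_iw_jh^2 ,
\end{equation*}
which, by \cite[Lemma~4]{StGu08}, is comparable to
\begin{equation*}
\|\jap{w}^{-3/2}P_w\nabla_w h\|^2+\|\jap{w}^{-1/2}(I-P_w)\nabla_w h\|^2+\|\jap{w}^{-1/2}h\|^2 .
\end{equation*}
We apply this with $h=\widetilde g$.

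\textbf{Step 3: translate the norm back.} By the chain rule, $\nabla_w\widetilde g=(\alp+\bt t^2)^{-1/2}(\nabla_v g)(\cdot)$. Since $w=z$, the projection $P_w$ is $P_z$. Undoing the Jacobian $(\alp+\bt t^2)^{3/2}$, which cancels the prefactor from Step 1, we get
\begin{equation*}
\int g Lg\,\ud v\gtrsim e^{-\f{\alp\bt|x|^2}{\alp+\bt t^2}}\Big[(\alp+\bt t^2)^{-1}\big(\|\jap z^{-3/2}P_z\rd_v G\|^2+\|\jap z^{-1/2}(I-P_z)\rd_vG\|^2\big)+\|\jap z^{-1/2}G\|^2\Big],
\end{equation*}
with $G=(I-\Pi)g$. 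Since $(\alp+\bt t^2)^{-1}\approx t^{-2}$ for $t\ge1$, this is exactly $\|G\|_{\Delta_v}^2$ from Definition~\ref{def:dissipation-norm}. Equivalently, one can phrase this step through Lemma~\ref{lem:dissipation.norm.equivalence} and \eqref{eq:sigma.decomposition}, noting that $\sigma_{ij}$ is the rescaled $\sigma^1_{ij}$.

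\textbf{Main obstacle.} The only substantive input is the classical coercivity of $L_1$ in the Coulomb case, which is quoted. The step most likely to need care is the bookkeeping in Step 3: verifying that the factors $t^{-2}$ and the weights $\jap z$ match the definition of $\Delta_v$ exactly, and that $\delta_0$ is independent of $t$ and $x$. Both follow from the exact scaling, since all $t$ and $x$ dependence enters only through the Jacobian, the Gaussian prefactor and the chain-rule factor.
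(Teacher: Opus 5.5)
Your proposal is correct and is essentially the paper's argument: you rescale via Proposition~\ref{prop:normalize} to the normalized operator $L_1$, invoke the known Coulomb-case coercivity of $L_1$, and undo the scaling. Your bookkeeping of the Jacobian, the chain-rule factor $(\alp+\bt t^2)^{-1}\approx t^{-2}$, and the identity $\widetilde{\Pi g}=\Pi_1\widetilde g$ is exactly what the paper leaves implicit. The only difference is the quoted input---the paper cites \cite[Theorem~1.2]{cMrmS07} rather than Guo's coercivity lemma for $\gamma\geq -3$---and this does not matter.
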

\begin{proof}
By the result in \cite[Theorem~1.2]{cMrmS07}, the corresponding estimate holds for the normalized operator $L_1$ (see Proposition~\ref{prop:normalize}). 
Applying Proposition~\ref{prop:normalize}, we then obtain the desired bound. \qedhere
\end{proof}

\subsection{Weighted estimates for the collisional operators}

In this section we prove weighted $L^2_v$ estimates for the linear collisional operator. Up to this point we have been estimating the collisional operator after transforming to a problem for $L_1$. If we take this approach, the natural weights to introduce should involve, e.g., factors of $e^{\f{\alp\bt|x|^2}{\alp+\bt t^2}}$ and $e^{\de\left| \f{\bt t x}{\sqrt{\alp+\bt t^2}} - \sqrt{\alp + \bt t^2} v\right|^2}$. However, these weights do not adapt to the transport operator and lead to a loss of moments. We have instead introduced weights (recall \eqref{eq:M}) that are adapted to the transport operator and for this reason we need additional new weighted estimates for the collision operators.

We will begin with weighted estimates for $K$. We will in fact prove a slightly stronger estimate than is needed for $K$ in Proposition~\ref{prop:main.prelim.for.K}, which will be useful later for the commutator estimates. The key estimate will be given in Proposition~\ref{prop:weighted-K-bound}.

Before we proceed, recall the Hardy--Littlewood--Sobolev inequality.
\begin{lemma}[Hardy--Littlewood--Sobolev inequality]\label{lem:HLS}
Suppose $\f 2p + \f{\lambda}3 = 2$ with $\lambda \in [0,3)$. Then 
$$\Big| \int_{\R^3} \int_{\R^3} \f{h_1(v) h_2(v_*)}{|v-v_*|^\lambda} \, \ud v \ud v_* \Big|\ls \| h_1\|_{L^p} \| h_2\|_{L^p} .$$
\end{lemma}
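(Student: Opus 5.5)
The plan is to prove the Hardy--Littlewood--Sobolev inequality in the restricted form stated in Lemma~\ref{lem:HLS}: both functions lie in the same space $L^p$ with $\f 2p + \f\lambda 3 = 2$. The first step is to dispose of the trivial endpoint. If $\lambda = 0$, then $p=1$ and the integral factorizes as $\int h_1 \, \ud v \int h_2 \, \ud v_*$, which is bounded by $\|h_1\|_{L^1}\|h_2\|_{L^1}$. So I will assume $\lambda\in(0,3)$, in which case $p = \f{6}{6-\lambda}\in(1,2)$. I will then use $|\int\int| \le \int\int |h_1||h_2||v-v_*|^{-\lambda}$ to reduce to nonnegative $h_1,h_2$.

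Next, I will write the double integral as $\int_{\R^3} h_1 \, (I_\lambda h_2)\,\ud v$, where $I_\lambda h = |\cdot|^{-\lambda} * h$. By H\"older's inequality, it suffices to show $\|I_\lambda h_2\|_{L^{q}} \ls \|h_2\|_{L^p}$ with $q = p' $ the dual exponent. A direct computation from $\f 2p + \f\lambda3 = 2$ gives $\f1p - \f1q = \f 2p - 1 = 1 - \f\lambda3$. This is exactly the scaling relation for the fractional integral (Riesz potential) $I_\lambda$ in dimension $3$ mapping $L^p\to L^q$, with $1<p<q<\infty$.

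To prove this mapping property, I will use Hedberg's pointwise argument. For $R>0$, split the convolution at $|v-v_*| = R$. On the near region $|v-v_*|<R$, decomposing into dyadic shells gives $\int_{|y|<R} |y|^{-\lambda} h(v-y)\,\ud y \ls R^{3-\lambda} Mh(v)$, where $M$ is the Hardy--Littlewood maximal function; this uses $\lambda<3$. On the far region, H\"older's inequality gives a bound $\|h\|_{L^p}\, \| |y|^{-\lambda}\mathbf 1_{|y|\ge R}\|_{L^{p'}} \ls R^{3/p' - \lambda}\|h\|_{L^p}$. The exponent $\f3{p'}-\lambda$ is negative, since it equals $-\f 3q$. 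Optimizing over $R$ by equating the two bounds yields
\[
I_\lambda h(v) \ls (Mh(v))^{p/q}\|h\|_{L^p}^{1-p/q}.
\]
Taking $L^q$ norms and applying the maximal inequality $\|Mh\|_{L^p}\ls\|h\|_{L^p}$, valid for $p>1$, gives $\|I_\lambda h\|_{L^q}\ls \|h\|_{L^p}$.

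The main obstacle is only the bookkeeping of exponents in the optimization over $R$; I expect no genuine difficulty there. Since this is a classical inequality, in the paper it is reasonable to simply cite a standard reference (e.g.\ Lieb--Loss, Analysis, Theorem~4.3) rather than reprove it. I would present the argument above only as a sketch.
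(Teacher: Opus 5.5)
Your proposal is correct, and it matches the paper: the paper gives no proof of this lemma and simply recalls the classical Hardy--Littlewood--Sobolev inequality (the diagonal case $r=p$ of, e.g., Lieb--Loss, Theorem~4.3, used later only with $\lambda=1$, $p=\frac65$), so citing a standard reference is exactly what the paper does. Your Hedberg sketch is also a valid self-contained justification: the exponents $\frac{1}{p'}=\frac{\lambda}{6}$, the far-region exponent $-\frac{3}{q}$, and the optimized bound $(Mh)^{p/q}\|h\|_{L^p}^{1-p/q}$ all check out, and you are right to treat the $\lambda=0$, $p=1$ endpoint separately, since the standard theorem and the maximal inequality both require $p>1$.
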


\begin{proposition}\label{prop:main.prelim.for.K}
	Define
    \begin{equation}
        w = \begin{cases}
            w_v  & \hbox{if $\vartheta =1$} \\
            1 & \hbox{if $\vartheta =0$}.
        \end{cases}
    \end{equation} For $\ell_1,\ell_2 \in \mathbb Z_{\geq 0}$, define
	\begin{align}
\calJ_1 = &\: \int_{\R^3} w^2|z|^{\ell_2} \mu |g_1 g_2| \, \ud v, \label{eq:calJ1.def}\\
\calJ_2 = &\: t^2 \int_{\R^3}\int_{\R^3} |v-v_*|^{-1} w^2 \mu^{\f 12} \mu^{\f 12}_* |z_*|^{\ell_1} |z|^{\ell_2} |{g_1}_* g_2| \,  \ud v_* \ud v. \label{eq:calJ3.def}
\end{align}
	Then for any $\ell \in \mathbb Z_{\geq 0}$, the following holds for any sufficiently regular functions $g_1$, $g_2$ with implicit constants depending on $\ell_1$, $\ell_2$, $\ell$:
	\begin{enumerate}
\item If $\vartheta = 1$, then
\begin{equation}\label{eq:weighted-K-1}
\begin{split}
\sum_{n=1}^2 \calJ_n \ls &\: \norm{g_1}_{L^2_v} \norm{g_2}_{L^2_v} + t^{-2} \norm{wg_1}_{L^2_v} \norm{wg_2}_{L^2_v}.
\end{split}
\end{equation}
\item If $\vartheta = 0$, then 
\begin{equation}\label{eq:weighted-K-0}
\begin{split}
\sum_{n=1}^2 \calJ_n 
\ls &\:  e^{-\f{\alp \bt |x|^2}{\alp + \bt t^2}} \norm{\jap{z}^{-\ell} g_1}_{L^2_v} \norm{\jap{z}^{-\ell} g_2}_{L^2_v}.
\end{split}
\end{equation}
\end{enumerate}	
\end{proposition}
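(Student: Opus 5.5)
The plan is to reduce everything to the $z$-variable by means of the factorization
\[
\mu = e^{-\f{\alp\bt|x|^2}{\alp+\bt t^2}}e^{-|z|^2}.
\]
The polynomial factors $|z|^{\ell_2}$, $|z_*|^{\ell_1}$ and any extra $\jap{z}^{2\ell}$, $\jap{z_*}^{2\ell}$ will be absorbed by halving the Gaussians. The singular kernel in $\calJ_2$ will be handled by a Schur test based on Proposition~\ref{prop:aux-integral}.

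\textbf{Case $\vartheta=0$.} Here $w\equiv 1$. For $\calJ_1$ I use the pointwise bound
\[
|z|^{\ell_2}\jap{z}^{2\ell}\mu\ls e^{-\f{\alp\bt|x|^2}{\alp+\bt t^2}}e^{-\f12|z|^2}\ls e^{-\f{\alp\bt|x|^2}{\alp+\bt t^2}}.
\]
Writing $|g_1g_2|=\jap{z}^{2\ell}|\jap{z}^{-\ell}g_1||\jap{z}^{-\ell}g_2|$ and applying Cauchy--Schwarz then gives \eqref{eq:weighted-K-0} for $\calJ_1$. For $\calJ_2$, the product $\mu^{\f12}\mu_*^{\f12}$ carries the full factor $e^{-\f{\alp\bt|x|^2}{\alp+\bt t^2}}$, since each of $\mu^{\f12}$ and $\mu_*^{\f12}$ contributes half of it. It remains to bound the bilinear form with kernel
\[
k(v,v_*)=t^2|v-v_*|^{-1}e^{-\f12|z|^2}e^{-\f12|z_*|^2}|z|^{\ell_2}|z_*|^{\ell_1}\jap{z}^{\ell}\jap{z_*}^{\ell}
\]
on $L^2_v$. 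By the Schur test it suffices to show $\sup_v\int k\,\ud v_*\ls 1$ and the symmetric bound $\sup_{v_*}\int k\,\ud v\ls 1$. The rescaled form of \eqref{eq:int-phi}, namely
\[
\int|v-v_*|^{-1}e^{-c|z_*|^2}\ud v_*\ls t^{-2}\jap{z}^{-1},
\]
cancels the $t^2$. The remaining $z$-factors are bounded, since they carry a Gaussian. This uses the version of Proposition~\ref{prop:aux-integral} with polynomial factors. By symmetry in $v\leftrightarrow v_*$ the other Schur integral is identical.

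\textbf{Case $\vartheta=1$.} Here $w=w_v$ depends only on $v$, and the key identity is
\[
w_v^2\mu=e^{-(\bt-\bar\bt)|x-tv|^2}, \qquad w_v\mu^{\f12}=e^{-\f12(\bt-\bar\bt)|x-tv|^2}.
\]
Both are bounded because $\bar\bt<\bt/3$. I split $x$-space into the regions $|x|\le t^2$ and $|x|>t^2$.

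For $|x|\le t^2$, Lemma~\ref{lem:z.x-tv.com} gives $\jap{z}\sim\jap{x-tv}$. Hence $w_v^2\mu|z|^{\ell_2}\ls e^{-c|x-tv|^2}\jap{x-tv}^{\ell_2}\ls1$, and $\calJ_1\ls\|g_1\|_{L^2_v}\|g_2\|_{L^2_v}$. For $|x|>t^2$, I instead keep one factor $\mu^{\f12}$. It contains $e^{-\f{\alp\bt|x|^2}{2(\alp+\bt t^2)}}\ls e^{-ct^2}\ls t^{-2}$, and its Gaussian in $z$ absorbs $|z|^{\ell_2}$. The other half gives $w_v^2\mu^{\f12}\le w_v^2$, so $\calJ_1\ls t^{-2}\|wg_1\|\|wg_2\|$.

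For $\calJ_2$ I plan the same region split. In the region $|x|>t^2$ the argument is identical: extract $e^{-ct^2}$ from $\mu^{\f12}\mu_*^{\f12}$ to beat $t^2$ and gain $t^{-2}$, then run the Schur test as in the case $\vartheta=0$ for the pairing $w g_1$, $w g_2$. The weight is moved onto $g_1$ using $\mu_*^{\f14}\ls w_v(v_*)$, which is trivial since $w_v\ge1$. The remaining $w(v)^2\mu^{\f14}$ is split as $w(v)\cdot w(v)\mu^{\f14}$.

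\textbf{Main obstacle.} The hardest part is $\calJ_2$ in the region $|x|\le t^2$ when $\vartheta=1$. The weight $w_v(v)^2$ sits entirely at $v$, while $w_v^2\mu^{\f12}=w_v\cdot e^{-\f12(\bt-\bar\bt)|x-tv|^2}$ is not bounded. The reason is that it retains a factor $e^{\f{\alp}{2}|v|^2}$ at large $v$, with no matching decay in $v_*$. My first attempt is to pair asymmetrically: one $w_v(v)$ goes to $g_2$, and the bounded factor $w_v\mu^{\f12}$ goes into the kernel. This yields a Schur bound of the mixed form $\|g_1\|\,\|wg_2\|$.

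To convert the mixed bound into the allowed form, I would further split according to whether $|v|$ is large. Where $w_v(v)\ls1$, which in particular covers $|v|\ls1$ within the region $|x|\le t^2$, the estimate gives the unweighted term directly. On the complement, I would try to extract an extra $t^{-2}$ (or better) from the Gaussian $e^{-c|x-tv|^2}$ together with the constraint $|x|\le t^2$, and then use $\|g_1\|\le\|wg_1\|$. If this fails, I would revisit whether $w$ should be symmetrized between $v$ and $v_*$ inside the kernel, using $|v-v_*|$-localization from the factor $e^{-|z_*|^2}$ to compare $w_v(v)$ with $w_v(v_*)$.
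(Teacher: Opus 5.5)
Your treatment of $\vartheta=0$ is sound (a Schur test with the rescaled \eqref{eq:int-phi} in place of the paper's Hardy--Littlewood--Sobolev step), as are $\calJ_1$ for $\vartheta=1$ and $\calJ_2$ on $\{|x|>t^2\}$. In the last case the bookkeeping needs repair: $w_v\mu^{\f14}=e^{\f{\alp}{4}|v|^2-(\f{\bt}{4}-\f{\bar{\bt}}{2})|x-tv|^2}$ is unbounded. Keep all of $\mu^{\f12}$ with $w_v(v)$, and take $e^{-ct^2}$ from the $x$-Gaussian in $\mu_*^{\f12}$ alone.

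The genuine gap is the case you flag yourself, $\calJ_2$ with $\vartheta=1$ on $\{|x|\le t^2\}$. It is left open, and the route you sketch cannot close it. Your mixed bound $\|g_1\|_{L^2_v}\|wg_2\|_{L^2_v}$ is obtained after bounding the factor $e^{-\f{\alp\bt|x|^2}{2(\alp+\bt t^2)}}$ inside $\mu_*^{\f12}$ by $1$. Once that factor is discarded, there is no $t^{-2}$ left to extract. Take $|x|=tR$ with $1\ll R^2\ls\log t$. The kernel localizes both $v$ and $v_*$ near $x/t$, where $w_v\approx e^{\alp R^2/2}$, and neither $e^{-c|x-tv|^2}$ nor $|x|\le t^2$ yields decay in $t$. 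For $g_1,g_2$ concentrated there, your mixed bound exceeds $\|g_1\|_{L^2_v}\|g_2\|_{L^2_v}+t^{-2}\|wg_1\|_{L^2_v}\|wg_2\|_{L^2_v}$ by a factor of order $e^{\alp R^2/2}$, i.e.\ up to order $t$. Symmetrizing $w$ between $v$ and $v_*$ is not enough by itself either (the paper does this algebraically in \eqref{eq:K.est.weights.manipulation}). It still only gives $\|wg_1\|\|wg_2\|$ with no $t^{-2}$.

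The missing idea is that $e^{-\f{\alp\bt|x|^2}{2(\alp+\bt t^2)}}$ depends on $x$ only. It can therefore be moved from the $v_*$-side to the $v$-side, where it nearly cancels the excess growth $e^{\f{\alp}{2}|v|^2}\approx e^{\f{\alp|x|^2}{2t^2}}$ of the second factor $w_v(v)$. Indeed $\f{\alp|x|^2}{2t^2}-\f{\alp\bt|x|^2}{2(\alp+\bt t^2)}=\f{\alp^2|x|^2}{2t^2(\alp+\bt t^2)}\ls 1$ for $|x|\le t^2$. The cross term $\f{\alp x\cdot(tv-x)}{t^2}$ is absorbed by completing the square against $-c|x-tv|^2$ once $t\ge t_*$. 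This is exactly \eqref{eq:annoying.weight}--\eqref{eq:stupid.weights} in the paper. In your own notation it reads
\[
w_v^2\mu^{\f12}\mu_*^{\f12}=e^{-(\bt-\bar{\bt})|x-tv|^2}e^{\f12(|z|^2-|z_*|^2)},\qquad |z|^2=\f{\alp+\bt t^2}{t^2}|x-tv|^2-\f{2\alp x\cdot(x-tv)}{t^2}+\f{\alp^2|x|^2}{t^2(\alp+\bt t^2)}.
\]
So for $|x|\le t^2$, $t\ge t_*$ and $\bar{\bt}<\bt/2$, the kernel is $\ls e^{-c|x-tv|^2}e^{-\f14|z_*|^2}$ after absorbing $|z|^{\ell_2}|z_*|^{\ell_1}$. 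Your Schur test then gives $\calJ_2\ls\|g_1\|_{L^2_v}\|g_2\|_{L^2_v}$ outright. In this region the term $t^{-2}\|wg_1\|_{L^2_v}\|wg_2\|_{L^2_v}$ is only needed for $t\le t_*$, where $1\ls t^{-2}$.
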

\begin{proof}

We begin with the $\vartheta = 1$ case in Steps~1 and 2. We then turn to the $\vartheta = 0$ case in Step~3.

\pfstep{Step~1: First bound of the left-hand side of \eqref{eq:weighted-K-1}} We now consider the $\vartheta = 1$ case and so $w = w_v$. Our main goal in this step is to establish
\begin{equation}\label{eq:K.est.sharper}
\sum_{n=1}^2 \calJ_n \ls \jap{x/t^2}^{\ell_1+\ell_2} \Big\| e^{-\alp|v|^2-\f{ (\beta+\bar{\bt})}{8}|x-tv|^2} w g_1 \Big\|_{L^2_v} \Big\| e^{-\f{(\beta-\bar{\bt})}{8}|x-tv|^2} w g_2 \Big\|_{L^{2}_v}.
\end{equation}
In Step~2 we will show that this is sufficient for our desired conclusion.

We first look at $\calJ_1$ in \eqref{eq:calJ1.def}. Using $\mu = e^{-\alp|v|^2 - \bt|x-tv|^2}$ and Lemma~\ref{lem:z.x-tv.com}, we bound $\calJ_1$ as follows:
\begin{equation}\label{eq:K.est.J1}
\begin{split}
\calJ_1 = &\: \int_{\R^3} w^2|z|^{\ell_2} \mu |g_1 g_2| \, \ud v \leq \jap{x/t^2}^{\ell_2} \| e^{-\alp|v|^2- \f \bt 2 |x-tv|^2} w g_1 \|_{L^2_v} \| e^{- \f \bt 4 |x-tv|^2} w g_2 \|_{L^2_v},
\end{split}
\end{equation}
where we used a bit of the exponentially decaying factor to dominate $\jap{x-tv}^{\ell_2}$. The bound \eqref{eq:K.est.J1} is better than needed in \eqref{eq:K.est.sharper} (after recalling that $\bar{\bt} \in (0,\bt)$). 

For the term $\calJ_2$ in \eqref{eq:calJ3.def}, observe that
\begin{equation}\label{eq:K.est.weights.manipulation}
w^2 |z_*|^{\ell_1} |z|^{\ell_2} \mu^{\f 12} \mu^{\f 12}_* = w w_* |z_*|^{\ell_1} |z|^{\ell_2}  e^{-\f{(\beta-\bar{\bt})}{2}|x-tv|^2} e^{-\alpha|v_*|^2}e^{-\f{(\beta+\bar{\bt})}{2}|x-tv_*|^2}.
\end{equation}
We use Lemma~\ref{lem:z.x-tv.com} to obtain
\begin{align}
e^{-\f{(\beta-\bar{\bt})}{2}|x-tv|^2} |z|^{\ell_2}  \ls&\: \jap{x/t^2}^{\ell_2} e^{-\f{(\beta-\bar{\bt})}{4}|x-tv|^2},\label{eq:K.est.weights.manipulation.1}\\
e^{-\alpha|v_*|^2}e^{-\f{(\beta+\bar{\bt})}{2}|x-tv_*|^2} |z_*|^{\ell_1} \ls &\: \jap{x/t^2}^{\ell_1} e^{-\alp |v_*|^2 - \f{(\beta+\bar{\bt})}{4}|x-tv_*|^2}. \label{eq:K.est.weights.manipulation.2}
\end{align}

Hence, after controlling the weights, we have
$$\calJ_2 \ls \calJ_2',$$
where
\begin{align}
\calJ_2' = &\: t^2 \jap{x/t^2}^{\ell_1+\ell_2} \int_{\R^3}\int_{\R^3} |v-v_*|^{-1}e^{-\f{(\beta-\bar{\bt})}{4}|x-tv|^2}e^{-\alp|v_*|^2-\f{(\beta+\bar{\bt})}{4}|x-tv_*|^2} |w_* {g_1}_*  w g_2| \,  \ud v_* \ud v.
\end{align}
By the Hardy--Littlewood--Sobolev inequality (Lemma~\ref{lem:HLS} with $p = \f 65$ and $\lambda = 1$), we have
\begin{equation*}
\begin{split}
\calJ_2' \ls &\:  t^2\jap{x/t^2}^{\ell_1+\ell_2}\Big\| e^{-\alp|v|^2-\f{(\beta+\bar{\bt})}{4}|x-tv|^2} w g_1 \Big\|_{L^{\f 65}_v} \Big\| e^{-\f{(\beta-\bar{\bt})}{4}|x-tv|^2} w g_2 \Big\|_{L^{\f 65}_v} \\
\ls &\: t^2\jap{x/t^2}^{\ell_1+\ell_2} \Big\| e^{-\f{(\beta-\bar{\bt})}{8}|x-tv|^2} \Big\|_{L^3_v} \Big\| e^{-\f{(\beta+\bar{\bt})}{8}|x-tv|^2}  \Big\|_{L^3_v}\Big\| e^{- \alp|v|^2-\f{ (\beta+\bar{\bt})}{8}|x-tv|^2} w g_1 \Big\|_{L^2_v} \Big\| e^{-\f{(\beta-\bar{\bt})}{8}|x-tv|^2} w g_2 \Big\|_{L^{2}_v}\\
\ls &\: \jap{x/t^2}^{\ell_1+\ell_2} \Big\| e^{- \alp |v|^2-\f{ (\beta+\bar{\bt})}{8}|x-tv|^2} w g_1 \Big\|_{L^2_v} \Big\| e^{-\f{(\beta-\bar{\bt})}{8}|x-tv|^2} w g_2 \Big\|_{L^{2}_v},
\end{split}
\end{equation*}
where we used that $\| e^{-\f{(\beta-\bar{\bt})}{8}|x-tv|^2} \|_{L^3_v} ,\,\| e^{-\f{(\beta+\bar{\bt})}{8}|x-tv|^2} \|_{L^3_v} \ls t^{-1}$.

Combining the bounds for $\calJ_1$ and $\calJ_2$, we have thus proven \eqref{eq:K.est.sharper}.


 \pfstep{Step~2: Proof of \eqref{eq:weighted-K-1}} Our remaining goal is to show that the right-hand side of \eqref{eq:K.est.sharper} can be bounded by the right-hand side of \eqref{eq:weighted-K-1}. For this purpose, we split into two cases $|x| \geq t^2$ and $|x| \leq t^2$. 
 
 If $|x| \geq t^2$, then we can use
 $$e^{-\alp|v|^2-\f{ (\beta+\bar{\bt})}{8}|x-tv|^2} = e^{-\f{\alp(\beta+\bar{\bt})|x|^2}{8\alp+ (\beta+\bar{\bt})t^2}} e^{-|\sqrt{\alp + \f 18(\beta+\bar{\bt})t^2} v - \f{\f 18(\beta+\bar{\bt})xt}{\sqrt{\alp + \f 18(\beta+\bar{\bt})t^2}}|^2} \leq e^{-\f{c|x|^2}{t^2}}$$
for some $c>0$ depending on $\alp$, $\bt$, $\bar{\bt}$. Hence,
\begin{equation}\label{eq:K.est.large.x.main}
\begin{split}
&\: \jap{x/t^2}^{\ell_1+\ell_2} \Big\| e^{-\alp|v|^2-\f{ (\beta+\bar{\bt})}{8}|x-tv|^2} w g_1 \Big\|_{L^2_v} \Big\| e^{-\f{(\beta-\bar{\bt})}{4}|x-tv|^2} w g_2 \Big\|_{L^{2}_v} 
\ls  \jap{x/t^2}^{\ell_1+\ell_2} e^{-\f{c|x|^2}{t^2}} \Big\|  w g_1 \Big\|_{L^2_v} \Big\| w g_2 \Big\|_{L^{2}_v} .
\end{split}
\end{equation}
Now observe that for $|x| \geq t^2$, we have 
$$\jap{x/t^2}^{\ell_1+\ell_2} e^{-\f{c|x|^2}{t^2}} \ls  e^{-\f{c|x|^2}{2 t^2}} (\sup_{m \leq \ell_1+\ell_2} \jap{y}^m e^{-\f{c|y|^2}{2}}) \ls e^{-\f{ct^2}{2}} \ls \brk{t}^{-2}.$$
Plugging this back into \eqref{eq:K.est.large.x.main}, we show that the term can be bounded above by 
$$\hbox{\eqref{eq:K.est.large.x.main}} \ls t^{-2}\Big\|  w g_1 \Big\|_{L^2_v} \Big\| w g_2 \Big\|_{L^{2}_v} ,$$
which is acceptable.

We then turn to the $|x| \leq t^2$ case. We see that the $\jap{x/t^2}^{\ell_1+\ell_2}$ factor is bounded. 
We now study the weights in \eqref{eq:K.est.sharper}. First, observe that for any $\mathfrak b >0$, the following holds when $|x|\leq t^2$
\begin{equation}\label{eq:general.weight.computation.1}
    \begin{split}
        e^{\f{\alp \bt |x|^2}{2(\alp + \bt t^2)}} e^{-\f 12 \alp |v|^2 - \mathfrak b |x-tv|^2} = &\: e^{\f{\alp \bt |x|^2}{2(\alp + \bt t^2)}} e^{-\f{\alp \mathfrak b |x|^2}{\alp + 2\mathfrak b t^2}} e^{-\f 12 |\sqrt{\alp + 2\mathfrak b t^2} v - \f{2x\mathfrak b t}{\sqrt{\alp + 2\mathfrak b t^2}}|^2} \\
        = &\: e^{\f{\alp^2 (\bt - 2\mathfrak b) |x|^2 }{2(\alp + 2\mathfrak b t^2)(\alp + \bt t^2)}} e^{-\f 12|\sqrt{\alp + 2\mathfrak b t^2} v - \f{2x\mathfrak b t}{\sqrt{\alp + 2\mathfrak b t^2}}|^2} \ls_{\mathfrak b} 1,
    \end{split}
\end{equation}
which implies
\begin{equation}
\begin{split}
e^{\f{\alp\bt |x|^2}{2(\alp + \bt t^2)}} e^{-\alp|v|^2-\f{ (\beta+\bar{\bt})}{8}|x-tv|^2} w =&\:  e^{\f{\alp\bt |x|^2}{2(\alp + \bt t^2)}} e^{-\f \alp 2|v|^2 -(\f{\bt}{8}- \f{3\bar{\bt}}{8})|x-tv|^2} \ls 1.
\end{split}
\end{equation}
For the weight $e^{-\f{\alp\bt |x|^2}{2(\alp + \bt t^2)}} e^{-\f{(\beta-\bar{\bt})}{4}|x-tv|^2} w$ on $g_2$, we compute that 
\begin{equation}\label{eq:annoying.weight}
\begin{split}
\log \Big( e^{-\f{\alp\bt |x|^2}{2(\alp + \bt t^2)}} e^{-\f{(\beta-\bar{\bt})}{4}|x-tv|^2} w \Big) = &\: \f \alp 2|v|^2 - \f{\alp \bt |x|^2}{2(\alp + \bt t^2)} - \f{\bt - \bar{\bt}}{4}|x-tv|^2 + \f {\bar{\bt}}2 |x-tv|^2 \\
= &\: \Big( \f{\alp}{2t^2} - \f{\bt - 3\bar{\bt}}{4}\Big) |x-tv|^2 + \f{\alp x\cdot (vt-x)}{t^2} + \f{\alp|x|^2}{2t^2}  - \f{\alp \bt |x|^2}{2(\alp + \bt t^2)} \\
= &\: \Big( \f{\alp}{2t^2} - \f{\bt - 3\bar{\bt}}{4}\Big) |x-tv|^2 + \f{\alp x\cdot (vt-x)}{t^2} +\f{\alp^2  |x|^2}{2t^2(\alp + \bt t^2)}.
\end{split}
\end{equation}
Since $|x|/t^2\leq 1$, for $t_*\geq \sqrt{\f{4 \alp}{\bt - 3\bar{\bt}}}$, we can complete the square to deduce that $\hbox{\eqref{eq:annoying.weight}} \ls 1$ when $t \geq t_*$. On the other hand, when $t \leq t_*$, we have $1\ls t^{-2}$. It follows that 
\begin{equation}\label{eq:stupid.weights}
    e^{-\f{\alp\bt |x|^2}{2(\alp + \bt t^2)}} e^{-\f{(\beta-\bar{\bt})}{4}|x-tv|^2} w \ls 1 + t^{-2} w.
\end{equation}

Altogether, when $|x| \leq t^2$, 
\begin{align}
e^{\f{\alp\bt |x|^2}{2(\alp + \bt t^2)}} \Big\| e^{-\alp|v|^2-\f{ (\beta+\bar{\bt})}{8}|x-tv|^2} w g_1 \Big\|_{L^2_v}  \ls &\: \|  g_1 \|_{L^2_v}, \\
e^{-\f{\alp\bt |x|^2}{2(\alp + \bt t^2)}} \Big\| e^{-\f{(\beta-\bar{\bt})}{4}|x-tv|^2} w g_2 \Big\|_{L^2_v} \ls  &\: \|  g_2 \|_{L^2_v} + t^{-2} \|w g_2\|_{L^2_v}
\end{align} 
so that the right-hand side of \eqref{eq:K.est.sharper} also obeys acceptable bounds.

\pfstep{Step~3: Proof of \eqref{eq:weighted-K-0}} We now consider the $\vartheta = 0$ case, which is simpler. Writing $\mu^{\f 12} = e^{-\f{\alp\bt |x|^2}{2(\alp + \bt t^2)}} e^{-|z|^2/2}$, we have
$$\calJ_1 \ls e^{-\f{\alp\bt |x|^2}{\alp + \bt t^2}} \int_{\R^3} w^2  \jap{z}^{-2\ell} |g_1 g_2| \, \ud v,$$
which gives the desired bound after using Cauchy--Schwarz. For $\calJ_2$, we observe that 
\begin{equation}
|z_*|^{\ell_1} |z|^{\ell_2} \jap{z}^{\ell} \jap{z_*}^{\ell} \mu^{\f 12} \mu^{\f 12}_* 
= |z_*|^{\ell_1} |z|^{\ell_2} \jap{z}^{\ell} \jap{z_*}^{\ell} e^{-\f{\alp \bt |x|^2}{\alp + \bt t^2}} e^{-|z|^2/2} e^{-|z_*|^2/2}.
\end{equation} 
so that 
\begin{equation}
    \begin{split}
        \calJ_2 \ls &\: t^2 e^{-\f{\alp \bt |x|^2}{\alp + \bt t^2}} \int_{\R^3}\int_{\R^3} |v-v_*|^{-1} e^{-|z|^2/4} e^{-|z_*|^2/4} |\jap{z_*}^{-\ell} {g_1}_* \jap{z}^{-\ell} g_2| \,  \ud v_* \ud v.
    \end{split}
\end{equation}
Using the Hardy--Littlewood--Sobolev inequality (Lemma~\ref{lem:HLS} with $p = \f 65$ and $\lambda = 1$), and the fact that $\| e^{-|z|^2/4} \|_{L^3_v} \ls t^{-1}$, we obtain the desired bound. \qedhere
\end{proof}

We apply the previous proposition to the specific case of the operator $K$. Recalling \eqref{eq:K.representation}, the previous proposition implies
\begin{proposition}\label{prop:weighted-K-bound}
Let $\ell \in \mathbb Z_{\geq 0}$. Then the following holds for implicit constants depending on $\ell$:
\begin{align}
 \left|\langle w_v^2 Kg_1, g_2 \rangle_{L^2_{v}} \right| \ls &\: \norm{g_1}_{L^2_v} \norm{g_2}_{L^2_v} + t^{-2} \norm{w_v g_1}_{L^2_v} \norm{w_v g_2}_{L^2_v}, \label{eq:K.upper.bound.weighted} \\
 \left|\langle  Kg_1, g_2 \rangle_{L^2_{v}} \right| \ls &\: e^{-\f{\alp \bt |x|^2}{\alp + \bt t^2}} \norm{\jap{z}^{-\ell} g_1}_{L^2_v} \norm{\jap{z}^{-\ell} g_2}_{L^2_v}.\label{eq:K.upper.bound.unweighted}
\end{align}
\end{proposition}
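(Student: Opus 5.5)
The plan is to reduce Proposition~\ref{prop:weighted-K-bound} directly to Proposition~\ref{prop:main.prelim.for.K} using the representation formula \eqref{eq:K.representation}. Write
\[
\langle w^2 Kg_1, g_2\rangle_{L^2_v} = 8\pi \int_{\R^3} w^2 \mu g_1 g_2\,\ud v + 4(\alp+\bt t^2)\int_{\R^3}\int_{\R^3} w^2 \mu^{\f 12}\mu^{\f 12}_*\Big(\phi_{ij}(v-v_*)z_{*i}z_{*j} - |v-v_*|^{-1}\Big) {g_1}_* g_2 \,\ud v_*\ud v,
\]
with $w=w_v$ for \eqref{eq:K.upper.bound.weighted} and $w=1$ for \eqref{eq:K.upper.bound.unweighted}.

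The first term is bounded by $8\pi\calJ_1$ with $\ell_2=0$, after putting absolute values inside.

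For the second term, use the pointwise bound on the kernel. By \eqref{eq:phi} with $\gamma=-3$, the matrix $\phi_{ij}(y)$ has operator norm $|y|^{-1}$. Hence
\[
|\phi_{ij}(v-v_*)z_{*i}z_{*j}| \le |v-v_*|^{-1}|z_*|^2,
\]
and so the whole kernel is bounded by $|v-v_*|^{-1}(|z_*|^2+1)$. Since $\alp+\bt t^2\ls t^2$ for $t\ge 1$, the second term is bounded by a constant times the sum of two copies of $\calJ_2$: one with $(\ell_1,\ell_2)=(2,0)$ and one with $(\ell_1,\ell_2)=(0,0)$.

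Applying \eqref{eq:weighted-K-1} in the case $\vartheta=1$ to each of these pieces gives \eqref{eq:K.upper.bound.weighted}. Applying \eqref{eq:weighted-K-0} in the case $\vartheta=0$, with the given $\ell$, gives \eqref{eq:K.upper.bound.unweighted}. In both cases the implicit constants depend only on $\ell$ (and on $\alp,\bt,\bar\bt$), since $\ell_1,\ell_2\le 2$ are fixed.

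The only point needing care is the justification of pulling absolute values inside, i.e.\ that the integrals converge absolutely. This follows from the local integrability of $|v-v_*|^{-1}$ together with the Gaussian factors, and the Hardy--Littlewood--Sobolev step in Proposition~\ref{prop:main.prelim.for.K} already controls them. I do not expect a genuine obstacle: the real work was done in Proposition~\ref{prop:main.prelim.for.K}, and this is a bookkeeping step matching the terms of \eqref{eq:K.representation} to $\calJ_1$ and $\calJ_2$.
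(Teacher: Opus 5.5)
Your proposal is correct and follows the paper's own route. The paper proves this by recalling \eqref{eq:K.representation} and invoking Proposition~\ref{prop:main.prelim.for.K}, and you have simply written out that bookkeeping: the $8\pi\mu g$ term is bounded by $\calJ_1$ with $\ell_2=0$, and the integral term is bounded by $\calJ_2$ with $(\ell_1,\ell_2)=(2,0)$ and $(0,0)$, using $|\phi_{ij}(y)a_ia_j|\le |y|^{-1}|a|^2$ and $\alp+\bt t^2\ls t^2$.
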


We now turn to the estimates for $A$. It will be convenient to introduce a further decomposition
\begin{equation}\label{eq:A.decomposition}
    A = \bar{A} + \widetilde{A}, 
\end{equation}
where 
\begin{equation}
    \bar{A} = \rd_{v_i}(\sigma_{ij}\rd_{v_j}g)-(\alpha+\beta t^2)\sigma_{ij}z_iz_j g,\quad \widetilde{A} = \sqrt{\alpha+\beta t^2}\rd_{v_i}\sigma_i g.
\end{equation}

We first bound $\widetilde{A}$.
\begin{proposition}\label{prop:sigma-i-est}
For any $\eta>0$ small, there exists a $C(\eta)>0$ such that the following holds for any sufficiently regular functions $g_1$, $g_2$:

\begin{align}
    \left|\langle w_v^2 \widetilde{A} g_1, g_2 \rangle_{L^2_{v}} \right|
    \leq &\:  \Big(\eta^{\f 12} \norm{g_1}_{\Delta_v(1)} + C(\eta) \norm{g_1}_{L^2_v}\Big) \Big(\eta^{\f 12} \norm{g_2}_{\Delta_v(1)} + C(\eta) \norm{g_2}_{L^2_v}\Big) \nonumber \\
    &\: +  t^{-2} (\norm{w_v g_1}_{L^2_v}^2 + \norm{w_v g_2}_{L^2_v}^2) \label{eq:tildeA.1} \\
    \left|\langle  \widetilde{A} g_1, g_2 \rangle_{L^2_{v}} \right|
    \ls &\: e^{-\f{\alp \bt |x|^2}{\alp + \bt t^2}} \langle \jap{z}^{-2} |g_1|, |g_2| \rangle_{L^2_v}  . \label{eq:tildeA.2}
\end{align}

\end{proposition}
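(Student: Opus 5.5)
The multiplier is $m := \sqrt{\alp+\bt t^2}\,\rd_{v_i}\sigma_i$. By \eqref{eq:sigma.bound} with $|\gamma|=1$, it satisfies the pointwise bound
\begin{equation*}
|m| \ls t\cdot e^{-\f{\alp\bt|x|^2}{\alp+\bt t^2}}\, t^{-1}\jap{z}^{-2} = e^{-\f{\alp\bt|x|^2}{\alp+\bt t^2}}\jap{z}^{-2}.
\end{equation*}
This gives \eqref{eq:tildeA.2} at once by integrating $|m||g_1||g_2|$ in $v$. The rest of the proof concerns the weighted bound \eqref{eq:tildeA.1}, where the difficulty is that $w_v^2$ grows like a Gaussian while $m$ decays only polynomially in $z$.

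\textbf{Splitting the $v$-integral.} We write
\begin{equation*}
\langle w_v^2\widetilde A g_1,g_2\rangle = \int m\, w_v^2\, g_1g_2\,\ud v
\end{equation*}
and split into the regions $\{|z|\geq R\}$ and $\{|z|<R\}$, where $R=R(\eta)$ is large.

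\emph{Far region $|z|\geq R$.} Here
\begin{equation*}
|m|w_v^2 \ls \jap{R}^{-1}\, e^{-\f{\alp\bt|x|^2}{\alp+\bt t^2}}\, w_v^2\jap{z}^{-1}.
\end{equation*}
By Cauchy--Schwarz, the contribution is at most $\jap{R}^{-1}\|g_1\|_{\Delta_v(1)}\|g_2\|_{\Delta_v(1)}$, using only the zeroth-order part $\|w_v\jap{z}^{-1/2}g\|$ of the $\Delta_v(1)$ norm. Choosing $\jap{R}^{-1}\leq\eta$, this is bounded by the $\eta^{1/2}\cdot\eta^{1/2}$ term in \eqref{eq:tildeA.1}.

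\emph{Near region $|z|<R$.} Here we need to compare $e^{-\f{\alp\bt|x|^2}{\alp+\bt t^2}}w_v^2$ with $1$. We reuse the computation from Step~2 of Proposition~\ref{prop:main.prelim.for.K}. Write
\begin{equation*}
e^{-\f{\alp\bt|x|^2}{\alp+\bt t^2}}w_v^2 = e^{-|z|^2}\mu^{-1}w_v^2\, e^{-\f{\alp\bt|x|^2}{\alp+\bt t^2}}\cdot e^{|z|^2},\qquad \mu^{-1}w_v^2 = e^{(\bar\bt-\bt)|x-tv|^2}\ \text{(times $e^{-\alp|v|^2}$ cancellation)}.
\end{equation*}
More cleanly, $e^{-\f{\alp\bt|x|^2}{\alp+\bt t^2}}w_v^2 = \mu\, w_v^2\, e^{|z|^2}$, and $\mu w_v^2 = e^{-(\bt-\bar\bt)|x-tv|^2}$. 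Hence
\begin{equation*}
|m|w_v^2 \ls e^{R^2}e^{-(\bt-\bar\bt)|x-tv|^2}\quad\text{on } \{|z|<R\}.
\end{equation*}
We then split by the size of $x$.
\begin{itemize}
\item When $|x|\leq t^2$, Lemma~\ref{lem:z.x-tv.com} gives $\jap{x-tv}\ls\jap{z}\ls R$. The weight is then bounded by $C(R)$, and Cauchy--Schwarz gives $C(\eta)\|g_1\|_{L^2_v}\|g_2\|_{L^2_v}$.
\item When $|x|\geq t^2$, the factor $e^{-\f{\alp\bt|x|^2}{\alp+\bt t^2}}\leq e^{-ct^2}\ls t^{-2}$. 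We keep the weight $w_v^2$ and use $|ab|\leq\f12(a^2+b^2)$, which gives $t^{-2}(\|w_vg_1\|^2+\|w_vg_2\|^2)$.
\end{itemize}
Finally, $C(\eta)\|g_1\|\|g_2\|$ fits into the product on the right of \eqref{eq:tildeA.1} after expanding it; all cross terms there are nonnegative.

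\textbf{Main obstacle.} The delicate point is the near region with $|x|\lesssim t^2$. There one must check carefully that $e^{-\alp\bt|x|^2/(\alp+\bt t^2)}w_v^2$ is bounded when $|z|\ls R$. The fallback is the completing-the-square computation \eqref{eq:annoying.weight}, which costs at most a $t^{-2}w_v$ error for small $t$ and which is absorbed exactly as in \eqref{eq:stupid.weights}.
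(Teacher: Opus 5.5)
Your proof is correct, and it reaches \eqref{eq:tildeA.1} by a cleaner route than the paper. The bound \eqref{eq:tildeA.2} is handled identically.

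The paper first splits into $|x|\geq t^2$, handled by $e^{-\f{\alp\bt|x|^2}{\alp+\bt t^2}}\ls t^{-2}$, and $|x|\leq t^2$. In the latter case it splits according to $|x-tv|\gtrless D$. It needs $|x|\leq t^2$ and Lemma~\ref{lem:z.x-tv.com} to turn $\jap{x-tv}^{-1/2}\ls D^{-1/2}$ into the gain in $\jap{z}$. On the bounded region it reuses the completing-the-square bound \eqref{eq:stupid.weights}, which is where the $t^{-2}\norm{w_v g_i}_{L^2_v}$ error comes from.

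You split directly in $|z|$, so your far region needs no restriction on $x$. In the near region your identity
\[
e^{-\f{\alp\bt|x|^2}{\alp+\bt t^2}}w_v^2=e^{|z|^2}\,\mu\, w_v^2=e^{|z|^2-(\bt-\bar{\bt})|x-tv|^2}\leq e^{R^2}\quad\text{on }\{|z|<R\}
\]
already settles everything, uniformly in $x$ and $t$, because $\bar{\bt}<\bt$. So your sub-split by $|x|$ is unnecessary. So are the $t^{-2}$ term and the ``main obstacle'' paragraph with its fallback to \eqref{eq:annoying.weight}; the point you flag as delicate is in fact trivial by your own identity.

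Your argument actually proves \eqref{eq:tildeA.1} without the term $t^{-2}(\norm{w_vg_1}_{L^2_v}^2+\norm{w_vg_2}_{L^2_v}^2)$. That also sidesteps a wrinkle: your $|x|\geq t^2$ subcase, like the paper's, only produces that term with an implicit constant, not the constant $1$ written in the statement. The paper's route buys only uniformity with the weight manipulations of Proposition~\ref{prop:main.prelim.for.K}; yours is shorter and gives a slightly stronger bound.

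Delete your first displayed rewriting of the weight, since $\mu^{-1}w_v^2\neq e^{(\bar{\bt}-\bt)|x-tv|^2}$. Only the ``more cleanly'' version, $\mu w_v^2=e^{-(\bt-\bar{\bt})|x-tv|^2}$, is correct.
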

\begin{proof}
By Lemma~\ref{lem:sigma-bounds}, we have for $w = w_v$ or $w=1$,
\begin{equation}\label{eq:tildeA.CS}
\begin{split}
\left|\langle w^2 \widetilde{A} g_1, g_2 \rangle_{L^2_{v}} \right| \ls t \left|\int_{\R^3}w^2 \rd_{v_i}\sigma_i g_1 \cdot g_2\ud v\right|\lesssim &\: e^{-\f{\alpha\beta |x|^2}{\alpha+\beta t^2}} \int_{\R^3} \jap{z}^{-2} w^2|g_1|\cdot |g_2|\ud v .
\end{split}
\end{equation}
This immediately gives \eqref{eq:tildeA.2}. We consider \eqref{eq:tildeA.1} for the remainder of the proof. 
The key now is to notice that the $\|\cdot \|_{\Delta_v(1)}$ norm controls $e^{-\f{\alpha\beta |x|^2}{2(\alpha+\beta t^2)}}  \| \jap{z}^{-\f 12} w_v \cdot \|_{L^2_v}$ and so the discrepancy of $\jap{z}^{-\f 12}$ power gives the needed smallness. 

To proceed, we split into the cases $|x|\geq t^2$ and $|x|\leq t^2$. If $|x| \geq t^2$, we have $e^{-\f{\alpha\beta |x|^2}{\alpha+\beta t^2}} \ls t^{-2}$ (and $\brk{z}^{-1} \ls 1$), so that the term is acceptable.

Consider now the case $|x|\leq t^2$. We split the domain of integration into $|x-tv|\leq D$ and $|x-tv|\geq D$ for $D\geq 1$ sufficiently large to be chosen, i.e., 
\begin{equation}\label{eq:tildeA.large.x}
\begin{split}
&\: e^{-\f{\alpha\beta |x|^2}{\alpha+\beta t^2}} \int_{\R^3} \jap{z}^{-2} w^2|g_1|\cdot |g_2|\ud v  \\
\ls &\: e^{-\f{\alpha\beta |x|^2}{\alpha+\beta t^2}} \Big(\| \jap{z}^{-1} w_v g_1 \|_{L^2_v(|x-tv| \geq D)} + \| \jap{z}^{-1} w_v g_1 \|_{L^2_v(|x-tv| \leq D)} \Big) \\
&\: \qquad \quad \times \Big( \| \jap{z}^{-1} w_v g_2 \|_{L^2_v(|x-tv| \geq D)} + \| \jap{z}^{-1} w_v g_2 \|_{L^2_v(|x-tv| \leq D)} \Big).
\end{split}
\end{equation}
For $|x-tv| \geq D$, we have by \eqref{eq:z.x-tv.com.restricted} that $\jap{z}^{-\f 12} \ls \jap{x-tv}^{-\f 12} \ls D^{-\f 12}$. Hence,  we have
\begin{equation}\label{eq:tildeA.small.x.1}
e^{-\f{\alpha\beta |x|^2}{2(\alpha+\beta t^2)}} \| \jap{z}^{-1} w_v g_i \|_{L^2_v(|x-tv| \geq D)} \ls D^{-1/2} \norm{g_i}_{\Delta_v(1)}\quad i=1,2.
\end{equation}
For the region $|x-tv| \leq D$, observe $e^{-\f{\alpha\beta |x|^2}{2(\alpha+\beta t^2)}} w_v \leq e^{-\f{\alpha\beta |x|^2}{2(\alpha+\beta t^2)}} e^{-\f{\bt-\bar{\bt}}4 |x-tv|^2} w_v e^{\f{\bt-\bar{\bt}}4 D^2}$ so that by \eqref{eq:stupid.weights}, we have
\begin{equation}\label{eq:tildeA.small.x.2}
e^{-\f{\alpha\beta |x|^2}{2(\alpha+\beta t^2)}} \| \jap{z}^{-1} w_v g_i \|_{L^2_v(|x-tv| \leq D)} \ls e^{\f{\bt-\bar{\bt}}4 D^2} \Big( \norm{g_i}_{L^2_v} + t^{-2} \norm{w_v g_i}_{L^2_v} \Big) \quad i=1,2.
\end{equation}
Plugging \eqref{eq:tildeA.large.x}, \eqref{eq:tildeA.small.x.1}, \eqref{eq:tildeA.small.x.2} into \eqref{eq:tildeA.CS}, and choosing $D$ sufficiently large, we obtain \eqref{eq:tildeA.1}. \qedhere

\end{proof}
Next we prove weighted estimates for the term involving $Ag.$
\begin{proposition}\label{prop:weighted-A-bound}
For $\bt_0>0$ sufficiently small, the following holds for some $\kappa>0$:
\begin{align}
-\jap{w_v^2 Ag, g }_{L^2_v} \geq &\:  \kappa \norm{g}_{\Delta_v(1)}^2 - C \| g\|_{L^2_v}^2 -  Ct^{-2} \norm{w_v g}_{L^2_v}^2.
\end{align}
\end{proposition}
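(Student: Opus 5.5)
We split $A=\bar A+\widetilde A$ as in \eqref{eq:A.decomposition}. The term $\widetilde A$ is already handled by \eqref{eq:tildeA.1} of Proposition~\ref{prop:sigma-i-est} with $g_1=g_2=g$: it contributes at most $2\eta\norm{g}_{\Delta_v(1)}^2+C(\eta)\norm{g}_{L^2_v}^2+2t^{-2}\norm{w_vg}_{L^2_v}^2$. Taking $\eta$ small compared to the coercivity constant obtained for $\bar A$ makes this acceptable. The main work is therefore the term $-\langle w_v^2\bar A g,g\rangle_{L^2_v}$.

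For $\bar A$ we integrate by parts in $v$:
\begin{equation*}
-\langle w_v^2\bar Ag,g\rangle_{L^2_v}=\int w_v^2\sigma_{ij}\rd_{v_i}g\rd_{v_j}g\,\ud v+(\alp+\bt t^2)\int w_v^2\sigma_{ij}z_iz_jg^2\,\ud v+\int \rd_{v_i}(w_v^2)\sigma_{ij}\rd_{v_j}g\, g\,\ud v.
\end{equation*}
By Lemma~\ref{lem:dissipation.norm.equivalence}, the first two terms are $\gtrsim\norm{g}_{\Delta_v(1)}^2$. The cross term is the obstacle. We compute $\rd_{v_i}(w_v^2)=2w_v^2\big(\alp v_i-\bar\bt t(x-tv)_i\big)$ and rewrite this vector in terms of $z$. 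Since $\alp v-\bar\bt t(x-tv)=(\alp+\bar\bt t^2)v-\bar\bt tx$, the computation in Lemma~\ref{lem:weight.compare} shows that it equals $\frac{\alp+\bar\bt t^2}{\sqrt{\alp+\bt t^2}}z+O\big(\frac{|x|}{t}\big)$. Thus its size is $\lesssim \frac{\bar\bt}{\bt}\, t|z|+O(|x|/t)$ plus lower-order contributions in $t$.

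We then use Cauchy--Schwarz with respect to the positive matrix $\sigma_{ij}$:
\begin{equation*}
\Big|\int w_v^2\sigma_{ij}V_i\rd_{v_j}g\,g\Big|\le \Big(\int w_v^2\sigma_{ij}\rd_ig\rd_jg\Big)^{1/2}\Big(\int w_v^2\sigma_{ij}V_iV_jg^2\Big)^{1/2}.
\end{equation*}
For the part of $V$ parallel to $z$, Lemma~\ref{lem:sigma-bounds} gives
\begin{equation*}
\sigma_{ij}V_iV_j\lesssim \Big(\frac{\alp+\bar\bt t^2}{\alp+\bt t^2}\Big)^2 (\alp+\bt t^2)\,\sigma_{ij}z_iz_j,
\end{equation*}
and the prefactor is $\lesssim(\bar\bt/\bt)^2+t^{-2}\lesssim \bt_0^2+t^{-2}$. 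Choosing $\bt_0$ small, this piece is absorbed into the two good terms. Here the factor $t^{-0.1}$ in $\bar\bt(t)$ only helps once $t$ is large, and bounded $t$ is handled separately.

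The remaining error has size $(|x|/t)^2\cdot\frac{e^{-\alp\bt|x|^2/(\alp+\bt t^2)}}{t^2}\jap{z}^{-1}w_v^2g^2$, integrated in $v$, together with corrections at bounded $t$. We split into cases as in Proposition~\ref{prop:sigma-i-est}. When $|x|\ge t^2$, the Gaussian factor together with $(|x|/t)^2\le |x|^2/t^2$ gives a bound $\lesssim t^{-2}\norm{w_vg}^2$. When $|x|\le t^2$, we split into $|x-tv|\le D$ and $|x-tv|\ge D$. On $|x-tv|\le D$, \eqref{eq:stupid.weights} converts the weights into $\norm{g}_{L^2_v}^2+t^{-2}\norm{w_vg}^2$, at the cost of a constant depending on $D$. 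On $|x-tv|\ge D$, the term is compared with $\norm{g}_{\Delta_v(1)}^2$. For this to be absorbed we must show that $|x|^2/t^4$ is small or bounded relative to the available $\jap z^{-1/2}$ gain; we expect this is acceptable because $|x|/t^2\le1$ and Lemma~\ref{lem:z.x-tv.com} applies.

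The delicate step is making sure the $x$-dependent part of $\nabla_v w_v^2$ does not destroy coercivity. We try first to show that it is $O(|x|/t^2)$ relative to $t|z|$ once it is correctly normalized by the $t^{-1}$ in the $\Delta_v$ norm. If that fails, the fallback is to absorb it via a small-$\eta$ splitting at large $\jap z$ in the same way as in Proposition~\ref{prop:sigma-i-est}.
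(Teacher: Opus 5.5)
Your skeleton is the paper's: integrate by parts in $\bar A$, keep $\int w_v^2\sigma_{ij}\rd_{v_i}g\,\rd_{v_j}g$ and $(\alp+\bt t^2)\int w_v^2\sigma_{ij}z_iz_jg^2$, and write $\nabla_v(w_v^2)=2w_v^2\big(\f{\alp+\bar{\bt}t^2}{\sqrt{\alp+\bt t^2}}\,z+r\big)$ with $r=\f{\alp(\bt-\bar{\bt})tx}{\alp+\bt t^2}$. You also treat $\widetilde A$ by \eqref{eq:tildeA.1}, as the paper does.

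Your Cauchy--Schwarz with respect to $\sigma_{ij}$ for the $z$-part is actually sharper than the paper's bounds for its terms $IV$, $V$. It bounds that cross term by exactly $\f{\alp+\bar{\bt}t^2}{\alp+\bt t^2}$ times the two good terms. Since $\bar{\bt}<\bt/3$ and $t\ge 1$, this factor is at most $\f{\alp+\bt/3}{\alp+\bt}<1$, so neither smallness of $\bt_0$ nor a separate treatment of bounded $t$ is needed. Your stated version, $\ls\bt_0^2+t^{-2}$ with an unspecified constant, does not by itself give absorption for bounded $t$. Either keep the exact factor, or put the $\f{\alp}{\sqrt{\alp+\bt t^2}}z$ piece into $\eta\norm{g}_{\Delta_v(1)}^2+C(\eta)t^{-2}\norm{w_vg}_{L^2_v}^2$, as the paper does with $V$.

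The genuine problem is the remainder $r$. After Young's inequality you must control $\eta^{-1}\int w_v^2\sigma_{ij}r_ir_jg^2\,\ud v$. Through $\lambda_2\sim\jap{z}^{-1}$, this is of size $\eta^{-1}\f{|x|^2}{t^4}e^{-\f{\alp\bt|x|^2}{\alp+\bt t^2}}\norm{\jap{z}^{-1/2}w_vg}_{L^2_v}^2$.

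Your plan on $\{|x|\le t^2,\ |x-tv|\ge D\}$ is to absorb this into $\norm{g}_{\Delta_v(1)}^2$, and that step fails. The error carries exactly the same $\jap{z}^{-1}$ weight as the zeroth-order part of $\norm{g}_{\Delta_v(1)}^2$. So there is no $\jap{z}^{-1/2}$ gain at large $D$ to exploit, unlike in Proposition~\ref{prop:sigma-i-est}, where $\widetilde A$ carries $\jap{z}^{-2}$. The relative coefficient $\eta^{-1}|x|^2/t^4$ is then of order $\eta^{-1}$ when $|x|\sim t^2$. Your fallback has the same defect.

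The missing observation is the one you already used for $|x|\ge t^2$, and it holds for every $x$. Since $t\ge 1$, $\f{\alp\bt|x|^2}{\alp+\bt t^2}\ge\f{\alp\bt}{\alp+\bt}\f{|x|^2}{t^2}$, hence $e^{-\f{\alp\bt|x|^2}{\alp+\bt t^2}}|x|^2/t^2\ls 1$ uniformly in $x$. Together with $\jap{z}^{-1}\le 1$, the whole remainder is $\le C(\eta)t^{-2}\norm{w_vg}_{L^2_v}^2$, which is an allowed error term. This is exactly how the paper disposes of its terms $VI$, $VII$; no splitting in $x$ or in $|x-tv|$ is needed.
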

\begin{proof}
Since we need to capture the smallness of $\bt_0$, in this proof, we allow our implicit constants to depend on $\alp$, $\bt$ but not $\bar{\bt}$.

We first consider the term $- \langle w_v^2 g, \rd_{v_i} (\sigma_{ij} \rd_{v_j} g) \rangle_{L^2_v}$.
\begin{equation}
\begin{split}
	&\: - \langle w_v^2 g, \rd_{v_i} (\sigma_{ij} \rd_{v_j} g) \rangle_{L^2_v} = -\langle e^{\alp |v|^2 + \bar{\bt} |x-tv|^2} g, \rd_{v_i} (\sigma_{ij} \rd_{v_j} g) \rangle_{L^2_v} \\
	= &\: \langle e^{\alp |v|^2 + \bar{\bt} |x-tv|^2} \sigma_{ij} \rd_{v_i} g,  \rd_{v_j} g \rangle_{L^2_v} - 2 \bar{\bt} t \langle e^{\alp |v|^2 + \bar{\bt} |x-tv|^2} \sigma_{ij} (x-tv)_i g,  \rd_{v_j} g \rangle_{L^2_v} \\
	&\: + 2 \alp \langle e^{\alp |v|^2 + \bar{\bt} |x-tv|^2} \sigma_{ij} v_i g,  \rd_{v_j} g \rangle_{L^2_v} =: I + II + III.
\end{split}
\end{equation}

$I$ is the main term to be kept. For $II$, $III$, we use that
$$\sigma_{ij} (x-tv)_i = -\f{t \sigma_{ij} z_i}{\sqrt{\alp+\bt t^2}} + \f{\alp \sigma_{ij} x_i}{\alp + \bt t^2} ,\quad \sigma_{ij} v_i =\f{\sigma_{ij} z_i}{\sqrt{\alp + \bt t^2}} + \f{\bt t^2}{\alp + \bt t^2} \f{\sigma_{ij} x_i}{t}.$$
Moreover, by \eqref{eq:sigma.decomposition},
$$\sigma_{ij} z_i \rd_{v_j} g = \f{e^{-\f{\alp \bt |x|^2}{\alp + \bt t^2}}}{\alp + \bt t^2}  \lambda_1 |z| |P_z \rd_{v} g|.$$
From these we obtain
\begin{equation}
\begin{split}
&\: |II| + |III| \\
\ls &\: e^{-\f{\alp \bt |x|^2}{\alp + \bt t^2}} \Big( \bar{\bt} t^{-1} \| w_v^2 \jap{z}^{-2} g P_z \rd_v g\|_{L^1_v} + t^{-3} \| w_v^2 \jap{z}^{-2} g P_z \rd_v g\|_{L^1_v} \\
&\: +  t^{-2} \| w_v^2 \jap{z}^{-3} g (P_z \f xt) P_z \rd_v g\|_{L^1_v} + t^{-2} \| w_v^2 \jap{z}^{-1} g ((I-P_z) \f xt) (I-P_z) \rd_v g\|_{L^1_v} \Big) \\
 =: &\: IV + V + VI + VII.
\end{split}
\end{equation}

The terms $V$, $VI$, $VII$ do not have smallness, but are compensated by additional decay, which we will exploit. The terms $VI$ and $VII$ are slightly harder due to the extra $\f xt$ weights; we consider $VI$ as an example. Using Cauchy--Schwarz and Young's inequalities,
\begin{equation}
\begin{split}
|VI| = &\: e^{-\f{\alp \bt |x|^2}{\alp + \bt t^2}} t^{-2} \| w_v^2 \jap{z}^{-3} g (P_z \f xt) P_z \rd_v g\|_{L^1_v} \\
\leq&\: e^{-\f{\alp \bt |x|^2}{\alp + \bt t^2}} \f{|x|}{t} t^{-2}  \| w_v g \|_{L^2_v} \| w_v \jap{z}^{-\f 32} P_z \rd_v g \|_{L^2_v} \\
\leq &\: \eta e^{-\f{\alp \bt |x|^2}{\alp + \bt t^2}}  t^{-2} \| w_v \jap{z}^{-\f 32} P_z \rd_v g \|_{L^2_v}^2  + C(\eta) e^{-\f{\alp \bt |x|^2}{\alp + \bt t^2}} \f{|x|^2}{t^2} t^{-2} \| w_v g \|_{L^2_v}^2.
\end{split}
\end{equation}
The first term $\ls \eta \| g \|_{\Delta_v(1)}^2$, while for the second term, since $e^{-\f{\alp \bt |x|^2}{\alp + \bt t^2}} \f{|x|^2}{t^2} \ls 1$, the term is bounded by $C(\eta) t^{-2} \| w_v g \|_{L^2_v}^2$. The terms $V$, $VII$ can be treated similarly. Thus, altogether, we have 
\begin{equation}\label{eq:barA.lower.bound.derivative}
\begin{split}
	-\int_{\R^3}w_v^2  g \rd_{v_i} (\sigma_{ij} \rd_{v_j} g) \ud v \geq \kappa_0 \int_{\R^3} w_v^2 \sigma_{ij} \rd_{v_i} g \rd_{v_j} g \ud v - C (\bar{\bt} + \eta) \| g \|_{\Delta_v(1)}^2 - C(\eta) t^{-2} \| w_v g \|_{L^2_v}^2.
\end{split}
\end{equation}
Recalling the definition of $\bar{A}$ and the equivalence of norms in Lemma~\ref{lem:dissipation.norm.equivalence}, and choosing $\bt_0$ (hence $\bar{\bt}$) and $\eta$ small depending on $\kappa_0$ in \eqref{eq:barA.lower.bound.derivative}, it thus follows that 
\begin{equation}\label{eq:barA.lower.bound}
\begin{split}
	-\langle w_v^2  g, \bar{A} g\rangle_{L^2_v} \geq \kappa_1 \| g \|_{\Delta_v(1)}^2 - C(\eta) t^{-2} \| w_v g \|_{L^2_v}^2.
\end{split}
\end{equation} 
Recalling now the definition of $A$ in \eqref{eq:A.decomposition}, we combine \eqref{eq:barA.lower.bound} with \eqref{eq:tildeA.1} and choose $\eta$ in \eqref{eq:tildeA.1} to be sufficiently small to obtain the desired conclusion. \qedhere
\end{proof}
From now on we fix $\bt_0$ (and notice that it depends only on $\alp$ and $\bt$).

Combining Proposition~\ref{prop:weighted-A-bound} with the bound \eqref{eq:K.upper.bound.weighted} for $K$, we immediately obtain the following lower bound for $L$ in a weighted space.
\begin{corollary}\label{cor:weighted-L-bound}
There exists a $\kappa>0$ such that the following holds:
\begin{align*}
\langle  w_v^2 Lg, g \rangle_{L^2_v} &\geq  \kappa \norm{g}_{\Delta_v(1)}^2 - C  \norm{g}_{L^2_v}^2 - C t^{-2} \norm{w_v g}_{L^2_v}^2.
\end{align*}
\end{corollary}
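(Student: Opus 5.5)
Since $L = -A - K$, we write
$$\langle w_v^2 Lg, g\rangle_{L^2_v} = -\langle w_v^2 Ag, g\rangle_{L^2_v} - \langle w_v^2 Kg, g\rangle_{L^2_v}$$
and treat the two pieces separately. For the first piece, Proposition~\ref{prop:weighted-A-bound} (with $\bt_0$ now fixed) gives directly the lower bound
$$-\langle w_v^2 Ag, g\rangle_{L^2_v} \geq \kappa \norm{g}_{\Delta_v(1)}^2 - C\norm{g}_{L^2_v}^2 - Ct^{-2}\norm{w_v g}_{L^2_v}^2.$$

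For the $K$ piece, we only need an upper bound on its absolute value. We apply \eqref{eq:K.upper.bound.weighted} of Proposition~\ref{prop:weighted-K-bound} with $g_1 = g_2 = g$, which gives
$$\big|\langle w_v^2 Kg, g\rangle_{L^2_v}\big| \ls \norm{g}_{L^2_v}^2 + t^{-2}\norm{w_v g}_{L^2_v}^2.$$
Both terms on the right have exactly the form of the error terms already permitted in the conclusion, so no smallness or absorption into the dissipation norm is needed.

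Subtracting the $K$ bound from the $A$ bound and enlarging $C$ yields
$$\langle w_v^2 Lg, g\rangle_{L^2_v} \geq \kappa\norm{g}_{\Delta_v(1)}^2 - C\norm{g}_{L^2_v}^2 - Ct^{-2}\norm{w_v g}_{L^2_v}^2,$$
with the same $\kappa$ as in Proposition~\ref{prop:weighted-A-bound}. The constants depend only on $\alp,\bt$, since $\bt_0$ has been fixed in terms of them.

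There is no genuine obstacle here. The only point to check is that the constant in Proposition~\ref{prop:weighted-K-bound} is independent of $g$ and of $t\ge 1$, which holds because it depends only on $\alp$, $\bt$ and $\bar{\bt}$.
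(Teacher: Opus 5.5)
Your proposal is correct and is exactly the paper's argument: combine the lower bound for $-\langle w_v^2 Ag, g\rangle_{L^2_v}$ from Proposition~\ref{prop:weighted-A-bound} with the upper bound \eqref{eq:K.upper.bound.weighted} for $K$ (taking $g_1 = g_2 = g$). The resulting $K$ error terms are already of the form allowed in the conclusion, so no absorption into the dissipation norm is needed.
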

We will also need upper bounds for $A$.
\begin{proposition}\label{prop:weighted-A-upper-bound}
The following estimates hold:
\begin{align}
     \left|\jap{ w_v^2 A g_1, g_2 }_{L^2_v} \right|
\ls &\: \norm{g_1}_{\Delta_v(1)}\norm{g_2}_{\Delta_v(1)} + t^{-2} \norm{w_v g_1}_{L^2_v}\norm{w_v g_2}_{L^2_v}, \label{eq:A.upper.bound.1}\\
    \left|\jap{ A g_1, g_2 }_{L^2_v} \right| 
\ls &\:  e^{-\f{\alp \bt |x|^2}{\alp + \bt t^2}} \Big( \jap{ \jap{z}^{-1}  |g_1|, |g_2|}_{L^2_v}  +  t^{-2}  \jap{ \jap{z}^{-3} | P_z \rd_v g_1|, | P_z \rd_v g_2 |}_{L^2_v} \nonumber \\
&\: \qquad \qquad +  t^{-2} \jap{ \jap{z}^{-1} |(I - P_z) \rd_v g_1|, |(I - P_z) \rd_v g_2|}_{L^2_v}  \Big).\label{eq:A.upper.bound.2}
\end{align}
\end{proposition}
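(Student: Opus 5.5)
We split $A = \bar{A} + \widetilde{A}$ as in \eqref{eq:A.decomposition} and bound each piece separately, treating the weighted estimate \eqref{eq:A.upper.bound.1} and the unweighted estimate \eqref{eq:A.upper.bound.2} in parallel. For $\widetilde{A}$, the unweighted bound is already \eqref{eq:tildeA.2}, and $\jap{z}^{-2}\le \jap{z}^{-1}$ makes it acceptable for \eqref{eq:A.upper.bound.2}. For the weighted bound we use \eqref{eq:tildeA.1} with $\eta = 1$. Its right-hand side involves $\|g_i\|_{L^2_v}$, which is not directly controlled by $\|g_i\|_{\Delta_v(1)}$ since the latter carries a factor $e^{-\alp\bt|x|^2/(2(\alp+\bt t^2))}$. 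To avoid this I would instead rerun the pointwise estimate \eqref{eq:tildeA.CS}, which gives $e^{-\f{\alp\bt|x|^2}{\alp+\bt t^2}}\int \jap{z}^{-2} w_v^2 |g_1||g_2|\,\ud v$. Cauchy--Schwarz with $\jap{z}^{-2}\le \jap{z}^{-1/2}\jap{z}^{-1/2}$ bounds this directly by $\|g_1\|_{\Delta_v(1)}\|g_2\|_{\Delta_v(1)}$, since the $|g|$ part of $\Delta_v(1)$ is exactly $e^{-\alp\bt|x|^2/(2(\alp+\bt t^2))}\|w_v\jap{z}^{-1/2} g\|_{L^2_v}$.

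For $\bar{A}$, the zeroth-order part $(\alp+\bt t^2)\sigma_{ij}z_iz_j$ equals $e^{-\f{\alp\bt|x|^2}{\alp+\bt t^2}}\lambda_1|z|^2$ by Lemma~\ref{lem:sigma-bounds}. Since $\lambda_1|z|^2 \ls \jap{z}^{-1}$, its contribution is bounded by $e^{-\f{\alp\bt|x|^2}{\alp+\bt t^2}}\jap{\jap{z}^{-1}w^2|g_1|,|g_2|}$ with $w=w_v$ or $w=1$. This gives the corresponding terms in both estimates by Cauchy--Schwarz.

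For the divergence part, we integrate by parts once. In the unweighted case this yields $-\int\sigma_{ij}\rd_{v_i}g_1\rd_{v_j}g_2$. Splitting by $P_z$ and $I-P_z$ via \eqref{eq:sigma.decomposition}, and using $\lambda_1\ls\jap{z}^{-3}$, $\lambda_2\ls\jap{z}^{-1}$, $(\alp+\bt t^2)^{-1}\ls t^{-2}$, gives exactly the two derivative terms in \eqref{eq:A.upper.bound.2}. In the weighted case there is the additional term $-\int \rd_{v_i}(w_v^2)\sigma_{ij} g_2 \rd_{v_j} g_1$, where $\rd_{v_i}w_v^2 = w_v^2(2\alp v_i - 2\bar\bt t(x-tv)_i)$. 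The main term $\int w_v^2\sigma_{ij}\rd_ig_1\rd_jg_2$ is bounded by $\|g_1\|_{\Delta_v(1)}\|g_2\|_{\Delta_v(1)}$ through Cauchy--Schwarz and Lemma~\ref{lem:dissipation.norm.equivalence}.

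The main obstacle is the cross term from $\rd_v(w_v^2)$. I would handle it exactly as the terms $II$ and $III$ in the proof of Proposition~\ref{prop:weighted-A-bound}: rewrite $\sigma_{ij}(x-tv)_i$ and $\sigma_{ij}v_i$ in terms of $\sigma_{ij}z_i$ and $\sigma_{ij}x_i$, and note that these terms have the same structure as $II$ and $III$ there. Those manipulations (the terms $IV$ through $VII$) bound them by $\|g_1\|_{\Delta_v(1)}\|g_2\|_{\Delta_v(1)} + t^{-2}\|w_vg_2\|_{L^2_v}\|w_v\jap{z}^{-3/2}P_z\rd_vg_1\|\cdots$. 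Unlike the coercive setting, $\bar\bt$ need not be small here, since only an upper bound is required.

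The genuinely delicate pieces are the $\f xt$-weighted terms such as $VI$ and $VII$. For these I would put $g_2$ in $t^{-1}\|w_vg_2\|_{L^2_v}$ times $e^{-\alp\bt|x|^2/(2(\alp+\bt t^2))}|x|/t \ls 1$, and put the derivative factor of $g_1$ in $\|g_1\|_{\Delta_v(1)}$ with an extra $t^{-1}$. This yields $t^{-1}\|w_v g_2\|\,\|g_1\|_{\Delta_v(1)}$, which is not quite of the stated product form. To fix this, I would first try symmetrizing, i.e. splitting the cross term evenly between $g_1$ and $g_2$ by a half integration by parts. Failing that, I would apply AM–GM to the product. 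Since the estimate is bilinear, the resulting bound can then be reduced to the stated product form by the standard scaling $g_1\to\lambda g_1$, $g_2\to\lambda^{-1}g_2$ and optimizing over $\lambda$.
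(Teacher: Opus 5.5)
Up to the cross term, your argument is the paper's. The paper also splits $A=\bar{A}+\widetilde{A}$, treats $\widetilde{A}$ in the weighted case directly via \eqref{eq:tildeA.CS} rather than \eqref{eq:tildeA.1}, and integrates $\bar{A}$ by parts using \eqref{eq:sigma.decomposition}. For the term with $\rd_v(w_v^2)$ it only says that ``a similar argument as in the proof of Proposition~\ref{prop:weighted-A-bound}'' applies.

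You correctly notice that the bilinear version of the bounds for $VI$, $VII$ there gives $t^{-1}\|w_vg_2\|_{L^2_v}\|g_1\|_{\Delta_v(1)}$. Your repair, however, fails. This mixed product is not controlled by the right-hand side of \eqref{eq:A.upper.bound.1}. Take $g_1$ rapidly oscillating, so that $\|w_vg_1\|_{L^2_v}\ll t\|g_1\|_{\Delta_v(1)}$. Take $g_2$ (say at $x=0$) smooth on the unit $z$-scale and concentrated at $|z|\sim M\gg t^2$, so that $\|g_2\|_{\Delta_v(1)}\ls M^{-1/2}\|w_vg_2\|_{L^2_v}\ll t^{-1}\|w_vg_2\|_{L^2_v}$. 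AM--GM followed by $g_1\to\lambda g_1$, $g_2\to\lambda^{-1}g_2$ and optimization in $\lambda$ returns exactly $t^{-1}\|g_1\|_{\Delta_v(1)}\|w_vg_2\|_{L^2_v}$. Rescaling only turns $X^2+Y^2$ back into $XY$ for the same pair of norms; it cannot re-pair $\|g_1\|_{\Delta_v(1)}$ with $\|g_2\|_{\Delta_v(1)}$. Symmetrizing by half an integration by parts only produces two mixed products of the same kind.

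The missing step is genuinely bilinear. First, never discard the $\jap{z}^{-1/2}$ on $g_2$. Write $D_k$ for the derivative part of $\|g_k\|_{\Delta_v(1)}$, $m_k:=e^{-E/2}\|w_v\jap{z}^{-1/2}g_k\|_{L^2_v}$ with $E:=\f{\alp\bt|x|^2}{\alp+\bt t^2}$, and $R:=|x|/t^2$. Since $t^{-2}\f{|x|}{t}=t^{-1}R$, Cauchy--Schwarz gives $|VI|+|VII|\ls R\,D_1m_2$, which settles $|x|\le t^2$. The same remark turns your bound for $V$ into $t^{-2}\|g_1\|_{\Delta_v(1)}\|g_2\|_{\Delta_v(1)}$.

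For $|x|\ge t^2$, take the $x$-part of the cross term, $c(t)\int w_v^2\f{x_i}{t}\sigma_{ij}\,g_2\,\rd_{v_j}g_1\,\ud v$ with $|c(t)|\ls 1$, and integrate by parts to move $\rd_{v_j}$ onto $g_2$. The new derivative term is bounded by $R\,m_1D_2$. The zeroth-order terms are bounded by $e^{-E}R(1+R)\|w_vg_1\|_{L^2_v}\|w_vg_2\|_{L^2_v}$. This uses $\rd_v(w_v^2)=2\sqrt{\alp+\bar{\bt}t^2}\,\bar{z}\,w_v^2$, where $\bar{z}:=\sqrt{\alp+\bar{\bt}t^2}\,v-\f{\bar{\bt}tx}{\sqrt{\alp+\bar{\bt}t^2}}$ satisfies $|\bar{z}|\ls |z|+R$ by the proof of Lemma~\ref{lem:weight.compare}, and contraction with $\sigma_{ij}$ absorbs the $|z|$. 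Since $E\gtrsim R^2t^2$, we have $e^{-E}(1+R)^3\ls t^{-2}$ in this region.

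Now apply $\min\{X,Y+Z\}\le\sqrt{XY}+Z$ to the two bounds. This bounds the term by $R\sqrt{D_1D_2m_1m_2}+t^{-2}\|w_vg_1\|_{L^2_v}\|w_vg_2\|_{L^2_v}$. Then $R\sqrt{D_1D_2m_1m_2}\le D_1D_2+R^2m_1m_2$, and $R^2m_1m_2\le R^2e^{-E}\|w_vg_1\|_{L^2_v}\|w_vg_2\|_{L^2_v}\ls t^{-2}\|w_vg_1\|_{L^2_v}\|w_vg_2\|_{L^2_v}$. This yields the stated product form. The bilinear estimate needs this two-sided bound plus geometric mean, not the quadratic AM--GM of Proposition~\ref{prop:weighted-A-bound}.
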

\begin{proof}
    We begin with \eqref{eq:A.upper.bound.2}. For $\bar{A}$, the bound is immediate after integrating by parts in $v$ and using \eqref{eq:sigma.decomposition}. For $\widetilde{A}$, the upper bound follows from \eqref{eq:tildeA.2}.

    We now turn to \eqref{eq:A.upper.bound.1}. A similar argument as in the proof of Proposition~\ref{prop:weighted-A-bound} gives 
    \begin{equation}\label{eq:A.upper.bound.proof.1}
        \Big| \langle w_v^2  g_2, \bar{A} g_1\rangle_{L^2_v} \Big| \ls  \| g_1 \|_{\Delta_v(1)}\| g_2 \|_{\Delta_v(1)} + t^{-2} \| w_v g_1 \|_{L^2_v} \| w_v g_2 \|_{L^2_v}.
    \end{equation}
    For $\widetilde{A}$, we use \eqref{eq:tildeA.CS} to deduce that
    \begin{equation}\label{eq:A.upper.bound.proof.2}
        \Big| \langle w_v^2  g_2, \widetilde{A} g_1\rangle_{L^2_v} \Big| \ls  \| g_1 \|_{\Delta_v(1)}\| g_2 \|_{\Delta_v(1)}.
    \end{equation}
    Combining \eqref{eq:A.upper.bound.proof.1}--\eqref{eq:A.upper.bound.proof.2} yields the desired conclusion. \qedhere
\end{proof}

\section{Energy estimates for the linearized equations}\label{sec:eng-est-lin}

We will consider the following linear equation with a source:
\begin{equation}\label{eq:lin-with-source}
\partial_t h+v_i\partial_{x_i}h+L h=\mathfrak{S}.
\end{equation}
In this section we allow our implicit constants to depend on $\alp$, $\bt$ and $K_{0}$, but not on $B$. (Note that since $\bt_0$ is already fixed depending on $\alp$ and $\bt$, the constants can also depend on $\bt_0$.) It will be important to use the largeness of $B$ to close our estimates.

\subsection{Preliminary weighted estimate for the linearized equation}

\begin{lemma}\label{lem:general-eng-estimate}
Let $h$ satisfy \eqref{eq:lin-with-source}. Define $w =w_{(\vartheta,\kay,\iota)}$ as in \eqref{eq:M} with $\kay\in \mathbb Z_{\geq 0}$, and $\iota,\, \vartheta \in \{0,1\}$. 
Then the following estimate holds for any $B\geq 1$:
\begin{align*}
&\: \f{\ud}{\ud t} \norm{w h}_{L^2_{x,v}}^2 + B t^{-1.1} \norm{w h}_{L^2_{x,v}}^2 + \vartheta t^{-1.1} \norm{|x-tv| w h}_{L^2_{x,v}}^2+\int_{\R^3}\int_{\R^3}w^2 hLh\ud v\ud x\\
\lesssim &\: \kay^2 B^{-1} t^{-2.9}\norm{\jap{x/t}^{-1} |x-tv| w h}_{L^2_{x,v}}^2 + \iota B^{-1}  t^{-2.9}\norm{\jap{x/t} |x-tv| w h}_{L^2_{x,v}}^2 \\
&\: + \iota  t^{-3} \norm{\jap{x/t} w h}_{L^2_{x,v}}^2 +\Big| \int_{\R^3}\int_{\R^3} w^2h \mathfrak{S}\ud v\ud x \Big|.
\end{align*}

\end{lemma}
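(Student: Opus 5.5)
The plan is to take the $L^2_{x,v}$ inner product of \eqref{eq:lin-with-source} with $w^2 h$ and integrate by parts in $x$ in the transport term, so that everything reduces to a pointwise bound on $T(\log w^2)$. Since $w^2 h\, T h = \f 12 T(w^2 h^2) - \f 12 h^2\, T(w^2)$ and $\int v_i\rd_{x_i}(\cdot)\,\ud x = 0$ (assuming enough decay of $h$ to justify this), we get
\begin{equation*}
\f 12 \f{\ud}{\ud t}\norm{wh}_{L^2_{x,v}}^2 - \f 12 \int_{\R^3}\int_{\R^3} T(\log w^2)\, w^2 h^2 \,\ud v \ud x + \int_{\R^3}\int_{\R^3} w^2 h L h\,\ud v\ud x = \int_{\R^3}\int_{\R^3} w^2 h \mathfrak S\,\ud v\ud x.
\end{equation*}
The $L$ term and the source term then appear exactly as in the statement. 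The remaining task is to show that $-T(\log w^2)$ gives the good terms $B t^{-1.1}$ and $\vartheta t^{-1.1}|x-tv|^2$, up to the listed errors. The constant $0.1$ is absorbed into the implicit constant, which is allowed because it does not depend on $B$.

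I would compute $T\log w^2$ factor by factor in \eqref{eq:M}.

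\textbf{The $\exp(Bt^{-0.1})$ factor.} It contributes $T(Bt^{-0.1}) = -0.1 B t^{-1.1}$. This is the main damping term.

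\textbf{The $\vartheta$ factor.} Since $Tv=0$ and $T(x-tv)=0$, only the time dependence of $\bar\bt$ survives. It gives $\vartheta\, \bar\bt'(t)|x-tv|^2 = -0.1\vartheta\bt_0 t^{-1.1}|x-tv|^2$, which has a good sign.

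\textbf{The polynomial factor.} We have
\begin{equation*}
T\log\jap{x/t}^{2\kay} = 2\kay\, \f{x\cdot(tv-x)}{t^3\jap{x/t}^2},
\end{equation*}
whose absolute value is at most $2\kay\, t^{-2}\jap{x/t}^{-1}|x-tv|$. By Young's inequality this is at most $\ep B t^{-1.1} + C\ep^{-1}\kay^2 B^{-1} t^{-2.9}\jap{x/t}^{-2}|x-tv|^2$. The first piece is absorbed into the damping term, and the second is the first error term in the statement.

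\textbf{The $\iota$ factor (the main obstacle).} Set $D=\alp+\bar\bt t^2$ and $F=\alp\bar\bt|x|^2/D$. Grouping the $v\cdot\nabla_x F$ term with the part of $\rd_t F$ coming from $\rd_t(t^2)$ gives
\begin{equation*}
\f{2\alp\bar\bt\, x\cdot(vD-\bar\bt t x)}{D^2} = \f{2\alp\bar\bt\, x\cdot\big(\alp v + \bar\bt t(tv-x)\big)}{D^2}.
\end{equation*}
I would bound the two pieces of this expression as follows.
\begin{itemize}
\item The $\bar\bt t(tv-x)$ piece is $\ls t^{-2}(|x|/t)|x-tv|$. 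Young's inequality bounds it by $\ep B t^{-1.1} + B^{-1}t^{-2.9}\jap{x/t}^2|x-tv|^2$, which is the second error term.
\item For the $\alp v$ piece, which is $\ls t^{-4}|x||v|$, I would write $v = (x-(x-tv))/t$. This yields $t^{-3}(|x|/t)^2 \ls t^{-3}\jap{x/t}^2$, which is the third error term, plus a term $t^{-5}|x||x-tv|$ that is handled like the previous one.
\end{itemize}
Finally, the terms from $\bar\bt'(t)$ inside $F$ equal $\rd_{\bar\bt}F\cdot\bar\bt'$. Here $\rd_{\bar\bt}F = \alp^2|x|^2/D^2 \ls t^{-4}|x|^2$ and $|\bar\bt'|\ls t^{-1.1}$, so these are $\ls t^{-3}\jap{x/t}^2$ as well.

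The delicate points are all in this last factor: keeping the algebra sharp enough that no term of size $t^{-2}(|x|/t)^2$ survives un-paired with $|x-tv|$, and making sure every remaining piece carries either the $t^{-2.9}$ or the $t^{-3}$ decay. Once that is checked, multiplying by $2$ and choosing $\ep$ small independently of $B$ gives the stated inequality.
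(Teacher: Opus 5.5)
Your proposal is correct and follows the paper's proof. You multiply by $w^2h$, compute $-T(w^2)$ factor by factor, and absorb the cross terms into the $Bt^{-1.1}$ damping via Young's inequality, which produces exactly the three listed errors. The paper instead splits $T = (\rd_t + \f{x_i}{t}\rd_{x_i}) - \f{(x-tv)_i}{t}\rd_{x_i}$ and drops the $\bar{\bt}'$ contribution of the $\iota$-factor as nonnegative, whereas you regroup via $v = (x-(x-tv))/t$ and bound that contribution as a $t^{-3}\jap{x/t}^2$ error; both are equally valid.
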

\begin{proof}
We multiply the equation \eqref{eq:lin-with-source} by $w^2 h$ to get
\begin{align*}
\frac{1}{2}(\rd_t+v_i\rd_{x_i})(w^2h^2)-\frac{1}{2}(\rd_t w^2+v_i\rd_{x_i}w^2)  h^2+w^2hLh=w^2\mathfrak{S}\cdot h.
\end{align*}
Now we note that for $T=\rd_t+v_i\rd_{x_i}$, we have $T v_j = T (x-tv)_j =0$ and thus
\begin{equation}\label{eq:Tw.exact.1}
-Tw^2=\Bigg( \f B{10} t^{-1.1} -\vartheta(\rd_t \bar{\bt}(t))|x-tv|^2 -2\kay T(|x|/t)\f{|x|}{t}\jap{x/t}^{-2}- \iota T\left(\frac{\alpha\bar{\bt}(t) |x|^2}{\alpha+\bar{\bt}(t) t^2}\right)\Bigg)w^2.
\end{equation}
Since 
\begin{align}
-\rd_t \bar{\bt}(t)=&\: - \beta_0\rd_t (1+t^{-0.1})=0.1 \beta_0 t^{-1.1}, \label{eq:Tw.exact.2}\\
-T(|x|/t)= &\: -(\rd_t +\frac{x_i}{t}\rd_{x_i})\Big(\f{|x|}{t} \Big)+\f{(x-tv)_i}{t}\partial_{x_i} \Big(\f{|x|}{t} \Big) =\f{(x-tv)_i x_i}{|x|t^2}, \label{eq:Tw.exact.3}
\end{align}
and for $\iota\in\{0,1\},$
\begin{equation}\label{eq:Tw.exact.4}
    \begin{split}
        &\: -\iota  T\left(\frac{\alpha\bar{\bt}(t) |x|^2}{\alpha+\bar{\bt}(t) t^2}\right) \\
=&\: -\iota(\rd_t +\frac{x_i}{t}\rd_{x_i})\left(\frac{\alpha\bar{\bt}(t) |x|^2}{\alpha+\bar{\bt}(t) t^2}\right)+\iota \f{(x-tv)_i}{t}\partial_{x_i}\left(\frac{\alpha\bar{\bt}(t) |x|^2}{\alpha+\bar{\bt}(t) t^2}\right)\\
= &\: -\f{2\iota \alp \bar{\bt}(t) |x|^2}{t(\alpha+\bar{\bt}(t)t^2) } \Big( 1 - \f{\bar{\bt}(t) t^2}{\alpha+\bar{\bt}(t)t^2}\Big) +\f{\iota \alp\bar{\bt}(t)  \bar{\bt}'(t)|x|^2 t^2 }{(\alpha+\bar{\bt}(t)t^2)^2} -\f{\iota \alp \bar{\bt}'(t) |x|^2}{\alpha+\bar{\bt}(t)t^2} + \f{2\iota \alpha\bar{\bt}(t)(x-tv)_ix_i}{t(\alpha+\bar{\bt}(t)t^2)}\\
=&\: -\f{2\iota \alp^2 \bar{\bt}(t) |x|^2}{t(\alpha+\bar{\bt}(t)t^2)^2 }  - \f{\iota \alp^2 \bar{\bt}'(t) |x|^2}{(\alpha+\bar{\bt}(t)t^2)^2 } + \f{2\iota \alpha\bar{\bt}(t)(x-tv)_ix_i}{t(\alpha+\bar{\bt}(t)t^2)}.
    \end{split}
\end{equation}

Using these observations and integrating in $x,v$, we get that 
\begin{equation}\label{eq:basic.EE.diff}
\begin{split}
&\: \f{\ud}{\ud t} \norm{wh}_{L^2_{x,v}}^2 + B t^{-1.1} \norm{w h}_{L^2_{x,v}}^2  + \vartheta t^{-1.1} \norm{|x-tv| w h}_{L^2_{x,v}}^2+\int_{\R^3}\int_{\R^3}w^2hLh\ud v\ud x\\
\lesssim&\: \int_{\R^3}\int_{\R^3} t^{-2} |x-tv| \Big( \kay \jap{x/t}^{-1} + \iota  \jap{x/t}\Big) w^2h^2\ud v\ud x+\iota \int_{\R^3}\int_{\R^3} t^{-3}\jap{x/t}^{2} w^2h^2\ud v\ud x\\
&\: \quad+ \Big| \int_{\R^3}\int_{\R^3} w^2 h \mathfrak{S}\ud v\ud x \Big|,
\end{split}
\end{equation}
where we dropped the non-negative term $- \f{\iota \alp^2 \bar{\bt}'(t) |x|^2}{(\alpha+\bar{\bt}(t)t^2)^2 }w^2 h^2$.

Using Young's inequality, for every $\eta >0$, the terms on the first line of the right-hand side of \eqref{eq:basic.EE.diff} can be controlled by
\begin{equation}
\begin{split}
&\: \eta B t^{-1.1}\norm{w h}_{L^2_{x,v}}^2 + C\eta^{-1} B^{-1} \kay^2  t^{-2.9}\norm{\jap{x/t}^{-1} |x-tv|w h}_{L^2_{x,v}}^2 \\
&\: + C\eta^{-1} B^{-1} \iota t^{-2.9}\norm{\jap{x/t} |x-tv| w h}_{L^2_{x,v}}^2 + C \iota t^{-3} \norm{\jap{x/t} w h}_{L^2_{x,v}}^2.
\end{split}
\end{equation}
Choosing $\eta>0$ sufficiently small, the first term can be absorbed by the second term on the left-hand side of \eqref{eq:basic.EE.diff}. Now fixing $\eta>0$ and putting all these together gives the desired estimate. \qedhere
\end{proof}

Now we will use Lemma~\ref{lem:general-eng-estimate} with different parameters. We already observe that $\kay \neq 0$ or $\iota \neq 0$ creates extra error terms on the right-hand side, while $\vartheta \neq 0$ creates difficulties in using the term $\langle w^2 h, Lh \rangle_{L^2_{x,v}}$ on the left-hand side.

We start with the case of $\iota=\vartheta=0.$
\begin{proposition}\label{prop:pure-poly-x-weights}
Let $h$ satisfy \eqref{eq:lin-with-source} then for $\kay \in \mathbb Z_{\geq 0}$ and $\iota=\vartheta=0$, we have the following estimate: 
\begin{equation}\label{eq:pure-poly-x-weights}
\begin{split}
&\: \f{\ud}{\ud t} \norm{w_{(0,\kay,0)} h}_{L^2_{x,v}}^2+\norm{ (I-\Pi)h}^2_{\calD_{x,v}(0,\kay,0)} 
+B t^{-1.1} \norm{w_{(0,\kay,0)} h}_{L^2_{x,v}}^2\\
\lesssim &\: B^{-1} t^{-2.9} \norm{w_{(1,\kay,0)} h}_{L^2_{x,v}}^2+ \Big| \int_{\R^3}\int_{\R^3} w_{(0,\kay,0)}^{2} h \mathfrak{S}\ud v\ud x \Big|.
\end{split}
\end{equation}
\end{proposition}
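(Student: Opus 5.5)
We plan to apply Lemma~\ref{lem:general-eng-estimate} with $w = w_{(0,\kay,0)}$, i.e.\ $\vartheta=\iota=0$. In that case the terms with $\vartheta$ and $\iota$ drop out, and we obtain
\begin{equation*}
\f{\ud}{\ud t} \norm{w_{(0,\kay,0)} h}_{L^2_{x,v}}^2 + B t^{-1.1} \norm{w_{(0,\kay,0)} h}_{L^2_{x,v}}^2 + \int_{\R^3}\int_{\R^3} w_{(0,\kay,0)}^2 hLh\,\ud v\ud x \ls \kay^2 B^{-1} t^{-2.9} \norm{\jap{x/t}^{-1}|x-tv| w_{(0,\kay,0)} h}_{L^2_{x,v}}^2 + \Big|\int\int w_{(0,\kay,0)}^2 h\mathfrak S\Big|.
\end{equation*}
This leaves two tasks: bound the first term on the right by $B^{-1}t^{-2.9}\norm{w_{(1,\kay,0)} h}^2_{L^2_{x,v}}$, and show that the $L$-term controls $\norm{(I-\Pi)h}^2_{\calD_{x,v}(0,\kay,0)}$. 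Since $\kay$ is fixed (bounded by $K_{0}$), $\kay^2$ can be absorbed into the implicit constant.

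For the first task, observe that $w_{(1,\kay,0)}^2 = w_{(0,\kay,0)}^2 e^{\alp|v|^2+\bar{\bt}|x-tv|^2}$. Since $\bar{\bt}\geq \bt_0>0$, we have $|x-tv|^2\ls e^{\bar{\bt}|x-tv|^2}$, and of course $\jap{x/t}^{-1}\leq 1$. Hence $\jap{x/t}^{-1}|x-tv| w_{(0,\kay,0)} \ls w_{(1,\kay,0)}$ pointwise, and the first term is controlled as required.

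For the second task, note that $w_{(0,\kay,0)}^2 = \jap{x/t}^{2\kay}\exp(Bt^{-0.1})$ is independent of $v$. It therefore factors out of the $v$-integral:
\begin{equation*}
\int\int w_{(0,\kay,0)}^2 hLh\,\ud v\ud x = \int_{\R^3} w_{(0,\kay,0)}^2 \Big(\int_{\R^3} hLh\,\ud v\Big)\ud x.
\end{equation*}
Lemma~\ref{lem:lower-bound-L} applies for each fixed $(t,x)$ and gives $\int hLh\,\ud v\geq \delta_0\norm{(I-\Pi)h}_{\Delta_v}^2$. Because $w_{(0,\kay,0)}$ does not depend on $v$, it commutes with both $\Pi$ and $\rd_v$, so $w_{(0,\kay,0)}^2\norm{(I-\Pi)h}^2_{\Delta_v} = \norm{w_{(0,\kay,0)}(I-\Pi)h}^2_{\Delta_v(0)}$. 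Integrating in $x$ then yields exactly $\delta_0\norm{(I-\Pi)h}^2_{\calD_{x,v}(0,\kay,0)}$ by Definition~\ref{def:calD}.

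Combining the two tasks proves \eqref{eq:pure-poly-x-weights}. There is no real obstacle here. The only point needing care is that the coercivity of Lemma~\ref{lem:lower-bound-L} survives because the weight is $v$-independent. This is exactly what fails for $\vartheta=1$, which is why the $|x-tv|$ moment is pushed onto the $w_{(1,\kay,0)}$ norm with the small factor $B^{-1}t^{-2.9}$.
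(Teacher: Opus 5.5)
Your proposal is correct and is essentially the paper's proof: apply Lemma~\ref{lem:general-eng-estimate} with $\iota=\vartheta=0$, use the $v$-independence of $w_{(0,\kay,0)}$ to invoke Lemma~\ref{lem:lower-bound-L} at each fixed $(t,x)$, and dominate $|x-tv|\,w_{(0,\kay,0)}$ by $w_{(1,\kay,0)}$ via the Gaussian factor $e^{\bar{\bt}|x-tv|^2/2}$ with $\bar{\bt}\geq\bt_0$. Your remarks that the factor $\kay^2$ and the dependence on $\bt_0$ go into the implicit constant make explicit what the paper leaves unstated.
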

\begin{proof}
We use Lemma~\ref{lem:general-eng-estimate} with $\iota=\vartheta=0$. Thus, all the terms on the right-hand side with $\iota$ vanish. Further, we use Lemma~\ref{lem:lower-bound-L} to get a coercive bound for $\int_{\R^3}\int_{\R^3}w_{(0,\kay,0)}^2hLh\ud v\ud x$ which is possible since $\jap{x/t}$ is $v$-independent and so
$$\int_{\R^3}\int_{\R^3}w_{(0,\kay,0)}^2hLh\ud v\ud x=\int_{\R^3}\int_{\R^3}e^{Bt^{-0.1}}\jap{x/t}^{\kay}hL\left(h\jap{x/t}^{\kay}\right)\ud v\ud x.$$


Finally, we bound $t^{-2.9}\norm{|x-tv|w h}_{L^2_{x,v}}^2$ by $t^{-2.9} \norm{w_{(1,\kay,0)} h}_{L^2_{x,v}}^2$.
\end{proof}

Next, we allow for non-zero $\kay$ and $\vartheta$, but still require $\iota = 0$. Notice that because of an error term $\norm{w_{(0,\kay,0)} h}_{L^2_{x,v}}^2$ on the right-hand side of the estimate, when this estimate is applied, we will have to allow $\norm{w_{(1,\kay,0)} h}_{L^2_{x,v}}^2$ to grow.
\begin{proposition}\label{prop:Guassian-v-weights}
Let $h$ satisfy \eqref{eq:lin-with-source} then for $\kay \in\mathbb Z_{\geq 0}$, $\vartheta=1$, $\iota=0$, we have the following estimate:
\begin{align*}
\f{\ud}{\ud t} \norm{w_{(1,\kay,0)} h}_{L^2_{x,v}}^2+&\:\norm{h}_{\calD_{x,v}(1,\kay,0)}^2 + B t^{-1.1} \norm{ w_{(1,\kay,0)} h}_{L^2_{x,v}}^2 + t^{-1.1} \norm{|x-tv| w_{(1,\kay,0)} h}_{L^2_{x,v}}^2 \\
&\quad\lesssim \Big| \int_{\R^3}\int_{\R^3}w^2_{(1,\kay,0)} h \mathfrak{S}\ud v\ud x \Big| +\norm{w_{(0,\kay,0)} h}_{L^2_{x,v}}^2.
\end{align*}
\end{proposition}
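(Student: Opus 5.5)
We apply Lemma~\ref{lem:general-eng-estimate} with $\vartheta=1$, $\iota=0$ and the given $\kay$. All $\iota$-terms on the right-hand side vanish, so the only error term besides the source is $\kay^2 B^{-1} t^{-2.9}\norm{\jap{x/t}^{-1}|x-tv| w_{(1,\kay,0)} h}_{L^2_{x,v}}^2$. Since $\jap{x/t}^{-1}\le 1$ and $t^{-2.9}\le t^{-1.1}$, this term is bounded by $C\kay^2 B^{-1} t^{-1.1}\norm{|x-tv|w_{(1,\kay,0)}h}_{L^2_{x,v}}^2$. For $B$ large (depending only on $\alp,\bt,K_0$, as permitted since constants here do not depend on $B$), it is absorbed into the good term $t^{-1.1}\norm{|x-tv| w_{(1,\kay,0)} h}^2_{L^2_{x,v}}$ on the left.

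The main step is a lower bound for the collision term $\int\int w_{(1,\kay,0)}^2 hLh\,\ud v\,\ud x$. The weight factors as $w_{(1,\kay,0)}^2 = e^{Bt^{-0.1}}\jap{x/t}^{2\kay} w_v^2$, and the prefactor $e^{Bt^{-0.1}}\jap{x/t}^{2\kay}$ is independent of $v$. So, for each fixed $x$, we apply Corollary~\ref{cor:weighted-L-bound} to $g = e^{Bt^{-0.1}/2}\jap{x/t}^{\kay} h$ (since $L$ acts only in $v$, it commutes with this factor). This gives
$$\langle w_v^2 Lg,g\rangle_{L^2_v}\ge \kappa\norm{g}_{\Delta_v(1)}^2 - C\norm{g}_{L^2_v}^2 - Ct^{-2}\norm{w_v g}_{L^2_v}^2.$$
Integrating in $x$, the first term is comparable to $\norm{h}^2_{\calD_{x,v}(1,\kay,0)}$. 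Here we interpret $\calD_{x,v}(1,\kay,0)$ via the $x$-integral of $\norm{\cdot}_{\Delta_v(1)}$ with the $v$-independent part of the weight; with the weighted $\Delta_v(1)$ norm this matches, possibly up to harmless constants. The second term is exactly $C\norm{w_{(0,\kay,0)}h}_{L^2_{x,v}}^2$, one of the allowed right-hand side terms. The third term is $Ct^{-2}\norm{w_{(1,\kay,0)}h}^2_{L^2_{x,v}}$, which is absorbed into $Bt^{-1.1}\norm{w_{(1,\kay,0)}h}^2_{L^2_{x,v}}$ once $B\ge 2C$, because $t^{-2}\le t^{-1.1}$ for $t\ge1$.

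After these absorptions we are left with half of $Bt^{-1.1}\norm{w_{(1,\kay,0)}h}^2$ and half of $t^{-1.1}\norm{|x-tv|w_{(1,\kay,0)}h}^2$ on the left, which is acceptable up to constants. The source term stays as it is. The only delicate point is bookkeeping: the constant $C$ from Corollary~\ref{cor:weighted-L-bound}, which depends only on $\alp,\bt$ since $\bt_0$ is fixed, and the $\kay^2$ factor must both be beaten by the choice of $B$. This is consistent with $B$ being fixed at the end of the section, depending on $K_0\ge \kay$. I expect no real obstacle beyond verifying that the $v$-independent weight can be pulled through $L$, which is the same observation already used in Proposition~\ref{prop:pure-poly-x-weights}.
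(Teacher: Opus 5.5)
Your proposal is correct and is essentially the paper's proof. You apply Lemma~\ref{lem:general-eng-estimate} with $\vartheta=1$, $\iota=0$, absorb the $\kay^2B^{-1}t^{-2.9}$ error into $t^{-1.1}\norm{|x-tv|w_{(1,\kay,0)}h}_{L^2_{x,v}}^2$ for $B$ large, apply Corollary~\ref{cor:weighted-L-bound} to the $v$-independent rescaling $e^{Bt^{-0.1}/2}\jap{x/t}^{\kay}h$, and absorb the resulting $t^{-2}\norm{w_{(1,\kay,0)}h}_{L^2_{x,v}}^2$ into $Bt^{-1.1}\norm{w_{(1,\kay,0)}h}_{L^2_{x,v}}^2$; the identification of $\norm{h}_{\calD_{x,v}(1,\kay,0)}$ with the $x$-integral of the $\Delta_v(1)$ norm that you flag is made implicitly in the paper as well.
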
 
\begin{proof}
We again begin by an application of Lemma~\ref{lem:general-eng-estimate}. Since $\iota=0$, all the terms on the right-hand side with $\iota$ vanish. Hence
\begin{align*}
&\: \f{\ud}{\ud t} \norm{w_{(1,\kay,0)} h}_{L^2_{x,v}}^2 +B t^{-1.1} \norm{w_{(1,\kay,0)} h}_{L^2_{x,v}}^2\\
&\: + t^{-1.1} \norm{|x-tv| w_{(1,\kay,0)} h}_{L^2_{x,v}}^2+\int_{\R^3}\int_{\R^3}w_{(1,\kay,0)}^2 hLh\ud v\ud x \\
\lesssim &\: B^{-1} t^{-2.9}\norm{|x-tv|w_{(1,\kay,0)} h}_{L^2_{x,v}}^2 + \Big| \int_{\R^3}\int_{\R^3} w_{(1,\kay,0)}^2h \mathfrak{S}\ud v\ud x \Big| \\
\lesssim &\:  \Big| \int_{\R^3}\int_{\R^3} w_{(1,\kay,0)}^2 h \mathfrak{S}\ud v\ud x \Big|,
\end{align*}
where the last line is achieved after taking $B$ sufficiently large so that we can absorb the first term on the right-hand side by the second term on the left-hand side. 

Next, we use Corollary~\ref{cor:weighted-L-bound} to get that 
\begin{align*}
\f{\ud}{\ud t} \norm{w_{(1,\kay,0)} h}_{L^2_{x,v}}^2&\: +B t^{-1.1} \norm{w_{(1,\kay,0)} h}_{L^2_{x,v}}^2+ t^{-1.1} \norm{|x-tv| w_{(1,\kay,0)} h}_{L^2_{x,v}}^2+\norm{h}^2_{\calD_{x,v}(1,\kay,0)} \\
&\lesssim  \norm{w_{(0,\kay,0)} h}_{L^2_{x,v}}^2+t^{-2}\norm{w_{(1,\kay,0)}h}^2_{L^2_{x,v}} + \Big|\int_{\R^3}\int_{\R^3} w_{(1,\kay,0)}^2h \mathfrak{S}\ud v\ud x \Big|,
\end{align*}
Again, by choosing $B$ large enough, we can absorb the second term on the right-hand side by the second term on the left-hand side. This gives the desired bound. \qedhere

\end{proof}

Combining Proposition~\ref{prop:pure-poly-x-weights}  and Proposition~\ref{prop:Guassian-v-weights}, we obtain the following corollary.
\begin{corollary}\label{cor:pure-poly-x-weights}
Let $h$ satisfy \eqref{eq:lin-with-source}. Then for $\kay \in \mathbb Z_{\geq 0}$, $\iota=0$, the following estimate holds:
\begin{equation}\label{eq:combined.main.energy}
\begin{split}
&\f{\ud}{\ud t} \left(\norm{w_{(0,\kay,0)} h}_{L^2_{x,v}}^2 + t^{-1.1} \norm{w_{(1,\kay,0)}h}^2_{L^2_{x,v}}\right)+\norm{(I-\Pi)h}^2_{\calD_{x,v}(0,\kay,0)} + t^{-1.1} \norm{h}_{\calD_{x,v}(1,\kay,0)}^2 \\
&\quad+B t^{-1.1} \Big( \norm{w_{(0,\kay,0)} h}_{L^2_{x,v}}^2 + t^{-1.1} \norm{w_{(1,\kay,0)} h}_{L^2_{x,v}}^2 \Big)+ t^{-2.2} \norm{\jap{x-tv}w_{(1,\kay,0)} h}_{L^2_{x,v}}^2\\
&\quad\lesssim t^{-1.1} \Big| \int_{\R^3}\int_{\R^3} w_{(1,\kay,0)}^2 h \mathfrak{S}\ud v\ud x \Big| + \Big|\int_{\R^3}\int_{\R^3} w_{(0,\kay,0)}^2 h \mathfrak{S}\ud v\ud x\Big|.
\end{split}
\end{equation}
\end{corollary}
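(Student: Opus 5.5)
The plan is to add Proposition~\ref{prop:Guassian-v-weights} multiplied by $t^{-1.1}$ to Proposition~\ref{prop:pure-poly-x-weights}. Two things need care: the time derivative now falls on $t^{-1.1}$ as well, and the error terms on the right of each estimate must be absorbed by good terms coming from the other.

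\textbf{Step 1 (weighted estimate).} I would multiply Proposition~\ref{prop:Guassian-v-weights} by $t^{-1.1}$ and write
$$t^{-1.1}\f{\ud}{\ud t}\norm{w_{(1,\kay,0)}h}^2_{L^2_{x,v}}=\f{\ud}{\ud t}\Big(t^{-1.1}\norm{w_{(1,\kay,0)}h}^2_{L^2_{x,v}}\Big)+1.1\,t^{-2.1}\norm{w_{(1,\kay,0)}h}^2_{L^2_{x,v}}.$$
The extra term on the right has a favorable sign, so it can simply be dropped. This yields the $\calD_{x,v}(1,\kay,0)$ term and the term $Bt^{-2.2}\norm{w_{(1,\kay,0)}h}^2_{L^2_{x,v}}$ on the left. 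The $|x-tv|$ term appears on the left with coefficient $t^{-2.2}$. Adding the $B t^{-2.2}\norm{w_{(1,\kay,0)} h}^2$ term (with $B\ge1$) upgrades $|x-tv|$ to $\jap{x-tv}$. On the right we are left with $t^{-1.1}$ times the source pairing, plus the error $C t^{-1.1}\norm{w_{(0,\kay,0)}h}^2_{L^2_{x,v}}$.

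\textbf{Step 2 (unweighted estimate and absorption).} I would then add Proposition~\ref{prop:pure-poly-x-weights}. Its error term $C B^{-1}t^{-2.9}\norm{w_{(1,\kay,0)}h}^2$ is bounded by $B^{-1}t^{-0.7}\cdot t^{-2.2}\norm{w_{(1,\kay,0)}h}^2$. For $B$ large (and $t\ge1$) this is absorbed by the good term $Bt^{-2.2}\norm{w_{(1,\kay,0)}h}^2$ from Step 1; in fact even the $B^{-1}$ factor alone suffices. The error $Ct^{-1.1}\norm{w_{(0,\kay,0)}h}^2$ from Step 1 is absorbed by the good term $Bt^{-1.1}\norm{w_{(0,\kay,0)}h}^2$ from Proposition~\ref{prop:pure-poly-x-weights}, once $B\ge 2C$. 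Here the constant $C$ is independent of $B$, by the convention stated at the start of the section.

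\textbf{Step 3 (conclusion).} After these absorptions, each good $B$ term retains at least half its size, which is harmless up to constants. The remaining right-hand side consists exactly of the two source pairings, which gives \eqref{eq:combined.main.energy}.

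The only delicate point is the bookkeeping of $B$: the implicit constants in both propositions must not depend on $B$, so that a large $B$ can absorb them. This holds by the section's convention, and both propositions are already stated for all sufficiently large $B$, so the two requirements on $B$ are compatible.
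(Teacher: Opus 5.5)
Your proposal is correct and matches the paper's proof. You multiply Proposition~\ref{prop:Guassian-v-weights} by $t^{-1.1}$, drop the favorably signed term produced by differentiating $t^{-1.1}$, and add Proposition~\ref{prop:pure-poly-x-weights}. You then absorb the two cross error terms, $B^{-1}t^{-2.9}\norm{w_{(1,\kay,0)}h}^2_{L^2_{x,v}}$ and $t^{-1.1}\norm{w_{(0,\kay,0)}h}^2_{L^2_{x,v}}$, into the $B$-terms on the left by taking $B$ large with $B$-independent constants.
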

\begin{proof}
We first multiply the estimate in Proposition~\ref{prop:Guassian-v-weights} by $t^{-1.1}$. Observe that $[\f{\ud}{\ud t}, t^{-1.1}]$ generates a term with a good sign that we can drop. We then add it to the estimate in Proposition~\ref{prop:pure-poly-x-weights}. Hence, we get 
\begin{align*}
& \hbox{LHS of \eqref{eq:combined.main.energy}}\\
&\quad\lesssim t^{-1.1} \Big| \int_{\R^3}\int_{\R^3} w_{(1,\kay,0)}^2 h \mathfrak{S}\ud v\ud x \Big| + \Big|\int_{\R^3}\int_{\R^3} w_{(0,\kay,0)}^2 h \mathfrak{S}\ud v\ud x\Big| \\
&\qquad+ B^{-1} t^{-2.9} \norm{w_{(1,\kay,0)} h}_{L^2_{x,v}}^2 + t^{-1.1} \norm{w_{(0,\kay,0)} h}_{L^2_{x,v}}^2.
\end{align*}
We absorb the last two terms on the right-hand side by the terms $B t^{-1.1} \Big( \norm{w_{(0,\kay,0)} h}_{L^2_{x,v}}^2 + t^{-1.1} \norm{w_{(1,\kay,0)} h}_{L^2_{x,v}}^2 \Big)$ on the left-hand side after choosing $B$ large enough. This gives us the required estimate. \qedhere
\end{proof}




\subsection{Gaussian in $x/t$ weighted estimate}
In this subsection we prove estimates for $\iota=1$ and $\vartheta=0.$ Observe that there is a loss in $\jap{x/t}$ moments and thus the terms we control have different $\kay$ weights.
\begin{proposition}\label{prop:Gaussian-x-weights}
Let $h$ satisfy \eqref{eq:lin-with-source}. Then for $\kay \in \mathbb Z_{\geq 0}$, the following holds:
\begin{align*}
&\f{\ud}{\ud t} \left(\norm{w_{(0,\kay,1)} h}_{L^2_{x,v}}^2 + \norm{w_{(0,\kay+1,0)} h}_{L^2_{x,v}}^2 + t^{-1.1} \norm{w_{(1,\kay+1,0)}h}^2_{L^2_{x,v}}\right) \\
&\hspace{3em} +\norm{(I-\Pi)h}^2_{\calD_{x,v}(0,\kay,1)} + t^{-1.1} \norm{h}_{\calD_{x,v}(1,\kay+1,0)}^2\\
&\hspace{5em}\lesssim t^{-1.1} \Big| \int_{\R^3}\int_{\R^3} w_{(1,\kay+1,0)}^2 h \mathfrak{S}\ud v\ud x \Big| + \Big|\int_{\R^3}\int_{\R^3} w_{(0,\kay+1,0)}^2 h \mathfrak{S}\ud v\ud x\Big|\\
&\hspace{5em}\quad+ \Big| \int_{\R^3}\int_{\R^3} w_{(0,\kay,1)}^2h \mathfrak{S}\ud v\ud x \Big| .
\end{align*}
\end{proposition}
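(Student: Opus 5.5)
Apply Lemma~\ref{lem:general-eng-estimate} with $\vartheta=0$, $\iota=1$ and the given $\kay$, and add to it the estimate of Corollary~\ref{cor:pure-poly-x-weights} with $\kay$ replaced by $\kay+1$. The corollary supplies the extra terms on the left-hand side, namely the energies $\norm{w_{(0,\kay+1,0)}h}^2+t^{-1.1}\norm{w_{(1,\kay+1,0)}h}^2$, their dissipation, the $Bt^{-1.1}$ damping, and $t^{-2.2}\norm{\jap{x-tv}w_{(1,\kay+1,0)}h}^2$. These will absorb the three $\iota$-error terms generated by the Gaussian-in-$x$ weight.

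\textbf{Step 1 (coercivity with the Gaussian weight).} The weight $w_{(0,\kay,1)}$ is independent of $v$. Hence $\int\int w_{(0,\kay,1)}^2 hLh = \int\int (w_{(0,\kay,1)}h)L(w_{(0,\kay,1)}h)$, and $\Pi$ commutes with multiplication by $w_{(0,\kay,1)}$. Lemma~\ref{lem:lower-bound-L} then gives $\gtrsim \norm{(I-\Pi)h}^2_{\calD_{x,v}(0,\kay,1)}$, exactly as in Proposition~\ref{prop:pure-poly-x-weights}.

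\textbf{Step 2 (the $\iota$-errors).} The three error terms are as follows.
\begin{itemize}
\item The term $B^{-1}t^{-2.9}\norm{\jap{x/t}|x-tv|w_{(0,\kay,1)}h}^2$ (together with the $\kay^2$ term) is handled by Lemma~\ref{lem:weight.compare}: $\jap{z}^\ell w_{(0,\kay,1)}\ls w_{(1,\kay,0)}\jap{x/t^2}^\ell$. Since $w_{(0,\kay,1)}$ already contains the factor $e^{-\f12|\sqrt{\alp+\bar\bt t^2}v-\cdots|^2}$ relative to $w_{(1,\kay,0)}$, I would aim for the cleaner bound $\jap{x-tv}\,w_{(0,\kay,1)}\ls w_{(1,\kay,0)}\jap{x/t^2}$. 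Combined with $\jap{x/t}$ this is $\ls \jap{x-tv}\,w_{(1,\kay+1,0)}$ for $t\ge1$. So the term is bounded by $B^{-1}t^{-2.9}\norm{\jap{x-tv}w_{(1,\kay+1,0)}h}^2$, which is absorbed by $t^{-2.2}\norm{\jap{x-tv}w_{(1,\kay+1,0)}h}^2$ on the left since $t^{-2.9}\le t^{-2.2}$.
\item The term $t^{-3}\norm{\jap{x/t}w_{(0,\kay,1)}h}^2$ should also be controlled using $w_{(0,\kay,1)}\ls w_{(1,\kay,0)}$, giving $\ls t^{-3}\norm{w_{(1,\kay+1,0)}h}^2$. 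This is absorbed by $Bt^{-2.2}\norm{w_{(1,\kay+1,0)}h}^2$.
\end{itemize}
Finally, the $Bt^{-1.1}\norm{w_{(0,\kay,1)}h}^2$ term from the lemma is nonnegative and may be dropped.

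\textbf{Main obstacle.} The main obstacle is checking the weight comparison in Step~2 uniformly in $x$, in particular for $|x|\gtrsim t^2$, where $z$ and $x-tv$ are no longer comparable. I would handle this using the identity in the proof of Lemma~\ref{lem:weight.compare}. One then needs the resulting factors of $\jap{x/t^2}$ to be dominated by the single extra $\jap{x/t}$ moment, which holds because $\jap{x/t^2}\le\jap{x/t}$ for $t\ge1$.

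Note that the corollary at level $\kay+1$ produces only source terms; its error terms were already absorbed in the proof of Corollary~\ref{cor:pure-poly-x-weights}. Summing the two inequalities therefore gives the stated estimate, with the three source integrals on the right.
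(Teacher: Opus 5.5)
Your proposal is correct and is essentially the paper's proof: apply Lemma~\ref{lem:general-eng-estimate} with $(\vartheta,\iota)=(0,1)$, get coercivity from the $v$-independence of $w_{(0,\kay,1)}$ and Lemma~\ref{lem:lower-bound-L}, then absorb the two $\iota$-errors into the terms $t^{-2.2}\|\jap{x-tv}w_{(1,\kay+1,0)}h\|^2$ and $Bt^{-2.2}\|w_{(1,\kay+1,0)}h\|^2$ supplied by Corollary~\ref{cor:pure-poly-x-weights} at level $\kay+1$.

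The only weight comparison you need is the pointwise bound $w_{(0,\kay,1)}\le w_{(1,\kay,0)}$ from \eqref{eq:weight.compare.prelim}, with the factor $|x-tv|$ kept as is, so there is no difficulty at $|x|\gtrsim t^2$. By contrast, do not use your ``cleaner bound'' that trades $\jap{x-tv}$ for $\jap{x/t^2}$: the extra $\jap{x/t}$ moment is already spent on the $\jap{x/t}$ factor in the error term, so that route would require $w_{(1,\kay+2,0)}$, which the left-hand side does not control.
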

\begin{proof}
We first use Lemma~\ref{lem:general-eng-estimate} to get
\begin{equation}\label{eq:exp.xt.weight.prelim}
\begin{split}
&\: \f{\ud}{\ud t} \norm{w_{(0,\kay,1)} h}_{L^2_{x,v}}^2+B t^{-1.1} \norm{w_{(0,\kay,1)} h}_{L^2_{x,v}}^2+\int_{\R^3}\int_{\R^3}w_{(0,\kay,1)}^2 hLh\ud v\ud x\\
\lesssim &\: B^{-1} t^{-2.9} \norm{\jap{x/t} |x-tv| w_{(0,\kay,1)} h}_{L^2_{x,v}}^2 + t^{-3} \norm{\jap{x/t} w_{(0,\kay,1)} h}_{L^2_{x,v}}^2 + \Big| \int_{\R^3}\int_{\R^3} w_{(0,\kay,1)}^2h \mathfrak{S}\ud v\ud x \Big|.
\end{split}
\end{equation}
By Lemma~\ref{lem:lower-bound-L}, since $w_{(0,\kay,1)}$ is independent of $v$, the term on the left-hand side controls
\begin{equation}
	\int_{\R^3}\int_{\R^3}w^2_{(0,\kay,1)} hLh\ud v\ud x \gtrsim \norm{(I-\Pi)h}^2_{\calD_{x,v}(0,\kay,1)}.
\end{equation}
To bound the right-hand side of \eqref{eq:exp.xt.weight.prelim}, we consider \eqref{eq:combined.main.energy} with $\kay$ replaced by $\kay +1$. After dropping some non-negative terms on the left, we obtain
\begin{equation}\label{eq:exp.xt.weight.prelim.2}
\begin{split}
&\f{\ud}{\ud t} \left(\norm{w_{(0,\kay+1,0)} h}_{L^2_{x,v}}^2 + t^{-1.1} \norm{w_{(1,\kay+1,0)}h}^2_{L^2_{x,v}}\right)+B t^{-2.2}  \norm{w_{(1,\kay+1,0)} h}_{L^2_{x,v}}^2 \\
&\quad + t^{-1.1} \norm{h}_{\calD_{x,v}(1,\kay+1,0)}^2 + t^{-2.2} \norm{\jap{x-tv}w_{(1,\kay+1,0)} h}_{L^2_{x,v}}^2 \\
&\quad\lesssim t^{-1.1} \Big| \int_{\R^3}\int_{\R^3} w_{(1,\kay+1,0)}^2 h \mathfrak{S}\ud v\ud x \Big| + \Big|\int_{\R^3}\int_{\R^3} w_{(0,\kay+1,0)}^2 h \mathfrak{S}\ud v\ud x\Big|.
\end{split}
\end{equation}

We now sum \eqref{eq:exp.xt.weight.prelim} and \eqref{eq:exp.xt.weight.prelim.2}. Observe that the terms $B^{-1} t^{-2.9} \norm{\jap{x/t} |x-tv| w_{(0,\kay,1)} h}_{L^2_{x,v}}^2$, $t^{-3} \norm{\jap{x/t} w_{(0,\kay,1)} h}_{L^2_{x,v}}^2$ on the right-hand side of \eqref{eq:exp.xt.weight.prelim} are controlled, respectively, by the terms $t^{-2.2} \norm{\jap{x-tv}w_{(1,\kay+1,0)} h}_{L^2_{x,v}}^2$, $B t^{-2.2}  \norm{w_{(1,\kay+1,0)} h}_{L^2_{x,v}}^2$ on the left-hand side of \eqref{eq:exp.xt.weight.prelim.2} after taking $B$ large. After dropping some non-negative terms, this gives the desired bound. \qedhere

\end{proof}

At this point we fix the constant $B$ so that all the estimates above hold. From now on, all implicit constants are allowed to depend on $B$.

\section{Commutator estimates}\label{sec:commutator}

\subsection{Estimates for $[Y,A]$}
We will commute the equation with $Y_l=t\partial_{x_l}+\partial_{v_l}$. Thus, we need estimates on commutators that arise due to the Landau collisional terms. 

We start by proving the following preliminary lemma.
\begin{lemma}\label{lem:Y-der-exp}
Define 
$$\sigma_{(l)ij} := \int_{\RR^3} \phi_{ij}(v-v_*) z_{*,l} \mu(v_*)\ud v_*.$$
Then the following holds: 
\begin{align}
	Y_l \sigma_{ij} = &\: -\f{2\alp \bt x_l t}{\alp + \bt t^2} \sigma_{ij}   - \f{2\alp}{\sqrt{\alp+\bt t^2}} \sigma_{(l)ij},  \label{eq:Y.on.coeff.1}\\
	Y_l (\sigma_{ij} z_j) = &\: -\f{2\alp \bt x_l t}{\alp + \bt t^2} \sigma_{ij}z_j   - \f{2\alp}{\sqrt{\alp+\bt t^2}} \sigma_{(l)ij} z_j + \f{\alpha}{\sqrt{\alpha+\beta t^2}} \sigma_{ij} \de_{lj}, \label{eq:Y.on.coeff.2}\\
	Y_l (\sigma_{ij} z_i z_j) =&\:  -\f{2\alp \bt x_l t}{\alp + \bt t^2} (\sigma_{ij}z_i z_j)  - \f{2\alp}{\sqrt{\alp+\bt t^2}} \sigma_{(l)ij}z_i z_j +  \f{\alpha}{\sqrt{\alpha+\beta t^2}} \sigma_{ij} (z_i \de_{jl} + z_j \de_{il}). \label{eq:Y.on.coeff.3}
\end{align}
\end{lemma}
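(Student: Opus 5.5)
The plan is to make $Y_l$ fall entirely on the Gaussian $\mu$ by a change of variables in the convolution, and then to use the fact that $Y_l$ acts very simply on $\mu$ and on $z$.

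\textbf{Step 1: move $Y_l$ onto $\mu$.} Substitute $u = v - v_*$ in the definition of $\sigma_{ij}$ to get
$$\sigma_{ij}(t,x,v) = \int_{\R^3} \phi_{ij}(u)\, \mu(t,x,v-u)\, \ud u .$$
The kernel $\phi_{ij}(u)$ no longer depends on $(x,v)$. It is bounded by $|u|^{-1}$, which is locally integrable. The derivatives $\rd_x\mu$ and $\rd_v\mu$ are Gaussians times polynomials, locally uniformly in $(x,v)$. So dominated convergence lets us differentiate under the integral:
$$Y_l\sigma_{ij} = \int \phi_{ij}(u)\, \big[(t\rd_{x_l}+\rd_{w_l})\mu\big](t,x,w)\big|_{w=v-u}\, \ud u .$$
This justification is the only non-algebraic point, and I expect it to be the main (mild) obstacle; the rest is computation.

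\textbf{Step 2: compute $Y_l\mu$.} We have $\log\mu = -\alp|w|^2 - \bt|x-tw|^2$. Then
$$t\rd_{x_l}\log\mu = -2\bt t(x-tw)_l, \qquad \rd_{w_l}\log\mu = -2\alp w_l + 2\bt t(x-tw)_l .$$
The $\bt$ terms cancel, so $Y_l\mu = -2\alp w_l\,\mu$. Next, invert the definition of $z_*$ (with $v_* = w$):
$$w_l = \frac{z_{*l}}{\sqrt{\alp+\bt t^2}} + \frac{\bt t x_l}{\alp+\bt t^2}.$$
Substituting this and undoing the change of variables gives
$$Y_l\sigma_{ij} = -\frac{2\alp\bt t x_l}{\alp+\bt t^2}\,\sigma_{ij} - \frac{2\alp}{\sqrt{\alp+\bt t^2}}\,\sigma_{(l)ij},$$
which is \eqref{eq:Y.on.coeff.1}.

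\textbf{Step 3: compute $Y_l z_j$.} From \eqref{eq:def-z},
$$t\rd_{x_l} z_j = -\frac{\bt t^2}{\sqrt{\alp+\bt t^2}}\,\de_{lj}, \qquad \rd_{v_l} z_j = \sqrt{\alp+\bt t^2}\,\de_{lj}.$$
Adding these, $Y_l z_j = \frac{\alp}{\sqrt{\alp+\bt t^2}}\,\de_{lj}$, which is a constant.

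\textbf{Step 4: Leibniz rule.} Apply $Y_l$ to $\sigma_{ij}z_j$ and to $\sigma_{ij}z_iz_j$ by the product rule. Use \eqref{eq:Y.on.coeff.1} for the derivative of $\sigma_{ij}$ and Step 3 for the derivative of each factor of $z$. This gives \eqref{eq:Y.on.coeff.2} and \eqref{eq:Y.on.coeff.3} directly. In \eqref{eq:Y.on.coeff.3}, the last term comes from the two factors $z_i$ and $z_j$, producing $z_i\de_{jl} + z_j\de_{il}$.
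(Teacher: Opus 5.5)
Your proof is correct and follows the paper's argument. The paper likewise computes $Y_l\mu = -2\alp v_l\mu$ and $Y_l z_j = \frac{\alp}{\sqrt{\alp+\bt t^2}}\de_{lj}$, moves $Y_l$ onto $\mu_*$ inside the convolution, rewrites $v_{*l}$ in terms of $z_{*l}$, and finishes with the Leibniz rule; your substitution $u = v - v_*$ with the dominated-convergence check just makes explicit the step of moving $Y_l$ inside the integral, which the paper takes for granted.
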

\begin{proof}
First, observe that
\begin{align}
Y_l \mu = &\: -2 \alp v_l \mu = -\f{2 \alp \bt x_l t}{\alp + \bt t^2} \mu - \f{2\alp z_l}{\sqrt{\alp+\bt t^2}} \mu, \label{eq:Y-der-mu}\\
Y_l z_j = &\: \f{\alpha}{\sqrt{\alpha+\beta t^2}} \de_{lj}. \label{eq:Y-der-z}
\end{align}
Using \eqref{eq:Y-der-mu} and \eqref{eq:def-z}, we obtain
\begin{equation}
	Y_l \sigma_{ij} =  \int_{\RR^3} \phi_{ij}(v-v_*) (Y_l\mu)(v_*) \ud v_*  = -\f{2\alp \bt x_l t}{\alp + \bt t^2}\sigma_{ij} - \f{2\alp}{\sqrt{\alp+\bt t^2}}\int_{\RR^3} \phi_{ij}(v-v_*) z_{*,l} \mu(v_*) \ud v_*,
\end{equation}
which gives \eqref{eq:Y.on.coeff.1}. Finally, \eqref{eq:Y.on.coeff.2} and \eqref{eq:Y.on.coeff.3} can be obtained by combining \eqref{eq:Y.on.coeff.1} and \eqref{eq:Y-der-z}. \qedhere

\end{proof}

Using the computations above, we have the following bounds.
\begin{proposition}\label{prop:Y.on.coeff.est}
	\begin{align}
		\Big| Y_l \sigma_{ij} +\f{2 \alp \bt x_l t}{\alp + \bt t^2} \sigma_{ij}\Big|, \,\Big| \Big(Y_l \sigma_{ij} +\f{2 \alp \bt x_l t}{\alp + \bt t^2} \sigma_{ij}\Big) z_i \Big|, \, \Big| \Big( Y_l \sigma_{ij} +\f{2 \alp \bt x_l t}{\alp + \bt t^2} \sigma_{ij} \Big) z_i z_j \Big| \ls &\: t^{-1} \times t^{-2} e^{-\f{\alp\bt |x|^2}{\alp + \bt t^2}} \jap{z}^{-1}, \label{eq:Y.on.coeff.est.1}\\
		\Big| Y_l \rd_{v_i} \sigma_{i} + \f{2 \alp \bt x_l t}{\alp + \bt t^2} \rd_{v_i}\sigma_{i}\Big| \ls &\: t^{-1} \times t^{-1} e^{-\f{\alp\bt |x|^2}{\alp + \bt t^2}} \jap{z}^{-1}, \label{eq:Y.on.coeff.est.2}\\
		\Big| Y_l (\sigma_{ij} z_i z_j) +\f{2 \alp \bt x_l t}{\alp + \bt t^2} (\sigma_{ij} z_i z_j) \Big|  \ls &\: t^{-1} \times t^{-2} e^{-\f{\alp\bt |x|^2}{\alp + \bt t^2}} \jap{z}^{-1}. \label{eq:Y.on.coeff.est.3}
	\end{align}
\end{proposition}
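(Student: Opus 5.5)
By Lemma~\ref{lem:Y-der-exp}, each left-hand side in Proposition~\ref{prop:Y.on.coeff.est} is exactly the remaining term(s) after subtracting $-\f{2\alp\bt x_l t}{\alp+\bt t^2}(\cdot)$. Concretely:
\begin{align*}
\Big(Y_l\sigma_{ij}+\f{2\alp\bt x_lt}{\alp+\bt t^2}\sigma_{ij}\Big) &= -\f{2\alp}{\sqrt{\alp+\bt t^2}}\,\sigma_{(l)ij},\\
Y_l(\sigma_{ij}z_iz_j)+\f{2\alp\bt x_lt}{\alp+\bt t^2}\sigma_{ij}z_iz_j &= -\f{2\alp}{\sqrt{\alp+\bt t^2}}\,\sigma_{(l)ij}z_iz_j+\f{2\alp}{\sqrt{\alp+\bt t^2}}\,\sigma_{lj}z_j .
\end{align*}
For \eqref{eq:Y.on.coeff.est.2}, write $\rd_{v_i}\sigma_i=\rd_{v_i}(\sigma_{ij}z_j)$. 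Since $[Y_l,\rd_{v_i}]=0$ and $\rd_{v_i}$ kills the $x$-dependent prefactor, we get
$$Y_l\rd_{v_i}\sigma_i+\f{2\alp\bt x_lt}{\alp+\bt t^2}\rd_{v_i}\sigma_i=\rd_{v_i}\Big(-\f{2\alp}{\sqrt{\alp+\bt t^2}}\sigma_{(l)ij}z_j+\f{\alp}{\sqrt{\alp+\bt t^2}}\sigma_{il}\Big).$$
The prefactor $\f{\alp}{\sqrt{\alp+\bt t^2}}\ls t^{-1}$ supplies the extra $t^{-1}$ in every estimate. It therefore suffices to show four bounds:
\begin{align*}
|\sigma_{(l)ij}|,\ |\sigma_{(l)ij}z_i|,\ |\sigma_{(l)ij}z_iz_j| &\ls t^{-2}e^{-\f{\alp\bt|x|^2}{\alp+\bt t^2}}\jap{z}^{-1},\\
|\sigma_{lj}z_j|,\ |\sigma_{ij}| &\ls t^{-2}e^{-\f{\alp\bt|x|^2}{\alp+\bt t^2}}\jap{z}^{-1},\\
|\rd_{v_i}(\sigma_{(l)ij}z_j)| &\ls t^{-1}e^{-\f{\alp\bt|x|^2}{\alp+\bt t^2}}\jap{z}^{-1},\\
|\rd_{v_i}\sigma_{il}| &\ls t^{-1}e^{-\f{\alp\bt|x|^2}{\alp+\bt t^2}}\jap{z}^{-1}.
\end{align*}

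\textbf{Terms with $\sigma_{ij}$.} These follow from Lemma~\ref{lem:sigma-bounds}. The decomposition \eqref{eq:sigma.decomposition}, together with $\lambda_1\ls\jap{z}^{-3}$ and $\lambda_2\ls\jap{z}^{-1}$, gives $|\sigma_{ij}|\ls t^{-2}e^{-\f{\alp\bt|x|^2}{\alp+\bt t^2}}\jap{z}^{-1}$. Moreover $\sigma_{lj}z_j$ only sees $\lambda_1|z|\ls\jap{z}^{-2}$. For $\rd_{v_i}\sigma_{il}$, I would use the same rescaling as in the proof of Lemma~\ref{lem:sigma-bounds}: each $\rd_v$ costs a factor $\sqrt{\alp+\bt t^2}\sim t$ times a $z$-derivative of $\sigma^1_{ij}$. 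The estimates of \cite[Lemma~4]{StGu08} then give $|\rd\sigma^1_{ij}|\ls\jap{z}^{-2}$, which is better than needed.

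\textbf{Terms with $\sigma_{(l)ij}$.} This is the main step. Perform the change of variables $w=\sqrt{\alp+\bt t^2}(v_*-v)$ as in \eqref{eq:sigma-integral}, and write $z_*=w+z$. This gives
$$\sigma_{(l)ij}=\f{e^{-\f{\alp\bt|x|^2}{\alp+\bt t^2}}}{\alp+\bt t^2}\int\phi_{ij}(w)|w|^{-3}\,(w+z)_l\,e^{-|w+z|^2}\,\ud w .$$
Up to the prefactor, this is $\sigma^1_{ij}$ with $\mu_1$ replaced by $v_l\mu_1$ and evaluated at $z$. The key point is that $\phi_{ij}(w)w_j=0$, so contracting with $z_j=(z+w)_j-w_j$ replaces $z_j$ by $(z+w)_j$. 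Hence
$$\sigma_{(l)ij}z_iz_j\propto\int\phi_{ij}(w)|w|^{-3}(w+z)_l(w+z)_i(w+z)_j e^{-|w+z|^2}\,\ud w.$$
Since $|\phi_{ij}(w)|w|^{-3}|\ls|w|^{-1}$, each such integral is bounded by $\int|w|^{-1}\,\mathrm{poly}(w+z)\,e^{-|w+z|^2}\ud w\ls\jap{z}^{-1}$, by Lemma~\ref{lem:int-exp} (equivalently, the polynomial version of Proposition~\ref{prop:aux-integral}). The uncontracted and once-contracted cases are handled identically. For $\rd_{v_i}(\sigma_{(l)ij}z_j)$, apply the same identity first and then differentiate. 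Moving the $v$-derivative onto the Gaussian factor yields $\sqrt{\alp+\bt t^2}$ times an integral of the same type, with an extra polynomial in $w+z$. This gives $t\cdot t^{-2}\jap{z}^{-1}$, as required.

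\textbf{Expected obstacle.} The only delicate point is making sure that contraction with $z$ never produces an uncompensated power $|z|$. This is exactly what the identity $\phi_{ij}(w)w_j=0$ handles, so I expect it to be routine once it is used.
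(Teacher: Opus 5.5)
Your proposal is correct and follows essentially the same route as the paper. You reduce via Lemma~\ref{lem:Y-der-exp} to bounds on $\sigma_{(l)ij}$, and you make explicit the extra terms $\sigma_{lj}z_j$ and $\rd_{v_i}\sigma_{il}$ produced by $Y_l z_j$, which the paper leaves implicit. You then rescale as in Proposition~\ref{prop:aux-integral}, use $\phi_{ij}(w)w_j=0$ to replace $z$ by $z_*=z+w$, and let $\rd_v$ fall on the Gaussian at the cost of $\sqrt{\alp+\bt t^2}$.

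One notational slip: the paper's $\phi_{ij}$ is already homogeneous of degree $-1$ (since $\gamma=-3$), so the rescaled kernel is just $\phi_{ij}(w)$, not $\phi_{ij}(w)|w|^{-3}$. Your bound $\ls |w|^{-1}$ is the right one for the actual kernel, so the argument is unaffected.
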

\begin{proof}
	By Proposition~\ref{prop:aux-integral},
	\begin{equation}
		|\sigma_{(l)ij}| \ls t^{-2} e^{-\f{\alp\bt|x|^2}{\alp + \bt t^2}} \jap{z}^{-1}.
	\end{equation}
	The first estimate in \eqref{eq:Y.on.coeff.est.1} then immediately follows from Lemma~\ref{lem:Y-der-exp}. In order to show that a contraction with $z$ does not increase the weight in $\jap{z}$, we note that $\phi_{ij}(v-v_*) (z-z_*)_i = \phi_{ij}(v-v_*) (z-z_*)_j = 0$ so that for instance
	\begin{equation}
		\sigma_{(l)ij} z_j  = \int_{\RR^3} \phi_{ij}(v-v_*) z_{*,l} z_{*,j} \mu(v_*)\ud v_*
	\end{equation} 
	and the term does not contribute additional $\jap{z}$ weights.
	
	The other estimates can be obtained in a similar manner. For \eqref{eq:Y.on.coeff.est.2}, we observe that $\rd_{v_i} z_j = \sqrt{\alp +\bt t^2} \de_{ij}$. \qedhere
\end{proof}

\begin{proposition}\label{prop:Y.A}
Let $w = w_{(\vartheta,\kay,\iota)}$. Then
	\begin{equation}
		\begin{split}
			&\: |\langle w^2 Y_l g, [Y_l, A] g \rangle_{L^2_{x,v}}| \\
			\ls &\: \| Y_l g\|_{\calD_{x,v}(\vartheta,\kay,\iota)} \| g\|_{\calD_{x,v}(\vartheta,\kay+1,\iota)} + t^{-2} \| w_{(\vartheta,\kay,\iota)} Y_l g\|_{L^2_{x,v}} \| w_{(\vartheta,\kay+1,\iota)} g\|_{L^2_{x,v}} .
		\end{split}
	\end{equation}
\end{proposition}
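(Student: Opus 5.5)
We compute the commutator $[Y_l,A]$ from the representation \eqref{eq:A}, using $[Y_l,\rd_{v_i}]=0$:
\begin{equation*}
[Y_l,A]g = \rd_{v_i}\big((Y_l\sigma_{ij})\rd_{v_j} g\big) + \sqrt{\alp+\bt t^2}\,(Y_l\rd_{v_i}\sigma_i)\, g - (\alp+\bt t^2)\,Y_l(\sigma_{ij}z_iz_j)\, g .
\end{equation*}
By Lemma~\ref{lem:Y-der-exp}, each coefficient splits as $-\f{2\alp\bt x_l t}{\alp+\bt t^2}$ times the original coefficient, plus a remainder controlled by Proposition~\ref{prop:Y.on.coeff.est}. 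This gives
\begin{equation*}
[Y_l,A]g = -\f{2\alp\bt x_l t}{\alp+\bt t^2} A g + \calR g,
\end{equation*}
where $\calR$ has the same structure as $A$ but with coefficients gaining a factor $t^{-1}$.

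For the main part, the prefactor satisfies $|\f{2\alp\bt x_l t}{\alp+\bt t^2}|\ls \f{|x|}{t}\ls\jap{x/t}$. It depends only on $(t,x)$, so it commutes with all $v$-operations. We therefore write the pairing as
\begin{equation*}
\langle w^2 Y_l g, \tfrac{x_l t}{\alp+\bt t^2} A g\rangle = \int \tfrac{x_l t}{\alp+\bt t^2}\,\langle w^2 Y_l g, A g\rangle_{L^2_v}\,\ud x .
\end{equation*}
The $x$-dependent factors of $w$ are also $v$-independent and pull out of the $v$-pairing.
\begin{itemize}
\item If $\vartheta=0$, apply \eqref{eq:A.upper.bound.2} pointwise in $x$. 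Then Cauchy--Schwarz bounds the pairing by $\|w_{(0,\kay,\iota)}Y_lg\|_{\Delta_v}\|\jap{x/t}w_{(0,\kay,\iota)}g\|_{\Delta_v}$, since the right-hand side of \eqref{eq:A.upper.bound.2} is exactly a bilinear $\Delta_v$ form. Because $\jap{x/t}w_{(\vartheta,\kay,\iota)}=w_{(\vartheta,\kay+1,\iota)}$, integrating in $x$ gives the first term of the claim.
\item If $\vartheta=1$, use \eqref{eq:A.upper.bound.1} instead. This produces the additional $t^{-2}\|w_vY_lg\|\|w_v g\|$ contribution, which gives the second term of the claim.
\end{itemize}

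For the remainder $\calR$ we argue term by term.
\begin{itemize}
\item The divergence term: integrate by parts in $v$. The derivative falls either on $w^2 Y_lg$ or on $w^2$. By \eqref{eq:Y.on.coeff.est.1}, the coefficient $Y_l\sigma_{ij}+\f{2\alp\bt x_lt}{\alp+\bt t^2}\sigma_{ij}$ is $\ls t^{-3}e^{-\f{\alp\bt|x|^2}{\alp+\bt t^2}}\jap{z}^{-1}$, also after contraction with $z$. This must be compared with the anisotropic weights in $\Delta_v$. The $(I-P_z)$ components are fine: they carry $t^{-2}\jap{z}^{-1}$ in the norm. The $P_z$ components carry only $t^{-2}\jap{z}^{-3}$, so they require the contraction bound in \eqref{eq:Y.on.coeff.est.1}. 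One factor of $z$ costs $\sqrt{\alp+\bt t^2}\sim t$ when converted via $\rd_v z=\sqrt{\alp+\bt t^2}$. I would decompose $\rd_v=P_z\rd_v+(I-P_z)\rd_v$ on both sides and check each of the four products using the $|z|$-contracted bounds.
\item The zeroth-order terms: by \eqref{eq:Y.on.coeff.est.2}--\eqref{eq:Y.on.coeff.est.3}, they are $\ls e^{-\f{\alp\bt|x|^2}{\alp+\bt t^2}}\jap{z}^{-1}$ after multiplying by $t$ and $t^2$ respectively. Hence they are bounded by the $\jap{z}^{-1/2}|g|$ part of $\Delta_v$ via Cauchy--Schwarz.
\item When $\vartheta=1$ and $\rd_v$ hits $w_v^2$, it produces $(\alp v+\bar\bt t(x-tv))w_v^2$. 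This is handled as in terms $II$, $III$ of Proposition~\ref{prop:weighted-A-bound}: write $v$ and $x-tv$ in terms of $z$ and $x/t$. The $x/t$ growth is absorbed by the extra $\jap{x/t}$ allowed on $g$, or else yields the $t^{-2}$ weighted $L^2$ terms.
\end{itemize}

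The main obstacle is the $P_z$-direction derivative term in $\calR$. Its dissipation weight $t^{-2}\jap{z}^{-3}$ is much weaker than the coefficient bound $t^{-3}\jap{z}^{-1}$ naively suggests. Closing this requires exploiting the cancellation $\phi_{ij}(v-v_*)(z-z_*)_i=0$, exactly as in the proof of Proposition~\ref{prop:Y.on.coeff.est}, to show that $\sigma_{(l)ij}$ contracted with $z$ decays like $\jap{z}^{-2}$. Failing that, one can trade the missing $\jap{z}$ for the extra $t^{-1}$. Any leftover is absorbed into the Gaussian $w_v$ weight or the $t^{-2}$ error term, using Lemma~\ref{lem:weight.compare} and the case split $|x|\le t^2$ versus $|x|\ge t^2$.
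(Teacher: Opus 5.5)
Your proposal is correct and follows essentially the same route as the paper. Both arguments split $[Y_l,A]g=-\frac{2\alpha\beta x_l t}{\alpha+\beta t^2}Ag+\text{error}$ and bound the main term by Proposition~\ref{prop:weighted-A-upper-bound} at the cost of one $\langle x/t\rangle$ moment; both then integrate the divergence error by parts and split it into the four $P_z$/$(I-P_z)$ products, treat the $\partial_v w^2$ term as $II$, $III$ in Proposition~\ref{prop:weighted-A-bound}, and handle the zeroth-order errors by Cauchy--Schwarz. Your ``main obstacle'' is already resolved by \eqref{eq:Y.on.coeff.est.1} as stated: contraction with $z$ keeps the $t^{-3}\langle z\rangle^{-1}$ bound rather than improving it to $\langle z\rangle^{-2}$, so it is each contraction with the unit vector $z/|z|$ that gains $\langle z\rangle^{-1}$, and no further cancellation is needed. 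Your fallback of trading a missing $\langle z\rangle$ for $t^{-1}$ or for Gaussian weights would not work when $\vartheta=0$.
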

\begin{proof}
We compute
	\begin{equation}\label{eq:[Y_l,A].expand.sort.of}
		\begin{split}
			[Y_l, A] g = &\: \rd_{v_i} ((Y_l \sigma_{ij}) \rd_{v_j} g )+ \sqrt{\alp + \bt t^2} (\rd_{v_i} Y_l \sigma_i) g - (\alp + \bt t^2) Y_l (\sigma_{ij} z_i z_j) g \\
			= &\: -\f{2\alp \bt x_l t}{\alp + \bt t^2} A g + \mathrm{error},
		\end{split}
	\end{equation}
	where the error term can be bounded using Proposition~\ref{prop:Y.on.coeff.est}.
	
	The main term corresponding to $-\f{2\alp \bt x_l t}{\alp + \bt t^2} A$ can be bounded using Proposition~\ref{prop:weighted-A-upper-bound} by
	\begin{equation}
		\begin{split}
			&\: \Big|\langle w^2 Y_l g, \f{2\alp \bt x_l t}{\alp + \bt t^2} A g \rangle_{L^2_{x,v}} \Big| \ls \Big\| \f{|x|}{t} \Big\langle w^2 Y_l g,  A g \Big\rangle_{L^2_{v}} \Big\|_{L^1_x} \\
            \ls &\: \| Y_l g\|_{\calD_{x,v}(\vartheta,\kay,\iota)} \| g\|_{\calD_{x,v}(\vartheta,\kay+1,\iota)} + t^{-2} \| w_{(\vartheta,\kay,\iota)} Y_l g\|_{L^2_{x,v}} \| w_{(\vartheta,\kay+1,\iota)} g\|_{L^2_{x,v}} .
		\end{split}
	\end{equation}
	For the error terms from $\sigma_{(l)ij}$ associated with $\rd_{v_i} ((Y_l \sigma_{ij}) \rd_{v_j} g )$, we integrate by parts in $\rd_{v_i}$ so that it is bounded above by
	\begin{equation}
		\begin{split}
			&\: t^{-1}  \Big|\jap{ w^2 \rd_{v_i} Y_l g, \sigma_{(l)ij} \rd_{v_j} g }_{L^2_{x,v}} \Big| + t^{-1}  \Big|\jap{ (\rd_{v_i} w^2) Y_l g, \sigma_{(l)ij} \rd_{v_j} g }_{L^2_{x,v}} \Big| \\
			\ls &\: t^{-1} \times t^{-2} \Big( \jap{ e^{-\f{\alp \bt |x|^2}{\alp+\bt t^2}}w^2 \jap{z}^{-3} |P_z \rd_{v} Y_l g|, |P_z \rd_{v} g| }_{L^2_{x,v}} + \jap{ e^{-\f{\alp \bt |x|^2}{\alp+\bt t^2}}w^2 \jap{z}^{-2} |(I-P_z) \rd_{v} Y_l g|, |P_z \rd_{v} g| }_{L^2_{x,v}}  \\
			&\: \quad + \jap{ e^{-\f{\alp \bt |x|^2}{\alp+\bt t^2}}w^2 \jap{z}^{-2} |P_z \rd_{v} Y_l g|, |(I-P_z) \rd_{v} g| }_{L^2_{x,v}}  \\
            &\: \quad + \jap{e^{-\f{\alp \bt |x|^2}{\alp+\bt t^2}} w^2 \jap{z}^{-1} |(I-P_z) \rd_{v} Y_l g|, |(I-P_z) \rd_{v} g| }_{L^2_{x,v}}  \Big) + t^{-1}  \Big|\jap{ (\rd_{v_i} w^2) Y_l g, \sigma_{(l)ij} \rd_{v_j} g }_{L^2_{x,v}} \Big|\\
			\ls &\: t^{-1} \| Y_l g\|_{\calD_{x,v}(\vartheta,\kay,\iota)} \|g \|_{\calD_{x,v}(\vartheta,\kay,\iota)} + t^{-3} \| w_{(\vartheta,\kay,\iota)} Y_l g\|_{L^2_{x,v}} \|w_{(\vartheta,\kay,\iota)} g \|_{L^2_{x,v}}.
		\end{split}
	\end{equation}
    Here, we used the fact that in \eqref{eq:Y.on.coeff.est.1}, we have the same estimate when contracted with $z$ and so the estimate gains one power of $\jap{z}^{-1}$ when there is a factor of $P_z \rd_v$ and gains $\jap{z}^{-2}$ when there are two factors of $P_z \rd_v$. For the term $t^{-1}  \Big|\jap{ (\rd_{v_i} w^2) Y_l g, \sigma_{(l)ij} \rd_{v_j} g }_{L^2_{x,v}} \Big|$, we have used the $\rd_v w= 0$ if $\vartheta = 0$, and if $\vartheta=1$, the term can be treated in a similar manner as terms $II$, $III$ in the proof of Proposition~\ref{prop:weighted-A-bound}.
    
	The error terms associated with $\sqrt{\alp + \bt t^2} (\rd_{v_i} Y_l \sigma_i) g$ and $(\alp + \bt t^2) Y_l (\sigma_{ij} z_i z_j) g$ can be bounded above after using Proposition~\ref{prop:Y.on.coeff.est} by
	\begin{equation}
		\begin{split}
			&\: t^{-1} \langle e^{-\f{\alp \bt |x|^2}{\alp+\bt t^2}} w^2 |Y_l g|, \jap{z}^{-1} | g| \rangle_{L^2_{x,v}} \ls t^{-1} \| Y_l g\|_{\calD_{x,v}(\vartheta,\kay,\iota)} \|g \|_{\calD_{x,v}(\vartheta,\kay,\iota)}.
		\end{split}
	\end{equation} 
\end{proof}

\subsection{Commutator $[Y,K]$}\label{sec:Y.K.comm}
Next we consider the commutator of $Y$ and the operator $K$. We start with an auxiliary proposition for the following quantity.
\begin{definition}\label{def:frak-K-def}
For any smooth function $g$, let 
\begin{align}
\mathfrak{K}_{ij} g:= &\: \mu^{\f 12} \int_{\R^3}\phi_{ij}(v-v_*)\mu_*^{\f 12} g(v_*)\ud v_*. \label{eq:frak-K-def.1} 
\end{align}
\end{definition}

The following bound is straightforward:
\begin{lemma}\label{lem:fraK.direct}
\begin{align}
	|\mathfrak{K}_{ij} g| \ls \int_{\RR^3} |v-v_*|^{-1} \mu_*^{\f 12} \mu^{\f 12} |g_*| \ud v_*.
\end{align}
\end{lemma}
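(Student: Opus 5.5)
The bound follows directly from the definition \eqref{eq:frak-K-def.1} by estimating the kernel $\phi_{ij}$ pointwise. With $\gamma=-3$, \eqref{eq:phi} gives
$$\phi_{ij}(y)=\Big(\delta_{ij}-\f{y_iy_j}{|y|^2}\Big)|y|^{-1}.$$
For each fixed $i,j$, the matrix entry $\delta_{ij}-\f{y_iy_j}{|y|^2}$ is an entry of the orthogonal projection onto $y^\perp$. All entries of an orthogonal projection have absolute value at most $1$. Hence
$$|\phi_{ij}(v-v_*)|\le |v-v_*|^{-1}\quad\text{for all } v\neq v_*.$$

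Next, move the absolute value inside the integral in \eqref{eq:frak-K-def.1}. The factor $\mu^{\f12}(v)$ does not depend on $v_*$, so it can be placed inside as well. Using $\mu_*^{\f12}\ge 0$ and the pointwise bound above gives
$$|\mathfrak{K}_{ij}g|\le \mu^{\f12}\int_{\R^3}|\phi_{ij}(v-v_*)|\,\mu_*^{\f12}|g_*|\,\ud v_*\le \int_{\R^3}|v-v_*|^{-1}\mu_*^{\f12}\mu^{\f12}|g_*|\,\ud v_*.$$
This is the claimed estimate, in fact with implicit constant $1$.

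The only point to check is that the integral is well defined. The singularity $|v-v_*|^{-1}$ is locally integrable in $\R^3$. For smooth $g$, the Gaussian factor $\mu_*^{\f12}$ makes the integral absolutely convergent, so the manipulations above are justified. I do not expect any genuine obstacle in this argument.
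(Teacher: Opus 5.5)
Your proposal is correct and matches the paper's reasoning: the paper states this bound without proof as ``straightforward,'' and elsewhere it uses exactly your step, bounding $\phi_{ij}(v-v_*)$ by $|v-v_*|^{-1}$ and then taking absolute values inside the integral. Your justification that each entry of the projection onto $y^\perp$ has absolute value at most $1$ supplies the one detail the paper leaves implicit.
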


\begin{proposition}\label{prop:Y-K-comm-aux}
For $Y=t\rd_x+\rd_v$, we have the following:
\begin{align}
\Big| [Y_l,\mathfrak K_{ij} ]g + \f{2 \alp \bt x_l t}{\alp + \bt t^2} \mathfrak K_{ij} g \Big| \ls t^{-1} \int_{\RR^3} |v-v_*|^{-1} \mu_*^{\f 12} \mu^{\f 12} (|z| + |z_*| ) |g_*| \ud v_*. \label{eq:Y-K-comm-aux.1} 
\end{align}
\end{proposition}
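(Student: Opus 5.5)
Our plan is to compute $[Y_l,\mathfrak K_{ij}]g$ directly by letting $Y_l = t\rd_{x_l}+\rd_{v_l}$ fall on every factor in \eqref{eq:frak-K-def.1}. The kernel $\phi_{ij}(v-v_*)$ depends on $(v,v_*)$ only through the difference, so we will write $Y_l$ acting on $g_*$ as $(t\rd_{x_l}+\rd_{v_{*l}})g_*$. We will also integrate by parts in $v_*$ to move the resulting $\rd_{v_l}$ from the kernel onto the $v_*$ variable. The net effect is that $\rd_{v_l}\phi_{ij}(v-v_*)+\rd_{v_{*l}}\phi_{ij}(v-v_*)=0$. Hence
\[
Y_l(\mathfrak K_{ij}g)-\mathfrak K_{ij}(Y_lg)=\int_{\R^3}\phi_{ij}(v-v_*)\,\Big[(Y_l\mu^{\f12})\mu_*^{\f12}+\mu^{\f12}(Y_{*l}\mu_*^{\f12})\Big]g_*\,\ud v_*,
\]
where $Y_{*l}=t\rd_{x_l}+\rd_{v_{*l}}$. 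Boundary terms vanish because of the Gaussian decay of $\mu_*^{\f12}$.

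Next we insert \eqref{eq:Y-der-mu}, which gives $Y_l\mu^{\f12}=-\alp v_l\mu^{\f12}$. Writing $v_l$ in terms of $z$ gives
\[
Y_l\mu^{\f12}=\Big(-\f{\alp\bt x_lt}{\alp+\bt t^2}-\f{\alp z_l}{\sqrt{\alp+\bt t^2}}\Big)\mu^{\f12},
\]
and the identical formula holds for $Y_{*l}\mu_*^{\f12}$ with $z_*$ in place of $z$. The two $x$-dependent pieces each contribute $-\f{\alp\bt x_lt}{\alp+\bt t^2}\mathfrak K_{ij}g$, which add up to exactly $-\f{2\alp\bt x_lt}{\alp+\bt t^2}\mathfrak K_{ij}g$. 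This is the term subtracted on the left-hand side of \eqref{eq:Y-K-comm-aux.1}.

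What remains is
\[
-\f{\alp}{\sqrt{\alp+\bt t^2}}\int_{\R^3}\phi_{ij}(v-v_*)\,(z_l+z_{*l})\,\mu^{\f12}\mu_*^{\f12}g_*\,\ud v_*.
\]
We will bound this using $|\phi_{ij}(v-v_*)|\le|v-v_*|^{-1}$ (since $\gamma=-3$), as in Lemma~\ref{lem:fraK.direct}, together with $\f{\alp}{\sqrt{\alp+\bt t^2}}\ls t^{-1}$. This yields the claimed right-hand side $t^{-1}\int|v-v_*|^{-1}\mu_*^{\f12}\mu^{\f12}(|z|+|z_*|)|g_*|\,\ud v_*$.

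The only delicate step is the integration by parts in $v_*$ against the singular kernel $|v-v_*|^{-1}$. This kernel is locally integrable with a locally integrable derivative $O(|v-v_*|^{-2})$ in three dimensions, so the manipulation is justified for smooth, rapidly decaying $g$. Alternatively, one can change variables $v_*=v-u$ before differentiating, which avoids the issue entirely.
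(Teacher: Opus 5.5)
Your proposal is correct and follows essentially the same route as the paper. Both arguments let $Y_l$ fall on $\mu^{1/2}$ and, after moving the $v$-derivative off the kernel, on $\mu_*^{1/2}g_*$. Both then use $Y_l\mu^{1/2}=-\alp v_l\mu^{1/2}$ and split $v_l$, $v_{*l}$ into an $x$-part and a $z$-part. The $x$-part yields exactly $-\frac{2\alp\bt x_l t}{\alp+\bt t^2}\mathfrak K_{ij}g$, and the $z$-part is bounded via $|\phi_{ij}(v-v_*)|\le|v-v_*|^{-1}$ and $\alp/\sqrt{\alp+\bt t^2}\lesssim t^{-1}$.
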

\begin{proof}
We compute 
\begin{equation}
\begin{split}
	Y_l \mathfrak K_{ij} g = &\: \f 12 (Y_l \log \mu) \mathfrak K_{ij} g + \mu^{\f 12} \int_{\R^3}\phi_{ij}(v-v_*)(t\rd_{x_l} + \rd_{v_{*,l}})(\mu_*^{\f 12} g(v_*))\ud v_* \\
	= &\: -\alp \mu^{\f 12} \int_{\R^3}\phi_{ij}(v-v_*)\mu_*^{\f 12} (v_l + v_{*,l}) g(v_*)\ud v_* +  \mathfrak K_{ij} Y_l g.
\end{split}
\end{equation} 
Writing $v_l = \f{\bt t x_l}{\alp +\bt t^2} + \f{z_l}{\sqrt{\alp + \bt t^2}}$, $v_{*,l} = \f{\bt t x_l}{\alp +\bt t^2} + \f{z_{*,l}}{\sqrt{\alp + \bt t^2}}$, and using $|\phi_{ij}(v-v_*)|\ls |v-v_*|^{-1}$, we obtain \eqref{eq:Y-K-comm-aux.1}. \qedhere 
\end{proof}

\begin{proposition}\label{prop:Y.K.commute}
$[Y_l,K]$ obeys the following pointwise estimates:
\begin{equation}
	\Big| [Y_l, K] g + \f{2 \alp \bt x_l t}{\alp + \bt t^2} K g\Big|(t,x,v) \ls \calI_1 + \calI_2,
\end{equation}
where
\begin{align}
	\calI_1(t,x,v) := &\:  t^{-1} |z| \mu |g|, \\
	\calI_2(t,x,v) := &\: t^{-1} \times t^2 \int_{\RR^3} |v-v_*|^{-1} \mu^{\f 12} \mu_*^{\f 12} (\jap{z_*}^3 + \jap{z} ) |g_*| \ud v_*.
\end{align}
\end{proposition}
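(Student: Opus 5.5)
We work from the representation \eqref{eq:K} rather than \eqref{eq:K.representation}. In the notation of Definition~\ref{def:frak-K-def}, \eqref{eq:K} reads
$$Kg = -\mu^{-\f 12}\rd_{v_i}\big(\mu^{\f 12}\,\mathfrak K_{ij} h\big), \qquad h := \rd_{v_j} g + \sqrt{\alp+\bt t^2}\, z_j g .$$
Expanding the derivative gives
$$Kg = -\rd_{v_i}\mathfrak K_{ij} h + \sqrt{\alp+\bt t^2}\, z_i\,\mathfrak K_{ij} h .$$
Integrating by parts inside $\mathfrak K_{ij}$ moves the $v_*$-derivative onto $\phi_{ij}(v-v_*)\mu_*^{\f 12}$. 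This reproduces the structure from the proof of \eqref{eq:K.representation}: a local term $8\pi\mu g$ and an integral term built from $\mathfrak K$-type kernels.

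The plan is to commute $Y_l$ with each building block separately.

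\begin{itemize}
\item $[Y_l,\rd_{v_i}]=0$.
\item $Y_l z_j = \f{\alp}{\sqrt{\alp+\bt t^2}}\de_{lj} = O(t^{-1})$, by \eqref{eq:Y-der-z}.
\item $[Y_l,\mathfrak K_{ij}]$ is controlled by Proposition~\ref{prop:Y-K-comm-aux}: its leading part is $-\f{2\alp\bt x_l t}{\alp+\bt t^2}\mathfrak K_{ij}$, plus a remainder bounded by $t^{-1}\int|v-v_*|^{-1}\mu^{\f 12}\mu_*^{\f 12}(|z|+|z_*|)|g_*|$.
\item $Y_l\mu^{\pm\f 12}$ is given by \eqref{eq:Y-der-mu}. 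Its leading part $\mp\f{\alp\bt x_l t}{\alp+\bt t^2}\mu^{\pm\f 12}$ cancels between the prefactor $\mu^{-\f 12}$ and the factor $\mu^{\f 12}$ in \eqref{eq:K}. The leftover is $O(t^{-1}|z|)$ times the original expression.
\end{itemize}

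Collecting terms, every leading part combines into exactly $-\f{2\alp\bt x_l t}{\alp+\bt t^2}Kg$. Each remaining term carries one explicit factor $t^{-1}$, coming either from $Y_l z$ or from the $z/\sqrt{\alp+\bt t^2}$ remainders.

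\medskip

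\noindent\textbf{Local term.} The delta-function piece $8\pi\mu g$ gives $Y_l(\mu g) = (Y_l\mu)g + \mu Y_l g$. The term $\mu Y_l g$ is the part $KY_lg$, which is accounted for on the commutator side. By \eqref{eq:Y-der-mu}, $Y_l\mu$ equals the main $x_l$ term plus $-\f{2\alp z_l}{\sqrt{\alp+\bt t^2}}\mu$. The latter is bounded by $t^{-1}|z|\mu|g|$, which is exactly $\calI_1$.

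\noindent\textbf{Nonlocal term.} The nonlocal terms have the form $(\alp+\bt t^2)\mu^{\f 12}\int(\phi_{ij}z_{*i}z_{*j}-|v-v_*|^{-1})\mu_*^{\f 12}g_*$. Their commutator errors are as follows.
\begin{itemize}
\item Proposition~\ref{prop:Y-K-comm-aux}-type remainders, weighted by the factor $t^2$. These give $t^{-1}\cdot t^2\int|v-v_*|^{-1}\mu^{\f 12}\mu_*^{\f 12}(|z|+|z_*|)\jap{z_*}^2|g_*|$.
\item Terms where $Y_l$ hits $z_{*i}z_{*j}$. These give $t^{-1}\cdot t^2\int|v-v_*|^{-1}\mu^{\f 12}\mu_*^{\f 12}\jap{z_*}|g_*|$.
\end{itemize}
Both are bounded by $\calI_2$, using $|z|\jap{z_*}^2\ls \jap{z}^3+\jap{z_*}^3$. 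Here the Gaussian in $\mu^{\f 12}$ absorbs the excess powers of $\jap z$, and we keep the $\jap{z_*}^3$ weight and a single $\jap z$ factor.

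\noindent\textbf{Main obstacle.} The hard step is the bookkeeping of derivatives. One might worry that the commutator produces $\rd_v g$, which $\calI_1,\calI_2$ do not contain. The way around this is to work throughout with the integrated-by-parts form \eqref{eq:K.representation}. In that form $K$ is the local term plus an integral operator with a kernel of the form $\mu^{\f 12}\mu_*^{\f 12}\times$(a polynomial in $z_*$)$\times|v-v_*|^{-1}$-type, acting on $g_*$. Then $Y_l K g - K Y_l g$ only involves $Y_l$ acting on the kernel. This uses the identity $(t\rd_{x_l}+\rd_{v_l})F(v-v_*) = -\rd_{v_{*l}}F(v-v_*)$, integrated by parts onto $\mu_*^{\f 12}g_*$, which recombines into $Y_l$ acting on $g_*$.

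We must check that $Y_l$ applied to $\phi_{ij}(v-v_*)$ produces no $\rd_v\phi$ term. This holds because $\phi$ depends on $v-v_*$ only, while $Y_l$ acts symmetrically in $(v,v_*)$ at fixed $x$. The $8\pi\mu g$ part and the $|v-v_*|^{-1}$ kernel are handled identically.
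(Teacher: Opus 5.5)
Once you actually commit to \eqref{eq:K.representation}, your argument is the paper's. The opening manipulation of \eqref{eq:K} and the $\mu^{\pm\f 12}$ cancellation are never used and can be dropped. The steps that remain are as follows. First, $Y_l$ lands only on the kernel once you trade $\rd_{v_l}\phi_{ij}(v-v_*)$ for $-\rd_{v_{*l}}\phi_{ij}(v-v_*)$ and integrate by parts. Second, the local term gives $\calI_1$ via \eqref{eq:Y-der-mu}. Third, the nonlocal remainders come from Proposition~\ref{prop:Y-K-comm-aux} and \eqref{eq:Y-der-z}, and you correctly identify their weights $(|z|+|z_*|)\jap{z_*}^2$ and $|z_*|$.

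Your last step does not work, however. Nothing can be absorbed by the Gaussian, because $\calI_2$ carries the very same factor $\mu^{\f 12}$. Moreover, $|z|\jap{z_*}^2$ is not bounded by $\jap{z_*}^3+\jap{z}$; take $|z|\sim R^2$ and $|z_*|\sim R$.

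This is a defect of the stated bound itself, not of your bookkeeping. The remainder contains the term
$$-4\alp\sqrt{\alp+\bt t^2}\, z_l\, \mu^{\f 12}\int \phi_{ij}(v-v_*) z_{*i}z_{*j}\mu_*^{\f 12} g_*\,\ud v_*.$$
Take $z=R^2e_2$, $l=2$, and $g\geq 0$ concentrated near $z_*=Re_1$, so that $g(v)=0$. Then this term has size $tR^4\,\mu^{\f 12}\int|v-v_*|^{-1}\mu_*^{\f 12}g_*\,\ud v_*$ and dominates all the other remainder terms. Meanwhile $\calI_1+\calI_2$ is only $tR^3$ times the same integral.

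The paper's proof passes over the same point when it asserts that each term is bounded by $\calI_2$. What you (and the paper) actually prove is the estimate with $\jap{z_*}^3\jap{z}$ in place of $\jap{z_*}^3+\jap{z}$ in $\calI_2$. That corrected bound is harmless for the rest of the paper. The only use of $\calI_2$ is through $\calJ_2$ in Proposition~\ref{prop:main.prelim.for.K}, which allows arbitrary weights $|z_*|^{\ell_1}|z|^{\ell_2}$.
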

\begin{proof}
We use the formula \eqref{eq:K.representation} for $K$. Using Definition~\ref{def:frak-K-def} and noting $\phi_{ii}(y) = 2|y|^{-1}$, $K$ can be rewritten as
\begin{equation}\label{eq:K.for.Y.commutator}
	K g = 8\pi \mu g  + 4(\alp + \bt t^2)\, \mathfrak K_{ij}\big(z_i z_j g\big) - 2 (\alp + \bt t^2)\, \mathfrak K_{ii}(g).
\end{equation}

For the first term in \eqref{eq:K.for.Y.commutator}, we compute
\begin{equation}
\begin{split}
	 [Y_l, \mu] g + \f{2 \alp \bt x_l t}{\alp + \bt t^2} \mu g = (Y_l \mu) g + \f{2 \alp \bt x_l t}{\alp + \bt t^2} \mu g= -2\alp v_l \mu g + \f{2 \alp \bt x_l t}{\alp + \bt t^2} \mu g = - \f{2 \alp z_l}{\sqrt{\alp + \bt t^2}} \mu g.
\end{split}
\end{equation}
The right-hand side is clearly controlled by $\calI_1$.

For the second term in \eqref{eq:K.for.Y.commutator}, we use \eqref{eq:Y-der-z} so that
\begin{equation}
\begin{split}
	&\: Y_l \Big( (\alp + \bt t^2) \mathfrak K_{ij} \Big(z_i z_j g\Big) \Big) - \Big( (\alp + \bt t^2) \mathfrak K_{ij} \Big(z_i z_j Y_l g\Big) \Big) + \f{2\alp \bt x_l t}{\alp + \bt t^2} \Big( (\alp + \bt t^2) \mathfrak K_{ij} \Big( z_i z_j g\Big) \Big)\\
	= &\: (\alp + \bt t^2) \Big( [Y_l, \mathfrak K_{ij}] + \f{2\alp \bt x_l t}{\alp + \bt t^2} \mathfrak K_{ij} \Big)(z_i z_j g) + 2 \alp \sqrt{\alp + \bt t^2} \mathfrak K_{lj} (z_j g).
\end{split}
\end{equation}
By Lemma~\ref{lem:fraK.direct} and Proposition~\ref{prop:Y-K-comm-aux}, each term is bounded above by $\calI_2$.

Similarly,
\begin{equation}
\begin{split}
	&\: (\alp + \bt t^2) Y_l \Big(\mathfrak K_{ii} (g) \Big) - (\alp + \bt t^2)  \mathfrak K_{ii} ( Y_l g) + \f{2\alp \bt x_l t}{\alp + \bt t^2} \Big( (\alp + \bt t^2) \mathfrak K_{ii} (g) \Big) \\
	= &\: (\alp + \bt t^2) \Big(  [Y_l, \mathfrak K_{ii}] + \f{2\alp \bt x_l t}{\alp + \bt t^2} \mathfrak K_{ii}\Big)(g),
\end{split}
\end{equation}
which can be bounded above by $\calI_2$ by Lemma~\ref{lem:fraK.direct} and Proposition~\ref{prop:Y-K-comm-aux}. \qedhere
\end{proof}

\begin{proposition}\label{prop:Y.K}
	\begin{enumerate}
	\item If $\vartheta = 1$, 
	\begin{equation}
		\begin{split}
			&\: \Big| \langle w_{(1,\kay,0)}^2 [Y_l,K] g_1, Y_l g_2  \rangle_{L^2_{x,v}} \Big| \\
			\ls &\: \|  g_1\|_{\calD_{x,v}(1,\kay+1,0)} \| Y_l g_2\|_{\calD_{x,v}(1,\kay,0)} + t^{-2} \| w_{(1,\kay+1,0)} g_1\|_{L^2_{x,v}}\|w_{(1,\kay,0)} Y g_2\|_{L^2_{x,v}}.
		\end{split}
	\end{equation}
	\item If $\vartheta = 0$, 
	\begin{equation}
		\begin{split}
			&\: \Big| \langle w_{(0,\kay,\iota)}^2 [Y_l,K] g_1, Y_l g_2  \rangle_{L^2_{x,v}} \Big| \\
			\ls &\: \| g_1\|_{\calD_{x,v}(0,\kay+1,\iota)} \| Y_l g_2\|_{\calD_{x,v}(0,\kay,\iota)} + t^{-2} \| w_{(0,\kay,\iota)} g_1\|_{L^2_{x,v}}\|w_{(0,\kay,\iota)} Y g_2\|_{L^2_{x,v}}.
		\end{split}
	\end{equation}
	\end{enumerate}
\end{proposition}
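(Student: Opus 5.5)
We start from the pointwise decomposition in Proposition~\ref{prop:Y.K.commute}:
\[
[Y_l,K]g_1 = -\tfrac{2\alp\bt x_l t}{\alp+\bt t^2} K g_1 + \mathcal R,\qquad |\mathcal R|\ls \calI_1+\calI_2 ,
\]
and estimate the two pieces separately after pairing with $w^2 Y_l g_2$.

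\textbf{Main term.} The coefficient satisfies $|\tfrac{2\alp\bt x_l t}{\alp+\bt t^2}|\ls \jap{x/t}$, and this factor is absorbed into the weight: $\jap{x/t}\,w_{(\vartheta,\kay,\iota)} = w_{(\vartheta,\kay+1,\iota)}$.

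For $\vartheta=1$, we work at fixed $x$ and apply \eqref{eq:K.upper.bound.weighted} to $g_1$ and $Y_lg_2$. This gives
\[
\|g_1\|_{L^2_v}\|Y_lg_2\|_{L^2_v} + t^{-2}\|w_vg_1\|_{L^2_v}\|w_vY_lg_2\|_{L^2_v}.
\]
The second product, after multiplying by the $x$-weights and integrating with Cauchy--Schwarz in $x$, is exactly the $t^{-2}$ term of the claim. The unweighted $L^2_v$ product is not directly a dissipation-norm quantity, so we do not use \eqref{eq:K.upper.bound.weighted} as a black box. Instead we go back to the sharper bound \eqref{eq:K.est.sharper} in Proposition~\ref{prop:main.prelim.for.K}. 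The Gaussian factors there, $e^{-\alp|v|^2-\frac{\bt+\bar\bt}{8}|x-tv|^2}$ and $e^{-\frac{\bt-\bar\bt}{8}|x-tv|^2}$, leave room to insert $\jap{z}^{\pm 1/2}$. Together with the $e^{-\alp\bt|x|^2/(\alp+\bt t^2)}$ factor (by the weight computation \eqref{eq:general.weight.computation.1} when $|x|\le t^2$; the region $|x|\ge t^2$ gives $t^{-2}$ as in Step~2 there), this bounds the product by $\|w_vg_1\|_{\Delta_v(1)}\|w_vY_lg_2\|_{\Delta_v(1)}$ plus $t^{-2}$ terms. The dissipation norm includes the $|g|$ part with weight $e^{-\alp\bt|x|^2/(2(\alp+\bt t^2))}\jap{z}^{-1/2}$, which is what is needed.

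For $\vartheta=0$, we use \eqref{eq:K.upper.bound.unweighted} with $\ell=1$. Its right-hand side is bounded by $\|g_1\|_{\Delta_v}\|Y_lg_2\|_{\Delta_v}$. The weight $w_{(0,\kay,\iota)}$ is $v$-independent, so it commutes through $K$.

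\textbf{Error terms $\calI_1,\calI_2$.} We treat these with Proposition~\ref{prop:main.prelim.for.K}, taking $g_1$ in place of $g_1$ and $Y_lg_2$ in place of $g_2$.
\begin{itemize}
\item The term $t^{-1}|z|\mu|g_1|$ is of type $\calJ_1$ with $\ell_2=1$, carrying an extra $t^{-1}$.
\item The term $\calI_2$ is $t^{-1}$ times a $\calJ_2$-type integral with $\ell_1\le 3$, $\ell_2\le 1$.
\end{itemize}
Hence these errors obey the same bounds as the main term, improved by $t^{-1}$ and with no extra $\jap{x/t}$ loss. They are therefore dominated by the stated right-hand side.

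\textbf{Main obstacle.} The hardest step is the $\vartheta=1$ case: the plain $\|g_1\|_{L^2_v}\|Y_lg_2\|_{L^2_v}$ output of \eqref{eq:K.upper.bound.weighted} must be upgraded to the dissipation norms $\calD_{x,v}(1,\cdot,0)$. We plan to handle this by rereading Step~2 of the proof of Proposition~\ref{prop:main.prelim.for.K}. The leftover Gaussian there supplies both the factor $e^{-\alp\bt|x|^2/(\alp+\bt t^2)}$ and arbitrary $\jap{z}$ decay. The remaining case $t\le t_*$ in \eqref{eq:stupid.weights} produces only the $t^{-2}$ weighted term, which is already allowed in the claim.
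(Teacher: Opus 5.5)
Your proposal is correct and follows the paper's route. You split $[Y_l,K]$ via Proposition~\ref{prop:Y.K.commute} into the main term, whose coefficient is absorbed by raising $\kay$ to $\kay+1$, and $\calJ_1$/$\calJ_2$-type errors handled by Proposition~\ref{prop:main.prelim.for.K}; for $\vartheta=1$ you then upgrade plain $L^2_v$ products to $\calD_{x,v}(1,\cdot,0)$ norms by splitting $|x|\le t^2$ and $|x|\ge t^2$.

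The only difference is where that upgrade happens. The paper applies it directly to the black-box output, using $w_{(0,\kay,0)}=w_v^{-1}w_{(1,\kay,0)}$ together with $w_v^{-1}\ls e^{-\f{\alp\bt|x|^2}{2(\alp+\bt t^2)}}\jap{z}^{-1/2}$ for $|x|\le t^2$ and $w_v^{-1}\le e^{-ct^2}$ for $|x|\ge t^2$. So reopening \eqref{eq:K.est.sharper} is unnecessary. Also, the quantity you need is $\|w_v g_1\|_{\Delta_v(0)}$, not $\|w_v g_1\|_{\Delta_v(1)}$, which would double the weight.
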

\begin{proof}
	Using Proposition~\ref{prop:Y.K.commute}, we have
	\begin{equation}\label{eq:main.Y.K.commute}
		\begin{split}
			\Big| \langle w^2 [Y_l,K] g_1, Y_l g_2  \rangle_{L^2_{x,v}} \Big| \ls \Big| \Big\langle \f{2 \alp \bt x_l t}{\alp + \bt t^2} w^2 K g_1, Y_l g_2 \Big\rangle_{L^2_{x,v}} \Big| + \mathrm{error},
		\end{split}
	\end{equation}
	where the error terms are exactly those controlled by $\calJ_1$, $\calJ_2$ in Proposition~\ref{prop:main.prelim.for.K} (but with $g_2$ replaced by $Y_l g_2$). Thus, by Proposition~\ref{prop:main.prelim.for.K}, for $\vartheta = 0$, we obtain the desired estimate for the error term. When $\vartheta = 1$, Proposition~\ref{prop:main.prelim.for.K} gives
    \begin{equation}
		\begin{split}
			&\: \Big| \langle w_{(1,\kay,0)}^2 [Y_l,K] g_1, Y_l g_2  \rangle_{L^2_{x,v}} \Big| \\
			\ls &\: \| w_{(0,\kay+1,0)} g_1\|_{L^2_{x,v}} \| w_{(0,\kay,0)} Y_l g_2\|_{L^2_{x,v}} + t^{-2} \| w_{(1,\kay+1,0)} g_1\|_{L^2_{x,v}}\|w_{(1,\kay,0)} Y g_2\|_{L^2_{x,v}}.
		\end{split}
	\end{equation}
    Now observe that $w_{(0,\kay,0)} = e^{-\f{\alp \bar{\bt} |x|^2}{2(\alp + \bar{\bt} t^2)}} e^{-\f 12 |\sqrt{\alp + \bar{\bt} t^2} v - \f{x \bar{\bt} t}{\sqrt{\alp + \bar{\bt} t^2}}|^2} w_{(1,\kay,0)}$, and splitting into $|x| \leq t^2$ and $|x|\geq t^2$ as in Proposition~\ref{prop:main.prelim.for.K}, we obtain
    \begin{equation*} \begin{split} &\: \| w_{(0,\kay+1,0)} g_1\|_{L^2_{x,v}} \| w_{(0,\kay,0)} Y_l g_2\|_{L^2_{x,v}} \\ \ls &\:  \|  g_1\|_{\calD_{x,v}(1,\kay+1,0)} \| Y_l g_2\|_{\calD_{x,v}(1,\kay,0)} + t^{-2} \| w_{(1,\kay+1,0)} g_1\|_{L^2_{x,v}}\|w_{(1,\kay,0)} Y g_2\|_{L^2_{x,v}}, \end{split} \end{equation*}
    which implies the desired result.
	
	Finally, for the main term $\langle \f{2 \alp \bt x_l t}{\alp + \bt t^2} w^2 K g_1, Y_l g_2 \rangle_{L^2_{x,v}} $ in \eqref{eq:main.Y.K.commute}, we control it using Proposition~\ref{prop:weighted-K-bound}. Notice that we allow for an extra $\jap{x/t}$ moment to control the factor $\f{2 \alp \bt x_l t}{\alp + \bt t^2}$. \qedhere
\end{proof}

\subsection{Macroscopic quantities and commutator $[Y,\Pi]$}

Next, we also need the commutator between $Y$ and the projection $I-\Pi.$
We first derive a formula for the macroscopic quantities as in Definition~\ref{def:macro-quant}.
\begin{lemma}\label{lem:exp-abc}
For $a^h$, $b^h_i$ and $c^h$ as in Definition~\ref{def:macro-quant}, we have the following expressions
$$a^h= \pi^{-3/2}e^{\f{\alpha\beta |x|^2}{\alpha+\beta t^2}}(\alpha+\beta t^2)^{3/2}\left[\f{5}{2}\int_{\R^3}h\sqrt{\mu}\ud v-\int_{\R^3}|z|^2h\sqrt{\mu}\ud v\right],$$
$$b^h_i=2\pi^{-3/2}e^{\f{\alpha\beta |x|^2}{\alpha+\beta t^2}}(\alpha+\beta t^2)^{3/2}\int_{\R^3}z_i h\sqrt{\mu}\ud v,$$
and 
$$c^h=\pi^{-3/2}e^{\f{\alpha\beta |x|^2}{\alpha+\beta t^2}}(\alpha+\beta t^2)^{3/2}\left[-\int_{\R^3}h\sqrt{\mu}\ud v+\f{2}{3}\int_{\R^3}|z|^2h\sqrt{\mu}\ud v\right].$$

Thus, 
\begin{equation}\label{eq:Pi.formula}
\begin{split}
\Pi h=\pi^{-3/2}(\alpha+\beta t^2)^{3/2}e^{-\f{|z|^2}{2}}&\left\{\int_{\R^3}(\f 52- |z_*|^2) h_*e^{-\f{|z_*|^2}{2}}\ud v_*+2z_i\int_{\R^3}(z_*)_i h_*e^{-\f{|z_*|^2}{2}}\ud v_*\right.\\
&\hspace{6em}\left.+|z|^2\int_{\R^3}(-1+ \f 23 |z_*|^2) h_*e^{-\f{|z_*|^2}{2}}\ud v_*\right\}.
\end{split}
\end{equation}
\end{lemma}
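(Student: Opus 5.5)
The plan is to reduce everything to Gaussian moment computations in the variable $z$. Write $\mu = e^{-\frac{\alpha\beta|x|^2}{\alpha+\beta t^2}} e^{-|z|^2}$, so that $\sqrt{\mu} = e^{-\frac{\alpha\beta|x|^2}{2(\alpha+\beta t^2)}} e^{-|z|^2/2}$. Since $\ud v = (\alpha+\beta t^2)^{-3/2}\ud z$ at fixed $(t,x)$, every inner product $\int (\cdot)\,\ud v$ of the basis functions becomes $e^{-\frac{\alpha\beta|x|^2}{\alpha+\beta t^2}}(\alpha+\beta t^2)^{-3/2}$ times a standard Gaussian moment. The moments needed are
\[
\int_{\R^3} e^{-|z|^2}\ud z = \pi^{3/2},\quad \int_{\R^3} z_i z_j e^{-|z|^2}\ud z=\tfrac12\pi^{3/2}\delta_{ij},\quad \int_{\R^3} |z|^2 e^{-|z|^2}\ud z=\tfrac32\pi^{3/2},\quad \int_{\R^3} |z|^4e^{-|z|^2}\ud z=\tfrac{15}{4}\pi^{3/2}.
\]
Odd moments vanish, so the $b_i$ equations decouple from the $(a,c)$ equations.

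The orthogonality conditions $\langle h-\Pi h, \phi\rangle_{L^2_v}=0$ for $\phi\in\{\sqrt\mu, z_i\sqrt\mu, |z|^2\sqrt\mu\}$ then become linear equations. With $\lambda := e^{-\frac{\alpha\beta|x|^2}{\alpha+\beta t^2}}(\alpha+\beta t^2)^{-3/2}\pi^{3/2}$, they read
\[
\lambda\big(a+\tfrac32 c\big)=\int h\sqrt\mu\,\ud v,\qquad \lambda\,\tfrac12 b_i=\int z_i h\sqrt\mu\,\ud v,\qquad \lambda\big(\tfrac32 a+\tfrac{15}{4}c\big)=\int |z|^2h\sqrt\mu\,\ud v.
\]
The middle equation gives $b_i$ immediately. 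The $2\times2$ system has determinant $\frac{15}{4}-\frac94=\frac32$, and its inverse is $\begin{pmatrix}5/2 & -1\\ -1 & 2/3\end{pmatrix}$. This produces exactly the stated coefficients $\frac52,-1$ for $a$ and $-1,\frac23$ for $c$, each multiplied by $\lambda^{-1}=\pi^{-3/2}e^{\frac{\alpha\beta|x|^2}{\alpha+\beta t^2}}(\alpha+\beta t^2)^{3/2}$.

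For \eqref{eq:Pi.formula}, I substitute these expressions into $\Pi h = a\sqrt\mu+b_iz_i\sqrt\mu+c|z|^2\sqrt\mu$. I write $\sqrt\mu$ and $\sqrt{\mu_*}$ in factored form. The factor $e^{\frac{\alpha\beta|x|^2}{\alpha+\beta t^2}}$ cancels against the two half-Gaussian $x$-factors, since $z$ and $z_*$ share the same $x$. This leaves $e^{-|z|^2/2}$ outside and $e^{-|z_*|^2/2}$ inside the integrals, with prefactor $\pi^{-3/2}(\alpha+\beta t^2)^{3/2}$.

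There is no real obstacle. The only places requiring care are the moment constants, in particular $\int|z|^4e^{-|z|^2}=\frac{15}{4}\pi^{3/2}$ (obtained from $3\cdot\frac34+6\cdot\frac14$), and the bookkeeping of the Jacobian $(\alpha+\beta t^2)^{3/2}$.
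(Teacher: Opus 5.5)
Your proposal is correct and follows the paper's proof: both compute the Gaussian moments in $z$ (with the Jacobian $\ud v=(\alpha+\beta t^2)^{-3/2}\ud z$), read off $b_i$ from the decoupled odd moments, and invert the Gram matrix $\begin{bmatrix}1&3/2\\3/2&15/4\end{bmatrix}$ to get $\begin{bmatrix}5/2&-1\\-1&2/3\end{bmatrix}$. You also obtain \eqref{eq:Pi.formula} the same way, by substituting the coefficients and cancelling the $x$-dependent exponentials between $\sqrt{\mu}$ and $\sqrt{\mu_*}$.
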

\begin{proof}
First note that 
\begin{equation}\label{eq:mu-expan}
\sqrt{\mu}=e^{-\f{\alpha|v|^2}{2}}e^{-\f{\beta |x-tv|^2}{2}}=e^{-\f{\alpha\beta |x|^2}{2(\alpha+\beta t^2)}}e^{-\f{|z|^2}{2}}.
\end{equation}

Then we can see easily that for every $i$, $z_i\sqrt{\mu}$ is orthogonal to $\sqrt{\mu}$, $z_j\sqrt{\mu}$ for $j\neq i$ and $|z|^2\sqrt{\mu}$. Hence 
$$b^h_i=\f{\int_{\R^3}z_i h\sqrt{\mu}\ud v}{\int_{\R^3}z_i^2\mu\ud v }=2\pi^{-3/2}e^{\f{\alpha\beta |x|^2}{\alpha+\beta t^2}}(\alpha+\beta t^2)^{{3/2}}\int_{\R^3}z_i h\sqrt{\mu}\ud v.$$
To derive $a^h$, $c^h$ we look at the following system of equations
\begin{align*}
\begin{bmatrix}
\int_{\R^3} \mu\ud v& \int_{\R^3}|z|^2\mu\ud v\\
\int_{\R^3}|z|^2\mu\ud v& \int_{\R^3}|z|^4\mu\ud v
\end{bmatrix}
\begin{bmatrix}
a^h\\c^h
\end{bmatrix}
=
\begin{bmatrix}
\int_{\R^3} h\sqrt{\mu}\ud v\\ \int_{\R^3}h |z|^2 \sqrt{\mu} \ud v
\end{bmatrix}.
\end{align*}
Using simple Gaussian integrals we get that 
$$\begin{bmatrix}
\int_{\R^3} \mu\ud v& \int_{\R^3}|z|^2\mu\ud v\\
\int_{\R^3}|z|^2\mu\ud v& \int_{\R^3}|z|^4\mu\ud v
\end{bmatrix}=\pi^{3/2}\f{e^{-\f{\alpha\beta |x|^2}{\alpha+\beta t^2}}}{(\alpha+\beta t^2)^{3/2}}\begin{bmatrix}
1& {3/2} \\
 {3/2}&{15/4 }
\end{bmatrix}.$$
Finally, we get that 
$$
\begin{bmatrix}
\int_{\R^3} \mu\ud v& \int_{\R^3}|z|^2\mu\ud v\\
\int_{\R^3}|z|^2\mu\ud v& \int_{\R^3}|z|^4\mu\ud v
\end{bmatrix}^{-1}=\pi^{-3/2}e^{\f{\alpha\beta |x|^2}{\alpha+\beta t^2}}(\alpha+\beta t^2)^{3/2}{\begin{bmatrix}
5/2& -1 \\
-1&2/3
\end{bmatrix}.}
$$
Hence $$a^h=\pi^{-3/2}e^{\f{\alpha\beta |x|^2}{\alpha+\beta t^2}}(\alpha+\beta t^2)^{3/2}\left[\f{5}{2}\int_{\R^3}h\sqrt{\mu}\ud v-\int_{\R^3}|z|^2h\sqrt{\mu}\ud v\right]$$
and 
$$c^h=\pi^{-3/2}e^{\f{\alpha\beta |x|^2}{\alpha+\beta t^2}}(\alpha+\beta t^2)^{3/2}\left[-\int_{\R^3}h\sqrt{\mu}\ud v+\f{2}{3}\int_{\R^3}|z|^2h\sqrt{\mu}\ud v\right].$$

The final statement can be proved by using \eqref{eq:mu-expan} and Definition~\ref{def:macro-quant}.
\end{proof}
\begin{proposition}\label{prop:comm-Y-Pi}
For $\Pi$ as defined in Definition~\ref{def:macro-quant}, the commutator $[Y_l,\Pi] = -[Y_l,I-\Pi]$ schematically takes the form
\begin{equation}\label{eq:Yl.Pi.schematic}
	[Y_l,\Pi]g \eqs t^{-1}  \times t^3 e^{-\f{|z|^2}{2}} \Big( \mathfrak{h}_l^{(1)} (t,z)\ \int_{\R^3}\mathfrak{h}_l^{(2)} (t,z_*)g_*e^{-\f{|z_*|^2}{2}}\ud v_*\Big), 
\end{equation}
where $\mathfrak{h}_l^{(i)} (t,z)$ satisfies the bound 
\begin{equation}\label{eq:frak.h.schematic.bound}
	|\rd_z^\om \mathfrak{h}^{(i)}_l| \lesssim_\om \jap{z}^{\max\{3-|\om|,0\}}.
\end{equation}
In \eqref{eq:Yl.Pi.schematic}, $\eqs$ means that the equation is schematic, and that the right-hand side consists of a sum of terms of this form.
\end{proposition}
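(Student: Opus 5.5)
Starting from \eqref{eq:Pi.formula}, I would write $\Pi$ as a kernel operator,
\begin{equation*}
\Pi g(v) = \pi^{-3/2}(\alp+\bt t^2)^{3/2} e^{-\f{|z|^2}{2}} \int_{\R^3} P(z,z_*)\, g_*\, e^{-\f{|z_*|^2}{2}}\,\ud v_*,
\end{equation*}
where $P(z,z_*) = (\f 52 - |z_*|^2) + 2 z_i z_{*i} + |z|^2(-1+\f 23|z_*|^2)$ is a fixed polynomial of degree at most $2$ in each variable. To compute $[Y_l,\Pi]g = Y_l(\Pi g) - \Pi(Y_l g)$, I would handle the two slots of the kernel separately. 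In the output variable, $Y_l$ acts on $(\alp+\bt t^2)^{3/2}$, which is $v$- and $x$-independent and so gives nothing, and on $e^{-|z|^2/2}P(z,z_*)$. For the latter I use \eqref{eq:Y-der-z}, $Y_l z_j = \f{\alp}{\sqrt{\alp+\bt t^2}}\de_{lj}$. Since $t\rd_{x_l}$ also acts on $z_*$ through its $x$-dependence, I would write $t\rd_{x_l} = Y_l - \rd_{v_l}$ inside the integral. In the input variable, $\Pi(Y_l g)$ contains $\int P\, e^{-|z_*|^2/2}(t\rd_{x_l}+\rd_{v_{*l}})g_*\,\ud v_*$. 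Here I move $t\rd_{x_l}$ outside the integral at the cost of its action on the $x$-dependence of $z_*$, and integrate $\rd_{v_{*l}}$ by parts onto $P e^{-|z_*|^2/2}$.

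Combining the two, every derivative lands on a function of $z$ or $z_*$, in the combination $Y_l$ (acting through $z$ and $z_*$). By \eqref{eq:Y-der-z} this equals $\f{\alp}{\sqrt{\alp+\bt t^2}}(\rd_{z_l}+\rd_{z_{*l}})$, because $Y_l z_* = Y_l z$ as well: $z_*$ depends on $x$ exactly as $z$ does, and the $\rd_{v_{*}}$ from the integration by parts supplies the matching term. The $\rd_{z_{*}}$ part arises with a sign flip from the integration by parts, and I would check it assembles into the same total $Y_l$ derivative. The result is
\begin{equation*}
[Y_l,\Pi]g = \pi^{-3/2}(\alp+\bt t^2)^{3/2}\f{\alp}{\sqrt{\alp+\bt t^2}} \int \big(\rd_{z_l}+\rd_{z_{*l}}\big)\Big(e^{-\f{|z|^2}{2}}P\,e^{-\f{|z_*|^2}{2}}\Big) g_*\,\ud v_*.
\end{equation*}
The prefactor is $\approx t^3\cdot t^{-1}$. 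Applying the derivatives gives $e^{-|z|^2/2}e^{-|z_*|^2/2}$ times polynomials of degree at most $3$ in $z$ and at most $3$ in $z_*$. Expanding these into finitely many products $\mathfrak h^{(1)}_l(z)\mathfrak h^{(2)}_l(z_*)$ of monomials of degree $\le 3$ yields \eqref{eq:Yl.Pi.schematic}, and \eqref{eq:frak.h.schematic.bound} holds trivially for polynomials of degree $\le 3$. The identity $[Y_l,\Pi]=-[Y_l,I-\Pi]$ is immediate.

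The main obstacle is the bookkeeping in the second step. One must verify that the $t\rd_{x_l}$ contributions from the $x$-dependence of $z$ and $z_*$, together with the factor $e^{\f{\alp\bt|x|^2}{\alp+\bt t^2}}$ implicit in the normalization, do not leave a term with an unbounded factor like $x_l t/(\alp+\bt t^2)$. They should not, because \eqref{eq:Pi.formula} has no leftover $x$-dependent exponential once $\sqrt\mu$ is written as in \eqref{eq:mu-expan}: the $e^{\pm\f{\alp\bt|x|^2}{\alp+\bt t^2}}$ factors cancel exactly. With that cancellation, only $Y_l z$ derivatives survive, and each carries the gain $t^{-1}$.
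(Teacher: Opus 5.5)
Your proposal is correct and is essentially the paper's argument: the paper also differentiates \eqref{eq:Pi.formula} directly and uses that $t\rd_{x_l}$ of the $v_*$-integral equals $\int Y_{*l}(\cdots)\,\ud v_*$, since the added $\rd_{v_{*l}}$ integrates to zero. It then applies \eqref{eq:Y-der-z} to the polynomial--Gaussian factors in $z$ and $z_*$; your kernel $P(z,z_*)$ and combined derivative $\frac{\alp}{\sqrt{\alp+\bt t^2}}(\rd_{z_l}+\rd_{z_{*l}})$ simply make explicit what the paper summarizes as ``the other terms can be estimated in a similar manner.''
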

\begin{proof}
We start with \eqref{eq:Pi.formula}. We compute 
\begin{equation}
\begin{split}
	&\: Y_l \Big( \pi^{-3/2}(\alpha+\beta t^2)^{3/2}e^{-\f{|z|^2}{2}}\int_{\R^3}(\f 52- |z_*|^2) g_*e^{-\f{|z_*|^2}{2}}\ud v_* \Big) \\
	= &\: \pi^{-3/2}(\alpha+\beta t^2)^{3/2}(Y_l e^{-\f{|z|^2}{2}})\int_{\R^3}(\f 52- |z_*|^2) g_*e^{-\f{|z_*|^2}{2}}\ud v_* \\
	&\: + \pi^{-3/2}(\alpha+\beta t^2)^{3/2}e^{-\f{|z|^2}{2}} \int_{\R^3} Y_{*l} ((\f 52- |z_*|^2) g_*e^{-\f{|z_*|^2}{2}}) \ud v_*,
\end{split}
\end{equation}
so that the commutator $$\pi^{-3/2}(\alpha+\beta t^2)^{3/2} \Big( Y_l ( e^{-\f{|z|^2}{2}}\int_{\R^3}(\f 52- |z_*|^2) g_*e^{-\f{|z_*|^2}{2}}\ud v_* ) - e^{-\f{|z|^2}{2}} \int_{\R^3}  (\f 52- |z_*|^2) (Y_lg)_*e^{-\f{|z_*|^2}{2}} \ud v_*\Big)$$ 
obeys the desired bound after using \eqref{eq:Y-der-z}. The other terms can be estimated in a similar manner. \qedhere


\end{proof}

We now control terms taking the same general schematic form as the macroscopic quantities. We will prove two general lemmas and then apply them to specific cases.

\begin{lemma}\label{lem:a.b.c.schematic}
	Let $k\in \mathbb Z_{\geq 0}$ and $\ell \geq 0$. Suppose $g$ is regular, and
	$$\mathfrak m^g(t,x) = t^3 \Big(  \int_{\R^3}\mathfrak{h} (t,z_*)g_*e^{-\f{|z_*|^2}{2}}\ud v_*\Big) \quad \hbox{where $| \mathfrak{h}| \lesssim \jap{z}^{k}$}.$$
	Then the following holds for some implicit constants depending on $k$ and $\ell$:
	\begin{align}
		 |  \mathfrak m^g |(t,x) \ls &\: t^{\f 32} \| \jap{z}^{-\ell} g\|_{L^2_{v}}. \label{eq:dv.macro.1} 
	\end{align}
\end{lemma}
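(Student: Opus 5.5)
This is a direct Cauchy--Schwarz estimate in $v_*$, combined with the change of variables $v_*\mapsto z_*$. The key point is that the Jacobian of this change of variables supplies the $t^{-3/2}$ needed to offset the prefactor $t^3$.

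First, insert $\jap{z_*}^{\ell}\jap{z_*}^{-\ell}$ and apply Cauchy--Schwarz in $v_*$:
\begin{equation*}
|\mathfrak m^g|(t,x) \leq t^3 \Big( \int_{\R^3} |\mathfrak h(t,z_*)|^2 \jap{z_*}^{2\ell} e^{-|z_*|^2} \ud v_* \Big)^{\f 12} \| \jap{z}^{-\ell} g\|_{L^2_v}.
\end{equation*}
Second, estimate the remaining Gaussian integral. Use $|\mathfrak h|\ls \jap{z_*}^k$ and change variables $y = z_* = \sqrt{\alp+\bt t^2}\, v_* - \f{x\bt t}{\sqrt{\alp+\bt t^2}}$, so that $\ud v_* = (\alp+\bt t^2)^{-3/2}\ud y$. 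This gives
\begin{equation*}
\int_{\R^3} \jap{z_*}^{2k+2\ell} e^{-|z_*|^2}\ud v_* = (\alp+\bt t^2)^{-\f 32}\int_{\R^3}\jap{y}^{2k+2\ell}e^{-|y|^2}\ud y \ls_{k,\ell} t^{-3}.
\end{equation*}
This is the same computation as in \eqref{eq:int-no-phi} of Proposition~\ref{prop:aux-integral}, with polynomial factors allowed. The bound holds uniformly in $x$, and since $t\geq 1$ we may absorb $\alp$ into the implicit constant.

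Third, combine the two steps. Taking the square root gives the factor $t^{-3/2}$, and multiplying by $t^3$ yields $|\mathfrak m^g|\ls t^{3/2}\|\jap{z}^{-\ell}g\|_{L^2_v}$, which is \eqref{eq:dv.macro.1}.

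There is no real obstacle here. The one point to check is that the weight $e^{-|z_*|^2/2}$ is already built into the definition of $\mathfrak m^g$, without the factor $e^{-\f{\alp\bt|x|^2}{2(\alp+\bt t^2)}}$. Consequently no $x$-dependent exponential appears, and the bound is uniform in $x$ with no Gaussian loss or gain. The only care required is to track the Jacobian power $t^{-3}$ correctly.
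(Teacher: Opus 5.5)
Your proposal is correct and follows the paper's proof essentially verbatim. The paper likewise applies Cauchy--Schwarz in $v_*$ with the $\jap{z_*}^{\pm\ell}$ split, then uses the change of variables to $z_*$ to get $\int_{\R^3} e^{-|z_*|^2}\jap{z_*}^{2k+2\ell}\,\ud v_* \ls t^{-3}$, which gives $t^3\cdot t^{-\f 32} = t^{\f 32}$.
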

\begin{proof}
	We use Cauchy--Schwarz to obtain
	\begin{equation}
	\begin{split}
		|  \mathfrak m^g |(t,x)
		\ls &\:  t^3 \Big( \int_{\RR^3} e^{-|z_{*}|^2}  \jap{z_{*}}^{2k+2\ell} \ud v_{*} \Big)^{\f 12} \Big( \int_{\RR^3} \jap{z_*}^{-2\ell} g_*^2 \ud v_* \Big)^{\f 12}.
	\end{split}
	\end{equation}
	A change of variables shows that
	\begin{equation}\label{eq:macro.stupid.integration}
	\int_{\RR^3} e^{-|z_*|^2} \jap{z_*}^{2k+2\ell} \ud v_*  \ls t^{-3},
	\end{equation}
	which gives \eqref{eq:dv.macro.1}. \qedhere

\end{proof}

\begin{lemma}\label{lem:Pi.schematic}
	Let $k_1,\,k_2 \in \mathbb Z_{\geq 0}$. Suppose $g$ is regular, and
	$$\mathfrak s^g = t^3 e^{-\f{|z|^2}{2}} \Big( \mathfrak{h}^{(1)} (t,z) \int_{\R^3}\mathfrak{h}^{(2)} (t,z_*)g_*e^{-\f{|z_*|^2}{2}}\ud v_*\Big) \quad \hbox{where $|\rd_z^\om \mathfrak{h}^{(i)}| \lesssim_\om \jap{z}^{k_1},\quad i =1,2.$}$$
	Then for any multi-index $\gamma$, the following holds for some implicit constants depending on $k_1$, $k_2$ and $\gamma$:
	\begin{align}
		 \|  \jap{z}^{k_2} \rd_v^\gamma \mathfrak s^g \|_{L^2_{v}} \ls &\: t^{|\gamma|}  \| \jap{z}^{-\f 12} g\|_{L^2_{v}}. \label{eq:dv.[Y,Pi].1} 
	\end{align}
\end{lemma}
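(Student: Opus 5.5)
The plan is to factor $\mathfrak s^g$ as $e^{-\f{|z|^2}{2}}\mathfrak h^{(1)}(t,z)\cdot \mathfrak m^g(t,x)$, where $\mathfrak m^g = t^3\int_{\R^3}\mathfrak h^{(2)}(t,z_*)g_*e^{-\f{|z_*|^2}{2}}\ud v_*$ depends only on $(t,x)$ and not on $v$. Then all $v$-derivatives fall on the first factor, and the estimate splits into an explicit $v$-integral of a Gaussian-type function times $|\mathfrak m^g|$, which we bound using Lemma~\ref{lem:a.b.c.schematic}.

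First, $\rd_{v_i} = \sqrt{\alp+\bt t^2}\,\rd_{z_i}$ when acting on functions of $(t,z)$, since $\rd_{v_l} z_m = \sqrt{\alp+\bt t^2}\de_{lm}$. Therefore
$$\rd_v^\gamma\Big(e^{-\f{|z|^2}{2}}\mathfrak h^{(1)}(t,z)\Big) = (\alp+\bt t^2)^{\f{|\gamma|}{2}}\,\rd_z^\gamma\Big(e^{-\f{|z|^2}{2}}\mathfrak h^{(1)}(t,z)\Big).$$
By the Leibniz rule and the assumed bounds $|\rd_z^\om\mathfrak h^{(1)}|\ls\jap{z}^{k_1}$, the right-hand side is bounded pointwise by $t^{|\gamma|}\jap{z}^{k_1+|\gamma|}e^{-\f{|z|^2}{2}}$, up to constants depending on $\gamma$ and $k_1$.

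Next, we compute the $L^2_v$ norm. Changing variables $v\mapsto z$ (Jacobian $(\alp+\bt t^2)^{-3/2}$), as in \eqref{eq:macro.stupid.integration}, gives
$$\Big\|\jap{z}^{k_2+k_1+|\gamma|}e^{-\f{|z|^2}{2}}\Big\|_{L^2_v}\ls t^{-\f 32}.$$
Hence
$$\|\jap{z}^{k_2}\rd_v^\gamma\mathfrak s^g\|_{L^2_v}\ls t^{|\gamma|}\,t^{-\f32}\,|\mathfrak m^g|(t,x).$$

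Finally, we apply Lemma~\ref{lem:a.b.c.schematic} with $k=k_1$ and $\ell=\f12$. This gives $|\mathfrak m^g|\ls t^{\f32}\|\jap{z}^{-\f12}g\|_{L^2_v}$. The powers $t^{-3/2}$ and $t^{3/2}$ cancel, which yields \eqref{eq:dv.[Y,Pi].1}.

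None of these steps is a real obstacle. The only points needing care are keeping track of the $t$-powers: the $t^{|\gamma|}$ comes from the chain rule, and the two $t^{\pm 3/2}$ factors come from the two Gaussian integrals in $v$ and $v_*$, respectively, and these must cancel exactly. One should also note that the estimate is pointwise in $(t,x)$, so no $x$-integration is involved.
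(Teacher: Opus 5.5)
Your proposal is correct and follows the paper's proof: you factor $\mathfrak s^g = e^{-\f{|z|^2}{2}}\mathfrak h^{(1)}(t,z)\,\mathfrak m^g(t,x)$ and let all $v$-derivatives fall on the first factor, producing $t^{|\gamma|}$ by the chain rule and $t^{-\f 32}$ from the Gaussian $L^2_v$ norm. You then cancel the $t^{-\f 32}$ against the $t^{\f 32}$ from Lemma~\ref{lem:a.b.c.schematic} applied with $k=k_1$ and $\ell=\f12$, exactly as the paper does.
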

\begin{proof}
	We write 
	$$\mathfrak s^g = e^{-\f{|z|^2}{2}} \mathfrak{h}^{(1)} (t,z) \mathfrak m^g(t,x),$$
	where $\mathfrak m^g$ obeys the assumptions of Lemma~\ref{lem:a.b.c.schematic}.
	We then estimate
	\begin{equation}
	\begin{split}
		\| \jap{z}^{{k_2}} \rd_v^\gamma \mathfrak s^g \|_{L^2_{v}}
		\ls &\:  t^{|\gamma|}  \Big(  \sup_x \int_{\RR^3} e^{-|z|^2} \jap{z}^{2(|\gamma|+ k_1+k_2)} \ud v \Big)^{\f 12}  |  \mathfrak m^g |.
	\end{split}
	\end{equation}
	Using the result of Lemma~\ref{lem:a.b.c.schematic} and a computation similar to \eqref{eq:macro.stupid.integration}, we obtain the desired estimate.  \qedhere
	
\end{proof}

\begin{corollary}\label{cor:abc}
For any $\ell \geq 0$, the following holds with implicit constants depending on $\ell$:
\begin{align}
	|  a^g |,\, | b^g |,\, |  c^g | \ls &\:  t^{\f 32} e^{\f{\alp \bt |x|^2}{2(\alp + \bt t^2)}} \|\jap{z}^{-\ell} g \|_{L^2_v}, \label{eq:abc.est} \\
	|Y_l  a^g |,\, | Y_l b^g |,\, | Y_l c^g | \ls &\: t^{\f 32} e^{\f{\alp \bt |x|^2}{2(\alp + \bt t^2)}} \Big(\jap{x/t}\|\jap{z}^{-\ell}  g \|_{L^2_v} + \| \jap{z}^{-\ell} Y g \|_{L^2_v}\Big). \label{eq:Yabc.est}
\end{align}
\end{corollary}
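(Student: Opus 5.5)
The plan is to read off \eqref{eq:abc.est} from the explicit formulas in Lemma~\ref{lem:exp-abc} combined with Lemma~\ref{lem:a.b.c.schematic}. Write $\sqrt{\mu} = e^{-\f{\alp\bt|x|^2}{2(\alp+\bt t^2)}} e^{-|z|^2/2}$ as in \eqref{eq:mu-expan}. Then each of $a^g$, $b^g_i$, $c^g$ is of the form
$$e^{\f{\alp\bt|x|^2}{2(\alp+\bt t^2)}}\,(\alp+\bt t^2)^{3/2} t^{-3}\,\mathfrak m^g(t,x),$$
where $\mathfrak m^g$ is as in Lemma~\ref{lem:a.b.c.schematic} with $\mathfrak h$ one of $\f 52 - |z|^2$, $2 z_i$, $-1+\f 23|z|^2$. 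All of these satisfy $|\mathfrak h|\ls \jap{z}^2$. Note that only half of the Gaussian factor $e^{\f{\alp\bt|x|^2}{\alp+\bt t^2}}$ survives, since the other half is absorbed by the $\sqrt\mu$ inside the integral. Since $(\alp+\bt t^2)^{3/2}t^{-3}\ls 1$, Lemma~\ref{lem:a.b.c.schematic} gives $|\mathfrak m^g|\ls t^{3/2}\|\jap{z}^{-\ell}g\|_{L^2_v}$, which is \eqref{eq:abc.est}.

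For \eqref{eq:Yabc.est}, apply $Y_l$ to the same representation and distribute the derivative, using \eqref{eq:Y-der-mu} and \eqref{eq:Y-der-z}.

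First, $Y_l$ acting on the prefactor gives
$$Y_l\Big(\f{\alp\bt|x|^2}{2(\alp+\bt t^2)}\Big) = \f{\alp\bt t x_l}{\alp+\bt t^2},$$
which is $O(|x|/t)$. This yields a term bounded by $\jap{x/t}$ times the bound in \eqref{eq:abc.est}.

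Second, $Y_l$ does not act on $t$. Inside the $v_*$-integral, $Y_l$ acts as $t\rd_{x_l}+\rd_{v_{*l}}$ on the integrand. Since $t\rd_{x_l}$ does not interact with the integration variable, we may write $\int Y_l(\cdots)\,\ud v_* = \int (t\rd_{x_l}+\rd_{v_{*l}})(\cdots)\,\ud v_*$; the $\rd_{v_{*l}}$ part of $Y_l g_*$ is kept rather than integrated away. Then:
\begin{itemize}
\item $Y_l$ falling on $g_*$ gives $\int \mathfrak h(z_*) (Y_lg)_* e^{-|z_*|^2/2}\ud v_*$, which Lemma~\ref{lem:a.b.c.schematic} bounds by $t^{3/2}\|\jap{z}^{-\ell}Yg\|_{L^2_v}$.
\item $Y_l$ falling on $\mathfrak h(z_*)e^{-|z_*|^2/2}$ produces, via $Y_lz_{*j}=\f{\alp}{\sqrt{\alp+\bt t^2}}\de_{lj}$, a factor $O(t^{-1})$ times a new function $\tilde{\mathfrak h}(z_*)$ with $|\tilde{\mathfrak h}|\ls\jap{z_*}^3$. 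This is again covered by Lemma~\ref{lem:a.b.c.schematic}, and is even better than needed.
\end{itemize}

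Collecting the terms gives \eqref{eq:Yabc.est}.

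The only point needing care is the bookkeeping that $Y_l$ applied to $z_*$ yields no growing factor (it is $O(t^{-1})$ rather than $O(t)$, unlike a bare $\rd_v$). This is exactly \eqref{eq:Y-der-z}, so I expect no real obstacle. The sole source of the $\jap{x/t}$ loss is the derivative of the spatial Gaussian prefactor.
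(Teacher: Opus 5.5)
Your proposal is correct and is essentially the paper's proof: rewrite $a^g$, $b^g$, $c^g$ via Lemma~\ref{lem:exp-abc} as $e^{\f{\alp\bt|x|^2}{2(\alp+\bt t^2)}}\mathfrak m^g$ and apply Lemma~\ref{lem:a.b.c.schematic}; for $Y_l$, use \eqref{eq:Y-der-z} and the derivative of the Gaussian prefactor to reach the schematic form $e^{\f{\alp\bt|x|^2}{2(\alp+\bt t^2)}}\big(\jap{x/t}\,\mathfrak m^g+\mathfrak m^{Yg}\big)$. One step should be justified more precisely. On a function of $(t,x)$, $Y_l$ acts as $t\rd_{x_l}$, and you may replace this by $t\rd_{x_l}+\rd_{v_{*l}}$ under the integral because $\int_{\R^3}\rd_{v_{*l}}(\cdots)\,\ud v_*=0$, not merely because $t\rd_{x_l}$ commutes with the $v_*$-integration.
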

\begin{proof}
	By Lemma~\ref{lem:exp-abc}, $e^{-\f{\alp \bt |x|^2}{2(\alp + \bt t^2)}} a^g$, $e^{-\f{\alp \bt |x|^2}{2(\alp + \bt t^2)}} b^g$, $e^{-\f{\alp \bt |x|^2}{2(\alp + \bt t^2)}} c^g$ take the form of $\mathfrak m^g$ of Lemma~\ref{lem:a.b.c.schematic}. Thus \eqref{eq:abc.est} follows from Lemma~\ref{lem:a.b.c.schematic}.
	
	Using Lemma~\ref{lem:exp-abc}, \eqref{eq:Y-der-z} and $Y_l \f{|x|^2}{\alp + \bt t^2} = \f{2 t x_l}{\alp + \bt t^2}$, it follows that $Y_l  a^g$, $Y_l b^g$, $Y_l c^g $ take the schematic form
	\begin{equation}\label{eq:Y.abc.schematic}
		Y_l  a^g,\, Y_l b^g, \, Y_l c^g \eqs e^{\f{\alp\bt |x|^2}{2(\alp + \bt t^2)}} \jap{x/t} \mathfrak m^g + e^{\f{\alp\bt |x|^2}{2(\alp + \bt t^2)}}  \mathfrak m^{Yg}.
	\end{equation}
	From this we deduce \eqref{eq:Yabc.est} using Lemma~\ref{lem:a.b.c.schematic}. \qedhere
\end{proof}

\begin{corollary}\label{cor:Pi.in.weighted.space}
    For any $\kay \in \mathbb Z_{\geq 0}$ and $\ell \geq 0$,
    \begin{align}
        \| \jap{z}^{\ell} \Pi g \|_{\calD_{x,v}(0,\kay,1)} \ls &\: \| g\|_{\calD_{x,v}(0,\kay,1)},  \label{eq:Pi.in.weighted.space.2} \\
        \| w_{(0,\kay,0)} \jap{z}^{\ell} \Pi g \|_{L^2_{x,v}} \ls &\: \| w_{(0,\kay,0)} g\|_{L^2_{x,v}}.  \label{eq:Pi.in.weighted.space.3}
    \end{align}
\end{corollary}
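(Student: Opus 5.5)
The plan is to use the explicit formula \eqref{eq:Pi.formula} together with the pointwise coefficient bounds of Corollary~\ref{cor:abc}. The point is that $\Pi g$ is a Gaussian in $z$ times a polynomial of degree at most $2$, with coefficients depending only on $(t,x)$. Extra powers $\jap{z}^\ell$ and $v$-derivatives therefore cost nothing beyond a factor of $t$ per derivative, and that factor is exactly what the dissipation norm $\Delta_v(0)$ absorbs through its $t^{-1}$ and $t^{-2}$ weights. Since the weights $w_{(0,\kay,\iota)}$ are independent of $v$, they pass through $\Pi$, and we may work at fixed $(t,x)$, multiplying by $w^2$ and integrating in $x$ only at the end.

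For \eqref{eq:Pi.in.weighted.space.3}, fix $(t,x)$ and write $\Pi g = e^{-\f{\alp\bt|x|^2}{2(\alp+\bt t^2)}} e^{-|z|^2/2}\,p(z)$, where $p$ is a quadratic polynomial whose coefficients are $a^g,b^g,c^g$. Corollary~\ref{cor:abc} with $\ell=0$ gives
$$|a^g|+|b^g|+|c^g|\ls t^{\f32}e^{\f{\alp\bt|x|^2}{2(\alp+\bt t^2)}}\|g\|_{L^2_v}.$$
The change of variables $v\mapsto z$ gives $\|\jap{z}^{\ell} e^{-|z|^2/2}p\|_{L^2_v}\ls t^{-\f32}\max|\text{coeff}|$. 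Combining these, the exponential factors cancel and the powers $t^{\pm 3/2}$ cancel, so
$$\|\jap{z}^\ell\Pi g\|_{L^2_v}\ls\|g\|_{L^2_v}.$$
Multiplying by $w_{(0,\kay,0)}^2$ and integrating in $x$ gives the claim. This is the same computation as Lemma~\ref{lem:Pi.schematic}, with $\mathfrak h^{(1)},\mathfrak h^{(2)}$ polynomials of degree at most $2$.

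For \eqref{eq:Pi.in.weighted.space.2}, we need, at each $(t,x)$,
$$\|\jap{z}^\ell\Pi g\|_{\Delta_v(0)}\ls\|g\|_{\Delta_v(0)}.$$
The left-hand side carries the factor $e^{-\f{\alp\bt|x|^2}{2(\alp+\bt t^2)}}$ times $L^2_v$ norms of $\jap{z}^{\ell'}\Pi g$ and of $t^{-1}\jap{z}^{\ell'}\rd_v\Pi g$. Since $\rd_v=\sqrt{\alp+\bt t^2}\,\rd_z$, each $v$-derivative produces exactly one factor of $t$, and the $t^{-1}$ in the norm cancels it. We then use Lemma~\ref{lem:a.b.c.schematic} with $\ell=\f12$ (any $\ell\ge0$ is allowed there), which bounds the coefficients by $t^{\f32}e^{\f{\alp\bt|x|^2}{2(\alp+\bt t^2)}}\|\jap{z}^{-1/2}g\|_{L^2_v}$. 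Arguing as for \eqref{eq:Pi.in.weighted.space.3}, the left-hand side is then bounded by $e^{-\f{\alp\bt|x|^2}{2(\alp+\bt t^2)}}\|\jap{z}^{-1/2}g\|_{L^2_v}$, and this is at most $\|g\|_{\Delta_v(0)}$ by Definition~\ref{def:dissipation-norm}. Finally we multiply by $w_{(0,\kay,1)}^2$ and integrate in $x$.

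The only point needing care is to make sure the exponential prefactor $e^{\pm\f{\alp\bt|x|^2}{2(\alp+\bt t^2)}}$ appears symmetrically, so that it cancels rather than accumulates. This holds because the dissipation norm's prefactor $e^{-\f{\alp\bt|x|^2}{\alp+\bt t^2}}$ on squared norms matches the growth of the coefficient bounds in Corollary~\ref{cor:abc}. Everything else is routine Gaussian integration.
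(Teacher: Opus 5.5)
Your proposal is correct and follows essentially the paper's argument: expand $\Pi g$ via \eqref{eq:Pi.formula}, bound $(a^g,b^g,c^g)$ by Corollary~\ref{cor:abc} (with $\ell=\f12$ for \eqref{eq:Pi.in.weighted.space.2}), and use $\|\jap{z}^{\ell'}\sqrt{\mu}\|_{L^2_v}\ls t^{-\f32}e^{-\f{\alp\bt|x|^2}{2(\alp+\bt t^2)}}$. Then the $x$-Gaussian of $\sqrt{\mu}$ cancels the growth of the coefficients, and each $\rd_v$ costs a factor of $t$ that is absorbed by the $t^{-1}$ in $\Delta_v(0)$. The only difference is cosmetic: the paper carries $w_{(0,\kay,1)}$ through the computation, producing the factor $e^{-\f{\alp^2(\bt-\bar{\bt})|x|^2}{2(\alp+\bt t^2)(\alp+\bar{\bt}t^2)}}$ on both sides, whereas you prove the unweighted bound at fixed $(t,x)$ and insert the $v$-independent weight at the end, which is equivalent.
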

\begin{proof}
    We compute using \eqref{eq:Pi.formula} and Corollary~\ref{cor:abc} that
    \begin{equation}\label{eq:Pi.in.weighted.space.1.prelim}
    \begin{split}
        &\: \| e^{-\f{\alp \bt|x|^2}{2(\alp + \bt t^2)}} w_{(0,\kay,1)} \jap{z}^{\ell} \Pi g \|_{L^2_{v}} \\
        \ls &\: \Big(  \| \jap{z}^{\ell+ 2} \sqrt{\mu} \|_{L^2_v} \Big)  e^{-\f{\alp^2(\bt - \bar{\bt})|x|^2}{2(\alp + \bt t^2)(\alp + \bar{\bt} t^2)}} \jap{x/t}^{\kay} |(a^{g},b^{g}, c^{g})|  \\
        \ls &\: (t^{-\f 32} e^{-\f{\alp \bt |x|^2}{2(\alp + \bt t^2)}} ) e^{-\f{\alp^2(\bt - \bar{\bt})|x|^2}{2(\alp + \bt t^2)(\alp + \bar{\bt} t^2)}}  \jap{x/t}^{\kay} (t^{\f32} e^{\f{\alp \bt |x|^2}{2(\alp + \bt t^2)}})\| \jap{z}^{-\f 12} g\|_{L^2_{v}} \\
        \ls &\: e^{-\f{\alp^2(\bt - \bar{\bt})|x|^2}{2(\alp + \bt t^2)(\alp + \bar{\bt} t^2)}} \jap{x/t}^{\kay} \|\jap{z}^{-\f 12} g\|_{L^2_{v}},
    \end{split}
\end{equation}   
where we used a change of variables together with Lemma~\ref{lem:z.x-tv.com} to compute
\begin{equation}\label{eq:decay.for.Pi.in.weighted.space}
    \| \jap{z}^{\ell + 2} \sqrt{\mu} \|_{L^2_v} \ls e^{-\f{\alp \bt |x|^2}{2(\alp + \bt t^2)}} \| e^{-|z|^2/4}\|_{L^2_v} \ls t^{-\f 32} e^{-\f{\alp \bt |x|^2}{2(\alp + \bt t^2)}}.
\end{equation}
    A similar argument gives
    \begin{equation}\label{eq:Pi.in.weighted.space.1.prelim.dv}
    \begin{split}
        &\: t^{-1} \| e^{-\f{\alp \bt|x|^2}{2(\alp + \bt t^2)}} w_{(0,\kay,1)} \jap{z}^{\ell} \rd_v \Pi g \|_{L^2_{v}} \\
        \ls &\: \Big(  \| \jap{z}^{\ell+ 3} \sqrt{\mu} \|_{L^2_v} \Big) e^{-\f{\alp^2(\bt - \bar{\bt})|x|^2}{2(\alp + \bt t^2)(\alp + \bar{\bt} t^2)}} \jap{x/t}^{\kay} |(a^{g},b^{g}, c^{g})|  \\
        \ls &\: e^{-\f{\alp^2(\bt - \bar{\bt})|x|^2}{2(\alp + \bt t^2)(\alp + \bar{\bt} t^2)}} \jap{x/t}^{\kay} \|\jap{z}^{-\f 12} g\|_{L^2_{v}}.
    \end{split}
\end{equation}  
    Recalling the definitions in Definition~\ref{def:dissipation-norm} and Definition~\ref{def:calD}, we have
    \begin{equation}
    \begin{split}
        \| \Pi g \|_{\calD_{x,v}(0,\kay,1)} 
        \ls &\: \| e^{-\f{\alp \bt |x|^2}{2(\alp + \bt t^2)}} w_{(0,\kay,1)}\Pi g \|_{L^2_{x,v}} + t^{-1} \| e^{-\f{\alp \bt |x|^2}{2(\alp + \bt t^2)}} w_{(0,\kay,1)} \rd_v \Pi g \|_{L^2_{x,v}}.
    \end{split}
\end{equation}
Combining this with the $L^2_x$ integrated version of \eqref{eq:Pi.in.weighted.space.1.prelim}, and \eqref{eq:Pi.in.weighted.space.1.prelim.dv}, we obtain \eqref{eq:Pi.in.weighted.space.2}. Essentially the same (but slightly simpler) argument as in \eqref{eq:Pi.in.weighted.space.1.prelim} also gives \eqref{eq:Pi.in.weighted.space.3}. \qedhere
\end{proof}


\begin{proposition}\label{prop:L.[Y,Pi]}
	For any $k \in \mathbb Z_{\geq 0}$, it holds that
	\begin{equation}
		\|\jap{z}^k L [Y_l,\Pi] g \|_{L^2_{v}} \ls t^{-1}  e^{-\f{\alp \bt |x|^2}{\alp + \bt t^2}} \|  \jap{z}^{-\f 12} g\|_{L^2_{v}} 
	\end{equation}
	for some implicit constant depending on $k$.
\end{proposition}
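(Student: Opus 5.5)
By Proposition~\ref{prop:comm-Y-Pi}, $[Y_l,\Pi]g$ is a finite sum of terms of the form $t^{-1}\mathfrak s^g$, where
$$\mathfrak s^g = e^{-\f{|z|^2}{2}}\mathfrak h^{(1)}(t,z)\,\mathfrak m^g(t,x),\qquad \mathfrak m^g = t^3\int_{\R^3}\mathfrak h^{(2)}(t,z_*)g_*e^{-\f{|z_*|^2}{2}}\ud v_*.$$
Here $\mathfrak h^{(1)}$ has polynomially bounded $z$-derivatives. By linearity it suffices to estimate $\jap{z}^k L\mathfrak s^g$ for one such term. The key structural point is that $\mathfrak m^g$ depends only on $(t,x)$, so $L$ acts only on the $v$-profile $e^{-|z|^2/2}\mathfrak h^{(1)}(t,z)$. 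By Lemma~\ref{lem:a.b.c.schematic} (with $\ell=\f12$), $|\mathfrak m^g|\ls t^{\f32}\|\jap{z}^{-\f12}g\|_{L^2_v}$. It therefore remains to show
$$\big\|\jap{z}^k L\big(e^{-\f{|z|^2}{2}}\mathfrak h^{(1)}(t,z)\big)\big\|_{L^2_v}\ls t^{-\f32}e^{-\f{\alp\bt|x|^2}{\alp+\bt t^2}}.$$

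To prove this bound, I will use the rescaling of Proposition~\ref{prop:normalize}:
$$L\big(e^{-|z|^2/2}\mathfrak h^{(1)}\big)(v) = e^{-\f{\alp\bt|x|^2}{\alp+\bt t^2}}\,\big(L_1 G\big)(z),\qquad G(w):=e^{-|w|^2/2}\mathfrak h^{(1)}(t,w).$$
Changing variables $v\mapsto z$ produces a Jacobian factor $(\alp+\bt t^2)^{-3/4}\sim t^{-\f32}$ in the $L^2_v$ norm. This gives
$$\|\jap{z}^k L(\cdots)\|_{L^2_v}\ls t^{-\f32}e^{-\f{\alp\bt|x|^2}{\alp+\bt t^2}}\,\|\jap{w}^k L_1G\|_{L^2_w}.$$
Multiplying by the prefactors $t^{-1}\cdot t^{\f32}$ then yields exactly the claimed $t^{-1}e^{-\f{\alp\bt|x|^2}{\alp+\bt t^2}}\|\jap{z}^{-\f12}g\|_{L^2_v}$.

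The main obstacle is to show $\|\jap{w}^kL_1G\|_{L^2_w}\ls_k 1$ uniformly in $t$, since $\mathfrak h^{(1)}$ depends on $t$. I would handle this using the fixed-Maxwellian structure. In the $L_1$ analogue of Lemma~\ref{lem:A}, $A_1G$ involves $\sigma^1_{ij}$ and its derivatives, which are smooth with polynomial bounds by \cite[Lemma~4]{StGu08}, together with up to two $w$-derivatives of $G$. These derivatives are bounded by $e^{-|w|^2/2}\jap{w}^{C}$ uniformly in $t$, using the uniform bounds \eqref{eq:frak.h.schematic.bound}. Hence $\jap{w}^kA_1G\in L^2$ uniformly. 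For $K_1G$, the $L_1$ version of \eqref{eq:K.representation} (with $t=0$, $\alp=1$, $\bt=0$) gives $8\pi\mu_1G$ plus $\mu_1^{1/2}$ times a convolution of $|w-w_*|^{-1}$ against $\jap{w_*}^2\mu_{1*}^{1/2}|G_*|$. The factor $\mu_1^{1/2}$ absorbs any $\jap{w}^k$, and the convolution is bounded by Lemma~\ref{lem:int-exp}. One could alternatively verify this directly in the original variables by integrating the pointwise bounds of Lemma~\ref{lem:sigma-bounds} and Proposition~\ref{prop:aux-integral}, but the rescaling route is cleaner.

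Finally, summing over the finitely many schematic terms completes the argument.
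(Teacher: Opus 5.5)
Your proof is correct, but it takes a different route from the paper's.

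\textbf{The paper's route.} The paper never rescales at this step. It splits $L=-A-K$ and treats the two parts separately.

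For $A$, it inserts the schematic form of $[Y_l,\Pi]g$ from Proposition~\ref{prop:comm-Y-Pi} into the explicit formula \eqref{eq:A}. It then balances the coefficient bounds of Lemma~\ref{lem:sigma-bounds} against Lemma~\ref{lem:Pi.schematic}:
\begin{itemize}
\item $\sigma_{ij}$ carries a factor $t^{-2}e^{-\f{\alp\bt|x|^2}{\alp+\bt t^2}}$, and each $v$-derivative of $\sigma_{ij}$ gains back only one power of $t$;
\item each $\rd_v$ falling on the profile costs a factor $t$;
\item so every term of $A$ is $O(1)$ in $t$, and the prefactor $t^{-1}$ gives the result.
\end{itemize}

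For $K$, it uses \eqref{eq:K.upper.bound.unweighted} with $\ell=k$ and duality. This gives the general operator bound $\|\jap{z}^k K h\|_{L^2_v}\ls e^{-\f{\alp\bt|x|^2}{\alp+\bt t^2}}\|h\|_{L^2_v}$ for arbitrary $h$, which is then combined with Lemma~\ref{lem:Pi.schematic} at $\gamma=0$.

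\textbf{Your route.} You factor out the $(t,x)$-dependent coefficient $\mathfrak m^g$, on which $L$ does not act. You then use Proposition~\ref{prop:normalize} to move the whole operator to $L_1$ acting on the fixed profile $G$.

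All powers of $t$ are then accounted for in one place: the Jacobian factor $(\alp+\bt t^2)^{-3/4}\sim t^{-3/2}$ exactly cancels the $t^{3/2}$ from Lemma~\ref{lem:a.b.c.schematic}. What remains is the $t$-independent bound $\|\jap{w}^kL_1G\|_{L^2_w}\ls 1$. You justify this adequately:
\begin{itemize}
\item derivatives of $\sigma^1_{ij}=\phi_{ij}*\mu_1$ can be placed on $\mu_1$, so they are polynomially bounded;
\item the $K_1$ convolution is controlled by Lemma~\ref{lem:int-exp};
\item the outer $\mu_1^{1/2}$ absorbs any weight $\jap{w}^k$.
\end{itemize}

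\textbf{Comparison.} Your approach makes the $t$-bookkeeping transparent and handles $A$ and $K$ uniformly. The cost is that you need separate facts about $L_1$, such as derivative bounds on $\sigma^1_{ij}$ and a direct estimate of $K_1$. The paper's approach is shorter in context because it reuses estimates already proven (Lemma~\ref{lem:Pi.schematic} and Proposition~\ref{prop:weighted-K-bound}). Its bound for $K$ also holds for arbitrary inputs, not only for Gaussian-times-polynomial profiles.
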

\begin{proof}
	Recalling \eqref{eq:A}, Proposition~\ref{prop:comm-Y-Pi} and Lemma~\ref{lem:Pi.schematic} give the bound for $\|\jap{z}^k A [Y_l,\Pi] g \|_{L^2_{v}}$. Using Proposition~\ref{prop:weighted-K-bound} and duality, we have $\|\jap{z}^k K [Y_l,\Pi] g \|_{L^2_{v}} \ls e^{-\f{\alp \bt |x|^2}{\alp + \bt t^2}} \| [Y_l,\Pi] g \|_{L^2_{v}}$, which implies the desired estimate after also using Proposition~\ref{prop:comm-Y-Pi} and Lemma~\ref{lem:Pi.schematic}. \qedhere
\end{proof}

\begin{lemma}
The following formula holds:
\begin{equation}\label{eq:L.energy.I.term.2.1}
\begin{split}
T \Pi g 
=&\:  \Big\{(\rd_t+\f{x_i\beta t}{\alpha+\beta t^2}\rd_{x_i})a^g +(\rd_t+\f{x_i\beta t}{\alpha+\beta t^2}\rd_{x_i})b^g_l\cdot z_l+(\rd_t+\f{x_i\beta t}{\alpha+\beta t^2}\rd_{x_i})c^g\cdot |z|^2 \Big\}\sqrt{\mu}\\
&\:+\f{1}{t\sqrt{\alpha+\beta t^2}}\{Y_ia^g +Y_i b^g_l \cdot z_l+Y_i c^g\cdot |z|^2\}z_i \sqrt{\mu} + \{b^g_l Tz_l+c^g T |z|^2\}\sqrt{\mu}.
\end{split}
\end{equation}
Moreover, 
\begin{equation}\label{eq:I-Pi.T.Pi}
\| (I-\Pi) T \Pi g \|_{L^2_v} \ls t^{-2}  \Big(\jap{x/t}\|  g \|_{L^2_v} + \| Y g \|_{L^2_v}\Big).
\end{equation}
\end{lemma}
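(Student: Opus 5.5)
The formula \eqref{eq:L.energy.I.term.2.1} is a direct computation from the definition $\Pi g = (a^g + b^g_l z_l + c^g|z|^2)\sqrt{\mu}$ together with the identity $T\mu = 0$ (since $\mu$ depends only on $v$ and $x-tv$, both annihilated by $T$). Hence
\begin{equation*}
T\Pi g = \big(Ta^g + Tb^g_l\, z_l + Tc^g\, |z|^2\big)\sqrt{\mu} + \big(b^g_l\, Tz_l + c^g\, T|z|^2\big)\sqrt{\mu},
\end{equation*}
so the only work is to rewrite $T$ acting on the $x$-dependent coefficients $a^g,b^g,c^g$. I will decompose the velocity $v$ using \eqref{eq:def-z}, namely $v_i = \frac{\beta t x_i}{\alpha+\beta t^2} + \frac{z_i}{\sqrt{\alpha+\beta t^2}}$. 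Then, for functions of $(t,x)$ alone,
\begin{equation*}
v_i\partial_{x_i} = \frac{\beta t x_i}{\alpha+\beta t^2}\partial_{x_i} + \frac{z_i}{t\sqrt{\alpha+\beta t^2}}\, t\partial_{x_i},
\end{equation*}
and $t\partial_{x_i} = Y_i$ on $v$-independent functions. Substituting this gives \eqref{eq:L.energy.I.term.2.1} immediately.

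For \eqref{eq:I-Pi.T.Pi}, I will apply $I-\Pi$ to each line of the formula. The first line lies in $\ker L$ and is annihilated. For the second line, note that $z_i\sqrt{\mu}$ and $z_iz_l\sqrt{\mu}$ contain only polynomials of degree at most two in $z$ and are killed or retained by $\Pi$. The part that survives $I-\Pi$ consists of the traceless part of $z_iz_l$ and the term $|z|^2 z_i$. The coefficients are $\frac{1}{t\sqrt{\alpha+\beta t^2}}\,Y_i(a^g,b^g,c^g) \sim t^{-2}\,Y(a,b,c)$. By \eqref{eq:Yabc.est} with $\ell = 0$,
\begin{equation*}
|Y(a,b,c)| \lesssim t^{3/2} e^{\frac{\alpha\beta|x|^2}{2(\alpha+\beta t^2)}}\big(\langle x/t\rangle\|g\|_{L^2_v} + \|Yg\|_{L^2_v}\big),
\end{equation*}
while $\|\langle z\rangle^3\sqrt{\mu}\|_{L^2_v} \lesssim t^{-3/2} e^{-\frac{\alpha\beta|x|^2}{2(\alpha+\beta t^2)}}$ by \eqref{eq:decay.for.Pi.in.weighted.space}. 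These two factors multiply to give exactly $t^{-2}\big(\langle x/t\rangle\|g\| + \|Yg\|\big)$.

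The third line involves $Tz_l$. Since $T(x-tv)=0$ and $Tv=0$, writing $z$ as a combination of $v$ and $x-tv$ with time-dependent coefficients gives $Tz = \partial_t(\text{coefficients})\cdot(v,\,x-tv)$. I expect this to yield
\begin{equation*}
Tz_l = \frac{\beta t}{\alpha+\beta t^2}\, z_l\;-\;\text{(terms proportional to } \tfrac{x_l}{t}\text{ with } t^{-2}\text{-size coefficients)}.
\end{equation*}
Checking the exact coefficients is the main obstacle. The leading piece is of order $t^{-1}z_l$, which would be too large. However, I will verify that this piece, together with the corresponding piece of $T|z|^2$ (which is of order $t^{-1}|z|^2$ plus $O(t^{-2})\frac{x}{t}\cdot z$), produces only terms $b_l z_l\sqrt{\mu}$ and $c|z|^2\sqrt{\mu}$ that lie in $\ker L$ and are therefore removed by $I-\Pi$.

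The remaining pieces have coefficients of size $t^{-2}\langle x/t\rangle$. Using \eqref{eq:abc.est} in the same Gaussian-integration manner as above, they are bounded by $t^{-2}\langle x/t\rangle\|g\|_{L^2_v}$. If an explicit computation shows a surviving $O(t^{-1})$ piece orthogonal to the kernel, I would instead seek cancellation against the first line, but I expect the kernel structure to absorb it.
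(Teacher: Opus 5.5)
Your proposal is correct and follows the paper's proof. The formula comes from $T\sqrt{\mu}=0$ together with the splitting $v_i=\frac{\beta t x_i}{\alpha+\beta t^2}+\frac{z_i}{\sqrt{\alpha+\beta t^2}}$. The estimate comes from discarding the kernel terms and combining \eqref{eq:Yabc.est} with the Gaussian bound on $\|\langle z\rangle^3\sqrt{\mu}\|_{L^2_v}$.

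One correction to your expectation: carrying out your own decomposition $z=\frac{\alpha}{\sqrt{\alpha+\beta t^2}}v-\frac{\beta t}{\sqrt{\alpha+\beta t^2}}(x-tv)$ gives exactly $Tz_l=-\frac{\alpha\beta x_l}{(\alpha+\beta t^2)^{3/2}}$, with no $z_l$ component. So the whole third line lies in $\ker L$ and is annihilated by $I-\Pi$, and neither your hedge nor a cancellation with the first line is needed. What remains are only the terms $Y_ib^g_l\,(I-\Pi)(z_lz_i\sqrt{\mu})$ and $Y_ic^g\,(I-\Pi)(|z|^2z_i\sqrt{\mu})$, exactly as in the paper.
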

\begin{proof}
\eqref{eq:L.energy.I.term.2.1} is an explicit computation after writing $\Pi g = a^g\sqrt{\mu}+b^g_l z_l \sqrt{\mu}+c^g|z|^2\sqrt{\mu}$. To obtain \eqref{eq:I-Pi.T.Pi}, note that after using $T z_i = -\f{\alp \bt x_i}{(\alp + \bt t^2)^{\f 32}}$, many terms are annihilated by $(I-\Pi)$, specifically, 
\begin{equation}
\begin{split}
	(I - \Pi) T \Pi g  = \f{1}{t\sqrt{\alpha+\beta t^2}}\Big\{Y_i b^g_l (I-\Pi) (z_l z_i \sqrt{\mu}) + Y_i c^g (I-\Pi)( |z|^2 z_i \sqrt{\mu}) \Big\}.
\end{split}
\end{equation}
Since $(I-\Pi)$ is (by definition) bounded on $L^2_v$, it follows that 
\begin{equation*}
	\|(I - \Pi) T \Pi g\|_{L^2_v} \ls t^{-2} \Big( |Yb^g| \|z_l z_i \sqrt{\mu}\|_{L^2_v} + |Yc^g| \||z|^2 z_i \sqrt{\mu}\|_{L^2_v}\Big) \ls t^{-2} \times t^{-\f 32} e^{-\f{\alp \bt |x|^2}{2(\alp + \bt t^2)}} (|Yb^g| + |Yc^g|),
\end{equation*}
where we used Proposition~\ref{prop:aux-integral} to estimate $\|z_l z_i \sqrt{\mu}\|_{L^2_v} $, $\||z|^2 z_i \sqrt{\mu}\|_{L^2_v}$. The estimate \eqref{eq:I-Pi.T.Pi} then follows from \eqref{eq:Yabc.est}. \qedhere
\end{proof}




\subsection{Main $[Y,L]$ commutation}

\begin{proposition}\label{prop:Y.L}
	\begin{enumerate}
	\item	For $\vartheta =1$, $\kay \in \mathbb Z_{\geq 0}$ and $\iota = 0$, the following holds: 
	\begin{equation}\label{eq:main.Y.L}
		\begin{split}
			&\: \Big| \langle w_{(\vartheta,\kay,\iota)}^2 [L,Y_l]g, Y_l g \rangle_{L^2_{x,v}} \Big| \\
			\ls &\: \| g \|_{\calD_{x,v}(\vartheta,\kay+1,\iota)} \|  Y_l g \|_{\calD_{x,v}(\vartheta,\kay,\iota)} + t^{-2} \| w_{(\vartheta,\kay+1,\iota)}g \|_{L^2_{x,v}} \| w_{(\vartheta,\kay,\iota)} Y_l g \|_{L^2_{x,v}}.
		\end{split}
	\end{equation}	
	\item If $\vartheta = 0$, we have the refined estimate
	\begin{equation}\label{eq:main.Y.L.I-Pi}
		\begin{split}
			&\: \Big| \langle w^2_{(0,\kay,\iota)} [L,Y_l]g, Y_l g \rangle_{L^2_{x,v}} \Big| \\
			\ls &\: \Big(  \| (I-\Pi) g \|_{\calD_{x,v}(0,\kay+1,\iota)} + t^{-1} \| g \|_{\calD_{x,v}(0,\kay,\iota)} \Big) \| (I-\Pi) Y_l g \|_{\calD_{x,v}(0,\kay,\iota)} \\
	&\: + t^{-1} \| (I-\Pi) g \|_{\calD_{x,v}(0,\kay,\iota)} \| Y_l g \|_{\calD_{x,v}(0,\kay,\iota)} + t^{-2} \| w_{(0,\kay+1,\iota)} g \|_{L^2_{x,v}} \| w_{(0,\kay,\iota)} Y_l g \|_{L^2_{x,v}}.
		\end{split}
	\end{equation}
	\end{enumerate}
\end{proposition}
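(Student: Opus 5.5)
Since $L=-A-K$, we have $[L,Y_l]=[Y_l,A]+[Y_l,K]$, and the plan is to treat the two pieces separately with the commutator bounds already proved.

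\textbf{Part (1), $\vartheta=1$, $\iota=0$.} Here no microscopic decomposition is needed. For the $A$-part, apply Proposition~\ref{prop:Y.A} with $w=w_{(1,\kay,0)}$; it gives exactly the right-hand side of \eqref{eq:main.Y.L}. For the $K$-part, apply Proposition~\ref{prop:Y.K}(1) with $g_1=g_2=g$; its right-hand side is again that of \eqref{eq:main.Y.L}, using $\|w Yg\|\ls \sum_l \|w Y_l g\|$. So part (1) is just a sum of these two estimates.

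\textbf{Part (2), $\vartheta=0$.} The refinement must exploit $L\Pi=0$ and $\Pi L=0$, since $L$ is self-adjoint in $L^2_v$ with kernel $\mathrm{ker}(L)$. The weight $w_{(0,\kay,\iota)}$ is $v$-independent, so we may pair in $L^2_v$ pointwise in $x$.
\begin{itemize}
\item[(a)] Write $g=(I-\Pi)g+\Pi g$. Since $L\Pi g=0$, we have $[L,Y_l]\Pi g = LY_l\Pi g = L[Y_l,\Pi]g + L\Pi Y_l g = L[Y_l,\Pi]g$. By Proposition~\ref{prop:L.[Y,Pi]}, this term is bounded in $L^2_v$ by $t^{-1}e^{-\f{\alp\bt|x|^2}{\alp+\bt t^2}}\|\jap{z}^{-1/2}g\|_{L^2_v}$. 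Pairing with $Y_lg$ and using $v$-independence of the weight, $\langle L[Y_l,\Pi]g, Y_lg\rangle_v=\langle L[Y_l,\Pi]g,(I-\Pi)Y_lg\rangle_v$. After Cauchy--Schwarz, with one exponential factor split evenly onto each side, this gives the contribution $t^{-1}\|g\|_{\calD(0,\kay,\iota)}\|(I-\Pi)Y_lg\|_{\calD(0,\kay,\iota)}$.
\item[(b)] For $[L,Y_l](I-\Pi)g$, write $Y_lg=(I-\Pi)Y_lg+\Pi Y_lg$.
\begin{itemize}
\item[(b1)] Pairing with $(I-\Pi)Y_lg$: apply Propositions~\ref{prop:Y.A} and~\ref{prop:Y.K}(2) with $g_1=(I-\Pi)g$ and $g_2$ chosen so that the second slot is $(I-\Pi)Y_lg$. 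Those proofs are bilinear with arbitrary second function, so this is legitimate. The result is $\|(I-\Pi)g\|_{\calD(0,\kay+1,\iota)}\|(I-\Pi)Y_lg\|_{\calD(0,\kay,\iota)}$ plus $t^{-2}$ terms; the extra $\jap{x/t}$ moment comes from the main term $\f{2\alp\bt x_lt}{\alp+\bt t^2}A$. The $t^{-2}$ weighted $L^2$ terms are acceptable after Corollary~\ref{cor:Pi.in.weighted.space}.
\item[(b2)] Pairing with $\Pi Y_lg$: use $\langle [L,Y_l]h,\Pi Y_l g\rangle_v = \langle LY_lh,\Pi Y_lg\rangle_v - \langle Y_lLh,\Pi Y_lg\rangle_v$. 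The first term vanishes by self-adjointness. For the second, integrate $Y_l$ by parts in $(x,v)$, which costs $Y_l w^2$, i.e.\ $O(t^{-1}\jap{x/t}^{-1})$ times $w^2$, and moves $Y_l$ onto $\Pi Y_lg$. Then $\langle Lh, Y_l\Pi Y_lg\rangle=\langle Lh,[Y_l,\Pi]Y_lg\rangle$, since $L\Pi=0$ kills the other piece. Here $h=(I-\Pi)g$. By Proposition~\ref{prop:L.[Y,Pi]} (with self-adjointness) this is bounded by $t^{-1}\|(I-\Pi)g\|_{\calD}\|Y_lg\|_{\calD}$. The boundary term from $Y_lw^2$ is bounded using $\langle Lh,\Pi Y_lg\rangle=0$, so it too vanishes up to the weight derivative. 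Altogether (b2) yields the $t^{-1}\|(I-\Pi)g\|_{\calD(0,\kay,\iota)}\|Y_lg\|_{\calD(0,\kay,\iota)}$ term.
\end{itemize}
\end{itemize}

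\textbf{Main obstacle.} The hardest step is (b2): making the integration by parts in $Y_l=t\rd_x+\rd_v$ rigorous with the $x$-dependent weight $w_{(0,\kay,\iota)}$, and checking that every piece gains the factor $t^{-1}$. The source of the gain is that $Y_l$ falling on $w$ gives $t\rd_x w \sim \jap{x/t}^{-1}\cdot(\text{bounded})\,w$, with no $t$-growth for $\iota=1$ either. The remaining work is bookkeeping of exponential factors, handled as in Corollary~\ref{cor:Pi.in.weighted.space}.
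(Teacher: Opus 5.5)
Your argument is correct and is essentially the paper's proof. Your three pieces (a), (b1), (b2) are exactly the paper's terms $II$, $I$, $III$. The paper reaches them by two integrations by parts in $Y_l$ instead of splitting $g$ and $Y_lg$ up front. The only real difference is in (b2): you move $L$ onto $[Y_l,\Pi]Y_lg$ by self-adjointness and invoke Proposition~\ref{prop:L.[Y,Pi]}, whereas the paper bounds $\langle w^2 L(I-\Pi)g,[Y_l,\Pi]Y_lg\rangle$ using the upper bounds for $A$, $K$ and Lemma~\ref{lem:Pi.schematic}.

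One correction to your side remarks. $Y_l w^2 = t\rd_{x_l} w^2$ has size $(\jap{x/t}^{-1}+\iota|x|/t)\,w^2$, not $O(t^{-1}\jap{x/t}^{-1})\,w^2$, so it is not a source of any $t^{-1}$ gain. This is harmless, because that term vanishes identically: $t\rd_{x_l}w^2$ is $v$-independent and $\langle Lh,\Pi Y_lg\rangle_{L^2_v}=0$. The $t^{-1}$ in (b2) actually comes from $[Y_l,\Pi]$.
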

\begin{proof}
The first estimate is an immediate consequence of Proposition~\ref{prop:Y.A} and Proposition~\ref{prop:Y.K}.

We turn to the refined estimate, where $w = w_{(0,\kay,\iota)}$. We use that $w$ depends only on $(t,x)$ and integrate by parts in $Y_l$. In the following, we will repeatedly use the fact that $L h = L(I-\Pi)h$ and $\langle w^2 h_1, L h_2 \rangle_{L^2_v} = \langle w^2 (I-\Pi) h_1, L h_2 \rangle_{L^2_v}$ (which follows from the self-adjointness of $L$).
\begin{equation}
\begin{split}
	&\:  \langle w^2 [L,Y_l]g, Y_l g \rangle_{L^2_{x,v}} \\
	= &\:  \langle w^2 L(I-\Pi) Y_l g, (I-\Pi) Y_l g \rangle_{L^2_{x,v}} - \langle w^2 Y_l L (I-\Pi) g,  Y_l g \rangle_{L^2_{x,v}} \\
	= &\: \langle w^2 L(I-\Pi) Y_l g, (I-\Pi) Y_l g \rangle_{L^2_{x,v}}  + \langle w^2  L (I-\Pi) g,  Y_l^2 g \rangle_{L^2_{x,v}} + \langle (t \rd_{x_l} w^2)  L (I-\Pi) g,  Y_l g \rangle_{L^2_{x,v}} \\
	= &\: \langle w^2 L(I-\Pi) Y_l g, (I-\Pi) Y_l g \rangle_{L^2_{x,v}}  + \langle w^2  L (I-\Pi) g,  (I-\Pi) Y_l^2 g \rangle_{L^2_{x,v}} \\
	&\: + \langle (t \rd_{x_l} w^2)  L (I-\Pi) g,  (I-\Pi) Y_l g \rangle_{L^2_{x,v}} \\
	= &\: \langle w^2 L(I-\Pi) Y_l g, (I-\Pi) Y_l g \rangle_{L^2_{x,v}}  + \langle w^2  L (I-\Pi) g,  Y_l (I-\Pi) Y_l g \rangle_{L^2_{x,v}} \\
	&\: + \langle w^2  L (I-\Pi) g,  [Y_l, \Pi] Y_l g \rangle_{L^2_{x,v}} + \langle (t \rd_{x_l} w^2)  L (I-\Pi) g,  (I-\Pi) Y_l g \rangle_{L^2_{x,v}} \\
	= &\: \langle w^2 L(I-\Pi) Y_l g, (I-\Pi) Y_l g \rangle_{L^2_{x,v}}  - \langle w^2  Y_l L (I-\Pi) g,   (I-\Pi) Y_l g \rangle_{L^2_{x,v}} \\
	&\: + \langle w^2  L (I-\Pi) g,  [Y_l, \Pi] Y_l g \rangle_{L^2_{x,v}} \\
	= &\: \langle w^2 [L,Y_l] (I-\Pi) g, (I-\Pi) Y_l g \rangle_{L^2_{x,v}}  + \langle w^2 L [Y_l,\Pi] g, (I-\Pi) Y_l g \rangle_{L^2_{x,v}}  \\
	&\: + \langle w^2  L (I-\Pi) g,  [Y_l, \Pi] Y_l g \rangle_{L^2_{x,v}} =: I + II + III.
	\end{split}
\end{equation}

The term $I$ is the main term, for which we use the commutator estimates in Proposition~\ref{prop:Y.A} and Proposition~\ref{prop:Y.K} to obtain
\begin{equation}
	\begin{split}
	|I| \ls &\: |\langle w^2 [L,Y_l] (I-\Pi) g, (I-\Pi) Y_l g \rangle_{L^2_{x,v}}| \\
	\ls &\: \| (I-\Pi) g \|_{\calD_{x,v}(0,\kay+1,\iota)} \| (I-\Pi) Y_l g \|_{\calD_{x,v}(0,\kay,\iota)} \\
	&\: + t^{-2} \| w_{(0,\kay+1,\iota)} (I-\Pi) g \|_{L^2_{x,v}} \| w (I-\Pi) Y_l g \|_{L^2_{x,v}} \\
	\ls &\: \| (I-\Pi) g \|_{\calD_{x,v}(0,\kay+1,\iota)} \| (I-\Pi) Y_l g \|_{\calD_{x,v}(0,\kay,\iota)} + t^{-2} \| w_{(0,\kay+1,\iota)} g \|_{L^2_{x,v}} \| w Y_l g \|_{L^2_{x,v}},
	\end{split}
\end{equation}
where in the last line we used that the weights are $v$-independent.

For the term $II$, we use Cauchy--Schwarz together with Proposition~\ref{prop:L.[Y,Pi]} to obtain
\begin{equation}
	\begin{split}
		|II| \ls &\: |\langle w^2 L [Y_l,\Pi] g, (I-\Pi) Y_l g \rangle_{L^2_{x,v}}| \\
		\ls &\: \| w e^{\f{\alp\bt |x|^2}{2(\alp + \bt t^2)}} \jap{z}^{\f 12} L [Y_l,\Pi] g \|_{L^2_{x,v}} \| w e^{-\f{\alp\bt |x|^2}{2(\alp + \bt t^2)}} \jap{z}^{-\f 12} (I-\Pi) Y_l g \|_{L^2_{x,v}} \\
		\ls &\: t^{-1} \| w e^{-\f{\alp\bt |x|^2}{2(\alp + \bt t^2)}} \jap{z}^{-\f 12} g \|_{L^2_{x,v}}  \| w e^{-\f{\alp\bt |x|^2}{2(\alp + \bt t^2)}} \jap{z}^{-\f 12} (I-\Pi) Y_l g \|_{L^2_{x,v}} \\
		\ls &\: t^{-1} \| g \|_{\calD_{x,v}(0,\kay,\iota)} \| (I-\Pi) Y_l g \|_{\calD_{x,v}(0,\kay,\iota)}.
	\end{split}
\end{equation}

For $III$, we use the bound for $L = -A-K$ in Proposition~\ref{prop:weighted-K-bound} and Proposition~\ref{prop:weighted-A-upper-bound} to obtain
\begin{equation}
	\begin{split}
		|III| \ls &\: \Big| \langle w^2  L (I-\Pi) g,  [Y_l, \Pi] Y_l g \rangle_{L^2_{x,v}} \Big| \\
		\ls &\: \| (I-\Pi) g \|_{\calD_{x,v}(0,\kay,\iota)} \| [Y_l, \Pi] Y_l g \|_{\calD_{x,v}(0,\kay,\iota)} \\
		&\: + t^{-2} \| w (I-\Pi) g \|_{L^2_{x,v}} \| w [Y_l, \Pi] Y_l g \|_{L^2_{x,v}} \\
		\ls &\: t^{-1} \| (I-\Pi) g \|_{\calD_{x,v}(0,\kay,\iota)} \| Y_l g \|_{\calD_{x,v}(0,\kay,\iota)} + t^{-3} \| w g \|_{L^2_{x,v}} \| w Y_l g \|_{L^2_{x,v}},
	\end{split}
\end{equation}
where we used Proposition~\ref{prop:comm-Y-Pi} and Lemma~\ref{lem:Pi.schematic}, as well as $ \| w (I-\Pi) g \|_{L^2_{x,v}} \ls  \| w g \|_{L^2_{x,v}}$ (since $w$ is independent of $v$). Combining all the estimates yields \eqref{eq:main.Y.L.I-Pi}. \qedhere
\end{proof}

\subsection{Commutator of $L$ with the transport operator}
In this subsection we will compute the commutator $[L,T]= [L,\rd_t+v_i\rd_{x_i}].$

\begin{lemma}\label{lem:T-der-exp}
Define 
\begin{equation}\label{eq:hat.sigma.def}
	\hat{\sigma}_{ij} := - 2\alp t^{-1} \int_{\RR^3} \phi_{ij}(v-v_*) (v-v_*)\cdot v_* \mu_* \ud v_*.
\end{equation}
Then the following holds: 
\begin{align}
	T \sigma_{ij} = &\: -2t^{-1}\sigma_{ij} + \hat{\sigma}_{ij},  \label{eq:T.on.coeff.1}\\
	T (\sigma_{ij} z_j) = &\: -2t^{-1} \sigma_{ij} z_j + \hat{\sigma}_{ij} z_j - \f{\alp \bt \sigma_{ij} x_j}{(\alp + \bt t^2)^{\f 32}}, \label{eq:T.on.coeff.2}\\
	T (\sigma_{ij} z_i z_j) =&\: - 2 t^{-1} \sigma_{ij} z_i z_j + \hat{\sigma}_{ij} z_i z_j - \f{2 \alp \bt \sigma_{ij} x_i z_j}{(\alp + \bt t^2)^{\f 32}}. \label{eq:T.on.coeff.3}
\end{align}
\end{lemma}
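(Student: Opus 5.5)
We compute $T\sigma_{ij}$ directly, moving $T$ inside the convolution integral as in Lemma~\ref{lem:Y-der-exp}. Write $\sigma_{ij}(t,x,v)=\int \phi_{ij}(v-v_*)\mu(t,x,v_*)\,\ud v_*$. Since $\phi_{ij}(v-v_*)$ is independent of $(t,x)$ and $v\cdot\nabla_x$ acts on $\mu(t,x,v_*)$ with the outer velocity $v$, we get
\[
T\sigma_{ij}=\int \phi_{ij}(v-v_*)\big(\rd_t+v\cdot\nabla_x\big)\mu_*\,\ud v_*.
\]
The plan is to write $v=v_*+(v-v_*)$. The first part gives $(\rd_t+v_*\cdot\nabla_x)\mu_*=0$, since $\mu$ solves the free transport equation because $T v=0$ and $T(x-tv)=0$. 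What remains is
\[
T\sigma_{ij}=\int\phi_{ij}(v-v_*)\,(v-v_*)\cdot\nabla_x\mu_*\,\ud v_*,\qquad \nabla_x\mu_*=-2\bt(x-tv_*)\mu_*.
\]

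The key step converts $\nabla_x\mu_*$ into $v_*$-derivatives plus a $v_*$ weight. From $\rd_{v_*}\mu_*=(-2\alp v_*+2\bt t(x-tv_*))\mu_*$ we obtain
\[
-2\bt(x-tv_*)\mu_*=-t^{-1}\rd_{v_*}\mu_*-2\alp t^{-1}v_*\mu_*.
\]
The $v_*$-weight piece produces exactly $-2\alp t^{-1}\int\phi_{ij}(v-v_*)(v-v_*)\cdot v_*\mu_*\,\ud v_*=\hat\sigma_{ij}$. The derivative piece is
\[
-t^{-1}\int \phi_{ij}(v-v_*)(v-v_*)_k\,\rd_{v_{*k}}\mu_*\,\ud v_*,
\]
which we integrate by parts in $v_*$. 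Setting $y=v-v_*$, we need $\rd_{y_k}\big(\phi_{ij}(y)y_k\big)=(3+\gamma+2)\phi_{ij}=2\phi_{ij}$ for $\gamma=-3$. This holds because $\phi_{ij}$ is homogeneous of degree $-1$, so Euler gives $y_k\rd_{y_k}\phi_{ij}=-\phi_{ij}$, and $\operatorname{div}y=3$. Then $\rd_{v_{*k}}=-\rd_{y_k}$ turns the piece into $-t^{-1}\cdot 2\int\phi_{ij}\mu_*=-2t^{-1}\sigma_{ij}$, which gives \eqref{eq:T.on.coeff.1}.

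The main point to check is the boundary and singularity at $y=0$ in this integration by parts, and that no delta term appears. Since $|\phi_{ij}(y)y_k|\ls 1$ is bounded, the boundary contribution on a small sphere around $y=0$ is $O(\epsilon^2)\to0$. There is also no distributional mass at the origin, since $\phi_{ij}y_k$ is locally integrable and its derivative $2\phi_{ij}$ is $L^1_{\mathrm{loc}}$. The Gaussian decay handles infinity.

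For \eqref{eq:T.on.coeff.2} and \eqref{eq:T.on.coeff.3}, we use the Leibniz rule together with $Tz_j=-\f{\alp\bt x_j}{(\alp+\bt t^2)^{3/2}}$, already recorded in the proof of \eqref{eq:I-Pi.T.Pi}. That identity follows by computing $\rd_t z_j+v\cdot\nabla_x z_j$ directly and simplifying: the $v$ terms cancel and the $x$ coefficient is $-\rd_t\big(\bt t/\sqrt{\alp+\bt t^2}\big)$. Using the symmetry of $\sigma_{ij}$ then yields the factor $2$ in \eqref{eq:T.on.coeff.3}.
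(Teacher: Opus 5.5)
Your proposal is correct and follows essentially the same route as the paper. Like the paper, you cancel the $v_*\cdot\nabla_x$ part using $(\partial_t+v_*\cdot\nabla_x)\mu_*=0$, and rewrite $\nabla_x\mu_*=-t^{-1}\nabla_{v_*}\mu_*-2\alpha t^{-1}v_*\mu_*$ to split off $\hat{\sigma}_{ij}$. You then integrate by parts using $\partial_{y_k}(\phi_{ij}(y)y_k)=2\phi_{ij}(y)$ and finish with the Leibniz rule, $Tz_j=-\frac{\alpha\beta x_j}{(\alpha+\beta t^2)^{3/2}}$ and the symmetry of $\sigma_{ij}$. Your check that no boundary or delta contribution arises at $y=0$ (since $|\phi_{ij}(y)y|\lesssim 1$) is a detail the paper leaves implicit.
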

\begin{proof}
Since $(T\mu)(v_*) = 0$, we have
\begin{equation}
	\begin{split}
		T \sigma_{ij} = &\: \int_{\RR^3} \phi_{ij}(v-v_*) (v-v_*)_l \rd_{x_l} \mu_* \ud v_* = - 2\bt \int_{\RR^3} \phi_{ij}(v-v_*) (v-v_*)\cdot (x-tv_*) \mu_* \ud v_* \\
		= &\: -  t^{-1} \int_{\RR^3} \phi_{ij}(v-v_*) (v-v_*)\cdot  \nabla_{v_*} \mu_* \ud v_* - 2\alp t^{-1} \int_{\RR^3} \phi_{ij}(v-v_*) (v-v_*)\cdot  v_* \mu_* \ud v_*.
	\end{split}
\end{equation}
The second term is exactly $\hat{\sigma}_{ij}$. For the first term, we integrate by parts and use
\begin{equation}\label{eq:div.comp}
\rd_{v_{*,l}} (\phi_{ij}(v-v_*) (v-v_*)_l) = - 2 \phi_{ij}(v-v_*)
\end{equation}
to obtain \eqref{eq:T.on.coeff.1}. The two identities \eqref{eq:T.on.coeff.2}--\eqref{eq:T.on.coeff.3} follow from combining the above with 
$T z_i = -\f{\alp \bt x_i}{(\alp + \bt t^2)^{\f 32}}$. \qedhere
\end{proof}

\begin{lemma}\label{lem:hat.sigma}
The quantity $\hat{\sigma}_{ij}$ satisfies the following estimates:
\begin{equation}
|\hat{\sigma}_{ij}| ,\, |\hat{\sigma}_{ij} z_j |, \, |\hat{\sigma}_{ij} z_i z_j| \ls (t^{-5} + t^{-4} \f{|x|}{t}) e^{-\f{\alp\bt |x|^2}{\alp + \bt t^2}},\quad |\rd_{v_j} \hat{\sigma}_{ij}|,\, |(\rd_{v_j} \hat{\sigma}_{ij}) z_i | \ls (t^{-4} + t^{-3} \f{|x|}{t}) e^{-\f{\alp\bt |x|^2}{\alp + \bt t^2}}.
\end{equation}
Moreover,
\begin{equation}\label{eq:hat.sigma.A}
\Big| \jap{ w^2_{(0,\kay,1)} g_1, \rd_{v_i} (\hat{\sigma}_{ij} \rd_{v_j} g_2) }_{L^2_{x,v}} \Big| \ls t^{-2} \| g_1 \|_{\calD_{x,v}(0,\kay,1)}  \| \jap{z} g_2 \|_{\calD_{x,v}(0,\kay+1,1)}.
\end{equation}
\end{lemma}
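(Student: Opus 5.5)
The plan is to treat $\hat{\sigma}_{ij}$ exactly as $\sigma_{ij}$ and $\sigma_{(l)ij}$ were treated in Lemma~\ref{lem:sigma-bounds} and Proposition~\ref{prop:Y.on.coeff.est}, namely by rescaling to the $z$ variables and applying Proposition~\ref{prop:aux-integral}. First write
$$v_* = \f{\bt t x}{\alp+\bt t^2} + \f{z_*}{\sqrt{\alp+\bt t^2}},$$
so that
$$\hat{\sigma}_{ij} = -2\alp t^{-1}\int \phi_{ij}(v-v_*)(v-v_*)\cdot\Big(\f{\bt t x}{\alp+\bt t^2}+\f{z_*}{\sqrt{\alp+\bt t^2}}\Big)\mu_*\ud v_*.$$
The kernel obeys $|\phi_{ij}(v-v_*)(v-v_*)|\ls 1$, since $\phi_{ij}(y)y_k$ is homogeneous of degree $0$. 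Each term is then a Gaussian integral of the form \eqref{eq:int-no-phi}, possibly with polynomial factors of $z_*$. This gives the bound $t^{-3}e^{-\f{\alp\bt|x|^2}{\alp+\bt t^2}}$ times either $t^{-1}\cdot \f{|x|}{t}$ (from the $x$ term) or $t^{-1}\cdot t^{-1}$ (from the $z_*$ term). Together these give $(t^{-5}+t^{-4}\f{|x|}{t})e^{-\f{\alp\bt|x|^2}{\alp+\bt t^2}}$.

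For the contractions with $z_j$ and $z_iz_j$, I will use $\phi_{ij}(v-v_*)(z-z_*)_j=0$, as in Proposition~\ref{prop:Y.on.coeff.est}. This replaces $z$ by $z_*$ inside the integral, and the Gaussian absorbs the resulting polynomial factors, so the same bound holds. For $\rd_{v_j}\hat{\sigma}_{ij}$, I will differentiate the kernel. The derivative $\rd_{v_j}[\phi_{ij}(y)y_k]$ is homogeneous of degree $-1$, so it is bounded by $|v-v_*|^{-1}$. Then \eqref{eq:int-phi} costs a factor $t^{-2}$ instead of $t^{-3}$, which gives the stated loss of one power of $t$. The contraction with $z_i$ is handled by the same orthogonality trick, applied before differentiating (or by the product rule, using $\rd_{v_j} z_i=\sqrt{\alp+\bt t^2}\,\de_{ij}$ together with the undifferentiated bound).

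For \eqref{eq:hat.sigma.A}, I will integrate by parts in $v$. Since $w_{(0,\kay,1)}$ is $v$-independent, this gives $-\langle w^2\rd_{v_i}g_1,\hat{\sigma}_{ij}\rd_{v_j}g_2\rangle$. Next I split $\rd_v$ into $P_z\rd_v$ and $(I-P_z)\rd_v$. The $\Delta_v$ norm of $g_1$ controls $t^{-1}e^{-\f{\alp\bt|x|^2}{2(\alp+\bt t^2)}}\jap{z}^{-3/2}|P_z\rd_vg_1|$ and $t^{-1}e^{-\f{\alp\bt|x|^2}{2(\alp+\bt t^2)}}\jap{z}^{-1/2}|(I-P_z)\rd_vg_1|$. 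Using $|\hat\sigma|\ls t^{-4}\jap{x/t}e^{-\f{\alp\bt|x|^2}{\alp+\bt t^2}}$, we have $t^{-4}=t^{-2}\cdot t^{-1}\cdot t^{-1}$. One factor $t^{-1}$ pairs with each derivative, and $t^{-2}$ is left over. The factor $\jap{x/t}$ is absorbed by raising $\kay$ to $\kay+1$ on $g_2$. The $\jap{z}$ weights left uncompensated, at most $\jap{z}^{3/2}$ on the $P_z$ components, are shared between the two factors. I put them all on $g_2$, which is why the statement carries $\jap{z}g_2$. The derivative then falls on $\jap{z}g_2$, producing $\rd_v(\jap{z}g_2)$ plus a lower-order term $\sqrt{\alp+\bt t^2}\,\jap{z}^{0}g_2$, and the latter is controlled by the zeroth-order part of the norm.

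The main obstacle is the weight bookkeeping in the $P_z\rd_v$ versus $P_z\rd_v$ pairing, where the norm only provides $\jap{z}^{-3/2}$ on each side. I expect to need the contracted bound $|\hat\sigma_{ij}z_iz_j|$, which decays as fast as $|\hat\sigma_{ij}|$, to gain the needed $\jap{z}^{-2}$ there. Failing that, I will use the extra $\jap{z}$ allowed on $g_2$ and check that one power suffices.
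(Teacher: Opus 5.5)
Your proposal is correct and follows the paper's proof. The shared steps are: rescaling in $z_*$, $|\phi_{ij}(y)y|\ls 1$ with Proposition~\ref{prop:aux-integral}, the orthogonality $\phi_{ij}(v-v_*)(z-z_*)_j=0$ for the contractions, and integration by parts with the $P_z$ splitting for \eqref{eq:hat.sigma.A}. The one difference is in $\rd_{v_j}\hat{\sigma}_{ij}$: the paper moves the derivative onto $v_*\mu_*$ and uses \eqref{eq:int-no-phi}, while your differentiation of the degree-zero kernel with \eqref{eq:int-phi} works equally well (and even gains a factor $\jap{z}^{-1}$).

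One correction to your bookkeeping: the one-index contraction $|\hat{\sigma}_{ij}z_j|$ is also essential in both mixed $P_z$/$(I-P_z)$ pairings, not just $|\hat{\sigma}_{ij}z_iz_j|$ in the $P_z$/$P_z$ pairing. With the contracted bounds every pairing needs exactly one extra $\jap{z}$ on $g_2$, so your fallback of using that single $\jap{z}$ without them would not close.
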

\begin{proof}
 	Note that $|\phi_{ij}(v-v_*) (v-v_*)| \ls 1$. Hence, after writing $v_* = \f{z_*}{\sqrt{\alp + \bt t^2}} + \f{x \bt t}{\alp + \bt t^2}$, the bound for $|\hat{\sigma}_{ij}|$ follows from \eqref{eq:int-no-phi}. To show the improvements when contracting with $z$, we observe that $\phi_{ij}(v-v_*) z_i = \phi_{ij}(v-v_*) z_{*i}$, $\phi_{ij}(v-v_*) z_i z_j = \phi_{ij}(v-v_*) z_{*i} z_{*j}$, and then again argue with Proposition~\ref{prop:aux-integral}. These estimates imply \eqref{eq:hat.sigma.A} after integrating by parts in $\rd_{v_i}$ and decomposing $\rd_v g_k = P_z(\rd_v g_k) + (I - P_z) (\rd_v g_k)$.
    
    For $\rd_{v_i} \hat{\sigma}_{ij}$, we compute
	\begin{equation}
\begin{split}
\rd_{v_i} \hat{\sigma}_{ij} =&\:  - 2\alp t^{-1} \int_{\RR^3} \phi_{ij}(v-v_*) (v-v_*)_l \rd_{v_{*i}} (v_{l,*} \mu_*) \ud v_*  \\
=&\:  4\alp t^{-1}\sqrt{\alp + \bt t^2}\int_{\RR^3} \phi_{ij}(v-v_*) (v-v_*)\cdot v_* z_{*i} \mu_* \ud v_*,
\end{split}
\end{equation}
%
where we used $\phi_{ij}(v-v_*) (v-v_*)_i = 0$. Now we write $v_* = \f{z_*}{\sqrt{\alp + \bt t^2}} + \f{x \bt t}{\alp + \bt t^2}$ so that we have the desired estimate after using Proposition~\ref{prop:aux-integral}. The improvement when contracted with $z_i$ can be obtained as before. \qedhere

\end{proof}

\begin{lemma}\label{lem:Tdvsigma}
	For $\sigma_i = \sigma_{ij} z_j$, it holds that
	$$|T (\rd_{v_i} \sigma_i) + t^{-1} \rd_{v_i} \sigma_i| \ls (t^{-3}  \f{|x|}{t}+ t^{-3} )e^{-\f{\alp\bt |x|^2}{\alp + \bt t^2}}.$$
\end{lemma}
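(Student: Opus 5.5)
We will compute $T(\rd_{v_i}\sigma_i)$ directly from the commutation $[T,\rd_{v_i}] = -\rd_{x_i}$ combined with Lemma~\ref{lem:T-der-exp}. Since $T\rd_{v_i} = \rd_{v_i} T - \rd_{x_i}$, we get
\begin{equation*}
T(\rd_{v_i}\sigma_i) = \rd_{v_i}\big(T(\sigma_{ij}z_j)\big) - \rd_{x_i}(\sigma_{ij}z_j).
\end{equation*}
By \eqref{eq:T.on.coeff.2}, the first term equals $-2t^{-1}\rd_{v_i}\sigma_i + \rd_{v_i}(\hat\sigma_{ij}z_j) - \f{\alp\bt}{(\alp+\bt t^2)^{3/2}}\rd_{v_i}(\sigma_{ij}x_j)$. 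For the second term we write $\rd_{x_i} = t^{-1}(Y_i - \rd_{v_i})$, so that $-\rd_{x_i}(\sigma_{ij}z_j) = t^{-1}\rd_{v_i}\sigma_i - t^{-1}Y_i(\sigma_{ij}z_j)$. Adding these, the coefficient of $\rd_{v_i}\sigma_i$ becomes $-2t^{-1}+t^{-1} = -t^{-1}$. This is exactly the cancellation the lemma asserts, and the statement then reduces to bounding three remainders by $(t^{-3}\f{|x|}{t}+t^{-3})e^{-\f{\alp\bt|x|^2}{\alp+\bt t^2}}$.

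The first remainder is $\rd_{v_i}(\hat\sigma_{ij}z_j) = (\rd_{v_i}\hat\sigma_{ij})z_j + \sqrt{\alp+\bt t^2}\,\hat\sigma_{ii}$. By Lemma~\ref{lem:hat.sigma}, this is $\ls (t^{-4}+t^{-3}\f{|x|}{t})e^{-\f{\alp\bt|x|^2}{\alp+\bt t^2}}$, which is acceptable. The second remainder is $\f{\alp\bt}{(\alp+\bt t^2)^{3/2}}(\rd_{v_i}\sigma_{ij})x_j$. By Lemma~\ref{lem:sigma-bounds} (or the rescaling argument of Proposition~\ref{prop:normalize}), $|\rd_v\sigma_{ij}|\ls t^{-1}e^{-\f{\alp\bt|x|^2}{\alp+\bt t^2}}$, so this term is $\ls t^{-3}\cdot t^{-1}|x|\,e^{-\cdots}= t^{-3}\f{|x|}{t}e^{-\cdots}$, again acceptable.

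The third remainder is $t^{-1}Y_i(\sigma_{ij}z_j)$, which we expand using \eqref{eq:Y.on.coeff.2}. The leading piece $-\f{2\alp\bt x_i t}{\alp+\bt t^2}\sigma_{ij}z_j$ is bounded by $\f{|x|}{t}\,t^{-2}e^{-\cdots}$, since $|\sigma_{ij}z_j|\ls t^{-2}e^{-\cdots}\jap z^{-1}|z|$ by the representation \eqref{eq:sigma.decomposition} and the decay $\lambda_1\sim\jap z^{-3}$. With the prefactor $t^{-1}$, this gives $t^{-3}\f{|x|}{t}$. The remaining pieces, $\f{2\alp}{\sqrt{\alp+\bt t^2}}\sigma_{(l)ij}z_j$ and $\f{\alp}{\sqrt{\alp+\bt t^2}}\sigma_{ii}$, are $\ls t^{-1}\cdot t^{-2}e^{-\cdots}$ by Proposition~\ref{prop:Y.on.coeff.est}, hence $t^{-4}$ after the prefactor.

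The main point to verify carefully is the bookkeeping of the exact coefficient $-t^{-1}$, namely that the $t^{-1}\rd_{v_i}\sigma_i$ produced by $\rd_x = t^{-1}(Y-\rd_v)$ combines correctly with the $-2t^{-1}$ from \eqref{eq:T.on.coeff.2}. A secondary check is that contracting with $z_j$ costs no $\jap z$ growth; this follows as in Proposition~\ref{prop:Y.on.coeff.est}, using $\phi_{ij}(v-v_*)z_j=\phi_{ij}(v-v_*)z_{*j}$. All remaining steps are direct applications of the earlier bounds.
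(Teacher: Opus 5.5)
Your proposal is correct and follows essentially the same route as the paper. Both arguments commute $T$ past $\rd_{v_i}$ via $[T,\rd_{v_l}]=-\rd_{x_l}=t^{-1}(-Y_l+\rd_{v_l})$, apply \eqref{eq:T.on.coeff.2} to obtain the $-2t^{-1}+t^{-1}=-t^{-1}$ cancellation, and bound the $\hat\sigma_{ij}$, $x_j\rd_{v_i}\sigma_{ij}$ and $t^{-1}Y_i\sigma_i$ remainders using Lemma~\ref{lem:hat.sigma}, the rescaled bound $|\rd_v\sigma_{ij}|\ls t^{-1}e^{-\frac{\alpha\beta|x|^2}{\alpha+\beta t^2}}$, and Lemma~\ref{lem:Y-der-exp} together with Proposition~\ref{prop:Y.on.coeff.est}.
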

\begin{proof}
	By \eqref{eq:T.on.coeff.2}, we have
	\begin{equation}
		\rd_{v_i} T \sigma_i + 2 t^{-1} \rd_{v_i} \sigma_i = \rd_{v_i} (\hat{\sigma}_{ij} z_j) - \f{\alp \bt \rd_{v_i} \sigma_{ij} x_j }{(\alp + \bt t^2)^{\f 32}}.
	\end{equation}
	The first term is controlled by $(t^{-4} + t^{-3}\f{|x|}{t}) e^{-\f{\alp \bt |x|^2}{\alp + \bt t^2}}$ using Lemma~\ref{lem:hat.sigma} and the second term is controlled by $t^{-3} \jap{z}^{-2} \f{|x|}{t} e^{-\f{\alp \bt |x|^2}{\alp + \bt t^2}}$. 
	Now, observe
	\begin{equation}\label{eq:T.dv.comm}
[T, \rd_{v_l}]= -\rd_{x_l} = t^{-1}(-Y_l + \rd_{v_l})
\end{equation}
and thus
	\begin{equation}
		T (\rd_{v_i} \sigma_i)  = \rd_{v_i} T \sigma_i + t^{-1}(-Y_i \sigma_i  + \rd_{v_i} \sigma_i).
	\end{equation}
	The term $t^{-1}Y_i \sigma_i$ can be bounded by $(t^{-3} \jap{z}^{-2} \f{|x|}{t}+ t^{-3} \jap{z}^{-1} )e^{-\f{\alp\bt |x|^2}{\alp + \bt t^2}}$ using Lemma~\ref{lem:Y-der-exp}. The term $t^{-1} \rd_{v_i} \sigma_i$ is to be treated as a main term and so altogether we have the desired conclusion.
\end{proof}

\begin{proposition}\label{prop:T.A.commute}
	\begin{equation}
		\begin{split}
			\Big| \langle w_{(0,\kay,1)}^2 g_1, [T,A] g_2 \rangle_{L^2_{x,v}} \Big| \ls t^{-2} \| g_1 \|_{\calD_{x,v}(0,\kay,1)} \Big(  \| \jap{z} g_2 \|_{\calD_{x,v}(0,\kay+1,1)} + \| Y g_2 \|_{\calD_{x,v}(0,\kay+1,1)} \Big).
		\end{split}
	\end{equation}
\end{proposition}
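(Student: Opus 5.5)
Since $T$ acts on coefficients and $[T,\rd_{v_i}] = -\rd_{x_i} = t^{-1}(\rd_{v_i} - Y_i)$ by \eqref{eq:T.dv.comm}, I would expand $[T,A]g_2$ using the representation \eqref{eq:A}, i.e.\ $A = \rd_{v_i}(\sigma_{ij}\rd_{v_j}\cdot) + \sqrt{\alp+\bt t^2}(\rd_{v_i}\sigma_i) - (\alp+\bt t^2)\sigma_{ij}z_iz_j$. Concretely,
\begin{equation*}
\begin{split}
[T,A]g_2 = &\: \rd_{v_i}\big((T\sigma_{ij})\rd_{v_j} g_2\big) + [T,\rd_{v_i}](\sigma_{ij}\rd_{v_j}g_2) + \rd_{v_i}\big(\sigma_{ij}[T,\rd_{v_j}]g_2\big) \\
&\: + T\big(\sqrt{\alp+\bt t^2}\,\rd_{v_i}\sigma_i\big) g_2 - T\big((\alp+\bt t^2)\sigma_{ij}z_iz_j\big) g_2 .
\end{split}
\end{equation*}
The key observation is that the ``main'' parts cancel up to a multiple of $A$ plus $A$-type terms. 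By Lemma~\ref{lem:T-der-exp}, $T\sigma_{ij} = -2t^{-1}\sigma_{ij} + \hat\sigma_{ij}$. The two commutator terms $[T,\rd_v]$ contribute $t^{-1}\rd_{v_i}(\sigma_{ij}\rd_{v_j}g_2)$ twice (one from each slot), cancelling the $-2t^{-1}$, plus terms with $Y$: $-t^{-1}Y_i(\sigma_{ij}\rd_{v_j}g_2) - t^{-1}\rd_{v_i}(\sigma_{ij}Y_j g_2)$. Similarly, $T(\alp+\bt t^2) = 2\bt t$ gives $2t^{-1}(\alp+\bt t^2)$ up to $O(t^{-3})$, which cancels the $-2t^{-1}\sigma_{ij}z_iz_j$ from \eqref{eq:T.on.coeff.3}. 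For the $\widetilde A$ part, Lemma~\ref{lem:Tdvsigma} gives $T(\rd_{v_i}\sigma_i) = -t^{-1}\rd_{v_i}\sigma_i + O(\cdot)$, and $T\sqrt{\alp+\bt t^2} = t^{-1}\sqrt{\alp+\bt t^2} + O(t^{-3})$ cancels it. I would verify these cancellations first; they are what produce the $t^{-2}$ gain.

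The remaining terms are of four kinds. The first is $\rd_{v_i}(\hat\sigma_{ij}\rd_{v_j}g_2)$, handled directly by \eqref{eq:hat.sigma.A}. The second consists of $\hat\sigma_{ij}z_iz_j$ and the $x$-term $\sigma_{ij}x_iz_j/(\alp+\bt t^2)^{3/2}$ from \eqref{eq:T.on.coeff.3}, multiplied by $(\alp+\bt t^2)$. By Lemma~\ref{lem:hat.sigma}, these are bounded by $(t^{-3}+t^{-2}|x|/t)e^{-\alp\bt|x|^2/(\alp+\bt t^2)}$ (with the $\sigma$ term contributing $t^{-2}\jap{z}^{-2}|x|/t$ by Lemma~\ref{lem:sigma-bounds}). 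Pairing these with $g_1$ and using Cauchy--Schwarz, with $\jap{z}^{-1/2}$ on $g_1$ and $\jap{z}^{1/2}\le\jap{z}\cdot\jap{z}^{-1/2}$ on $g_2$, and absorbing $|x|/t$ into one extra $\jap{x/t}$ moment, gives $t^{-2}\|g_1\|_{\calD}\|\jap{z}g_2\|_{\calD(0,\kay+1,1)}$. The third consists of the error terms from Lemma~\ref{lem:Tdvsigma} and the $O(t^{-3})$ parts, treated in the same way. The fourth consists of the $Y$-terms, which I would rewrite as $-t^{-1}\rd_{v_i}(\sigma_{ij}Y_jg_2) - t^{-1}(Y_i\sigma_{ij})\rd_{v_j}g_2 - t^{-1}\sigma_{ij}\rd_{v_j}Y_ig_2$, commuting $Y$ with $\rd_v$ freely since $[Y_i,\rd_{v_j}]=0$. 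Integrating by parts in $\rd_{v_i}$ onto $w^2 g_1$, where $w_{(0,\kay,1)}$ is $v$-independent, gives $t^{-1}\langle\sigma_{ij}\rd_{v_i}g_1,\rd_{v_j}Yg_2\rangle$-type terms. These are naturally bounded by the dissipation norms only if $\rd_v Y g_2$ is controlled, and $\calD$ of $Yg_2$ controls $t^{-1}\rd_v Yg_2$ with anisotropic weights. Together with the explicit $t^{-1}$ and the $t^{-2}$ inside $\sigma_{ij}$, Lemma~\ref{lem:sigma-bounds} and Lemma~\ref{lem:dissipation.norm.equivalence} then yield $t^{-2}\|g_1\|_{\calD}\|Yg_2\|_{\calD}$. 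The $Y_i\sigma_{ij}$ term is handled by Proposition~\ref{prop:Y.on.coeff.est}; its main part, $\f{x_lt}{\alp+\bt t^2}\sigma_{ij}$, costs one $\jap{x/t}$ moment, which is why the right-hand side has $\kay+1$.

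The main obstacle I expect is bookkeeping with the anisotropic structure. Bounding $\sigma_{ij}\rd_{v_i}g_1\rd_{v_j}Yg_2$ requires splitting into $P_z$ and $(I-P_z)$ components using \eqref{eq:sigma.decomposition} so that the $\jap{z}^{-3}$ and $\jap{z}^{-1}$ weights match the $\Delta_v$ norm. Similarly, contracting $\hat\sigma$ or $Y\sigma$ with $\rd_v$ requires the improved bounds when contracted with $z$. I would also double-check that every term indeed has a net $t^{-2}$ after these cancellations, particularly the $\widetilde A$ term and the $(\alp+\bt t^2)$ derivative, since any mismatch there would leave only a non-integrable $t^{-1}$ factor.
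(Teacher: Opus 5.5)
Your proposal follows essentially the same route as the paper. You expand $[T,A]$ using \eqref{eq:T.dv.comm} and Lemma~\ref{lem:T-der-exp}, and use the cancellation of the $t^{-1}$-order contributions in all three parts of $A$: the $\rd_{v_i}(\sigma_{ij}\rd_{v_j}\cdot)$ part, the $\widetilde{A}$ part via Lemma~\ref{lem:Tdvsigma}, and the $\sigma_{ij}z_iz_j$ part. You then bound the residual $\hat\sigma$, $x$-weighted and $Y$-terms with \eqref{eq:hat.sigma.A}, Lemma~\ref{lem:hat.sigma}, Lemma~\ref{lem:sigma-bounds} and Proposition~\ref{prop:Y.on.coeff.est}, including the improved bounds after contraction with $z$.

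One slip needs correcting. Integrating $-t^{-1}\rd_{v_i}(\sigma_{ij}Y_jg_2)$ by parts gives $t^{-1}\langle w^2_{(0,\kay,1)}\sigma_{ij}\rd_{v_i}g_1, Y_jg_2\rangle_{L^2_{x,v}}$, not a term with $\rd_v Y g_2$. The factor $\rd_vYg_2$ appears only in the term you do not integrate by parts, $t^{-1}\langle w^2_{(0,\kay,1)}g_1, \sigma_{ij}\rd_{v_j}Y_ig_2\rangle_{L^2_{x,v}}$. Each of these carries exactly one $v$-derivative, which is what makes your count $t^{-1}\cdot t^{-2}\cdot t=t^{-2}$ correct. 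A term with $\rd_v$ on both factors, as you wrote it, would only give $t^{-1}\|g_1\|_{\calD_{x,v}(0,\kay,1)}\|Yg_2\|_{\calD_{x,v}(0,\kay,1)}$.
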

\begin{proof}
We control the commutator for each term constituting $A$. Using \eqref{eq:T.dv.comm} and  Lemma~\ref{lem:T-der-exp}, we obtain
\begin{equation}
\begin{split}
	&\: T \rd_{v_i} (\sigma_{ij} \rd_{v_j} g) - \rd_{v_i} (\sigma_{ij} \rd_{v_j} T g) \\
	= &\: \rd_{v_i} (\hat{\sigma}_{ij} \rd_{v_j} g) - t^{-1} Y_i (\sigma_{ij} \rd_{v_j} g) -t^{-1} \rd_{v_i} (\sigma_{ij} Y_j g),
\end{split}
\end{equation}
where we note the cancellation for $t^{-1} \rd_{v_i} (\sigma_{ij} \rd_{v_j} g)$. The term $| \langle w_{(0,\kay,1)}^2 g_1, \rd_{v_i} (\hat{\sigma}_{ij} \rd_{v_j} g_2) \rangle_{L^2_{x,v}} |$ can be estimated by Lemma~\ref{lem:hat.sigma}. For the other two terms, we use Proposition~\ref{prop:Y.on.coeff.est} to obtain

\begin{equation}
\begin{split}
&\: t^{-1} \Big| \langle w_{(0,\kay,1)}^2 g_1, Y_i (\sigma_{ij} \rd_{v_j} g_2) \rangle_{L^2_{x,v}} \Big| + t^{-1} \Big| \langle w_{(0,\kay,1)}^2 g_1, \rd_{v_i} (\sigma_{ij} Y_j g_2) \rangle_{L^2_{x,v}} \Big| \\
\ls &\: e^{-\f{\alp\bt |x|^2}{\alp + \bt t^2}} t^{-3} \f{|x|}{t} \Big( \langle w_{(0,\kay,1)}^2 |g_1|, \jap{z}^{-2} |P_z\rd_{v} g_2| \rangle_{L^2_{x,v}} + \langle w_{(0,\kay,1)}^2 |g_1|, \jap{z}^{-1} |(I-P_z)\rd_{v} g_2| \rangle_{L^2_{x,v}} \Big)\\
&\: + e^{-\f{\alp\bt |x|^2}{\alp + \bt t^2}}  \Big( t^{-4} \langle w_{(0,\kay,1)}^2 |g_1| , \jap{z}^{-2} |P_z\rd_v g_2| \rangle_{L^2_{x,v}} + t^{-4} \langle w_{(0,\kay,1)}^2 |g_1| , \jap{z}^{-1} |(I-P_z)\rd_v g_2| \rangle_{L^2_{x,v}}    \\
&\: + t^{-3} \langle w_{(0,\kay,1)}^2|g_1| , \jap{z}^{-2} |P_z \rd_v Y g_2| \rangle_{L^2_{x,v}} + t^{-3} \langle w_{(0,\kay,1)}^2|g_1| , \jap{z}^{-1} |(I-P_z) \rd_v Y g_2| \rangle_{L^2_{x,v}}   \Big) \\
&\: + e^{-\f{\alp\bt |x|^2}{\alp + \bt t^2}} \Big( t^{-3}  \langle w_{(0,\kay,1)}^2 |P_z \rd_v g_1|, \jap{z}^{-2} |Yg_2| \rangle_{L^2_{x,v}} + t^{-3}  \langle w_{(0,\kay,1)}^2 |(I-P_z)\rd_v g_1|, \jap{z}^{-1} |Yg_2| \rangle_{L^2_{x,v}} \Big)\\
\ls &\: \| g_1 \|_{\calD_{x,v}(0,\kay,1)} \Big( t^{-2}  \| g_2 \|_{\calD_{x,v}(0,\kay+1,1)} + t^{-2} \| Y g_2 \|_{\calD_{x,v}(0,\kay+1,1)} \Big).
\end{split}
\end{equation} Here, we have in particular integrated by parts in $\rd_{v_i}$ for the term $t^{-1} \rd_{v_i} (\sigma_{ij} Y_j g_2)$.

Next, we consider
\begin{equation}\label{eq:T.A.main.2}
\begin{split}
	&\: \Big| T \Big( \sqrt{\alp + \bt t^2} \rd_{v_i} \sigma_i g_2 \Big)  - \sqrt{\alp + \bt t^2} \rd_{v_i} \sigma_i (Tg_2) \Big| \\
	= &\: \Big| \f{\bt t}{\alp + \bt t^2} \sqrt{\alp + \bt t^2} \rd_{v_i} \sigma_i g_2 + \sqrt{\alp + \bt t^2} (T\rd_{v_i} \sigma_i) g_2 \Big| \\
	\ls &\: (t^{-2}  \f{|x|}{t}+ t^{-2} )e^{-\f{\alp\bt |x|^2}{\alp + \bt t^2}}|g_2|,
\end{split}
\end{equation}
where we used Lemma~\ref{lem:Tdvsigma} and noted a cancellation ($|\f{\bt t}{\alp + \bt t^2} - t^{-1} | \ls t^{-3} $) of the slowest decaying terms.

Finally, we consider the following term:
\begin{equation}\label{eq:T.A.main.3}
\begin{split}
	&\: \Big| T \Big( (\alp + \bt t^2) \sigma_{ij} z_i z_j g_2 \Big)  -  (\alp + \bt t^2) \sigma_{ij} z_i z_j (Tg_2)  \Big| \\
	= &\: \Big| - 2 \alp t^{-1} \sigma_{ij} z_i z_j g_2 + (\alp + \bt t^2)\Big( \hat{\sigma}_{ij} z_i z_j - \f{2 \alp \bt \sigma_{ij} x_i z_j}{(\alp + \bt t^2)^{\f 32}} \Big) g_2\Big| \\
	\ls &\: \Big( t^{-3} + t^{-2} \f{|x|}{t} \Big) e^{-\f{\alp \bt |x|^2}{\alp + \bt t^2}} |g_2|,
\end{split}
\end{equation}
where we used Lemma~\ref{lem:T-der-exp} and Lemma~\ref{lem:hat.sigma}.

For terms in \eqref{eq:T.A.main.2} and \eqref{eq:T.A.main.3}, we have
\begin{equation}
\begin{split}
	\Big| \jap{ w_{(0,\kay,1)}^2 g_1, \hbox{RHS of \eqref{eq:T.A.main.2}, \eqref{eq:T.A.main.3}} }_{L^2_{x,v}} \Big| \ls t^{-2}\| g_1 \|_{\calD_{x,v}(0,\kay,1)} \| \jap{z} g_2 \|_{\calD_{x,v}(0,\kay+1,1)} , 
\end{split}
\end{equation}
after using Lemma~\ref{lem:weight.compare}. Hence all terms are acceptable. \qedhere

\end{proof}

We now turn to the commutator $[T,K]$. In analogy with Section~\ref{sec:Y.K.comm}, it is convenient to first consider the commutator of $T$ with $\mathfrak{K}_{ij}$.
\begin{proposition}\label{prop:T.fraK.comm}
Let $\mathfrak{K}_{ij}$ be as in Definition~\ref{def:frak-K-def}. Then
\begin{align}
[T, \mathfrak{K}_{ij}] g + 2 t^{-1} \mathfrak{K}_{ij} g = &\:  t^{-1} \mu^{\f 12} \int_{\R^3}(v-v_*)_l \phi_{ij}(v-v_*)    \mu_*^{\f 12}( (Y_lg)_*-\alp v_{*l} g_*)\ud v_*.
\end{align}
\end{proposition}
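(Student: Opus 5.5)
The plan is a direct computation. Expand $T(\mathfrak K_{ij} g)$, split off $\mathfrak K_{ij}(Tg)$, and handle the remaining terms with the identity $\rd_{x_l} = t^{-1}(Y_l - \rd_{v_l})$ followed by an integration by parts in $v_*$. The mechanism is the same one used in the proof of Lemma~\ref{lem:T-der-exp} for $T\sigma_{ij}$.

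\emph{Step 1: the outer factor.} Since $T\mu = 0$, we also have $T(\mu^{\f 12}) = 0$. So $T$ passes through the outer factor $\mu^{\f 12}(v)$ and only acts on the integral. The kernel $\phi_{ij}(v-v_*)$ does not depend on $(t,x)$, so $T$ acts only on $\mu_*^{\f 12} g_* = (\mu^{\f 12} g)(t,x,v_*)$. This gives
\[
T(\mathfrak K_{ij} g) = \mu^{\f 12}\int_{\R^3}\phi_{ij}(v-v_*)\,(\rd_t + v_l\rd_{x_l})(\mu_*^{\f 12} g_*)\ud v_* .
\]

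\emph{Step 2: isolate $\mathfrak K_{ij}(Tg)$.} Write $v_l = v_{*l} + (v-v_*)_l$. The combination $\rd_t + v_{*l}\rd_{x_l}$ is the transport field evaluated at $v_*$; it annihilates $\mu_*^{\f 12}$ and produces $\mu_*^{\f 12}(Tg)_*$, so this part is exactly $\mathfrak K_{ij}(Tg)$. What remains is
\[
\mu^{\f 12}\int_{\R^3}\phi_{ij}(v-v_*)(v-v_*)_l\,\rd_{x_l}(\mu_*^{\f 12} g_*)\ud v_* .
\]

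\emph{Step 3: replace $\rd_{x_l}$.} Use $\rd_{x_l} = t^{-1}(Y_l - \rd_{v_l})$ at the point $v_*$, together with $Y_l \mu^{\f 12} = -\alp v_l \mu^{\f 12}$ (from \eqref{eq:Y-der-mu}). This gives
\[
\rd_{x_l}(\mu_*^{\f 12} g_*) = t^{-1}\mu_*^{\f 12}\big((Y_l g)_* - \alp v_{*l} g_*\big) - t^{-1}\rd_{v_{*l}}(\mu_*^{\f 12} g_*).
\]
The first piece gives precisely the right-hand side of the proposition.

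\emph{Step 4: integrate by parts.} For the $\rd_{v_{*l}}$ piece, integrate by parts in $v_*$ and use \eqref{eq:div.comp}, namely $\rd_{v_{*l}}\big(\phi_{ij}(v-v_*)(v-v_*)_l\big) = -2\phi_{ij}(v-v_*)$. The piece becomes $-2t^{-1}\mathfrak K_{ij} g$, which is then moved to the left-hand side.

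The only step requiring care is justifying this integration by parts. The function $\phi_{ij}(y)y_l$ is bounded, and its divergence is $O(|y|^{-1})$, which is locally integrable in $\R^3$. The Gaussian factor $\mu_*^{\f 12}$ handles integrability at infinity for regular $g$. Hence no boundary term arises at $v_* = v$, for instance by excising a small ball whose boundary contribution vanishes as its radius tends to zero. Everything else is bookkeeping.
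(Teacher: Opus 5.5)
Your proposal is correct and follows the paper's proof essentially step for step. Like the paper, you use $T\mu^{\f 12}=0$, split $v_l = v_{*l} + (v-v_*)_l$ to extract $\mathfrak K_{ij}(Tg)$, rewrite $\rd_{x_l} = t^{-1}(Y_{*l} - \rd_{v_{*l}})$ together with $Y_{*l}\mu_*^{\f 12} = -\alp v_{*l}\mu_*^{\f 12}$, and integrate by parts using \eqref{eq:div.comp}; your remark that the excised-ball boundary term is $O(\epsilon^2)$, because $\phi_{ij}(y)y_l$ is bounded, fills in a justification the paper leaves implicit.
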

\begin{proof}
Noting that $T\mu = 0$ (and $(T\mu)_*=0$), we have
\begin{equation}
\begin{split}
T \mathfrak{K}_{ij} g = &\: T \Big( \mu^{\f 12} \int_{\R^3}\phi_{ij}(v-v_*)\mu_*^{\f 12} g(v_*)\ud v_*\Big) \\
= &\: \mu^{\f 12} \int_{\R^3}\phi_{ij}(v-v_*)\mu_*^{\f 12} (Tg)(v_*)\ud v_* + \mu^{\f 12} \int_{\R^3}\phi_{ij}(v-v_*)    (v-v_*)_l  \rd_{x_l} (\mu_*^{\f 12} g_*)\ud v_*.
\end{split}
\end{equation}
Thus, rewriting $\rd_{x_l} = t^{-1} (Y_{*l} - \rd_{v_{*l}})$, we have
\begin{equation}
\begin{split}
[T, \mathfrak{K}_{ij}]g = &\:  t^{-1} \mu^{\f 12} \int_{\R^3}(v-v_*)_l \phi_{ij}(v-v_*)  (Y_{*l} - \rd_{v_{*l}}) (\mu_*^{\f 12} g(v_*))\ud v_*.
\end{split}
\end{equation}
Observing that $Y_{*l} \mu_*^{\f 12}=-\alp v_{*l} \mu_*^{\f 12}$ and using \eqref{eq:div.comp}, we obtain
\begin{equation}
\begin{split}
[T, \mathfrak{K}_{ij}]g = &\:  t^{-1} \mu^{\f 12} \int_{\R^3}(v-v_*)_l \phi_{ij}(v-v_*)    \mu_*^{\f 12}( (Y_lg)_*-\alp v_{*l} g_*)\ud v_* - 2  t^{-1} \mathfrak{K}_{ij} g.
\end{split}
\end{equation}
\end{proof}

\begin{proposition}\label{prop:T.K.commute}
	\begin{equation}
		\begin{split}
			\Big| \langle w_{(0,\kay,1)}^2 g_1, [T,K] g_2 \rangle_{L^2_{x,v}} \Big| \ls &\: t^{-2} \| g_1 \|_{\calD_{x,v}(0,\kay,1)} \Big(  \| g_2 \|_{\calD_{x,v}(0,\kay+1,1)} + \| Y g_2 \|_{\calD_{x,v}(0,\kay,1)} \Big).
		\end{split}
	\end{equation}
\end{proposition}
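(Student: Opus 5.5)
We would use the rewriting of $K$ from the proof of Proposition~\ref{prop:Y.K.commute},
\begin{equation*}
Kg = 8\pi\mu g + 4(\alp+\bt t^2)\,\mathfrak K_{ij}(z_iz_jg) - 2(\alp+\bt t^2)\,\mathfrak K_{ii}(g),
\end{equation*}
and commute $T$ through each of the three pieces, in analogy with the $[Y,K]$ computation.

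\textbf{The local term.} Since $T\mu=0$, we have $[T,\mu]=0$ and this term contributes nothing.

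\textbf{The nonlocal terms.} Write
\begin{equation*}
[T,(\alp+\bt t^2)\mathfrak K_{ij}(z_iz_j\,\cdot)]g = 2\bt t\,\mathfrak K_{ij}(z_iz_jg) + (\alp+\bt t^2)[T,\mathfrak K_{ij}](z_iz_jg) + (\alp+\bt t^2)\mathfrak K_{ij}\big((Tz_iz_j)g\big),
\end{equation*}
and similarly for the $\mathfrak K_{ii}$ term. We then apply Proposition~\ref{prop:T.fraK.comm}. The term $-2t^{-1}\mathfrak K_{ij}$ coming from it, multiplied by $(\alp+\bt t^2)$, combines with $2\bt t\,\mathfrak K_{ij}$ to give a coefficient
\begin{equation*}
2\bt t - 2t^{-1}(\alp+\bt t^2) = -2\alp t^{-1}.
\end{equation*}
This cancellation of the leading $O(t)$ terms is the key structural point, the analogue of the cancellation $|\f{\bt t}{\alp+\bt t^2}-t^{-1}|\ls t^{-3}$ used in Proposition~\ref{prop:T.A.commute}. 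After it, the remainder $-2\alp t^{-1}\mathfrak K_{ij}(\cdots)$ carries an explicit $t^{-1}$ compared with $K$ itself, which has size $t^2\mathfrak K$.

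The remaining pieces are handled as follows.
\begin{itemize}
\item The term $Tz_i = -\f{\alp\bt x_i}{(\alp+\bt t^2)^{3/2}}$ gives a factor $t^{-2}\f{|x|}{t}$, hence a net power $t^{-2}\cdot t^2\cdot t^{-2}\jap{x/t}$ relative to $t^2\mathfrak K$. This costs one extra $\jap{x/t}$ moment, which is why $\kay+1$ appears on $g_2$.
\item The integral term in Proposition~\ref{prop:T.fraK.comm} has kernel $(v-v_*)_l\phi_{ij}(v-v_*)$, bounded by $1$. We write $v_{*l}=\f{z_{*l}}{\sqrt{\alp+\bt t^2}}+\f{\bt t x_l}{\alp+\bt t^2}$, so the $\alp v_{*l}g_*$ piece contributes $t^{-1}\jap{z_*}+\jap{x/t}\,t^{-1}$. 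Commuting $Y_l$ past $z_iz_j$ via \eqref{eq:Y-der-z} produces $(Y_lg)_*$ terms plus lower-order terms with an additional $t^{-1}$.
\end{itemize}

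\textbf{Estimating the resulting bilinear forms.} Every term now has the form $t^{-1}\times t^2\int k(v,v_*)\mu^{1/2}\mu_*^{1/2}\,\mathrm{poly}(z,z_*)\,|h_*|\,|g_1|$, with $|k|\ls|v-v_*|^{-1}$ or $|k|\ls1$ and $h\in\{g_2,\jap{x/t}g_2,Yg_2\}$.
\begin{itemize}
\item For $|k|\ls|v-v_*|^{-1}$, we apply Proposition~\ref{prop:main.prelim.for.K} in the $\vartheta=0$ case. The $x$-dependent weight $w^2_{(0,\kay,1)}$ factors out, and we obtain $t^{-1}e^{-\f{\alp\bt|x|^2}{\alp+\bt t^2}}\|\jap{z}^{-\ell}h\|_{L^2_v}\|\jap{z}^{-\ell}g_1\|_{L^2_v}$.
\item For the bounded kernel, a direct Cauchy--Schwarz argument suffices: $\|\mu^{1/2}\jap z^m\|_{L^1_v}\ls t^{-3}e^{-\f{\alp\bt|x|^2}{2(\alp+\bt t^2)}}$, so $t^{-1}t^2t^{-3}=t^{-2}$.
\end{itemize}

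This leaves only $t^{-1}$ in the first case, whereas the target is $t^{-2}$. To recover the missing power, we plan to use that $\mathfrak K_{ij}$ with kernel $|v-v_*|^{-1}\mu^{1/2}\mu^{1/2}_*$ actually gains, through HLS, the factor $\|e^{-|z|^2/4}\|_{L^3_v}^2\ls t^{-2}$. The $t^2$ in $\calJ_2$ in Proposition~\ref{prop:main.prelim.for.K} was precisely inserted to compensate for this gain. Here we instead have only $t\cdot t^{-2}$, i.e.\ $t^{-1}$ overall, once the $O(t)$ leading parts are cancelled. So the cancellation must be checked carefully. The expected outcome is that all surviving terms carry the prefactor $t^{-1}$, not $t$, multiplying $\mathfrak K$, which by HLS yields $t^{-1}\cdot t^{-2}=t^{-3}\le t^{-2}$. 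Finally, we convert $e^{-\f{\alp\bt|x|^2}{\alp+\bt t^2}}\|\jap z^{-1/2}\cdot\|_{L^2_v}$ into the $\Delta_v$ norm and integrate in $x$ with the weights $w_{(0,\kay,1)}$, which are $v$-independent.

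\textbf{Main obstacle.} The delicate step is this bookkeeping of powers of $t$: making sure the $O(t)$ contributions cancel exactly, including the one coming from $\rd_t(\alp+\bt t^2)$, and that the $-\alp v_{*l}$ term does not reintroduce a non-decaying $\f{x}{t}$ factor without the compensating $\jap{x/t}$ moment available in $g_2$.
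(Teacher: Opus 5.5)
Your proposal is correct and follows the paper's proof essentially step by step: the same rewriting of $K$ via $\mathfrak K_{ij}$, the same cancellation $2\bt t - 2t^{-1}(\alp+\bt t^2) = -2\alp t^{-1}$ after applying Proposition~\ref{prop:T.fraK.comm}, Proposition~\ref{prop:main.prelim.for.K} (i.e.\ HLS) for the terms with kernel $|v-v_*|^{-1}$, and Cauchy--Schwarz with the $t^{-3}$ volume factor for the bounded-kernel terms. The hedging in your last two paragraphs is unnecessary: your own computation already shows that the only singular-kernel terms are $-2\alp t^{-1}\mathfrak K_{ij}(z_iz_jg_2)$ (giving $t^{-3}$) and the $Tz$ term $\jap{x/t}\mathfrak K_{ij}(\cdot)$ (giving $t^{-2}$ at the cost of one $\jap{x/t}$ moment), and the $O(t)$ prefactor multiplies only the bounded-kernel terms from Proposition~\ref{prop:T.fraK.comm}.
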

\begin{proof}
We start with \eqref{eq:K.for.Y.commutator}, which we recall:
\begin{equation}\label{eq:K.for.Y.commutator.in.T.K}
	K g_2 = 8\pi \mu g_2  + 4(\alp + \bt t^2)\, \mathfrak K_{ij}\big(z_i z_j g_2\big) - 2 (\alp + \bt t^2)\, \mathfrak K_{ii}(g_2).
\end{equation}
Clearly $[T, 8\pi \mu] g_2= 0$ since $T\mu = 0$. For the second term, we compute using $T z_i = -\f{\alp \bt x_i}{(\alp + \bt t^2)^{\f 32}}$ that
\begin{equation}\label{eq:T.K.commute.2}
\begin{split}
	&\: \Big| T ((\alp + \bt t^2) \mathfrak K_{ij} (z_i z_j g_2)) - (\alp + \bt t^2) \mathfrak K_{ij} (z_i z_j T g_2) \Big| \\
	= &\: \Big| 2\bt t \mathfrak K_{ij} (z_i z_j g_2) + (\alp + \bt t^2) [T, \mathfrak K_{ij}] (z_i z_j g_2) + (\alp + \bt t^2) \f{\alp \bt x_i}{(\alp + \bt t^2)^{\f 32}} \mathfrak K_{ij} (z_j g_2) \Big|.
\end{split}
\end{equation}
Using Proposition~\ref{prop:T.fraK.comm} and \eqref{eq:Y-der-z}, and bounding $\phi_{ij}(v-v_*)$ by $|v-v_*|^{-1}$, and $v$ by $t^{-1}|z| + |x|/t$, we have 
\begin{equation}
    \begin{split}
        &\: \Big| 2\bt t \mathfrak K_{ij} (z_i z_j g_2) + (\alp + \bt t^2) [T, \mathfrak K_{ij}] (z_i z_j g_2) \Big| \\
        \ls &\: \Big|  t^{-1}  \mathfrak K_{ij} (z_i z_j g_2) \Big| + t \int_{\bbR^3} \mu^{\f 12} \mu_*^{\f 12} \Big( (t^{-1} \jap{z_*}^3 + \jap{z_*}^2|x|/t) |(g_2)_*| + \jap{z_*}^2 |(Yg_2)_*| \Big) \ud v_*.
    \end{split}
\end{equation}
Thus using also Lemma~\ref{lem:fraK.direct}, we have 
\begin{equation}
    \begin{split}
        &\: \hbox{RHS of \eqref{eq:T.K.commute.2}} \\
        \ls &\: \int_{\bbR^3} \mu^{\f 12} \mu_*^{\f 12} |v-v_*|^{-1} \Big( t^{-1} \jap{z_*}^2 + \jap{z_*} |x|/t\Big) |(g_2)_*| \ud v_* \\
        &\: + t \int_{\bbR^3} \mu^{\f 12} \mu_*^{\f 12}   \Big((t^{-1}\jap{z_*}^3 + \jap{z_*}^2 |x|/t) |(g_2)_*| + \jap{z_*}^2 |(Yg_2)_*| \Big) \ud v_* .
    \end{split}
\end{equation}
Thus,
\begin{equation}
    \begin{split}
        &\: \Big| \jap{w^2_{(0,\kay,1)} g_1, \hbox{RHS of \eqref{eq:T.K.commute.2}}}_{L^2_{x,v}} \Big| \\
        \ls &\: \int_{\bbR^3\times \bbR^3 \times \bbR^3} w^2_{(0,\kay,1)}\mu^{\f 12} \mu_*^{\f 12}  |v-v_*|^{-1} \Big(t^{-1}\jap{z_*}^2 + \jap{z_*} |x|/t \Big) |g_1| |(g_2)_*| \ud v_* \ud v \ud x \\
        &\: + t \int_{\bbR^3\times \bbR^3 \times \bbR^3}  w^2_{(0,\kay,1)}\mu^{\f 12} \mu_*^{\f 12} |g_1|\Big( (t^{-1} \jap{z_*}^3 + \jap{z_*}^2 |x|/t) |(g_2)_*| + \jap{z_*}^2 |(Yg_2)_*| \Big) \ud v_* \ud v \ud x \\
        =:&\: I + II.
    \end{split}
\end{equation}
The term $I$ can be bounded by using the estimates for \eqref{eq:calJ3.def} in \eqref{eq:weighted-K-0} so that
\begin{equation}
    \begin{split}
        I \ls &\: t^{-3} \int_{\bbR^3} \| e^{-\f{\alp \bt |x|^2}{2(\alp + \bt t^2)}} w_{(0,\kay,1)} \jap{z}^{-\f 12} g_1 \|_{L^2_v} \| e^{-\f{\alp \bt |x|^2}{2(\alp + \bt t^2)}} w_{(0,\kay,1)} \jap{z}^{-\f 12} g_2 \|_{L^2_v} \ud x \\
        &\: + t^{-2} \int_{\bbR^3} \| e^{-\f{\alp \bt |x|^2}{2(\alp + \bt t^2)}} w_{(0,\kay,1)} \jap{z}^{-\f 12} g_1 \|_{L^2_v} \| e^{-\f{\alp \bt |x|^2}{2(\alp + \bt t^2)}} w_{(0,\kay+1,1)} \jap{z}^{-\f 12} g_2 \|_{L^2_v} \ud x \\
        \ls &\: t^{-2}\| g_1 \|_{\calD_{x,v}(0,\kay,1)} \| g_2 \|_{\calD_{x,v}(0,\kay+1,1)}.
    \end{split}
\end{equation}

For $II$, we bound 
$$\mu^{\f 12} \ls e^{-\f{\alp \bt |x|^2}{2(\alp + \bt t^2)}}e^{-|z|^2/4} \jap{z}^{-\f 12},\quad  \mu_*^{\f 12} \jap{z_*}^3 \ls e^{-\f{\alp \bt |x|^2}{2(\alp + \bt t^2)}}e^{-|z_*|^2/4} \jap{z_*}^{-\f 12}$$
and use Cauchy--Schwarz in both $v$ and $v_*$ so that
\begin{equation}
    \begin{split}
        II\ls &\: t \Big(\sup_x \int_{\bbR^3} e^{-|z|^2/4} \ud v\Big)\\
        &\: \quad \times \int_{\bbR^3} e^{-\f{\alp \bt |x|^2}{\alp + \bt t^2}}\| w_{(0,\kay,1)} \jap{z}^{-\f 12} g_1 \|_{L^2_v} (\| w_{(0,\kay+1,1)} \jap{z}^{-\f 12} g_2 \|_{L^2_v} + \| w_{(0,\kay,1)} \jap{z}^{-\f 12} Y g_2 \|_{L^2_v}) \ud x \\
        \ls &\: t^{-2} \| g_1 \|_{\calD_{x,v}(0,\kay,1)} \Big(  \| g_2 \|_{\calD_{x,v}(0,\kay+1,1)} + \| Y g_2 \|_{\calD_{x,v}(0,\kay,1)} \Big).
    \end{split}
\end{equation}
Thus, altogether the commutator term associated with the second term in \eqref{eq:K.for.Y.commutator.in.T.K} obeys the desired bound.

The final term in \eqref{eq:K.for.Y.commutator.in.T.K} can be treated in a similar manner and is in fact a bit simpler. We omit the details. \qedhere
\end{proof}

Now we can finally estimate the full $[L,T]$ commutator.
\begin{proposition}\label{prop:comm-L-T}
For $\kay \in \mathbb Z_{ \geq 0}$, the following estimate holds:
\begin{align*}
&\: \Big| \jap{ w^2_{(0,\kay,1)}g_1, [L,T] g_2 }_{L^2_{x,v}} \Big| \ls t^{-2} \| g_1 \|_{\calD_{x,v}(0,\kay,1)} \Big(  \| \jap{z} g_2 \|_{\calD_{x,v}(0,\kay+1,1)} + \| Y g_2 \|_{\calD_{x,v}(0,\kay+1,1)} \Big).
\end{align*}
\end{proposition}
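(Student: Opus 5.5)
This estimate is a direct combination of the two commutator bounds just proven. Since $L = -A - K$, we have
\begin{equation*}
[L,T] = -[A,T] - [K,T] = [T,A] + [T,K].
\end{equation*}
Hence
\begin{equation*}
\Big| \jap{ w^2_{(0,\kay,1)}g_1, [L,T] g_2 }_{L^2_{x,v}} \Big| \leq \Big| \jap{ w^2_{(0,\kay,1)}g_1, [T,A] g_2 }_{L^2_{x,v}} \Big| + \Big| \jap{ w^2_{(0,\kay,1)}g_1, [T,K] g_2 }_{L^2_{x,v}} \Big|.
\end{equation*}

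The first term is controlled directly by Proposition~\ref{prop:T.A.commute}. It gives
\begin{equation*}
t^{-2} \| g_1 \|_{\calD_{x,v}(0,\kay,1)} \Big( \| \jap{z} g_2 \|_{\calD_{x,v}(0,\kay+1,1)} + \| Y g_2 \|_{\calD_{x,v}(0,\kay+1,1)} \Big),
\end{equation*}
which is exactly the right-hand side of the claim.

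The second term is controlled by Proposition~\ref{prop:T.K.commute}. It gives
\begin{equation*}
t^{-2} \| g_1 \|_{\calD_{x,v}(0,\kay,1)} \Big( \| g_2 \|_{\calD_{x,v}(0,\kay+1,1)} + \| Y g_2 \|_{\calD_{x,v}(0,\kay,1)} \Big).
\end{equation*}
It remains to show this is dominated by the claimed right-hand side, which follows from two pointwise weight comparisons.

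First, $\jap{z}\geq 1$ pointwise. Every term in the $\Delta_v(0)$ norm is monotone in $|g|$ and in $|\rd_v g|$, so one might worry that the derivative part changes when $g$ is multiplied by $\jap{z}$. This is harmless: for the $\calD_{x,v}$ norm it is simpler to bound the $K$-term directly, because in the proof of Proposition~\ref{prop:T.K.commute} only the zeroth-order quantities $\|\jap{z}^{-\f12} w g_2\|_{L^2_v}$ enter. These are trivially bounded by the corresponding quantities for $\jap{z} g_2$. Alternatively, one can just note that the weaker $\| g_2\|$ factor is acceptable.

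Second, $w_{(0,\kay,1)}\leq w_{(0,\kay+1,1)}$, since $\jap{x/t}\geq 1$. This gives
\begin{equation*}
\| Y g_2 \|_{\calD_{x,v}(0,\kay,1)}\leq \| Y g_2 \|_{\calD_{x,v}(0,\kay+1,1)}.
\end{equation*}

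Adding the two bounds completes the proof. There is no real obstacle here; the only point needing care is the first comparison, replacing $\|g_2\|_{\calD}$ by $\|\jap{z}g_2\|_{\calD}$. As noted, this is immediate from the zeroth-order structure of the bounds used in the proof of Proposition~\ref{prop:T.K.commute}.
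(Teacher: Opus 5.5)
Your proof is correct and is essentially the paper's argument: the paper proves the statement in one line by writing $[L,T]=[T,A]+[T,K]$ and invoking Propositions~\ref{prop:T.A.commute} and~\ref{prop:T.K.commute}. Your two weight comparisons fill in what the paper leaves implicit: monotonicity of $\calD_{x,v}(0,\kay,1)$ in $\kay$, and the fact that the $[T,K]$ bound only uses zeroth-order quantities, which lets you replace $\|g_2\|_{\calD_{x,v}(0,\kay+1,1)}$ by $\|\jap{z} g_2\|_{\calD_{x,v}(0,\kay+1,1)}$.
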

\begin{proof}
This is a consequence of Propositions~\ref{prop:T.A.commute} and \ref{prop:T.K.commute}. \qedhere
\end{proof}

\section{Energy estimates for $f$ and $Y_l f$}\label{sec:lower.order.energy}
In this section, we use the energy estimates in Section~\ref{sec:eng-est-lin} and the commutator estimates in Section~\ref{sec:commutator} to prove weighted energy estimates for $f$ and $Y_lf$.
\begin{proposition}\label{prop:f.final}
The following holds for $f$ satisfying the assumptions of Theorem~\ref{thm:main}:
\begin{equation}
	\begin{split}
		&\: \sup_{t \in [t_0,\infty)} \left(\norm{w_{(0,K_{0}-1,1)} f}_{L^2_{x,v}}^2 + \norm{w_{(0,K_0,0)} f}_{L^2_{x,v}}^2 + t^{-1.1} \norm{w_{(1,K_{0},0)}f}^2_{L^2_{x,v}}\right)(t) \\
		&\: + \int_{t_0}^\infty \left( \norm{(I-\Pi) f}^2_{\calD_{x,v}(0,K_{0}-1,1)} + t^{-1.1} \norm{f}_{\calD_{x,v}(1,K_{0},0)}^2 \right)(t) \, \ud t \ls  E_{K_{0}}.
	\end{split}
\end{equation}
\end{proposition}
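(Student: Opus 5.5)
Since $f$ solves \eqref{eq:linear.intro}, I would apply the energy estimates of Section~\ref{sec:eng-est-lin} with $h=f$ and source $\mathfrak S=0$. All the source integrals on the right-hand side of Proposition~\ref{prop:Gaussian-x-weights} then vanish. I would take $\kay=K_{0}-1$ there, so that $\kay+1=K_0$, and obtain
\begin{equation*}
\f{\ud}{\ud t}\Big(\norm{w_{(0,K_0-1,1)} f}_{L^2_{x,v}}^2+\norm{w_{(0,K_0,0)} f}_{L^2_{x,v}}^2+t^{-1.1}\norm{w_{(1,K_0,0)}f}_{L^2_{x,v}}^2\Big)+\norm{(I-\Pi)f}^2_{\calD_{x,v}(0,K_0-1,1)}+t^{-1.1}\norm{f}^2_{\calD_{x,v}(1,K_0,0)}\ls 0.
\end{equation*}
Integrating from $t_0$ to $t$ and taking the supremum over $t$ gives the claimed bound, with the right-hand side equal to the same energy evaluated at $t=t_0$.

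It remains to show that this initial quantity is bounded by $E_{K_0}$. This follows from the $|\om|=0$ terms in \eqref{eq:EK.def}. The term $\norm{w_{(0,K_0,0)}f_{t_0}}^2$ appears there directly. The term $t_0^{-1.1}\norm{w_{(1,K_0,0)}f_{t_0}}^2$ also appears directly. The term $\norm{w_{(0,K_0-1,1)}f_{t_0}}^2$ appears as the last term in the first line of \eqref{eq:EK.def}.

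The weights $w$ carry the factor $\exp(Bt^{-0.1})$. Since $B$ has been fixed, this factor lies between $1$ and $e^{B}$, so it affects only the constants. There is no real obstacle. The one point to check is that the hypotheses of Proposition~\ref{prop:Gaussian-x-weights} are met by $f$ for the index $\kay=K_0-1\ge 2$: the smoothness and decay of $f_{t_0}$ are needed to justify the integrations by parts. This can be handled with a standard approximation argument, or by using the a priori finiteness that follows from global existence of the linear problem.
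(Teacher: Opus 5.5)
Your proposal is correct and is the same argument as the paper's: apply Proposition~\ref{prop:Gaussian-x-weights} with $h=f$, $\mathfrak S=0$ and $\kay=K_0-1$, integrate in time, and note that the resulting initial-data terms are exactly the $|\om|=0$ terms of \eqref{eq:EK.def}. Your remark about the $\exp(Bt^{-0.1})$ factor is harmless but not needed, since that factor appears in the same weights on both sides of the estimate.
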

\begin{proof}
Since $f$ satisfies the linear equation with no source term, this estimate follows from integrating the equation in Proposition~\ref{prop:Gaussian-x-weights}. Note that the data terms are exactly the first three groups in \eqref{eq:EK.def}. \qedhere
\end{proof}

We derive the equation satisfied by $Y_l f$. Since $[T,Y_l]= 0$ for $T = \partial_t +v_i\partial_{x_i}$, the following holds:
\begin{lemma}\label{lem:Y.comm-Landau}
Let $f$ be a solution to \eqref{eq:linear.intro}. Then $Y_l f$ solves the following equation:
\begin{equation}\label{eq:comm-Landau}
\partial_t Y_l f+v_i\partial_{x_i} Y_l f+LY_l f= [L,Y_l]f.
\end{equation}
\end{lemma}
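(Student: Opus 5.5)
The plan is to apply $Y_l$ directly to \eqref{eq:linear.intro} and commute it through each operator. The equation reads $Tf + Lf = 0$ with $T = \rd_t + v_i\rd_{x_i}$. Since $Y_l$ is a linear differential operator, applying it gives
\[
Y_l T f + Y_l L f = 0,
\]
which we rewrite as
\[
T Y_l f + [Y_l,T] f + L Y_l f + [Y_l, L] f = 0.
\]
Moving the commutators to the right-hand side and using $[Y_l,L] = -[L,Y_l]$ gives exactly \eqref{eq:comm-Landau}, once we know that $[Y_l,T]=0$.

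The one computation is $[T,Y_l]=0$. We have
\[
[\rd_t, t\rd_{x_l}] = \rd_{x_l}, \qquad [v_i\rd_{x_i}, \rd_{v_l}] = -\rd_{x_l}.
\]
The remaining pairs commute: $[\rd_t,\rd_{v_l}]=0$, and $[v_i\rd_{x_i}, t\rd_{x_l}]=0$ because the coefficients $v_i$ and $t$ do not depend on $x$. Summing the four contributions gives $[T,Y_l]=0$.

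I should also check that $Y_l$ may be applied to $Lf$ and the commutator makes sense. Here $L$ depends on $(t,x)$ only through $\mu$ and $z$, which are smooth, so $[L,Y_l]f = LY_lf - Y_lLf$ is a well-defined operator expression. Its explicit structure is not needed at this stage, since it was already analyzed in Proposition~\ref{prop:Y.L}.

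I expect no real obstacle, since the lemma is a formal identity for smooth solutions. The only point needing care is justifying the differentiation, for example by working with smooth, rapidly decaying solutions (the data are smooth with $E_{K_0}<\infty$) so that every term is defined.
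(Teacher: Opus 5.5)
Your proposal is correct and follows the same route as the paper, which also derives the lemma directly from $[T,Y_l]=0$ by applying $Y_l$ to $Tf+Lf=0$ and moving $-[Y_l,L]f=[L,Y_l]f$ to the right-hand side. Your verification of $[T,Y_l]=0$ (the contributions $[\rd_t,t\rd_{x_l}]=\rd_{x_l}$ and $[v_i\rd_{x_i},\rd_{v_l}]=-\rd_{x_l}$ cancel) and your sign bookkeeping are both right.
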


Next, we prove an energy estimate for $Y_l f$ using the equation \eqref{eq:comm-Landau}.
\begin{proposition}\label{prop:Yf.final}
Under the assumptions of Theorem~\ref{thm:main}, for any $l \in \{1,2,3\}$, the following holds:
\begin{equation}\label{eq:Yf.final}
	\begin{split}
		&\: \sup_{t \in [t_0,\infty)} \left(\norm{w_{(0,K_{0}-2,1)} Y_l f}_{L^2_{x,v}}^2 + \norm{w_{(0,K_{0}-1,0)} Y_l f}_{L^2_{x,v}}^2 + t^{-1.1} \norm{w_{(1,K_{0}-1,0)}Y_l f}^2_{L^2_{x,v}}\right)(t) \\
		&\: + \int_{t_0}^\infty \left( \norm{(I-\Pi)Y_l f}^2_{\calD_{x,v}(0,K_{0}-2,1)} + t^{-1.1} \norm{Y_l f}_{\calD_{x,v}(1,K_{0}-1,0)}^2 \right)(t) \, \ud t \ls E_{K_{0}}.
	\end{split}
\end{equation} 
\end{proposition}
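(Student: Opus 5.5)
We apply Proposition~\ref{prop:Gaussian-x-weights} with $h = Y_l f$, $\mathfrak S = [L,Y_l]f$ (Lemma~\ref{lem:Y.comm-Landau}) and $\kay = K_{0}-2$. This controls the time derivative of $\norm{w_{(0,K_0-2,1)} Y_l f}^2 + \norm{w_{(0,K_0-1,0)} Y_l f}^2 + t^{-1.1}\norm{w_{(1,K_0-1,0)} Y_l f}^2$ together with the dissipation terms in \eqref{eq:Yf.final}. It remains to bound three source terms:
\[
\langle w^2 Y_l f, [L,Y_l] f\rangle_{L^2_{x,v}} \quad\text{with}\quad w \in \{w_{(0,K_0-2,1)},\, w_{(0,K_0-1,0)}\},
\]
and $t^{-1.1}\langle w_{(1,K_0-1,0)}^2 Y_l f, [L,Y_l] f\rangle_{L^2_{x,v}}$. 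The plan is to show that each is bounded by $\epsilon$ times the left-hand dissipation of $Y_l f$ plus an integrable-in-time quantity. That quantity should be controlled by $E_{K_0}$ through Proposition~\ref{prop:f.final}, which involves one more $\jap{x/t}$ moment. This moment gain is exactly the loss that the commutator estimates incur.

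For the two $\vartheta=0$ weights we use the refined estimate \eqref{eq:main.Y.L.I-Pi}.

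\textbf{Gaussian weight.} The product $\norm{(I-\Pi)f}_{\calD(0,K_0-1,1)}\norm{(I-\Pi)Y_lf}_{\calD(0,K_0-2,1)}$ is handled by Young's inequality. It splits into a small multiple of the $Y_lf$ dissipation plus $C\norm{(I-\Pi)f}^2_{\calD(0,K_0-1,1)}$, and the latter is integrable by Proposition~\ref{prop:f.final}.

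The terms carrying a factor $t^{-1}$ contain $\norm{f}_{\calD}$ and $\norm{Y_lf}_{\calD}$ with the full $f$, not just $(I-\Pi)f$. Their macroscopic parts are bounded using Corollary~\ref{cor:Pi.in.weighted.space} and the inequality $\norm{g}_{\calD(0,\kay,1)}\ls \norm{w_{(0,\kay,1)}g}_{L^2}$. This gives
\[
t^{-1}\norm{w_{(0,K_0-2,1)} f}\,\norm{w_{(0,K_0-2,1)}Y_lf} \ls t^{-1}E_{K_0}^{1/2}\norm{w Y_l f},
\]
which is borderline. To get integrability, we pair the microscopic factor with $\Pi$. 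The cross term $t^{-1}\norm{(I-\Pi)f}_{\calD}\norm{Y_lf}_{\calD}$ is handled by Cauchy--Schwarz in time as
\[
\Big(\int\norm{(I-\Pi)f}^2_{\calD}\Big)^{1/2}\Big(\int t^{-2}\norm{wY_lf}^2\Big)^{1/2}.
\]
The term $t^{-1}\norm{f}_{\calD}\norm{(I-\Pi)Y_lf}_{\calD}$ is absorbed by Young's inequality, leaving $t^{-2}\norm{wf}^2$, which is integrable.

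\textbf{Polynomial weight.} For $w_{(0,K_0-1,0)}$ the same argument works using the $L^2$ bounds, with the moment $K_0$ supplied by Proposition~\ref{prop:f.final}.

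\textbf{Remaining $\vartheta=0$ terms.} The terms $t^{-2}\norm{w f}\,\norm{w Y_lf}$ are bounded by $t^{-2}E_{K_0}^{1/2}\sup\norm{wY_lf}$ and close via a bootstrap.

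For the $\vartheta=1$ weight we use \eqref{eq:main.Y.L} multiplied by $t^{-1.1}$. The product $t^{-1.1}\norm{f}_{\calD(1,K_0,0)}\norm{Y_lf}_{\calD(1,K_0-1,0)}$ is absorbed by Young's inequality into the dissipation $t^{-1.1}\norm{Y_lf}^2_{\calD(1,K_0-1,0)}$, plus $t^{-1.1}\norm{f}^2_{\calD(1,K_0,0)}$, which is integrable by Proposition~\ref{prop:f.final}. The term
\[
t^{-3.1}\norm{w_{(1,K_0,0)}f}\,\norm{w_{(1,K_0-1,0)}Y_lf}
\]
is bounded by $t^{-2}\bigl(t^{-1.1}\norm{w_{(1,K_0,0)}f}^2\bigr)^{1/2}\bigl(t^{-1.1}\norm{w_{(1,K_0-1,0)}Y_lf}^2\bigr)^{1/2}$. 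The first factor is $\ls E_{K_0}$ and the second is part of the energy, so the term is integrable.

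Finally, we integrate from $t_0$. The initial terms are precisely the $|\om|=1$ entries of \eqref{eq:EK.def}. A Gr\"onwall or bootstrap argument, using $\sup_t E_Y(t)\le C E_{K_0} + \tfrac12\sup_t E_Y(t)$, then closes the estimate.

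The main obstacle is the non-decaying macroscopic contributions $t^{-1}\norm{f}_{\calD}\norm{Y_lf}_{\calD}$ in \eqref{eq:main.Y.L.I-Pi}. A naive bound is only $t^{-1}$, which is not integrable. The whole argument hinges on always pairing one $(I-\Pi)$ factor, which is $L^2_t$-integrable, with a factor $t^{-1}$ times a bounded energy, which is $L^2_t$-integrable since $\int t^{-2}<\infty$.
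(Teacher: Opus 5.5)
Your proposal is correct and follows the paper's proof. You apply Proposition~\ref{prop:Gaussian-x-weights} to $h=Y_lf$ with $\kay=K_0-2$, bound the three commutator sources via \eqref{eq:main.Y.L} (for $\vartheta=1$) and \eqref{eq:main.Y.L.I-Pi} (for $\vartheta=0$), absorb the $Y_lf$ factors by Young's inequality, and close with Proposition~\ref{prop:f.final} and the $|\omega|=1$ data terms.

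The only difference is bookkeeping in the $t^{-1}$ terms. You split the full $\calD$-factors into $\Pi$ and $(I-\Pi)$ parts and bound $\|\Pi g\|_{\calD}$ by the weighted $L^2$ energy. That bound holds only for the macroscopic part, not for general $g$ as your displayed inequality suggests, so the microscopic parts must go to the dissipation. The paper instead trades $\|Y_lf\|_{\calD(0,\cdot)}$ for the $t^{-1.1}$-weighted $\vartheta=1$ dissipation and controls $t^{-2}\|f\|_{\calD}^2$ directly through Proposition~\ref{prop:f.final}.
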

\begin{proof}
By Proposition~\ref{prop:Gaussian-x-weights} (with $\kay = K_{0}-2$ and Lemma~\ref{lem:Y.comm-Landau}), 
\begin{equation}
    \begin{split}
        \hbox{LHS of \eqref{eq:Yf.final}} \ls &\: \int_{t_0}^\infty \Big( t^{-1.1} \Big| \jap{ w_{(1,K_{0}-1,0)}^2 Y_l f, [L,Y_l] f }_{L^2_{x,v}}\Big| + 
        \Big|
 \jap{ w_{(0,K_{0}-1,0)}^2 Y_l f, [L,Y_l] f }_{L^2_{x,v}} \Big| \\
 &\: \qquad \qquad +
\Big| \jap{ w_{(0,K_{0}-2,1)}^2 Y_l f, [L,Y_l] f }_{L^2_{x,v}}\Big|\Big) \ud t.
    \end{split}
\end{equation}

All these commutator terms are bounded using Proposition~\ref{prop:Y.L}. More precisely, for the first term we use \eqref{eq:main.Y.L} so that 
\begin{equation}
    \begin{split}
        &\: \int_{t_0}^\infty \Big( t^{-1.1} \Big| \jap{ w_{(1,K_{0}-1,0)}^2 Y_l f, [L,Y_l] f }_{L^2_{x,v}} \Big| \Big) \ud t \\
        \ls &\:  \int_{t_0}^\infty t^{-1.1} \| f \|_{\calD_{x,v}(1,K_{0},0)} \|  Y_l f \|_{\calD_{x,v}(1,K_{0}-1,0)}(t) \, \ud t \\
        &\: + \int_{t_0}^\infty t^{-3.1} \| w_{(1,K_{0},0)}f \|_{L^2_{x,v}} \| w_{(1,K_{0}-1,0)} Y_l f \|_{L^2_{x,v}}(t) \, \ud t.
    \end{split}
\end{equation}
For the other two terms, we use \eqref{eq:main.Y.L.I-Pi} so as to obtain
\begin{equation}
    \begin{split}
        &\: \int_{t_0}^\infty \Big|
        \jap{ w_{(0,K_{0}-1,0)}^2 Y_l f, [L,Y_l] f }_{L^2_{x,v}} \Big| \ud t \\
        \ls &\: \int_{t_0}^\infty \Big(  \| (I-\Pi) f \|_{\calD_{x,v}(0,K_{0},0)} + t^{-1} \| f \|_{\calD_{x,v}(0,K_{0}-1,0)} \Big) \| (I-\Pi) Y_l f \|_{\calD_{x,v}(0,K_{0}-1,0)}(t) \ud t \\
        &\: +  \int_{t_0}^\infty t^{-1} \| (I-\Pi) f \|_{\calD_{x,v}(0,K_{0}-1,0)} \| Y_l f \|_{\calD_{x,v}(1,K_{0}-1,0)}(t) \ud t \\
	&\: + \int_{t_0}^\infty t^{-2} \| w_{(0,K_{0},0)} f \|_{L^2_{x,v}} \| w_{(0,K_{0}-1,0)} Y_l f \|_{L^2_{x,v}}(t) \ud t.
    \end{split}
\end{equation}
and 
\begin{equation}
    \begin{split}
        &\: \int_{t_0}^\infty \Big| \jap{ w_{(0,K_{0}-2,1)}^2 Y_l f, [L,Y_l] f }_{L^2_{x,v}}\Big| \ud t \\
        \ls &\: \int_{t_0}^\infty \Big(  \| (I-\Pi) f \|_{\calD_{x,v}(0,K_{0}-1,1)} + t^{-1} \| f \|_{\calD_{x,v}(0,K_{0}-2,1)} \Big) \| (I-\Pi) Y_l f \|_{\calD_{x,v}(0,K_{0}-2,1)}(t) \ud t \\
        &\: +  \int_{t_0}^\infty t^{-1} \| (I-\Pi) f \|_{\calD_{x,v}(0,K_{0}-2,1)} \| Y_l f \|_{\calD_{x,v}(1,K_{0}-2,0)}(t) \ud t \\
	&\: + \int_{t_0}^\infty t^{-2} \| w_{(0,K_{0}-1,1)} f \|_{L^2_{x,v}} \| w_{(0,K_{0}-2,1)} Y_l f \|_{L^2_{x,v}}(t) \ud t.
    \end{split}
\end{equation}
Using Young's inequality, we absorb terms involving $Y_l f$ to the left-hand side so that we obtain
\begin{equation}
    \begin{split}
        &\: \hbox{LHS of \eqref{eq:Yf.final}} \\
        \ls &\: \int_{t_0}^\infty \Big( t^{-1.1} \| f \|_{\calD_{x,v}(1,K_{0},0)}^2 + \| (I-\Pi) f \|_{\calD_{x,v}(0,K_{0}-1,1)}^2 + \| (I-\Pi) f \|_{\calD_{x,v}(0,K_{0},0)}^2 \Big)(t) \, \ud t \\
        &\: + \int_{t_0}^\infty t^{-2} \Big( \| f \|_{\calD_{x,v}(0,K_{0}-1,0)}^2 + \| f \|_{\calD_{x,v}(0,K_{0}-2,1)}^2\Big)(t) \, \ud t \\
        &\: +\Big( \int_{t_0}^\infty \Big( t^{-2.55} \| w_{(1,K_{0},0)}f \|_{L^2_{x,v}} + t^{-2} \| w_{(0,K_{0},0)} f \|_{L^2_{x,v}} + t^{-2} \| w_{(0,K_{0}-1,1)} f \|_{L^2_{x,v}} \Big)(t) \ud t \Big)^2.
    \end{split}
\end{equation}
Finally, observe that the right-hand side can be bounded by $E_{K_{0}}$ using Proposition~\ref{prop:f.final}. \qedhere
\end{proof}

\section{Weighted $L$ energies }\label{sec:L.energy}

\subsection{Weighted $L$ energy estimates I}
In this subsection we control the energy $t \langle w^2_{(0,\kay,1)} f , L f \rangle_{L^2_{x,v}}$.

\begin{proposition}\label{prop:L-eng-1}
Under the assumptions of Theorem~\ref{thm:main}, the following holds:
\begin{equation}\label{eq:L-eng-1}
\begin{split}
\f{\ud}{\ud t}& \left(t \langle w^2_{(0,\kay,1)} f , L f \rangle_{L^2_{x,v}} \right) + t \norm{w_{(0,\kay,1)}Lf}_{L^2_{x,v}}^2\\
&\quad\lesssim \|(I-\Pi) f\|_{\calD_{x,v}(0,\kay+1,1)}^2 + t^{-2}\Big( \| f\|_{\calD_{x,v}(1,\kay+2,0)}^2 + \|Y f\|_{\calD_{x,v}(1,\kay+1,0)}^2\Big) \\
&\:\quad\qquad + t^{-3} \Big( \| w_{(1,\kay+1,0)} f \|_{L^2_{x,v}}^2 + \| w_{(1,\kay,0)} Y f \|_{L^2_{x,v}}^2\Big).
    \end{split}
\end{equation}
\end{proposition}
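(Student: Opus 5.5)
We differentiate the energy directly. Write $w = w_{(0,\kay,1)}$, which depends only on $(t,x)$, and use the equation $Tf = -Lf$. Then
\begin{equation*}
\f{\ud}{\ud t}\Big( t\langle w^2 f, Lf\rangle_{L^2_{x,v}}\Big) = \langle w^2 f, Lf\rangle + t\langle (Tw^2) f, Lf\rangle + t\langle w^2 Tf, Lf\rangle + t\langle w^2 f, T Lf\rangle ,
\end{equation*}
where transport is integrated by parts in $x$ (the $v_i\rd_{x_i}$ part of $T$ integrates to zero against $w^2$ up to $Tw^2$). We write $TLf = LTf + [T,L]f = -L^2 f - [L,T]f$. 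Since $w$ is $v$-independent and $L$ is self-adjoint in $v$, we get $\langle w^2 f, L^2 f\rangle = \langle w^2 Lf, Lf\rangle$. Together with $\langle w^2 Tf, Lf\rangle = -\|wLf\|^2$, this gives the good term $-2t\|wLf\|_{L^2_{x,v}}^2$. Half of it is kept on the left-hand side and the other half is used for absorption. The remaining terms are
\[
\langle w^2 f, Lf\rangle ,\qquad t\langle (Tw^2) f, Lf\rangle ,\qquad t\langle w^2 f,[L,T]f\rangle .
\]

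\emph{First term.} Self-adjointness and $v$-independence of $w$ give $\langle w^2 f, Lf\rangle = \langle w^2 (I-\Pi)f, L(I-\Pi)f\rangle$. By Propositions~\ref{prop:weighted-K-bound} and \ref{prop:weighted-A-upper-bound} (with $\vartheta=0$, which carry the factor $e^{-\alp\bt|x|^2/(\alp+\bt t^2)}$), this is bounded by $\|(I-\Pi)f\|_{\calD_{x,v}(0,\kay,1)}^2$.

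\emph{Second term.} By \eqref{eq:Tw.exact.1}--\eqref{eq:Tw.exact.4}, $|Tw^2| \ls (t^{-1.1} + t^{-2}|x-tv|\jap{x/t} + t^{-3}\jap{x/t}^2)w^2$, with no $\vartheta$ contribution. For the bounded part, replace $f$ by $(I-\Pi)f$ in the pairing against $Lf$ and apply Cauchy--Schwarz against $Lf$:
\[
t^{-0.1}\|w(I-\Pi)f\|\,\|wLf\| \le \ep t\|wLf\|^2 + C t^{-1.2}\|w(I-\Pi)f\|^2 .
\]
This is not obviously controlled by the dissipation norm, because of the $\jap{z}^{-1/2}$ and $e^{-\alp\bt|x|^2/(\alp+\bt t^2)}$ degeneracies. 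A safer route is to keep $f$ on both sides and use the dual form, i.e.\ the $A$, $K$ upper bounds (Propositions~\ref{prop:weighted-A-upper-bound} and \ref{prop:weighted-K-bound}) with the weight $t\,Tw^2/w^2$ folded into the $x$-dependent weights. The factors $t^{-1.1}$ and $t^{-2}\jap{x/t}$ times $t$ are $\ls \jap{x/t}$, which increases $\kay$ to $\kay+1$ and yields $\|(I-\Pi)f\|_{\calD_{x,v}(0,\kay+1,1)}^2$. The factor $|x-tv|$ is not $v$-independent, so those pieces are handled as in Proposition~\ref{prop:Y.A}. Using Lemma~\ref{lem:weight.compare}, we pass to $w_{(1,\cdot,0)}$ weights, giving $t^{-2}\|f\|^2_{\calD_{x,v}(1,\kay+2,0)}$ plus $t^{-3}\|w_{(1,\kay+1,0)}f\|^2$.

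\emph{Commutator term.} We apply Proposition~\ref{prop:comm-L-T} with $g_1 = g_2 = f$:
\[
t\,|\langle w^2 f,[L,T]f\rangle| \ls t^{-1}\|f\|_{\calD(0,\kay,1)}\big(\|\jap z f\|_{\calD(0,\kay+1,1)}+\|Yf\|_{\calD(0,\kay+1,1)}\big).
\]
The difficulty is that $\|f\|_{\calD(0,\kay,1)}$ contains the macroscopic part, which does not decay. To get around this, we first use self-adjointness on the $g_1$ slot: since $\langle w^2 h, Lg\rangle$ only sees $(I-\Pi)h$, we write $\langle w^2 f,[L,T]f\rangle = \langle w^2 f, LTf\rangle - \langle w^2 f, TLf\rangle$. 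The first piece pairs with $(I-\Pi)f$ directly. The second piece can be rewritten via the identity $\langle w^2 \Pi f, TLf\rangle = T\langle w^2\Pi f, Lf\rangle - \langle T(w^2\Pi f), Lf\rangle = -\langle w^2 (I-\Pi)T\Pi f, Lf\rangle - \langle (Tw^2)\Pi f, Lf\rangle$ (note $\langle w^2\Pi f, Lf\rangle = 0$). By \eqref{eq:I-Pi.T.Pi}, this yields $t^{-2}$ gains paired with $\|wLf\|$, which are absorbed into $\ep t\|wLf\|^2$ with remainders $t^{-3}(\|w_{(1,\kay+1,0)}f\|^2 + \|w_{(1,\kay,0)}Yf\|^2)$.

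After this reduction, the commutator bound has $(I-\Pi)f$ in the first slot. Young's inequality gives $\|(I-\Pi)f\|^2_{\calD(0,\kay+1,1)} + t^{-2}\big(\|\jap z f\|^2_{\calD(0,\kay+1,1)} + \|Yf\|^2_{\calD(0,\kay+1,1)}\big)$. Lemma~\ref{lem:weight.compare} converts these into $t^{-2}\|f\|^2_{\calD(1,\kay+2,0)}$ and $t^{-2}\|Yf\|^2_{\calD(1,\kay+1,0)}$; the $\iota=1$ Gaussian factor dominates the $\jap{z}$ powers up to the loss $\jap{x/t^2}\le\jap{x/t}$.

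The main obstacle is this last step: isolating the macroscopic part in the commutator and the $Tw^2$ terms, and making sure every term ends up either carrying $(I-\Pi)f$ or carrying a $t^{-2}$ factor.
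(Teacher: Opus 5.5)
Your argument is correct in structure and uses the same ingredients as the paper: the good term $-2t\|w_{(0,\kay,1)}Lf\|^2_{L^2_{x,v}}$ from $Tf=-Lf$ and self-adjointness, the $A$/$K$ upper bounds for the undifferentiated and $Tw^2$ terms, Proposition~\ref{prop:comm-L-T}, and \eqref{eq:I-Pi.T.Pi}. It differs only in how $\Pi f$ is tracked.

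\textbf{How the bookkeeping differs.} The paper projects at the outset by differentiating $t\langle w^2(I-\Pi)f, L(I-\Pi)f\rangle$, so every term carries $(I-\Pi)f$. The correction $-2t\langle w^2(I-\Pi)T\Pi f, Lf\rangle$ then comes from $T(I-\Pi)f=Tf-T\Pi f$. The commutator bound is applied with $g_1=g_2=(I-\Pi)f$, which requires the $[Y,\Pi]$ estimate \eqref{eq:L.II.III.initial.2}.

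You keep $f$ and remove $\Pi f$ only from the first slot of the commutator term, using your integration-by-parts identity. Applying Proposition~\ref{prop:comm-L-T} with $g_2=f$ is legitimate and avoids \eqref{eq:L.II.III.initial.2}. Since $[L,T]\Pi f=LT\Pi f$, the second copy of $(I-\Pi)T\Pi f$ that the paper sees explicitly is absorbed into your commutator bound.

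\textbf{One step is not justified as written.} Your identity produces two terms, and \eqref{eq:I-Pi.T.Pi} controls only $t\langle w^2(I-\Pi)T\Pi f, Lf\rangle$. The other term, $-t\langle (Tw^2)\Pi f, Lf\rangle$, has no $t^{-2}$ gain in general. Indeed, $Tw^2$ contains the $v$-independent piece $-\f{B}{10}t^{-1.1}w^2$ coming from $e^{Bt^{-0.1}}$. Cauchy--Schwarz against $\|wLf\|$ would then leave $t^{-1.2}\|w_{(0,\kay,1)}f\|^2_{L^2_{x,v}}$. That is time-integrable, so Corollary~\ref{cor:fLf.final} would survive, but it is not of the form on the right-hand side of \eqref{eq:L-eng-1}.

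The fix is that this term cancels exactly against the $\Pi f$-part of your second term:
\[
t\langle (Tw^2) f, Lf\rangle - t\langle (Tw^2)\Pi f, Lf\rangle = t\langle (Tw^2)(I-\Pi)f, Lf\rangle .
\]
The right-hand side is precisely the paper's term $II$. Equivalently, every $v$-independent part of $Tw^2$ pairs $\Pi f$ against $Lf$ to zero pointwise in $x$. Only the $v$-dependent part, of size $t^{-2}|x-tv|\jap{x/t}\,w^2$, survives.

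With this cancellation recorded, the whole $Tw^2$ contribution is handled by \eqref{eq:Tw}, \eqref{eq:dv.Tw} and the $A$/$K$ upper bounds, exactly as in \eqref{eq:L.I.II.K}--\eqref{eq:L.I.II.A}. Your split into a ``bounded part'' and a part ``handled as in Proposition~\ref{prop:Y.A}'' is then unnecessary.
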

\begin{proof}
In the proof, we repeatedly use that
\begin{equation}\label{eq:L.properties}
	\hbox{$L\Pi = 0$, $L$ is self-adjoint, and that $w^2_{(0,\kay,1)}$ is $v$-independent.}
\end{equation}

Using \eqref{eq:L.properties} a few times, we start by computing
\begin{equation}\label{eq:L.energy.I.def}
\begin{split}
&\: \f{\ud}{\ud t} \left(t\langle w^2_{(0,\kay,1)}f , Lf \rangle_{L^2_{x,v}} \right) = \f{\ud}{\ud t} \left(t\langle w^2_{(0,\kay,1)}(I-\Pi)f , L(I-\Pi)f \rangle_{L^2_{x,v}} \right)\\
=&\: \jap{w^2_{(0,\kay,1)}(I-\Pi)f ,L(I-\Pi)f }_{L^2_{x,v}} +t\int_{\R^3}\int_{\R^3}T\{w^2_{(0,\kay,1)}(I-\Pi)f \cdot L(I-\Pi)f \}\ud v\ud x \\
= &\: \jap{ w^2_{(0,\kay,1)}(I-\Pi)f ,L(I-\Pi)f }_{L^2_{x,v}} +t \jap{ (Tw^2_{(0,\kay,1)}) (I-\Pi)f, L(I-\Pi)f }_{L^2_{x,v}} \\ 
&\: + t \jap{ w^2_{(0,\kay,1)} (T(I-\Pi)f),  L(I-\Pi)f }_{L^2_{x,v}} +t \jap{ w^2_{(0,\kay,1)} (I-\Pi)f, T( L(I-\Pi)f) }_{L^2_{x,v}} \\
= &\: \jap{ w^2_{(0,\kay,1)}(I-\Pi)f ,L(I-\Pi)f }_{L^2_{x,v}} +t \jap{ (Tw^2_{(0,\kay,1)}) (I-\Pi)f, L(I-\Pi)f }_{L^2_{x,v}} \\ 
&\: + 2 t \jap{ w^2_{(0,\kay,1)} (T(I-\Pi)f),  L(I-\Pi)f }_{L^2_{x,v}} +t \jap{w^2_{(0,\kay,1)} (I-\Pi)f, [T, L]((I-\Pi)f) }_{L^2_{x,v}} \\
=: &\: I + II + III + IV.
\end{split}
\end{equation}
(Recall $T=\rd_t+v_i\rd_{x_i}.$)

For the term $I$, we break $L$ into $A$ and $K$ so that by \eqref{eq:K.upper.bound.unweighted} and \eqref{eq:A.upper.bound.2}, we have 
\begin{equation}\label{eq:L.I.energy.main.I}
|I| \ls  \| (I-\Pi) f \|_{\calD_{x,v}(0,\kay,1)}^2.
\end{equation}

We turn to the term $II$ in \eqref{eq:L.energy.I.def}. By \eqref{eq:Tw.exact.1}--\eqref{eq:Tw.exact.4} and Lemma~\ref{lem:z.x-tv.com},
\begin{align}
    |Tw^2_{(0,\kay,1)}| \ls &\: \Big(t^{-1.1} + t^{-1} |x-tv| \f{|x|}{t^2} + t^{-1}(\f{|x|}{t^2})^2  \Big) w^2_{(0,\kay,1)} \ls \Big(t^{-1} \jap{\f{x}{t^2}}^2 + t^{-2} \jap{z} \f{|x|}{t}\Big) w^2_{(0,\kay,1)} \label{eq:Tw} \\
    |\rd_v T w^2_{(0,\kay,1)}| \ls &\: (|x|/t^2) w_{(0,\kay,1)}^2.\label{eq:dv.Tw}
\end{align}
We write $L = -A-K$ and use \eqref{eq:K.upper.bound.unweighted} and \eqref{eq:A.upper.bound.2} to bound $II.$ For $K$, we use \eqref{eq:K.upper.bound.unweighted} and \eqref{eq:Tw} to obtain
\begin{equation}\label{eq:L.I.II.K}
    \begin{split}
        &\: \Big| t \jap{ (Tw^2_{(0,\kay,1)}) (I-\Pi)f, K(I-\Pi)f }_{L^2_{x,v}} \Big| \\
        \ls &\: \Big\langle   e^{-\f{\alp \bt |x|^2}{\alp + \bt t^2}}  \jap{x/t^2}^2 w_{(0,\kay,1)}^2 \norm{\jap{z}^{-\f 12} (I-\Pi)f}_{L^2_v}, \norm{\jap{z}^{-\f 12} (I-\Pi)f}_{L^2_v} \Big\rangle_{L^2_x} \\
        \ls &\: \| (I-\Pi) f \|_{\calD_{x,v}(0,\kay+1,1)}^2,
    \end{split}
\end{equation}
where we used $\jap{x/t^2} \leq \jap{x/t}$ and redistributed the $v$-independent weights.

For $A$, we first use \eqref{eq:A.upper.bound.2} and then use \eqref{eq:Tw}, \eqref{eq:dv.Tw} to obtain
\begin{equation*}
\begin{split}
&\: \Big| t \jap{ (Tw^2_{(0,\kay,1)}) (I-\Pi)f, A(I-\Pi)f }_{L^2_{v}} \Big| \\
\ls &\:  e^{-\f{\alp \bt |x|^2}{\alp + \bt t^2}} t \Big( \Big| \jap{ \jap{z}^{-1}  (I-\Pi)f, (Tw_{(0,\kay,1)}^2) (I-\Pi)f}_{L^2_v} \Big| \\
&\: \qquad \qquad +  t^{-2} \Big| \jap{ \jap{z}^{-3} P_z \rd_v (I-\Pi)f, P_z \rd_v ((Tw_{(0,\kay,1)}^2 (I-\Pi)f)}_{L^2_v} \Big| \\
&\: \qquad \qquad + t^{-2} \Big| \jap{ \jap{z}^{-1} (I - P_z) \rd_v (I-\Pi)f, (I - P_z) \rd_v (Tw_{(0,\kay,1)}^2 (I-\Pi)f)}_{L^2_v} \Big| \Big) \\
\ls &\:  \jap{x/t^2}^2 \| w_{(0,\kay,1)} (I-\Pi) f \|_{\Delta_v(0)}^2  + t^{-1}  \| w_{(0,\kay,1)} (I-\Pi) f \|_{\Delta_v(0)} \| \jap{z} w_{(0,\kay+1,1)} (I-\Pi) f \|_{\Delta_v(0)}. 
\end{split}
\end{equation*}
Taking $L^2_x$ and using Corollary~\ref{cor:Pi.in.weighted.space} and Lemma~\ref{lem:weight.compare}, we obtain
\begin{equation}\label{eq:L.I.II.A}
\begin{split}
&\: \Big| t \jap{ (Tw^2_{(0,\kay,1)}) (I-\Pi)f, A(I-\Pi)f }_{L^2_{x,v}} \Big| \\
\ls &\: \| (I-\Pi) f \|_{\calD_{x,v}(0,\kay+1,1)}^2 + t^{-1} \| (I-\Pi) f \|_{\calD_{x,v}(0,\kay+1,1)} \| \jap{z} (I-\Pi) f \|_{\calD_{x,v}(0,\kay+1,1)}\\
\ls &\: \| (I-\Pi) f \|_{\calD_{x,v}(0,\kay+1,1)}^2 + t^{-1} \| (I-\Pi) f \|_{\calD_{x,v}(0,\kay+1,1)} \| f \|_{\calD_{x,v}(1,\kay+2,0)}.
\end{split}
\end{equation}

Altogether, combining \eqref{eq:L.I.II.K} and \eqref{eq:L.I.II.A}, and using Cauchy--Schwarz and Lemma~\ref{lem:weight.compare}, we obtain
\begin{equation}\label{eq:L.I.energy.main.II}
\begin{split}
|II| \ls &\: \| (I-\Pi) f \|_{\calD_{x,v}(0,\kay+1,1)}^2 + t^{-1} \| (I-\Pi) f \|_{\calD_{x,v}(0,\kay+1,1)} \| f \|_{\calD_{x,v}(1,\kay+2,0)}.
\end{split}
\end{equation}

For $III$ in \eqref{eq:L.energy.I.def}, we use \eqref{eq:L.properties} to obtain
\begin{equation}\label{eq:L.energy.I.term.2}
\begin{split}
III = &\:  2 t \langle w^2_{(0,\kay,1)}T f, Lf \rangle_{L^2_{x,v}} - 2 t  \langle w^2_{(0,\kay,1)} T \Pi f, Lf \rangle  \\
= &\:  - 2 t \| w_{(0,\kay,1)}Lf \|_{L^2_{x,v}}^2 - 2 t \langle w^2_{(0,\kay,1)}(I-\Pi) T \Pi f, Lf \rangle_{L^2_{x,v}},
\end{split}
\end{equation}
where we used the equation \eqref{eq:linear.intro} in the last line.

To handle the last term on the right-hand side of \eqref{eq:L.energy.I.term.2}, we use the Cauchy--Schwarz inequality and use \eqref{eq:I-Pi.T.Pi} with $g = f$, i.e.,
\begin{equation}
\begin{split}
&\: \Big|  t \jap{ w^2_{(0,\kay,1)}(I-\Pi) T \Pi f, Lf }_{L^2_{x,v}}\Big| \ls t \| w_{(0,\kay,1)} (I-\Pi) T \Pi f \|_{L^2_{x,v}} \| w_{(0,\kay,1)} Lf \|_{L^2_{x,v}}\\
\ls &\: t^{-1}  \Big( \| w_{(0,\kay+1,1)}  f \|_{L^2_{x,v}} + \| w_{(0,\kay,1)}  Y f \|_{L^2_{x,v}} \Big) \| w_{(0,\kay,1)} Lf \|_{L^2_{x,v}}.
 \end{split}
 \end{equation}
Using Young's inequality, we absorb $t^{\f 12} \| w_{(0,\kay,1)} Lf \|_{L^2_{x,v}}$ by the good term in \eqref{eq:L.energy.I.term.2} and so after also using Lemma~\ref{lem:weight.compare}, we have
\begin{equation}\label{eq:L.I.energy.main.III}
\begin{split}
 III \leq - t \| w_{(0,\kay,1)}Lf \|_{L^2_{x,v}}^2 + C t^{-3} \Big( \| w_{(1,\kay+1,0)} f \|_{L^2_{x,v}}^2 + \| w_{(1,\kay,0)} Y f \|_{L^2_{x,v}}^2 \Big).
\end{split}
\end{equation}

Finally, the term $IV$ in \eqref{eq:L.energy.I.def} is a commutator term for which we directly estimate using Proposition~\ref{prop:comm-L-T} with $g_1 = g_2 = (I-\Pi) f$ so that 
\begin{equation}\label{eq:L.II.III.initial}
\begin{split}
	|IV| \ls t^{-1} \|(I-\Pi) f\|_{\calD_{x,v}(0,\kay,1)} \Big( \|\jap{z}(I-\Pi) f\|_{\calD_{x,v}(0,\kay+1,1)} + \|Y (I-\Pi) f\|_{\calD_{x,v}(0,\kay+1,1)}\Big).
\end{split}
\end{equation}
By \eqref{eq:Pi.in.weighted.space.2} and Lemma~\ref{lem:weight.compare}, we have 
\begin{equation}\label{eq:z.I-Pi.f.0.k+1.1}
    \|\jap{z}(I-\Pi) f\|_{\calD_{x,v}(0,\kay+1,1)} \ls \| f\|_{\calD_{x,v}(1,\kay+2,0)}.
\end{equation}
Moreover, by Proposition~\ref{prop:comm-Y-Pi} and \eqref{eq:dv.[Y,Pi].1} in Lemma~\ref{lem:Pi.schematic}, $\| [Y,\Pi] f\|_{\calD_{x,v}(0,\kay+1,1)} \ls t^{-1} \| f\|_{\calD_{x,v}(0,\kay+1,1)}$ so that 
\begin{equation}\label{eq:L.II.III.initial.2}
    \| Y (I-\Pi) f\|_{\calD_{x,v}(0,\kay+1,1)} \ls \|  Yf\|_{\calD_{x,v}(0,\kay+1,1)} + t^{-1}\| f \|_{\calD_{x,v}(0,\kay+1,1)}.
\end{equation}
Thus, returning to \eqref{eq:L.II.III.initial}, and using Lemma~\ref{lem:weight.compare}, we obtain
\begin{equation}\label{eq:L.I.energy.main.IV}
\begin{split}
	|IV| \ls t^{-1} \|(I-\Pi) f\|_{\calD_{x,v}(0,\kay,1)} \Big( \| f\|_{\calD_{x,v}(1,\kay+2,0)} + \|Y f\|_{\calD_{x,v}(1,\kay+1,0)} \Big).
\end{split}
\end{equation}

Combining the above, moving the good term in $III$ to the left-hand side, we thus obtain the desired estimate \eqref{eq:L-eng-1}. \qedhere

\end{proof}

\begin{corollary}\label{cor:fLf.final}
Under the assumptions of Theorem~\ref{thm:main}, the following estimate holds:
\begin{equation}
\begin{split}
	&\: \sup_{t \in [t_0,\infty)} \Big(t \langle w^2_{(0,K_{0}-2,1)} f , L f \rangle_{L^2_{x,v}}(t) \Big) + \int_{t_0}^\infty t \norm{w_{(0,K_{0}-2,1)}Lf}_{L^2_{x,v}}^2(t) \ud t \ls E_{K_{0}}.
\end{split}
\end{equation}
\end{corollary}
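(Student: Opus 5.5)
We integrate the differential inequality \eqref{eq:L-eng-1} of Proposition~\ref{prop:L-eng-1} with $\kay = K_{0}-2$ over $[t_0,t]$ and bound every term on its right-hand side by $E_{K_0}$, using Propositions~\ref{prop:f.final} and \ref{prop:Yf.final}. This gives
\begin{equation*}
\begin{split}
&t \langle w^2_{(0,K_0-2,1)} f , L f \rangle_{L^2_{x,v}}(t) + \int_{t_0}^t s \norm{w_{(0,K_0-2,1)}Lf}_{L^2_{x,v}}^2 \ud s
\leq t_0 \langle w^2_{(0,K_0-2,1)} f_{t_0} , L f_{t_0} \rangle_{L^2_{x,v}} \\
&\qquad + C\int_{t_0}^t \Big(\|(I-\Pi) f\|_{\calD_{x,v}(0,K_0-1,1)}^2 + s^{-2}\big( \| f\|_{\calD_{x,v}(1,K_0,0)}^2 + \|Y f\|_{\calD_{x,v}(1,K_0-1,0)}^2\big) \\
&\qquad\qquad + s^{-3} \big( \| w_{(1,K_0-1,0)} f \|_{L^2_{x,v}}^2 + \| w_{(1,K_0-2,0)} Y f \|_{L^2_{x,v}}^2\big)\Big)\ud s.
\end{split}
\end{equation*}
The data term is one of the summands in \eqref{eq:EK.def}, so it is bounded by $E_{K_0}$.

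We treat the integrand term by term.
\begin{itemize}
\item The term $\|(I-\Pi) f\|^2_{\calD_{x,v}(0,K_0-1,1)}$ is integrable and bounded by $E_{K_0}$ by Proposition~\ref{prop:f.final}.
\item For $s^{-2}\|f\|^2_{\calD_{x,v}(1,K_0,0)}$, note that $s^{-2}\leq s^{-1.1}$ since $s\geq t_0\geq 1$. The time integral of $s^{-1.1}\|f\|^2_{\calD_{x,v}(1,K_0,0)}$ is controlled by Proposition~\ref{prop:f.final}.
\item Similarly, $s^{-2}\|Yf\|^2_{\calD_{x,v}(1,K_0-1,0)}$ is controlled by the dissipative term in Proposition~\ref{prop:Yf.final}.
\item For the remaining zeroth-order terms, Propositions~\ref{prop:f.final} and \ref{prop:Yf.final} give $\|w_{(1,K_0,0)}f\|^2_{L^2_{x,v}}\ls s^{1.1}E_{K_0}$ and $\|w_{(1,K_0-1,0)}Yf\|^2_{L^2_{x,v}} \ls s^{1.1}E_{K_0}$. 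The weights with smaller $\kay$ are pointwise smaller, since $\jap{x/t}\geq 1$. So these contributions are $\ls s^{-1.9}E_{K_0}$, which is integrable on $[t_0,\infty)$.
\end{itemize}
Summing the four items gives the bound $E_{K_0}$ for the integral, uniformly in $t$.

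Two details need care. First, the weight indices in \eqref{eq:L-eng-1} are at most $\kay+2 = K_0$ for $f$ and $\kay+1 = K_0-1$ for $Yf$. These match exactly the highest weights available in Propositions~\ref{prop:f.final} and \ref{prop:Yf.final}, and this moment counting is the main point to check. Second, $\langle w^2 f, Lf\rangle\geq 0$ by Lemma~\ref{lem:lower-bound-L}, because the weight is $v$-independent. Hence both terms on the left-hand side are nonnegative. Taking the supremum over $t$ and letting $t\to\infty$ in the integral then yields the corollary.
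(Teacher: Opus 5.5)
Your proposal is correct and follows the paper's proof: integrate \eqref{eq:L-eng-1} with $\kay=K_0-2$, bound the dissipative terms by the integrated estimates of Propositions~\ref{prop:f.final} and \ref{prop:Yf.final}, and make the $t^{-3}$ terms integrable using the $t^{1.1}$ growth bounds from those same propositions. Your additional checks — that the moment indices match, and that $\langle w^2_{(0,K_0-2,1)} f, Lf\rangle_{L^2_{x,v}}\geq 0$ because the weight is $v$-independent — are accurate.
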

\begin{proof}
    We integrate the inequality in Proposition~\ref{prop:L-eng-1} in $t$ for $\kay = K_{0}-2$. We thus need to bound the time integral of the right-hand side of \eqref{eq:L-eng-1}. The time integrals of the terms $\|(I-\Pi) f\|_{\calD_{x,v}(0,\kay+1,1)}^2$ and $t^{-2}\Big( \| f\|_{\calD_{x,v}(1,\kay+2,0)}^2 + \|Y f\|_{\calD_{x,v}(1,\kay+1,0)}^2\Big)$ are already controlled in
    Proposition~\ref{prop:f.final} and Proposition~\ref{prop:Yf.final}. For the remaining terms, we control them by noting that $\| w_{(1,\kay+1,0)} f \|_{L^2_{x,v}}^2$ and $\| w_{(1,\kay,0)} Y f \|_{L^2_{x,v}}^2$ grow no faster than $t^{1.1}$ (by Proposition~\ref{prop:f.final} and Proposition~\ref{prop:Yf.final}) and thus together with the factor $t^{-3}$, the term is integrable. \qedhere
\end{proof}

\subsection{Weighted L energy estimates II}
In this subsection, we consider the energy $t^2 \| w_{(0,\kay,1)} Lf \|_{L^2_{x,v}}^2$.

\begin{proposition}\label{prop:L-eng-2}
Under the assumptions of Theorem~\ref{thm:main}, the following holds:
\begin{equation*}
\begin{split}
\f{\ud}{\ud t}& \left(t^2 \| w_{(0,\kay,1)} Lf \|_{L^2_{x,v}}^2 \right) + t^2 \norm{Lf}^2_{\calD_{x,v}(0,\kay,1)}\\
&\quad\lesssim \Big( t \| w_{(0,\kay,1)} Lf \|_{L^2_{x,v}}^2 + \| (I - \Pi) f\|_{\calD_{x,v}(0,\kay+2,1)}^2 \Big) \\
        &\: \qquad + t^{-2} \Big( \| f\|_{\calD_{x,v}(1,\kay+2,0)}^2 + \| Yf\|_{\calD_{x,v}(1,\kay+1,0)}^2 + \| w_{(0,\kay+1,0)} f\|_{L^2_{x,v}}^2 + \| w_{(0,\kay,0)}  Yf\|_{L^2_{x,v}}^2 \Big).
\end{split}
\end{equation*}
\end{proposition}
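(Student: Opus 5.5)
We differentiate the energy directly, in parallel with the proof of Proposition~\ref{prop:L-eng-1}. Write $w=w_{(0,\kay,1)}$ and $h := Lf = L(I-\Pi)f$. Since $w$ is $v$-independent and $T$ annihilates $v$ and $x-tv$, we have
\begin{equation*}
\f{\ud}{\ud t}\Big(t^2\|wh\|_{L^2_{x,v}}^2\Big) = 2t\|wh\|^2_{L^2_{x,v}} + t^2\jap{(Tw^2) h,h}_{L^2_{x,v}} + 2t^2\jap{w^2 h, Th}_{L^2_{x,v}}.
\end{equation*}
The first term is directly one of the allowed terms on the right-hand side. For the transport-weight term, \eqref{eq:Tw} gives the bound $t\jap{x/t^2}^2 \|w h\|^2 + \jap{z}|x|/t$-type terms. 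The first piece is fine after shifting one power of $\jap{x/t}$, but the $\jap{z}$-weighted piece is only manageable if one first writes $Th$ and uses coercivity; I would absorb $t^{-1}\|\jap{z}^{1/2} w_{(0,\kay+1,1)}h\|$-type contributions into the dissipation $t^2\|h\|^2_{\calD}$ using $\jap{z}$-smallness in the region $|x|\le t^2$ (where the $e^{-\alp\bt|x|^2/(\alp+\bt t^2)}$ factor in $\Delta_v$ is harmless against $w_{(0,\kay,1)}$).

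For the main term, I would use $Th = L Tf + [T,L]f = -L^2 f + [T,L](I-\Pi)f$, because $Tf=-Lf$ and $L\Pi=0$. This gives the good term $-2t^2\jap{w^2 h, L h}$. Since $w$ is $v$-independent, Lemma~\ref{lem:lower-bound-L} gives the lower bound $\delta_0 t^2 \|(I-\Pi)h\|^2_{\calD_{x,v}(0,\kay,1)}$. To upgrade this to the full $\|h\|_{\calD}$ I need $\Pi h=\Pi L f = 0$, which holds by self-adjointness of $L$. This produces the dissipation on the left-hand side. The remaining term is the commutator $2t^2\jap{w^2 h,[T,L](I-\Pi)f}$. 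For it I apply Proposition~\ref{prop:comm-L-T} with $g_1 = h$ and $g_2=(I-\Pi)f$, obtaining the bound
\begin{equation*}
\|h\|_{\calD_{x,v}(0,\kay,1)}\Big(\|\jap{z}(I-\Pi)f\|_{\calD_{x,v}(0,\kay+1,1)}+\|Y(I-\Pi)f\|_{\calD_{x,v}(0,\kay+1,1)}\Big).
\end{equation*}
By \eqref{eq:z.I-Pi.f.0.k+1.1}, \eqref{eq:L.II.III.initial.2} and Lemma~\ref{lem:weight.compare}, the bracket is controlled by $\|f\|_{\calD(1,\kay+2,0)}+\|Yf\|_{\calD(1,\kay+1,0)}$. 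Young's inequality then absorbs $\frac{\delta_0}{2}t^2\|h\|_\calD^2$ and leaves $t^{-2}(\cdots)^2$. This matches the second line of the claim, and the unweighted $L^2$ terms there presumably come from the $t^{-2}$ tails when splitting $|x|\gtrless t^2$.

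The main obstacle is the $(Tw^2)$ term. It carries a factor $t\cdot t^{-2}\jap{z}|x|/t$ against $h^2$ with no $e^{-\alp\bt|x|^2/(\alp+\bt t^2)}$ decay available from $h$ itself. My first attempt is to bound it by $\|\jap{z}^{1/2}w_{(0,\kay+1,1)}h\|^2/t$ and then control $\jap{z}^{1/2}h=\jap{z}^{1/2}L(I-\Pi)f$ directly via the upper bounds \eqref{eq:A.upper.bound.2}, \eqref{eq:K.upper.bound.unweighted}, used by duality. Since $L$ carries the Gaussian factor and gains $\jap{z}^{-1/2}$ decay, this should reduce to $\|(I-\Pi)f\|^2_{\calD_{x,v}(0,\kay+2,1)}$, which explains the $\kay+2$ on the right-hand side. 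If that duality argument fails to close (for instance, because of the derivative part of $A$), the fallback is to split into $|x|\le t^2$, where the exponential weights are comparable, and $|x|\ge t^2$, where $e^{-\alp\bt|x|^2/(\alp+\bt t^2)}\ls t^{-2}$ together with Lemma~\ref{lem:weight.compare} produces the $t^{-2}\|w_{(0,\kay+1,0)}f\|^2$ and $t^{-2}\|w_{(0,\kay,0)}Yf\|^2$ terms.
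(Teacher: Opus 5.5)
Your skeleton matches the paper: you differentiate $t^2\|wLf\|^2$, extract $-2t^2\langle w^2L^2f,Lf\rangle$, use Lemma~\ref{lem:lower-bound-L} with $\Pi Lf=0$, and handle the commutator with Proposition~\ref{prop:comm-L-T} and Young. However, two steps are wrong, and the second is a genuine gap.

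\textbf{The identity for $TLf$.} The identity $TLf=-L^2f+[T,L](I-\Pi)f$ is false. From $L\Pi=0$ you only get $[T,L]\Pi f=TL\Pi f-LT\Pi f=-LT\Pi f$. This is nonzero because $T$ does not preserve $\ker L$: the $v\cdot\nabla_x$ part of $T$ acting on $b^f_l(t,x)$ and $c^f(t,x)$ produces $z_iz_l\sqrt{\mu}$ and $|z|^2z_i\sqrt{\mu}$. By \eqref{eq:L.energy.I.term.2.1}, $LT\Pi f$ is $\f{1}{t\sqrt{\alp+\bt t^2}}$ times a combination of $Y_ib^f_l\,L(z_lz_i\sqrt{\mu})$ and $Y_ic^f\,L(|z|^2z_i\sqrt{\mu})$. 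So you have dropped the term $-2t^2\langle w^2LT\Pi f,Lf\rangle_{L^2_{x,v}}$. The paper keeps it and bounds it via Corollary~\ref{cor:abc}. That term, not a splitting into $|x|\le t^2$ and $|x|\ge t^2$, is the source of $t^{-2}(\|w_{(0,\kay+1,0)}f\|^2_{L^2_{x,v}}+\|w_{(0,\kay,0)}Yf\|^2_{L^2_{x,v}})$; no $Yf$ can come out of the $Tw^2$ term. This is easy to repair in either of two ways. You can estimate the extra term as the paper does. Or you can avoid splitting and apply Proposition~\ref{prop:comm-L-T} with $g_2=f$, which is legitimate since it holds for general $g_2$. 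Lemma~\ref{lem:weight.compare} then bounds $\|\jap{z}f\|_{\calD_{x,v}(0,\kay+1,1)}+\|Yf\|_{\calD_{x,v}(0,\kay+1,1)}$ by $\|f\|_{\calD_{x,v}(1,\kay+2,0)}+\|Yf\|_{\calD_{x,v}(1,\kay+1,0)}$.

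\textbf{The $(Tw^2)$ term.} Your treatment of $t^2\langle (Tw^2)Lf,Lf\rangle_{L^2_{x,v}}$ does not close. By \eqref{eq:Tw}, this term contains an $O(1)$ coefficient times $\jap{z}\f{|x|}{t}$.
\begin{itemize}
\item A $\jap{z}$-growing quadratic form in $Lf$ cannot be absorbed into $t^2\|Lf\|^2_{\calD_{x,v}(0,\kay,1)}$, whose $v$-weight is $\jap{z}^{-1/2}$. There is no smallness of $\jap{z}$ in the region $|x|\le t^2$, since $z$ ranges over all of $\R^3$ at each fixed $x$.
\item $\|\jap{z}^{1/2}w\,L(I-\Pi)f\|_{L^2_{x,v}}$ cannot be bounded by $\|(I-\Pi)f\|_{\calD}$. $L$ is second order in $v$ while $\calD$ controls one derivative, so duality against an arbitrary $L^2$ test function fails.
\item Shifting the $\jap{x/t^2}^2$ piece onto $Lf$ produces $t\|w_{(0,\kay+1,1)}Lf\|^2_{L^2_{x,v}}$, which is not on the stated right-hand side.
\end{itemize}
The missing idea is to keep the term bilinear and expand only one factor. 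Write the second $Lf$ as $L(I-\Pi)f=-A(I-\Pi)f-K(I-\Pi)f$ and apply \eqref{eq:K.upper.bound.unweighted} and \eqref{eq:A.upper.bound.2} with $g_1=(I-\Pi)f$ and $g_2=(Tw^2)Lf$. The $v$-derivatives landing on $g_2$ are then paid for by $\|Lf\|_{\calD_{x,v}(0,\kay,1)}$, using \eqref{eq:dv.Tw} when they hit $Tw^2$. All extra weights, namely $\jap{x/t}^2$ and $\jap{z}$, are placed on $(I-\Pi)f$.

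The $z$-weights in \eqref{eq:A.upper.bound.2} exactly match those of $\calD\times\calD$, so the extra $\jap{z}$ from $Tw^2$ is not compensated by $A$. It must be paid for by the Gaussian-in-$v$ norm through \eqref{eq:z.I-Pi.f.0.k+1.1}. This gives $t\|(I-\Pi)f\|_{\calD_{x,v}(0,\kay+2,1)}\|Lf\|_{\calD_{x,v}(0,\kay,1)}+\|f\|_{\calD_{x,v}(1,\kay+2,0)}\|Lf\|_{\calD_{x,v}(0,\kay,1)}$. Young's inequality against the dissipation then yields exactly the terms $\|(I-\Pi)f\|^2_{\calD_{x,v}(0,\kay+2,1)}$ and $t^{-2}\|f\|^2_{\calD_{x,v}(1,\kay+2,0)}$ in the statement.
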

\begin{proof}
We use \eqref{eq:L.properties} throughout. We will also use $L = (I-\Pi)L$.

As in the proof of Proposition~\ref{prop:L-eng-1}, we compute using \eqref{eq:L.properties} to obtain
\begin{equation}\label{eq:L.II.main}
    \begin{split}
        &\: \f{\ud}{\ud t} \left(t^2 \| w_{(0,\kay,1)} Lf \|_{L^2_{x,v}}^2 \right) = \f{\ud}{\ud t} \left(t^2 \| w_{(0,\kay,1)} (I-\Pi)Lf \|_{L^2_{x,v}}^2 \right)\\
=&\: 2t \norm{w_{(0,\kay,1)} Lf}_{L^2_{x,v}}^2  + t^2\int_{\R^3}\int_{\R^3}T\{w^2_{(0,\kay,1)}((I-\Pi)L f)^2 \}\ud v\ud x \\
= &\: 2t \norm{w_{(0,\kay,1)} Lf}_{L^2_{x,v}}^2 + t^2 \jap{(Tw^2_{(0,\kay,1)}) Lf, Lf}_{L^2_{x,v}} + 2 t^2 \jap{w^2_{(0,\kay,1)}  T L(I-\Pi)f,  Lf}_{L^2_{x,v}}  \\
= &\: \underbrace{2t \norm{w_{(0,\kay,1)} Lf}_{L^2_{x,v}}^2 }_{=:I}\underbrace{-  2 t^2 \jap{w^2_{(0,\kay,1)}  L^2 f, Lf}_{L^2_{x,v}}}_{=:II} + \underbrace{t^2 \jap{(Tw^2_{(0,\kay,1)}) Lf, L(I-\Pi) f}_{L^2_{x,v}} }_{=:III}  \\
&\: + \underbrace{2 t^2 \jap{w^2_{(0,\kay,1)}  [T,L] (I-\Pi) f, Lf}_{L^2_{x,v}}}_{=:IV}   \underbrace{- 2 t^2 \jap{w^2_{(0,\kay,1)}  L T \Pi f, Lf}_{L^2_{x,v}} }_{=:V}.        
    \end{split}
\end{equation}
In particular, we have used the equation $Tf = -Lf$. The term $I$ can be bounded by 
\begin{equation}\label{eq:L.II.I}
    |I|\ls t \| w_{(0,\kay,1)} Lf \|_{L^2_{x,v}}^2.
\end{equation}

Using Lemma~\ref{lem:lower-bound-L}, the term $II$ gives a good term, i.e., there exists $c>0$ such that
\begin{equation}\label{eq:L.II.II}
    II \leq -2 c t^2\| (I-\Pi) Lf \|_{\calD_{x,v}(0,\kay,1)}^2 = - 2c t^2\| Lf \|_{\calD_{x,v}(0,\kay,1)}^2,
\end{equation}
where we used $(I-\Pi)L = L$.

For $III$, we write the second $L$ as $L=-A-K$ and apply \eqref{eq:K.upper.bound.unweighted} and \eqref{eq:A.upper.bound.2} with $g_1 = (I-\Pi)f$, $g_2 = (T w^2_{(0,\kay,1)} )(I-\Pi) Lf $. We handle the weight $Tw^2_{(0,\kay,1)}$ using \eqref{eq:Tw} in a similar manner as \eqref{eq:L.I.II.K}, \eqref{eq:L.I.II.A} but distributing the $\jap{x/t}$ weights slightly differently so that we obtain
\begin{equation}\label{eq:L.II.III.K}
    \begin{split}
        &\: \Big| t^2 \jap{ (Tw^2_{(0,\kay,1)}) Lf , K(I-\Pi)f }_{L^2_{x,v}} \Big| \\
        \ls &\: t \|(I-\Pi) f\|_{\calD_{x,v}(0,\kay+2,1)} \|Lf\|_{\calD_{x,v}(0,\kay,1)},
    \end{split}
\end{equation}
\begin{equation}\label{eq:L.II.III.A}
\begin{split}
&\: \Big| t^2 \jap{ (Tw^2_{(0,\kay,1)}) Lf, A(I-\Pi)f }_{L^2_{v}} \Big| \\
\ls &\: t \|(I-\Pi) f\|_{\calD_{x,v}(0,\kay+2,1)} \|Lf\|_{\calD_{x,v}(0,\kay,1)} + \| f\|_{\calD_{x,v}(1,\kay+2,0)} \|Lf\|_{\calD_{x,v}(0,\kay,1)}.
\end{split}
\end{equation}

Thus, combining \eqref{eq:L.II.III.K} and \eqref{eq:L.II.III.A}, we obtain
\begin{equation}\label{eq:L.II.III}
\begin{split}
|III| \ls &\: t \|(I-\Pi) f\|_{\calD_{x,v}(0,\kay+2,1)} \|Lf\|_{\calD_{x,v}(0,\kay,1)} + \| f\|_{\calD_{x,v}(1,\kay+2,0)} \|Lf\|_{\calD_{x,v}(0,\kay,1)}.
\end{split}
\end{equation}

For $IV$, we use the commutator estimate in Proposition~\ref{prop:comm-L-T} with $g_2 = (I-\Pi) f$ and $g_1 = Lf$ so that
\begin{equation}
	|IV| \ls \| Lf \|_{\calD_{x,v}(0,\kay,1)} \Big( \| \jap{z} (I-\Pi) f \|_{\calD_{x,v}(0,\kay+1,1)} + \| Y(I-\Pi) f \|_{\calD_{x,v}(0,\kay+1,1)} \Big).
\end{equation}
Using \eqref{eq:z.I-Pi.f.0.k+1.1} and also \eqref{eq:L.II.III.initial.2}, we have
\begin{equation}\label{eq:L.II.IV}
	|IV| \ls \| Lf \|_{\calD_{x,v}(0,\kay,1)} \Big( \| f \|_{\calD_{x,v}(1,\kay+2,0)} + \| Y f \|_{\calD_{x,v}(1,\kay+1,0)} \Big).
\end{equation}


Turning to $V$, we recall that $T\Pi f$ is given by \eqref{eq:L.energy.I.term.2.1}. Since $\sqrt{\mu},\, z_i \sqrt{\mu},\, |z|^2 \sqrt{\mu} \in \mathrm{ker}(L)$, we have
\begin{equation}
\begin{split}
LT \Pi f 
=&\: -\f{1}{t\sqrt{\alpha+\beta t^2}} \Big\{ Y_i b_l^f \cdot L (z_l z_i \sqrt{\mu}) + Y_i c^f\cdot L (|z|^2 z_i \sqrt{\mu}) \Big\} .
\end{split}
\end{equation}
An explicit computation (using in particular \eqref{eq:K.representation} and Proposition~\ref{prop:aux-integral}) shows that 
$$|L (z_l z_i \sqrt{\mu})|,\, |L (|z|^2 z_i \sqrt{\mu})| \ls e^{-\f{\alp \bt |x|^2}{\alp + \bt t^2}} \jap{z}^4 \sqrt{\mu}.$$
Thus, using that $\| \jap{z}^{9/2} \sqrt{\mu} \|_{L^2_v} \ls e^{-\f{\alp \bt |x|^2}{2(\alp +\bt t^2)}} t^{-\f 32}$ and applying Corollary~\ref{cor:abc}, we have
\begin{equation}
\begin{split}
	\| e^{\f{\alp \bt |x|^2}{2(\alp + \bt t^2)}} w_{(0,\kay,1)} \jap{z}^{\f 12} LT\Pi f\|_{L^2_{x,v}} \ls &\: t^{-2} \times t^{-\f 32} \| e^{\f{\alp \bt |x|^2}{2(\alp + \bt t^2)}} w_{(0,\kay,1)} e^{-\f{3\alp \bt |x|^2}{2(\alp +\bt t^2)}} (|Y b^f| + |Y c^f|)  \|_{L^2_x} \\
	\ls &\: t^{-2} \Big( \| w_{(0,\kay+1,0)} f\|_{L^2_{x,v}} + \| w_{(0,\kay,0)} Yf\|_{L^2_{x,v}}\Big),
\end{split}
\end{equation}
where we used $w_{(0,\kay,1)} \jap{x/t} e^{-\f{\alp \bt |x|^2}{2(\alp +\bt t^2)}} \ls w_{(0,\kay+1,0)} e^{-\f{\alp^2 (\bt - \bar{\bt}) |x|^2}{2(\alp+ \bar{\bt} t^2)(\alp +\bt t^2)}} \ls w_{(0,\kay+1,0)}$. 
As a consequence, using also Lemma~\ref{lem:weight.compare}, we obtain
\begin{equation}\label{eq:L.II.V}
\begin{split}
|V| \ls &\: t^2 \| e^{\f{\alp \bt |x|^2}{2(\alp + \bt t^2)}} w_{(0,\kay,1)} \jap{z}^{\f 12} LT\Pi f\|_{L^2_{x,v}} \| e^{-\f{\alp \bt |x|^2}{2(\alp + \bt t^2)}} w_{(0,\kay,1)} \jap{z}^{-\f 12} Lf \|_{L^2_{x,v}} \\
\ls &\:  t^{-1} \Big( \| w_{(0,\kay+1,0)}  f\|_{L^2_{x,v}} + \| w_{(0,\kay,0)} Yf\|_{L^2_{x,v}}\Big) \Big( t \| Lf \|_{\calD_{x,v}(0,\kay,1)}\Big).
\end{split}
\end{equation}

We now combine all the estimates. For the estimates for $III$, $IV$ and $V$ in \eqref{eq:L.II.III}, \eqref{eq:L.II.IV} and \eqref{eq:L.II.V}, we use Young's inequality so that $t \| Lf \|_{\calD_{x,v}(0,\kay,1)}$ can be absorbed by the good term in $II$. Thus, altogether 
\begin{equation*}
    \begin{split}
        &\: \hbox{RHS of \eqref{eq:L.II.main}} \\
        \leq &\: - c t^2 \| Lf \|_{\calD_{x,v}(0,\kay,1)}^2 + C\Big( t \| w_{(0,\kay,1)} Lf \|_{L^2_{x,v}}^2 + \| (I - \Pi) f\|_{\calD_{x,v}(0,\kay+2,1)}^2 \Big) \\
        &\: + C t^{-2} \Big( \| f\|_{\calD_{x,v}(1,\kay+2,0)}^2 + \| Yf\|_{\calD_{x,v}(1,\kay+1,0)}^2 + \| w_{(0,\kay+1,0)} f\|_{L^2_{x,v}}^2 + \| w_{(0,\kay,0)}  Yf\|_{L^2_{x,v}}^2 \Big).
    \end{split}
\end{equation*}
Moving the good term $c t^2 \| Lf \|_{\calD_{x,v}(0,\kay,1)}^2$ to the left-hand side yields the desired conclusion. \qedhere
\end{proof}

\begin{corollary}\label{cor:Lf.final}
Under the assumptions of Theorem~\ref{thm:main}, the following estimate holds:
    $$\sup_{t\in [t_0,\infty)} t^2 \| w_{(0,K_{0}-3,1)} Lf \|_{L^2_{x,v}}^2(t)  + \int_{t_0}^\infty t^2 \norm{Lf}^2_{\calD_{x,v}(0,K_{0}-3,1)}(t)\, \ud t \ls E_{K_{0}}.$$
\end{corollary}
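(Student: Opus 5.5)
We integrate the differential inequality of Proposition~\ref{prop:L-eng-2} in time with $\kay = K_{0}-3$, in the same way as Corollary~\ref{cor:fLf.final} was obtained from Proposition~\ref{prop:L-eng-1}. Integrating from $t_0$ to $t$ gives
\begin{equation*}
t^2 \| w_{(0,K_{0}-3,1)} Lf \|_{L^2_{x,v}}^2(t) + \int_{t_0}^t s^2 \norm{Lf}^2_{\calD_{x,v}(0,K_{0}-3,1)}(s)\,\ud s \ls t_0^2 \|w_{(0,K_{0}-3,1)} Lf_{t_0}\|_{L^2_{x,v}}^2 + \int_{t_0}^t \mathrm{RHS}(s)\,\ud s .
\end{equation*}
The data term is part of $E_{K_{0}}$ by \eqref{eq:EK.def}. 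It therefore suffices to bound the time integral of each term on the right-hand side of Proposition~\ref{prop:L-eng-2} by $E_{K_{0}}$, uniformly in $t$.

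We take the terms in order.

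\textbf{The term $t\|w_{(0,K_{0}-3,1)}Lf\|_{L^2_{x,v}}^2$.} The weight $w_{(0,K_{0}-3,1)}$ is pointwise bounded by $w_{(0,K_{0}-2,1)}$, since the $\jap{x/t}$ power only increases. So this term is dominated by $t\|w_{(0,K_{0}-2,1)}Lf\|^2_{L^2_{x,v}}$. Its time integral is bounded by $E_{K_{0}}$ by Corollary~\ref{cor:fLf.final}. This is exactly why the index drops by one relative to that corollary.

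\textbf{The term $\|(I-\Pi)f\|^2_{\calD_{x,v}(0,K_{0}-1,1)}$.} Here $\kay+2 = K_{0}-1$, and the time integral of this term is controlled directly by Proposition~\ref{prop:f.final}.

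\textbf{The $t^{-2}$-weighted dissipation terms.} These are $t^{-2}\|f\|^2_{\calD_{x,v}(1,K_{0}-1,0)}$ and $t^{-2}\|Yf\|^2_{\calD_{x,v}(1,K_{0}-2,0)}$. Since $t^{-2}\le t^{-1.1}$ for $t\ge 1$, and the $\jap{x/t}$ weights are monotone, they are dominated by $t^{-1.1}\|f\|^2_{\calD_{x,v}(1,K_{0},0)}$ and $t^{-1.1}\|Y_lf\|^2_{\calD_{x,v}(1,K_{0}-1,0)}$. These are integrable in time by Proposition~\ref{prop:f.final} and Proposition~\ref{prop:Yf.final}.

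\textbf{The remaining $L^2$ terms.} These are $t^{-2}\|w_{(0,K_{0}-2,0)}f\|^2_{L^2_{x,v}}$ and $t^{-2}\|w_{(0,K_{0}-3,0)}Yf\|^2_{L^2_{x,v}}$. By the same propositions, the norms are uniformly bounded by $E_{K_{0}}$ (the weights are weaker than $w_{(0,K_0,0)}$ and $w_{(0,K_0-1,0)}$). Hence $\int_{t_0}^\infty t^{-2}\,\ud t<\infty$ gives the bound.

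Combining these bounds and taking the supremum over $t$ yields the corollary. There is no real obstacle: all the difficulty sits in Proposition~\ref{prop:L-eng-2} and in the earlier corollaries. The only point requiring care is the bookkeeping of moments. The right-hand side carries $\kay+2$ and $\kay+1$ weights and a term $t\|w Lf\|^2$, so $\kay$ must be chosen as $K_{0}-3$ for these to match what Propositions~\ref{prop:f.final}, \ref{prop:Yf.final} and Corollary~\ref{cor:fLf.final} provide. This is also where the hypothesis $K_{0}\ge 3$ is used.
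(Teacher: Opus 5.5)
Your proposal is correct and is the paper's argument: integrate Proposition~\ref{prop:L-eng-2} with $\kay = K_{0}-3$, note that the data term is part of $E_{K_0}$, and bound the time integral of each right-hand-side term using Proposition~\ref{prop:f.final}, Proposition~\ref{prop:Yf.final} and Corollary~\ref{cor:fLf.final}. One small correction to your commentary: the term $t\|w_{(0,\kay,1)}Lf\|^2_{L^2_{x,v}}$ alone would permit $\kay = K_0-2$, so the drop to $K_0-3$ is actually forced by $\|(I-\Pi)f\|^2_{\calD_{x,v}(0,\kay+2,1)}$, which needs $\kay+2\le K_0-1$; this does not affect the validity of your proof.
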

\begin{proof}
    The proof is similar to Corollary~\ref{cor:fLf.final} and so we will be brief. We start with the estimate in Proposition~\ref{prop:L-eng-2} with $\kay = K_{0} -3$ and notice that the time-integral of the right-hand side is bounded by $E_{K_{0}}$ using Proposition~\ref{prop:f.final}, Proposition~\ref{prop:Yf.final} and Corollary~\ref{cor:fLf.final}. \qedhere
\end{proof}

\section{Limit at $t = \infty$}\label{sec:limit}

In this section, we prove parts \eqref{item:part.4} and \eqref{item:part.5} of Theorem~\ref{thm:main}. We first make some simple observations.
\begin{lemma}
There exists $c  >0$ such that the following holds:
    \begin{align}
        \jap{x/t}^{2\kay}\ls &\: w_{(0,\kay,1)}^2 e^{-\f{\alp \bt |x|^2}{\alp + \bt t^2}}   \quad \hbox{if $|x|\leq t^2$}, \label{eq:final.weights.1}\\
        \jap{x/t}^{2\kay} \ls &\: e^{-ct^2} w_{(0,0,1)}^2\quad \hbox{if $|x|\geq t^2$}, \label{eq:final.weights.2}
    \end{align}
    where the implicit constant may depend on $\kay$.
\end{lemma}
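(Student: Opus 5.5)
The plan is to prove both bounds by writing out the weights explicitly from \eqref{eq:M} and comparing the exponents. Recall that
$$w_{(0,\kay,1)}^2 = \jap{x/t}^{2\kay} e^{Bt^{-0.1}} \exp\Big(\f{\alp\bar{\bt}|x|^2}{\alp+\bar{\bt}t^2}\Big),$$
where $B$ is fixed and $\bar{\bt}(t)=\bt_0(1+t^{-0.1}) \in [\bt_0, 2\bt_0]$ with $2\bt_0 < \bt$. Also $e^{Bt^{-0.1}}\geq 1$. So this factor can be dropped on the right-hand sides, and all constants will be uniform in $t\geq 1$.

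For \eqref{eq:final.weights.1}, I will compute the combined exponent as in the proof of Lemma~\ref{lem:weight.compare} and \eqref{eq:general.weight.computation.1}:
$$\f{\alp\bar{\bt}|x|^2}{\alp+\bar{\bt}t^2}-\f{\alp\bt|x|^2}{\alp+\bt t^2} = -\f{\alp^2(\bt-\bar{\bt})|x|^2}{(\alp+\bar{\bt}t^2)(\alp+\bt t^2)}.$$
The right-hand side is nonpositive, and its absolute value is bounded by $C|x|^2/t^4 \leq C$ when $|x|\leq t^2$, using $\bar{\bt}\geq \bt_0$ and $t\geq 1$. Hence $w_{(0,\kay,1)}^2 e^{-\f{\alp\bt|x|^2}{\alp+\bt t^2}} \geq e^{-C}\jap{x/t}^{2\kay}$, which is \eqref{eq:final.weights.1}.

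For \eqref{eq:final.weights.2}, I will use $\alp+\bar{\bt}t^2 \leq (\alp+2\bt_0)t^2$ for $t\geq 1$. This gives
$$w_{(0,0,1)}^2 \geq \exp\big(2c_0|x|^2/t^2\big), \qquad 2c_0=\f{\alp\bt_0}{\alp+2\bt_0}.$$
Splitting this factor into two halves, I first use $\jap{x/t}^{2\kay} e^{-c_0|x|^2/t^2}\ls_\kay 1$. For the remaining half, $|x|\geq t^2$ gives $|x|^2/t^2\geq t^2$, so $e^{c_0|x|^2/t^2}\geq e^{c_0t^2}$. Together these yield $\jap{x/t}^{2\kay}\ls e^{-c_0t^2} w_{(0,0,1)}^2$, so we may take $c=c_0$.

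There is no real obstacle here. The only point needing care is the sign and size of the exponent difference in the first case, which relies on $\bar{\bt}<\bt$ and on the restriction $|x|\leq t^2$.
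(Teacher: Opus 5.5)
Your proposal is correct and follows essentially the same route as the paper. Both arguments write out the weight, compute the exponent difference $-\f{\alp^2(\bt-\bar{\bt})|x|^2}{(\alp+\bar{\bt}t^2)(\alp+\bt t^2)}$ and bound it for $|x|\leq t^2$, then for $|x|\geq t^2$ use a Gaussian lower bound $w_{(0,0,1)}^2 \gtrsim e^{2c|x|^2/t^2}$ to absorb the polynomial factor and extract $e^{-ct^2}$. Your splitting of the Gaussian into two halves is just an explicit version of the paper's bound $\sup_{y\geq t}\jap{y}^{2\kay}e^{-2c|y|^2}\ls e^{-ct^2}$.
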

\begin{proof}
    We compute
    \begin{equation}\label{eq:xt.exp.combine}
        w_{(0,0,1)}^2 e^{-\f{\alp \bt |x|^2}{\alp + \bt t^2}} = e^{Bt^{-0.1}} e^{-\f{\alp^2 (\bt - \bar{\bt}) |x|^2}{(\alp + \bt t^2)(\alp + \bar{\bt} t^2)}},
    \end{equation}
    from which \eqref{eq:final.weights.1} follows. For \eqref{eq:final.weights.2}, we observe that there exists $c>0$ such that 
    $$\jap{x/t}^{2\kay} w_{(0,0,1)}^{-2} \ls \jap{x/t}^{2\kay} e^{-2 c|x|^2/t^2} \ls \sup_{y \geq t} \jap{y}^{2\kay} e^{-2c|y|^2} \ls e^{-ct^2} \quad \hbox{if $|x| \geq t^2$},$$
    which gives \eqref{eq:final.weights.2}. \qedhere
\end{proof}

We now conclude part \eqref{item:part.4} of Theorem~\ref{thm:main}, i.e., that a limit at $t = \infty$ exists.
\begin{theorem}\label{thm:limit.def}
Under the assumptions of Theorem~\ref{thm:main}, there exists a well-defined limit $f^\sharp_\infty(x,v)$ such that $\lim_{t\to \infty} f(t,x+tv,v) = f^\sharp_\infty(x,v)$ in the norm $\|\jap{x}^{-\f 12} \cdot \|_{L^2_{x,v}}$. Moreover, the convergence holds with the following quantitative rate:
\begin{equation}\label{eq:f.convergence}
\| \brk{x}^{-\f 12} ( f^\sharp(t,x,v) -  f^\sharp_\infty(x,v)) \|_{L^2_{x,v}} \ls t^{-\f 12} E_{3}^{\f 12}.
\end{equation}
\end{theorem}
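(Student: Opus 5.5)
Since $f^\sharp(t,x,v) = f(t,x+tv,v)$ and $T = \rd_t + v_i\rd_{x_i}$, the linear equation \eqref{eq:linear.intro} gives
\[
\rd_t f^\sharp(t,x,v) = (Tf)(t,x+tv,v) = -(Lf)(t,x+tv,v).
\]
The plan is to show that $t\mapsto f^\sharp(t)$ is Cauchy in the norm $\|\jap{x}^{-\f 12}\cdot\|_{L^2_{x,v}}$ with a quantitative tail bound. For $t_0\le t_1<t_2$ we write $f^\sharp(t_2)-f^\sharp(t_1) = -\int_{t_1}^{t_2}(Lf)^\sharp(s)\,\ud s$. It then suffices to prove
\[
\int_{t}^\infty \|\jap{x}^{-\f12}(Lf)^\sharp(s)\|_{L^2_{x,v}}\,\ud s \ls t^{-\f 12}E_3^{\f12}.
\]
Completeness of the weighted $L^2$ space then gives the limit $f^\sharp_\infty$, and the rate \eqref{eq:f.convergence} follows by letting $t_2\to\infty$.

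The key input is the integrated dissipation bound for $Lf$ in Corollary~\ref{cor:Lf.final} with $K_0=3$, i.e. $\kay=0$:
\[
\int_{t_0}^\infty s^2\|Lf\|^2_{\calD_{x,v}(0,0,1)}\,\ud s\ls E_3 .
\]
The dissipation norm controls $e^{-\f{\alp\bt|x|^2}{2(\alp+\bt s^2)}}w_{(0,0,1)}\jap{z}^{-\f12}Lf$ in $L^2_{x,v}$. I therefore need a pointwise weight comparison of the form
\[
\jap{x+sv}^{-\f12}\ls e^{-\f{\alp\bt|x+sv|^2}{2(\alp+\bt s^2)}}\,w_{(0,0,1)}(s,x+sv,v)\,\jap{z}^{-\f12}
\]
after the change of variables $x\mapsto x+sv$, which is measure preserving. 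In the original variables $y=x+sv$ we have $x = y-sv$, so $\jap{x}^{-\f12}=\jap{y-sv}^{-\f12}$. Lemma~\ref{lem:z.x-tv.com} gives $\jap{z}\ls\jap{y-sv}+\jap{y/s^2}$. I would split into $|y|\le s^2$ and $|y|\ge s^2$:
\begin{itemize}
\item On $|y|\le s^2$, \eqref{eq:final.weights.1} with $\kay=0$ shows that the Gaussian factor $e^{-\f{\alp\bt|y|^2}{\alp+\bt s^2}}w^2_{(0,0,1)}$ is bounded below. Together with $\jap{z}\ls\jap{y-sv}$ this gives $\jap{y-sv}^{-\f12}\ls(\text{weight})\jap{z}^{-\f12}$.
\item On $|y|\ge s^2$, I use \eqref{eq:final.weights.2}, or directly \eqref{eq:xt.exp.combine}. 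The only issue is the comparison $\jap{z}\ls\jap{y-sv}+|y|/s^2$, and the extra growth in $|y|/s^2$ is harmless because a power of $|y|/s$ is available inside the weight $w_{(0,0,1)}$ at the price of a constant.
\end{itemize}
I must also make sure the $e^{-\alp\bt|y|^2/(\alp+\bt s^2)}$ loss relative to $w_{(0,0,1)}$, with $\bar\bt<\bt$, goes the right way. By \eqref{eq:xt.exp.combine} it does not: the product $w_{(0,0,1)}^2e^{-\alp\bt|y|^2/(\alp+\bt s^2)}$ decays in $|y|/s^2$. On the large region I would therefore instead use the $L^2$ bound $\sup_s s^2\|w_{(0,0,1)}Lf\|^2_{L^2_{x,v}}\ls E_3$ from Corollary~\ref{cor:Lf.final} together with the super-exponential smallness $e^{-cs^2}$ of the relative weight. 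This yields an integrable contribution $\ls s^{-1}e^{-cs^2}E_3^{1/2}$.

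Combining the two regions gives
\[
\|\jap{x}^{-\f12}(Lf)^\sharp(s)\|_{L^2_{x,v}}\ls \|Lf\|_{\calD_{x,v}(0,0,1)}(s)+e^{-cs^2}s^{-1}E_3^{\f12}.
\]
By Cauchy--Schwarz in time,
\[
\int_t^\infty\|Lf\|_{\calD}\,\ud s\le\Big(\int_t^\infty s^{-2}\,\ud s\Big)^{\f12}\Big(\int_t^\infty s^2\|Lf\|_\calD^2\,\ud s\Big)^{\f12}\ls t^{-\f12}E_3^{\f12},
\]
which is the claimed rate. The main obstacle is the weight bookkeeping in the region $|y|\gtrsim s^2$, where $\jap{z}$ and $\jap{x-sv}$ are no longer comparable and the dissipation weight degenerates. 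That is why I fall back on the unweighted-in-$v$ sup bound there rather than the dissipation norm.
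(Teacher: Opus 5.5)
Your proposal is correct and is essentially the paper's proof. You write $\rd_t f^\sharp = -(Lf)^\sharp$, change variables to the weight $\jap{x-tv}^{-1}$, and split into two regions. On $|x|\le t^2$, $\jap{x-tv}^{-1}\ls\jap{z}^{-1}$ and \eqref{eq:final.weights.1} reduce matters to the integrated $t^2\|Lf\|^2_{\calD_{x,v}(0,0,1)}$ bound of Corollary~\ref{cor:Lf.final}; on $|x|\ge t^2$, \eqref{eq:final.weights.2} and the sup part of that corollary give an $e^{-ct^2}$-small contribution. The only cosmetic difference is that the paper applies Cauchy--Schwarz in time before splitting, whereas you bound each time slice first and then integrate, which gives the same $t^{-1/2}E_3^{1/2}$ rate.
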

\begin{proof}
Denote $f^\sharp(t,x,v) = f(t,x+tv,v)$. We first show that for any sequence $t_n \to \infty$, $f^\sharp(t_n,x,v)$ is Cauchy in $\|\brk{x}^{-\f 12} \cdot \|_{L^2_{x,v}}$. To see this, notice that 
$$\rd_t [\jap{x}^{-\f 12} f^\sharp(t,x,v)] = \jap{x}^{-\f 12} (\rd_t f  + v\cdot \nabla_x f)(t,x+tv,v) = - \jap{x}^{-\f 12} Lf(t,x+tv,v).$$ Hence, for $1\leq n < m$, we have
$$\jap{x}^{-\f 12} f^\sharp(t_m,x,v) - \jap{x}^{-\f 12} f^\sharp(t_n,x,v) = - \int_{t_n}^{t_m} \jap{x}^{-\f 12} Lf(t,x+tv,v)\, \ud t.$$
Therefore, by Minkowski's inequality,
\begin{equation}\label{eq:f.limit.Cauchy}
\begin{split}
&\: \Big\| \jap{x}^{-\f 12} \Big( f^\sharp(t_m,x,v) - f^\sharp(t_n,x,v)  \Big) \Big\|_{L^2_{x,v}} \\
\leq &\: \int_{t_n}^{t_m} \| \jap{x}^{-\f 12}Lf(t,x+tv,v)\|_{L^2_{x,v}} \, \ud t \\
\leq &\: \Big( \int_{t_n}^{t_m} \int_{\RR^3\times \RR^3}  t^2\jap{x}^{-1} |Lf|^2(t,x+tv,v) \, \ud v \ud x \ud t \Big)^{1/2} t_n^{-1/2}\\
\leq &\: \Big( \int_{t_n}^{t_m} \int_{\RR^3\times \RR^3}  t^2 \jap{x-tv}^{-1} |Lf|^2(t,x,v) \, \ud v \ud x \ud t \Big)^{1/2} t_n^{-1/2}.
\end{split}
\end{equation}

We split the integral into $|x|\leq t^2$ and $|x| \geq t^2$. For $|x|\leq t^2$, we have $\jap{x-tv}^{-1} \ls \jap{z}^{-1}$ by \eqref{eq:z.x-tv.com.restricted}. Thus, using \eqref{eq:final.weights.1} and Corollary~\ref{cor:Lf.final}, we obtain
\begin{equation}\label{eq:f.limit.Cauchy.1}
\begin{split}
&\: \int_{t_n}^{t_m} \int_{\{|x|\leq t^2\}} \int_{\RR^3}   t^2 \jap{x-tv}^{-1} |Lf|^2(t,x,v) \, \ud v \ud x \ud t \\
\ls &\: \int_{t_n}^{t_m} \int_{\RR^3} \int_{\RR^3}   t^2 \jap{z}^{-1} e^{-\f{\alp \bt |x|^2}{\alp + \bt t^2}} w_{(0,0,1)}^2 |Lf|^2(t,x,v) \, \ud v \ud x \ud t  \\
\ls &\: \int_{t_n}^{t_m} t^2 \| Lf \|_{\calD_{x,v}(0,0,1)}^2 \ud t \ls E_3.
\end{split}
\end{equation}

For $|x| \geq t^2$, we use \eqref{eq:final.weights.2} and the ``sup'' part of Corollary~\ref{cor:Lf.final} to obtain
\begin{equation}\label{eq:f.limit.Cauchy.2}
\begin{split}
&\: \int_{t_n}^{t_m} \int_{\{|x|\geq t^2\}} \int_{\RR^3}   t^2  \jap{x-tv}^{-1} |Lf|^2(t,x,v) \, \ud v \ud x \ud t \\
\ls &\: \int_{t_n}^{t_m} e^{-ct^2} \Big( \int_{\RR^3} \int_{\RR^3} t^2 w_{(0,0,1)}^2 |Lf|^2(t,x,v) \, \ud v \ud x \Big) \ud t  \ls E_3.
\end{split}
\end{equation}

It follows from \eqref{eq:f.limit.Cauchy}--\eqref{eq:f.limit.Cauchy.2} that there exists $f^\sharp_\i:\bbR^3_x\times \bbR^3_v \to \bbR$ such that $\| \brk{x}^{-\f 12} ( f^\sharp(t_n,x,v) -  f^\sharp_\infty(x,v)) \|_{L^2_{x,v}} \to 0$ along any sequence $\{t_n\}_{n=1}^\infty$, which in turn implies
$$\lim_{t\to \infty} \| \brk{x}^{-\f 12} ( f^\sharp(t,x,v) -  f^\sharp_\infty(x,v)) \|_{L^2_{x,v}} =0.$$

Finally, fixing $t_n=t$, taking $t_m\to \infty$ in \eqref{eq:f.limit.Cauchy}, and using the above estimates, we obtain \eqref{eq:f.convergence}. \qedhere
\end{proof}

We now conclude part \eqref{item:part.5} of Theorem~\ref{thm:main}, i.e., that the limit at $t = \infty$ is in general non-trivial.
\begin{theorem}\label{thm:nonvanishing}
Fix $\rho_a, \,(\rho_b)_i,\, \rho_c \in C_c^\infty(\bbR^3)$ so that not all of them are zero. There exists $t_0>0$ sufficiently large (depending on these functions) such that the following holds.

Consider the initial value problem for data on $\{t = t_0\}$ such that 
\begin{equation}
f(t_0,x,v) = a(t_0,x) \sqrt{\mu} + b_i(t_0,x) z_i \sqrt{\mu} + c(t_0,x) |z|^2 \sqrt{\mu}
\end{equation}
with
$$a(t_0,x) = \rho_a(\f x{t_0}),\quad b_i(t_0,x) = (\rho_b)_i(\f x{t_0}),\quad c(t_0,x) = \rho_c(\f x{t_0}).$$

Then the limit $f^\sharp_\infty$, whose existence is guaranteed by Theorem~\ref{thm:limit.def}, is non-vanishing.
\end{theorem}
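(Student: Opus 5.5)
The plan is to compare $f^\sharp_\infty$ with $f^\sharp(t_0,\cdot,\cdot)$ by testing against a fixed, compactly supported function in the shifted variables, and to show that the initial pairing has size $\gtrsim 1$ while the change from $t_0$ to $\infty$ is $o(1)$ as $t_0\to\infty$. Since $f_{t_0}=\Pi f_{t_0}$ is purely macroscopic, $Lf_{t_0}=0$. Hence the last two groups of terms in $E_{K_0}$ (in \eqref{eq:EK.def}) vanish.

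\textbf{Step 1: the initial energy is uniformly bounded.} I would compute the remaining terms of $E_{K_0}$ directly. At $t=t_0$ we have $\sqrt{\mu}=e^{-\f{\alp\bt|x|^2}{2(\alp+\bt t_0^2)}}e^{-|z|^2/2}$, and the data are supported where $|x|\lesssim t_0$. Changing variables $v\mapsto z$ gives a factor $(\alp+\bt t_0^2)^{-3/2}$ in $\|\cdot\|_{L^2_v}^2$, while $\int_{|x|\lesssim t_0}dx\sim t_0^3$. The factors $\jap{x/t}$ and $e^{\alp\bar\bt|x|^2/(\alp+\bar\bt t^2)}$ are bounded on the support, so $\|w_{(0,K,\iota)}f_{t_0}\|^2_{L^2_{x,v}}\lesssim 1$. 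For the $\vartheta=1$ weight, $e^{\alp|v|^2+\bar\bt|x-tv|^2}\mu$ is again Gaussian in $z$, because $\bar\bt<\bt/3$; hence it contributes $O(1)$, and with the factor $t_0^{-1.1}$ it contributes $O(t_0^{-1.1})$. For $Y_lf_{t_0}$, \eqref{eq:Y-der-z} and \eqref{eq:Y-der-mu} give $Y_l$ acting on $\sqrt{\mu}$ and on powers of $z$ with coefficients $O(|x|/t_0+ t_0^{-1}|z|)$, which is bounded. Also $t_0\partial_{x}\rho(x/t_0)=(\nabla\rho)(x/t_0)$ is bounded. So $E_{K_0}\lesssim C(\rho)$, uniformly in $t_0$; the same holds for $E_3$.

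\textbf{Step 2: lower bound at time $t_0$.} Set $g_{t_0}(x,v):=f^\sharp(t_0,x,v)=f_{t_0}(x+t_0v,v)$. In these variables $x+t_0v-t_0v=x$, and $z$ evaluated at $(x+t_0v,v)$ equals $\f{\alp v}{\sqrt{\alp+\bt t_0^2}}-\f{\bt t_0 x}{\sqrt{\alp+\bt t_0^2}}$. So $g_{t_0}$ concentrates near $x\approx0$ at scale $t_0^{-1}$, with $v$ ranging over a region of size $t_0^{0}$: indeed $|x+t_0v|\lesssim t_0$ means $|v|\lesssim1$. This is the awkward part, because the limit concentrates in $x$ and a weak limit could vanish. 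Instead of a pairing I would therefore use a norm. Step 1 together with \eqref{eq:f.convergence} gives $\|\jap{x}^{-1/2}(g_{t_0}-f^\sharp_\infty)\|_{L^2}\lesssim t_0^{-1/2}$. On the other hand, $\|\jap{x}^{-1/2}g_{t_0}\|_{L^2_{x,v}}$ can be computed explicitly. After the change of variables $(x,v)\mapsto(x+t_0v,v)$, and using that $\jap{x}\approx1$ on the relevant region, it is comparable to $\|f_{t_0}\|_{L^2_{x,v}}$. That norm equals $\|(a,b,c)\text{-combination}\|$, and orthogonality of $\sqrt\mu,z_i\sqrt\mu,(|z|^2-\f32)\sqrt\mu$ makes it $\gtrsim \int |\rho(x/t_0)|^2 t_0^{-3}e^{-\alp\bt|x|^2/(\alp+\bt t_0^2)}dx\gtrsim c(\rho)>0$ uniformly in $t_0$.

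\textbf{Step 3: conclude.} By the triangle inequality, $\|\jap{x}^{-1/2}f^\sharp_\infty\|\ge c(\rho)-Ct_0^{-1/2}>0$ for $t_0$ large, so $f^\sharp_\infty\not\equiv0$. The main obstacle is the uniformity in $t_0$ of all the constants. Theorem~\ref{thm:main} asserts that the implicit constants are independent of $t_0$, so what remains is to check Step 1 carefully, in particular the $Y$-derivative terms and the $|x|\le t_0^2$ comparisons, and to check that the weight $\jap{x}^{-1/2}$ in the shifted frame does not destroy the lower bound in Step 2. If $\jap{x}$ were not $\approx1$ there, I would instead restrict to a compact region and use Lemma~\ref{lem:z.x-tv.com}.
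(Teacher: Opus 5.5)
Your proposal is correct and is essentially the paper's proof. The ingredients match: $(I-\Pi)f_{t_0}=0$ kills the $Lf_{t_0}$ terms, so $E_{K_0}\ls 1$ uniformly in $t_0$. The shifted norm $\|\jap{x}^{-\f12}f^\sharp\|_{L^2_{x,v}}(t_0)=\|\jap{x-t_0v}^{-\f12}f_{t_0}\|_{L^2_{x,v}}$ is bounded below; the paper does this exactly via your fallback, using $\jap{x-t_0v}\ls\jap{z}$ on $|x|\ls t_0^2$ from Lemma~\ref{lem:z.x-tv.com} together with $\|\jap{z}^{-\f12}f_{t_0}\|_{L^2_{x,v}}\gtrsim 1$. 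Then \eqref{eq:f.convergence} and the triangle inequality conclude.

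One harmless slip: since $\sqrt{\mu}(t_0,x+t_0v,v)=e^{-\f{\alp}{2}|v|^2-\f{\bt}{2}|x|^2}$, $f^\sharp(t_0)$ concentrates at scale $1$ in $x$, not $t_0^{-1}$. So your original pairing idea would also have worked, though the norm argument you settled on is the cleaner one.
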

\begin{proof}
Note that the initial data are chosen so that for any $K_{0} \in \mathbb Z_{\geq 3}$,
\begin{equation}\label{eq:lower.bound.thm.data.check}
    E_{K_{0}} \ls 1,\quad \| \jap{z}^{-\f 12} f \|_{L^2_{x,v}}(t_0) \gtrsim 1    
\end{equation}
independently of $t_0$, with implicit constants depending on the choice of $\rho_a, \,(\rho_b)_i,\, \rho_c \in C_c^\infty(\bbR^3)$. Notice that the definition of $E_{K_{0}}$ contains some growing $t_0$ weights, and in order to show that \eqref{eq:lower.bound.thm.data.check} holds independently of $t_0 \geq 1$, we used that $Lf|_{\{t=t_0\}}=0$, which in turn follows from $(I-\Pi)f|_{\{t=t_0\}}=0$.

Therefore, all the estimates proven above apply, and in particular, \eqref{eq:f.convergence} holds. Hence, there exist $c_1,c_2>0$ independent of $t_0$ such that
\begin{equation}
\begin{split}
\| \jap{x}^{-\f 12} f^\sharp_\infty \|_{L^2_{x,v}} \geq &\: \| \jap{x}^{-\f 12} f^\sharp \|_{L^2_{x,v}}(t_0)  - \| \jap{x}^{-\f 12} (f^\sharp(t_0,x,v) - f^\sharp_\infty(x,v)) \|_{L^2_{x,v}} \\
\geq &\: c_1 - c_2 t_0^{-\f 12}.
\end{split}
\end{equation}
Here, we used $$\| \jap{x}^{-\f 12} f^\sharp \|_{L^2_{x,v}}^2(t_0) = \int_{\bbR^3} \int_{\bbR^3} \jap{x-tv}^{-1} f^2(t_0,x,v)\,\ud v\ud x\gtrsim \| \jap{z}^{-\f 12} f \|_{L^2_{x,v}}^2(t_0),$$ which is true by Lemma~\ref{lem:z.x-tv.com} and the compact support of the data in $|x|\ls t_0^2$. 

Thus, taking $t_0$ sufficiently large, we conclude that $\| \jap{x}^{-\f 12} f^\sharp_\infty \|_{L^2_{x,v}}  \neq 0$, as desired. \qedhere
\end{proof}

\section{Decay estimates for the microscopic part}\label{sec:decay}

\subsection{First decay estimates}

We now prove decay estimates corresponding to \eqref{eq:decay.main.thm}.
\begin{theorem}\label{thm:microscopic}
    Under the assumptions of Theorem~\ref{thm:main},
    \begin{equation}
        \| \jap{z}^{-\f 12} \jap{x/t}^{K_{0}-2} (I-\Pi) f \|_{L^2_{x,v}} \ls t^{-\f 12} E_{K_{0}}^{\f 12}.
    \end{equation}
\end{theorem}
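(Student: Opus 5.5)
The plan is to read the decay off the sup part of Corollary~\ref{cor:fLf.final}, namely
\[
t\langle w^2_{(0,K_0-2,1)} f, Lf\rangle_{L^2_{x,v}}\ls E_{K_0},
\]
combined with the coercivity of Lemma~\ref{lem:lower-bound-L}. Since $w_{(0,K_0-2,1)}$ is $v$-independent, Lemma~\ref{lem:lower-bound-L} applies pointwise in $x$ to $w_{(0,K_0-2,1)}f$. It yields
\[
\langle w^2_{(0,K_0-2,1)} f, Lf\rangle_{L^2_{x,v}} \gtrsim \|(I-\Pi)f\|^2_{\calD_{x,v}(0,K_0-2,1)} .
\]
Hence, for every $t\ge t_0$,
\[
\|(I-\Pi)f\|^2_{\calD_{x,v}(0,K_0-2,1)}(t)\ls t^{-1}E_{K_0} .
\]
By Definition~\ref{def:dissipation-norm}, this norm controls
\[
\big\| e^{-\f{\alp\bt|x|^2}{2(\alp+\bt t^2)}} w_{(0,K_0-2,1)}\jap{z}^{-\f12}(I-\Pi)f\big\|_{L^2_{x,v}} .
\]

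Next we remove the exponential weight by splitting space into $|x|\le t^2$ and $|x|\ge t^2$. In the region $|x|\le t^2$, \eqref{eq:final.weights.1} gives
\[
\jap{x/t}^{2(K_0-2)}\ls w_{(0,K_0-2,1)}^2 e^{-\f{\alp\bt|x|^2}{\alp+\bt t^2}} .
\]
(This follows from \eqref{eq:xt.exp.combine} multiplied by $\jap{x/t}^{2\kay}$.) The contribution of this region to $\|\jap{z}^{-\f12}\jap{x/t}^{K_0-2}(I-\Pi)f\|^2_{L^2_{x,v}}$ is therefore bounded by the $\calD$ norm above, hence by $t^{-1}E_{K_0}$.

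In the region $|x|\ge t^2$ the coercive norm degenerates, so we use the boundedness estimates instead. By \eqref{eq:final.weights.2},
\[
\jap{x/t}^{2(K_0-2)}\ls e^{-ct^2}w^2_{(0,0,1)} .
\]
We also bound $\jap{z}^{-\f12}\le 1$, and use $\|w(I-\Pi)f\|_{L^2_v}\le\|wf\|_{L^2_v}$, which holds because $w$ is $v$-independent and $I-\Pi$ is an orthogonal projection. The contribution of this region is then at most
\[
e^{-ct^2}\|w_{(0,0,1)}f\|^2_{L^2_{x,v}}\ls e^{-ct^2}E_{K_0},
\]
using Proposition~\ref{prop:f.final} (note $w_{(0,0,1)}\le w_{(0,K_0-1,1)}$). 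This is far better than $t^{-1}E_{K_0}$.

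Adding the two regions gives the claimed bound. The only genuinely delicate point is the coercivity step, i.e. checking that the $v$-independent weight passes through Lemma~\ref{lem:lower-bound-L} exactly as in Proposition~\ref{prop:pure-poly-x-weights}. The rest is weight bookkeeping, so I expect no real obstacle beyond matching the exponential weights, which \eqref{eq:final.weights.1}--\eqref{eq:final.weights.2} already handle.
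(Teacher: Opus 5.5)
Your proposal is correct and follows the paper's proof essentially step for step. For $|x|\le t^2$ you apply \eqref{eq:final.weights.1} with the coercivity of Lemma~\ref{lem:lower-bound-L} to the sup bound in Corollary~\ref{cor:fLf.final}, and for $|x|\ge t^2$ you use the crude estimate $e^{-ct^2}\|w_{(0,0,1)}f\|_{L^2_{x,v}}^2$ from \eqref{eq:final.weights.2} and Proposition~\ref{prop:f.final}.
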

\begin{proof}
Using \eqref{eq:final.weights.1}, for $|x|\leq t^2$, we have
    \begin{equation}
        \int_{\{|x| \leq t^2\}} \int_{\bbR^3} \jap{x/t}^{2(K_{0}-2)} \jap{z}^{-1} ((I - \Pi) f)^2 \ud v\ud x \ls \|w_{(0,K_{0}-2,1)} e^{-\f{\alp \bt |x|^2}{2(\alp + \bt t^2)}} \jap{z}^{-\f 12} (I - \Pi) f\|_{L^2_{x,v}}^2.
    \end{equation}
By Lemma~\ref{lem:lower-bound-L} and Corollary~\ref{cor:fLf.final}, we obtain
    \begin{equation}
        \|w_{(0,K_{0}-2,1)} e^{-\f{\alp \bt |x|^2}{2(\alp + \bt t^2)}} \jap{z}^{-\f 12} (I - \Pi) f\|_{L^2_{x,v}}^2 \ls \langle w^2_{(0,K_{0}-2,1)} f, Lf \rangle_{L^2_{x,v}} \ls E_{K_{0}} t^{-1}.
    \end{equation}
    Combining, we obtain
    \begin{equation}\label{eq:microscopic.final.1}
        \int_{\{|x| \leq t^2\}} \int_{\bbR^3} \jap{x/t}^{2(K_{0}-2)} \jap{z}^{-1} ((I - \Pi) f)^2 \ud v\ud x \ls E_{K_{0}} t^{-1}.
    \end{equation}
    
    On the other hand, using \eqref{eq:final.weights.2} and Proposition~\ref{prop:f.final}, we obtain
    \begin{equation}\label{eq:microscopic.final.2}
        \int_{\{|x| \geq t^2\}} \int_{\bbR^3} \jap{x/t}^{2(K_{0}-2)} \jap{z}^{-1} ((I - \Pi) f)^2 \ud v\ud x \ls e^{-ct^2} \| w_{(0,0,1)} f \|_{L^2_{x,v}}^2 \ls e^{-ct^2} E_{K_{0}}.
    \end{equation}
    Summing \eqref{eq:microscopic.final.1} and \eqref{eq:microscopic.final.2} and taking square roots yield the desired conclusion. \qedhere
\end{proof}

We will now also prove the estimate \eqref{eq:weight.improved.intro} that was mentioned in Remark~\ref{rmk:weight.improved.intro}. In particular, this allows us to improve the $\jap{z}$ weight in Theorem~\ref{thm:microscopic} at the expense of an arbitrarily small loss in time decay.
\begin{proposition}\label{prop:weight.improved}
Under the assumptions of Theorem~\ref{thm:main}, for any $\ell \geq 0$ and $\ep >0$, the following holds for some implicit constants depending on $\ell$ and $\ep$:
    \begin{equation}
        \| \jap{z}^{\ell} \jap{x/t}^{K_{0}-2} (I - \Pi) f \|_{L^2_{x,v}}(t) \ls_{\ep,\ell} t^{-\f 12+\ep} E_{K_{0}}^{\f 12}.
    \end{equation}
\end{proposition}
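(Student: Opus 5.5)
The plan is to interpolate, in $L^2$, between the decay estimate of Theorem~\ref{thm:microscopic}, which carries the weight $\jap{z}^{-\f 12}$ and decays like $t^{-\f12}$, and the growing Gaussian-weighted bound \eqref{eq:growth.main.thm} of Proposition~\ref{prop:f.final}, which controls $t^{-1.1}\|w_{(1,K_0,0)} f\|^2_{L^2_{x,v}}\ls E_{K_0}$. The Gaussian weight beats any power of $\jap{z}$, at least when $|x|\le t^2$. As in the proofs of Theorem~\ref{thm:microscopic} and Theorem~\ref{thm:limit.def}, I first split into the regions $|x|\le t^2$ and $|x|\ge t^2$.

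\textbf{Region $|x|\ge t^2$.} By \eqref{eq:final.weights.2}, $\jap{x/t}^{2(K_0-2)}\ls e^{-ct^2}w_{(0,0,1)}^2$. The difficulty is the extra factor $\jap{z}^{2\ell}$. Here I use Lemma~\ref{lem:weight.compare}, which gives $\jap{z}^{\ell}w_{(0,\kay,1)}\ls w_{(1,\kay,0)}\jap{x/t^2}^{\ell}$. The factor $\jap{x/t^2}^\ell\le \jap{x/t}^\ell$ is absorbed: part of the Gaussian $e^{-c|x|^2/t^2}$ (at the cost of halving $c$) beats it. This yields a bound
\[
e^{-c't^2}\|w_{(1,K_0,0)}f\|^2_{L^2_{x,v}}\ls e^{-c't^2}t^{1.1}E_{K_0},
\]
where $\|(I-\Pi)f\|\ls\|f\|$ for the $\Pi$-part is handled via \eqref{eq:Pi.in.weighted.space.3} (or directly, since $\Pi f$ is Gaussian in $z$). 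This contribution is negligible.

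\textbf{Region $|x|\le t^2$.} Here $\jap{z}\sim\jap{x-tv}$ by \eqref{eq:z.x-tv.com.restricted}. I fix $\theta\in(0,1)$ small and write pointwise
\[
\jap{z}^{2\ell}|g|^2=\big(\jap{z}^{-1}|g|^2\big)^{1-\theta}\big(\jap{z}^{(2\ell+1-\theta)/\theta}|g|^2\big)^{\theta},
\]
with $g=\jap{x/t}^{K_0-2}(I-\Pi)f$, and apply H\"older in $(x,v)$. The first factor is bounded by $(t^{-1}E_{K_0})^{1-\theta}$ by \eqref{eq:microscopic.final.1}. For the second factor, the polynomial weight $\jap{z}^{N}$, $N=(2\ell+1-\theta)/\theta$, is dominated for $|x|\le t^2$ by the Gaussian part of $w_{(1,K_0,0)}$. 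Indeed, $e^{\alp|v|^2+\bar\bt|x-tv|^2}\ge e^{\bar\bt|x-tv|^2}\gtrsim_N\jap{x-tv}^{N}\sim\jap{z}^N$, and $\jap{x/t}^{K_0-2}\le\jap{x/t}^{K_0}$. For the microscopic projection, I bound $\|w_{(1,K_0,0)}\jap{z}^{N/2}\Pi f\|$ over the region $|x|\le t^2$ using the explicit formula \eqref{eq:Pi.formula} and Corollary~\ref{cor:abc}. Alternatively, and more simply, I note that $\jap{z}^{N}|\Pi f|^2\ls$ (weighted $L^2_v$ norm of $f$ at the same $x$, by the argument of Corollary~\ref{cor:Pi.in.weighted.space}), since $w_v\sqrt\mu$ is bounded by $e^{-c|x|^2/t^2}$-type factors. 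Either way, the second factor is $\ls (t^{1.1}E_{K_0})^{\theta}$. Combining the two factors gives
\[
\|\jap{z}^\ell g\|^2_{L^2(|x|\le t^2)}\ls t^{-(1-\theta)+1.1\theta}E_{K_0}=t^{-1+2.1\theta}E_{K_0}.
\]
Choosing $\theta=\ep/1.05$ gives $t^{-1+2\ep}$, and taking square roots yields the claim.

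\textbf{Main obstacle.} The delicate point is controlling the weighted norm of $\Pi f$ (rather than $f$) with the Gaussian weight $w_v$. The weight $e^{\alp|v|^2+\bar\bt|x-tv|^2}$ multiplied against $\sqrt\mu=e^{-\frac{\alp|v|^2+\bt|x-tv|^2}{2}}$ is not bounded, since $\alp>\alp/2$. So $w_v\Pi f$ must be estimated via the computation \eqref{eq:general.weight.computation.1}/\eqref{eq:stupid.weights}. My first attempt will avoid this entirely by interpolating with $\jap{z}^N$ only, not with $w_v$. That is, I bound $\jap{z}^{N}|\Pi f|^2$ pointwise in $x$ by $\|\jap z^{-1/2}f\|^2_{L^2_v}$ using Lemma~\ref{lem:Pi.schematic}. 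For $(I-\Pi)f$ I then use $\jap z^N|(I-\Pi)f|^2\ls \jap z^N|f|^2+\jap z^N|\Pi f|^2$, where the first term is controlled by $w_{(1,K_0,0)}$ for $|x|\le t^2$ and the second by the bounded norm \eqref{eq:boundedness.main.thm}.
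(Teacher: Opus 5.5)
Your proposal is correct and is essentially the paper's argument. You H\"older-interpolate between the $\jap{z}^{-\f12}$ bound of Theorem~\ref{thm:microscopic} (rate $t^{-\f12}$) and the growing bound $\|w_{(1,K_0,0)} f\|_{L^2_{x,v}}\ls t^{0.55}E_{K_0}^{\f12}$, with $\Pi f$ handled only through $v$-independent weights via \eqref{eq:Pi.in.weighted.space.3} (equivalently Lemma~\ref{lem:Pi.schematic}), exactly as you end up doing. Your split into $|x|\le t^2$ and $|x|\ge t^2$ is harmless but unnecessary: Theorem~\ref{thm:microscopic} is already global, and the factor $e^{\alp\bar{\bt}|x|^2/(2(\alp+\bar{\bt}t^2))}$ contained in $w_v$ absorbs the $\jap{x/t^2}$ loss of Lemma~\ref{lem:weight.compare}, so the paper does it in one step with $\|\jap{z}^{\ell_*} w_{(0,K_0-2,0)} f\|_{L^2_{x,v}}\ls \|w_{(1,K_0-2,0)} f\|_{L^2_{x,v}}$.
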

\begin{proof}
    Given $\ep,\ell$, choose $\ell_*$ such that $\ell = -\f 12(1-\f \ep 4) + \f{\ep \ell_*}{4}$. Then 
    $$\jap{z}^\ell \leq (\jap{z}^{-\f 12})^{1-\f \ep 4} (\jap{z}^{\ell_*})^{\f \ep 4} .$$
    Hence, H\"older's inequality gives
    $$\| \jap{z}^{\ell} \jap{x/t}^{K_{0}-2} (I - \Pi) f \|_{L^2_{x,v}}(t) \ls \| \jap{z}^{-\f 12} \jap{x/t}^{K_{0}-2} (I - \Pi) f \|_{L^2_{x,v}}^{1-\f \ep 4}(t)\| \jap{z}^{\ell_*} w_{(0,K_{0}-2,0)} (I - \Pi) f \|_{L^2_{x,v}}^{\f \ep 4}(t).$$
    The first factor is to be bounded by Theorem~\ref{thm:microscopic}. For the second factor, first note that by \eqref{eq:Pi.in.weighted.space.3},
    $$\| \jap{x/t}^{K_{0}-2} \jap{z}^{\ell_*} \Pi f \|_{L^2_{x,v}} \ls  \|w_{(0,K_{0}-2,0)} f\|_{L^2_{x,v}}. $$
    Thus, using Proposition~\ref{prop:f.final}, we have
    $$\| \jap{z}^{\ell_*} \jap{x/t}^{K_{0}-2} (I - \Pi) f \|_{L^2_{x,v}}(t) \ls \| w_{(0,K_{0}-2,0)} \jap{z}^{\ell_*} f\|_{L^2_{x,v}} \ls \| w_{(1,K_{0}-2,0)} f\|_{L^2_{x,v}} \ls t^{0.55} E_{K_{0}}^{\f 12}.$$
    Putting everything together yields 
    $$\| \jap{z}^{\ell} \jap{x/t}^{K_{0}-2} (I - \Pi) f \|_{L^2_{x,v}}(t) \ls t^{-\f 12(1-\f{\ep}4)} t^{\f{0.55 \ep}{4}}E_{K_0}^{\f 12} \ls t^{-\f 12 + \ep} E_{K_0}^{\f 12}. $$\qedhere
\end{proof}

\subsection{Elliptic estimates and improved decay}

In this subsection, we carry out elliptic estimates to obtain improved decay estimates in localized regions, thus proving \eqref{eq:improved.decay.main.thm}.

We will again start with the normalized collisional operator and rescale. It will be convenient to make the analogous definitions as \eqref{eq:L} and \eqref{eq:A.decomposition} for the normalized collisional operator, i.e., we define
$$A_1g := \mu_1^{-1/2}Q(\mu_1, \mu^{1/2}_1 g),\quad K_1 g := \mu^{-1/2}_1 Q(\mu^{1/2}_1 g,\mu_1),$$
as well as decompose
$$A_1 = \bar{A}_1 + \widetilde{A}_1,$$
with 
\begin{equation}\label{eq:barA.tA}
    \bar{A}_1 = \rd_{v_i}(\sigma^1_{ij}\rd_{v_j}g)-\sigma^1_{ij}v_iv_j g,\quad \widetilde{A}_1 = \rd_{v_i}\sigma^1_i g,
\end{equation}
where $\sigma^1_{ij}$ is as in \eqref{eq:sigma1-integral} and $\sigma^1_i = \sigma^1_{ij} v_j$.

\begin{lemma}
Define
$$X = \f{v_i}{|v|}\rd_i,\quad \slashed{\rd}_{v_i} = \rd_{v_i} - \f{v_iv_j}{|v|^2}\rd_j,$$
where $X$ is the unit radial vector field, while $\slashed{\rd}_{v_i}$ is the orthogonal projection of $\rd_{v_i}$ to the angular directions.

Then the linearized operator can be written as 
\begin{equation}
\bar{A}_1 g = X (\lambda_1^1(v) X g) + \f 2{|v|} \lambda_1^1(v) X g + \de^{ij} \srd_i (\lambda_2^1(v) \srd_j g) - \sigma^1_{ij} v_i v_j g, 
\end{equation}
where $\lambda_1^1$, $\lambda_2^1$ are as in \eqref{eq:sigma.decomposition.DL.1}--\eqref{eq:sigma.decomposition.DL.2}. Moreover, $\lambda_1^1$, $\lambda_2^1$ satisfy the following estimates:
\begin{equation}\label{eq:kernel.weights.in.spherical}
\jap{v}^{-3} \ls \lambda^1_1  \ls \jap{v}^{-3},\quad \jap{v}^{-1} \ls  \lambda^1_2 \ls \jap{v}^{-1},\quad |\rd_{v}\lambda^1_1 | \ls \jap{v}^{-4},\quad |\rd_{v} \lambda^1_2 |\ls \jap{v}^{-2}.
\end{equation}
\end{lemma}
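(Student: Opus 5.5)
The plan is to treat the two parts of the statement separately. First comes the spherical decomposition of $\bar{A}_1$. Second come the two-sided bounds on $\lambda^1_1,\lambda^1_2$ together with the bounds on their derivatives.

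For the decomposition, I start from $\bar{A}_1 g = \rd_{v_i}(\sigma^1_{ij}\rd_{v_j} g) - \sigma^1_{ij}v_iv_j g$. By \eqref{eq:sigma.decomposition} (the $t$-independent case $\alp=1$, $\bt=0$, or equivalently the rescaling of Proposition~\ref{prop:normalize}), we have $\sigma^1_{ij} = \lambda^1_1 \f{v_iv_j}{|v|^2} + \lambda^1_2(\de_{ij} - \f{v_iv_j}{|v|^2})$. Hence $\sigma^1_{ij}\rd_{v_j} g = \lambda^1_1 \f{v_i}{|v|} Xg + \lambda^1_2 \srd_i g$. Taking $\rd_{v_i}$ of the first piece gives $X(\lambda^1_1 Xg) + (\rd_{v_i}\f{v_i}{|v|})\lambda^1_1 Xg$, and $\rd_{v_i}(v_i/|v|) = 2/|v|$ in three dimensions. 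For the second piece, I decompose $\rd_{v_i} = \f{v_i}{|v|}X + \srd_i$. Since $\f{v_i}{|v|}\srd_i h = 0$ for any $h$, we get $\f{v_i}{|v|}X(\lambda^1_2\srd_i g) = X(\lambda^1_2 \f{v_i}{|v|}\srd_i g) - \lambda^1_2 (X\f{v_i}{|v|})\srd_i g$. The first term vanishes, and $X(v_i/|v|) = 0$, so this whole contribution vanishes. What remains is $\de^{ij}\srd_i(\lambda^1_2\srd_j g)$, which gives the formula. The only delicate point is $v=0$. There $\lambda^1_1 = \lambda^1_2$ by the explicit formulas (both integrands reduce to the same average after symmetrizing $u$ at $v=0$), so $\sigma^1_{ij}$ is smooth and the identity holds on $\bbR^3\setminus\{0\}$, which suffices in the distributional sense.

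For the bounds, the upper and lower bounds on $\lambda^1_1,\lambda^1_2$ follow from Lemma~\ref{lem:sigma-bounds}, namely the asymptotics $\jap{z}^3\lambda_1\to c_1$ and $\jap{z}\lambda_2 \to c_2$. These are combined with positivity and continuity on compact sets: the integrands in \eqref{eq:sigma.decomposition.DL.1}--\eqref{eq:sigma.decomposition.DL.2} are nonnegative and not identically zero, and they are continuous in $v$ by Lemma~\ref{lem:int-exp}-type dominated convergence.

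For the derivatives, I view $\lambda^1_k$ as functions of $r=|v|$ and differentiate under the integral. This gives $\rd_r\lambda^1_k = \int |u|^{-1}(\cdots) 2(u_1 - r)e^{-|u-re_1|^2}\ud u$. To see the gain, I use the known asymptotic expansions from \cite[Lemma~4]{StGu08}: there $\lambda_1^1 = c_1 r^{-3} + O(r^{-5})$ and $\lambda^1_2 = c_2 r^{-1} + O(r^{-3})$, with derivative bounds of one order better. The alternative is to substitute $u = re_1 + y$ and Taylor-expand $|u|^{-1}(1-u_1^2/|u|^2)$ in $y/r$. Then $\rd_r$ of each term in the expansion gains a factor $r^{-1}$, and the Gaussian-weighted remainders are controlled as in Lemma~\ref{lem:int-exp}. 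The Cartesian derivative $\rd_v\lambda^1_k = \f{v}{|v|}\rd_r\lambda^1_k$ then inherits these bounds for $|v|\geq 1$, and boundedness for $|v|\le 1$ follows from smoothness.

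The main obstacle is the derivative bound for $\lambda^1_1$, i.e.\ the $\jap{v}^{-4}$ decay. This requires exhibiting a cancellation: naively differentiating the integrand only gives $\jap{v}^{-3}$. I would first try to cite \cite[Lemma~4]{StGu08} directly, and only fall back on the Taylor-expansion argument if the citation does not give the needed derivative bound.
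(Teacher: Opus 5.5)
Your derivation of the spherical form of $\bar A_1$ is correct; it is the paper's computation run in the opposite direction (the paper expands $X(\lambda^1_1 Xg)$ and $\de^{ij}\srd_i(\lambda^1_2\srd_j g)$ back into Cartesian divergence form). Your two-sided bounds on $\lambda^1_1,\lambda^1_2$ come from the same source as the paper's, namely \cite[Lemma~4]{StGu08}.

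The weak point is your primary plan for $|\rd_v\lambda^1_1|\ls\jap{v}^{-4}$. You should not expect \cite[Lemma~4]{StGu08} to supply expansions $\lambda^1_1=c_1r^{-3}+O(r^{-5})$ with improved derivative bounds. What is recorded there is the asymptotic $\lambda^1_1\sim c_1\jap{v}^{-3}$ and derivative bounds of the type $|\rd^\beta_v\sigma^1_{ij}|\ls\jap{v}^{-1-|\beta|}$ (the form used in \eqref{eq:sigma.bound}). Since $v$ is an eigenvector of $\sigma^1$, one has $\rd_{v_k}\lambda^1_1=(\rd_{v_k}\sigma^1_{ij})v_iv_j/|v|^2$, and these bounds only yield $\jap{v}^{-2}$. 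This is exactly why the paper proves the bound by hand.

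So your fallback is the actual proof, and it is simpler than a Taylor expansion with remainder control. After your substitution, $\lambda^1_1(r)=\int h(re_1+y)e^{-|y|^2}\,\ud y$ with $h(u)=(u_2^2+u_3^2)|u|^{-3}$. The $r$-derivative therefore lands on $h$, giving
\[
\rd_r\lambda^1_1=\int_{\R^3}(\rd_{u_1}h)(u)\,e^{-|u-re_1|^2}\,\ud u,\qquad \rd_{u_1}h=-3u_1(u_2^2+u_3^2)|u|^{-5}.
\]
Absorb $u_2^2+u_3^2$ into $e^{-(u_2^2+u_3^2)}$. The integrand is then bounded by $|u|^{-2}$ for $|u|\le 1$ and by $|u|^{-4}$ for $|u|\ge 1$, times $e^{-\f12|u-re_1|^2}$. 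Lemma~\ref{lem:int-exp} with $\zeta=4$ then gives $\jap{v}^{-4}$.

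This is precisely the paper's argument; the paper obtains the same identity by integrating by parts in $u_1$ instead of translating. The same computation with $h_2(u)=|u|^{-1}-\f{u_2^2+u_3^2}{2|u|^3}$ and $|\nabla h_2|\ls |u|^{-2}$ gives $|\rd_v\lambda^1_2|\ls\jap{v}^{-2}$. That bound also follows from the cited $\sigma^1_{ij}$ estimates via $\lambda^1_2=\f12(\sigma^1_{ii}-\lambda^1_1)$.
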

\begin{proof}
It will be convenient to write $\rd_i = \rd_{v_i}$ in this proof. Recall the definition of $\bar{A}_1$ in \eqref{eq:barA.tA}. By \cite[Lemma~4]{StGu08}, we have 
\begin{equation*}
\sigma^1_{ij}=\left[\lambda^1_1(v)\f{v_iv_j}{|v|^2}+\lambda^1_2(v)\left(\delta_{ij}-\f{v_iv_j}{|v|^2}\right)\right].
\end{equation*}
 That $\lambda^1_1$, $\lambda^1_2$ satisfy the first two bounds in \eqref{eq:kernel.weights.in.spherical} follow from \cite[Lemma~4]{StGu08}. For the derivative estimates, we use the representation formula in \eqref{eq:sigma.decomposition.DL.1}--\eqref{eq:sigma.decomposition.DL.2}. Differentiating \eqref{eq:sigma.decomposition.DL.2} immediately gives the bound for $|\rd_{v} \lambda^1_2 |$. For $\lambda^1_1(v)$, we need to capture a cancellation. Differentiating \eqref{eq:sigma.decomposition.DL.1} by $\rd_{v_i}$ and then integrating by parts, we obtain
\begin{equation}
    \begin{split}
        \rd_{v_i}\lambda^1_1(v) = &\: 2\f{v_i}{|v|} \int_{\R^3} |u|^{-1} \Big( 1 - \f{u_1^2}{|u|^2} \Big) (u_1 -|v|)  e^{-((u_1-|v|)^2+ u_2^2+ u_3^2)}  \, \ud u \\
        = &\:  -\f{v_i}{|v|} \int_{\R^3} |u|^{-1} \Big( 1 - \f{u_1^2}{|u|^2} \Big) \rd_{u_1}  e^{-((u_1-|v|)^2+ u_2^2+ u_3^2)}  \, \ud u \\
        = &\:  -\f{3v_i}{|v|} \int_{\R^3} u_1 |u|^{-5} \Big( u_2^2 + u_3^2 \Big)  e^{-((u_1-|v|)^2+ u_2^2+ u_3^2)}  \, \ud u \\
        \ls &\: \int_{\{|u| \leq 1\}} |u|^{-2} e^{-\f 12((u_1-|v|)^2+ u_2^2+ u_3^2)}  \, \ud u + \int_{\{|u|\geq 1\}} |u|^{-4}  e^{-\f 12((u_1-|v|)^2+ u_2^2+ u_3^2)}  \, \ud u \ls \jap{v}^{-4},
    \end{split}
\end{equation}
where at the very last step we used Lemma~\ref{lem:int-exp} with $c = (|v|,0,0)$.

Now observe
\begin{equation}
    \begin{split}
        X (\lambda^1_1(v) Xg) =  &\: \f{v_i}{|v|} \rd_{v_i} (\lambda^1_1(v) \f{v_j}{|v|} \rd_{v_j} g) \\
        = &\:  \rd_{v_i} (\lambda^1_1(v) \f{v_i}{|v|} \f{v_j}{|v|} \rd_{v_j} g) - \f 2{|v|} \lambda^1_1(v) X g
    \end{split}
\end{equation}
and
\begin{equation}
    \begin{split}
        \de^{ij} \srd_i (\lambda_2^1(v) \srd_j g) 
        = &\: (\rd_i - \f{v_i v_{i'}}{|v|^2} \rd_{i'})(\lambda_2^1(v) \de^{ij} (\rd_j - \f{v_j v_{j'}}{|v|^2} \rd_{j'}) g) \\
= &\: \de^{ij} \rd_{i}(\lambda_2^1(v) \rd_j g) - \de^{ij} \rd_i (\f{\lambda_2^1(v) v_j v_{j'}}{|v|^2} \rd_{j'} g) - \de^{ij} \rd_{i'}(\f{\lambda_2^1(v) v_i v_{i'}}{|v|^2} \rd_j g) \\
&\: + \de^{ij} \rd_{i'} (\f{\lambda_2^1(v) v_iv_{i'}v_jv_{j'}}{|v|^4} \rd_{j'} g) + \de^{ij} (\rd_{i'}\f{v_iv_{i'}}{|v|^2}) \lambda_2^1(v) (\rd_j - \f{v_jv_{j'}}{|v|^2} \rd_{j'})g  \\
= &\:  \rd_{i}\Big(\lambda_2^1(v) \Big(\de^{ij} - \f{v_i v_{j}}{|v|^2} \Big) \rd_j g \Big).
    \end{split}
\end{equation}
Rearranging yields the desired result. \qedhere
\end{proof}

We separately perform elliptic estimates for finite $v$ and large $v$: The large-$v$ estimates have to be carried out in spherical coordinates to capture the anisotropy, but spherical coordinates will turn out to be inconvenient for finite $v$ due to their irregularity at $v = 0$. For this purpose, let $\chi: \bbR^3 \to \bbR$ be a radially-symmetric, non-negative, smooth, compactly supported cutoff function such that $\chi \equiv 1$ on $B(0,1)$, $\mathrm{supp}(\chi) \subseteq B(0,2)$.

\begin{proposition}\label{prop:main.Fredholm.est}
The following holds for sufficiently regular functions $g$:
\begin{equation}\label{eq:main.Fredholm.est}
\begin{split}
&\: \| \jap{v}^{-3} (1-\chi) X^2 g\|_{L^2_v}^2  + \|\jap{v}^{-2}(1-\chi) \slashed{\nabla}_v X g \|_{L^2_v}^2 + \| \jap{v}^{-2} (1-\chi) Xg \|_{L^2_v}^2 \\
&\: + \| \jap{v}^{-1} (1-\chi)( |\slashed{\nabla}{}^2_v g| + |\slashed{\nabla}_v g|) \|_{L^2_v}^2 + \|\jap{v}^{-1} g\|_{L^2_v}^2 + \sum_{i,j=1}^3 \| \chi \rd^2_{v_i v_j} g\|_{L^2_v}^2  \\
\ls &\: \| L_1 g\|_{L^2_v}^2 + \| \jap{v}^{-7/2} Xg \|_{L^2_v}^2 + \| \jap{v}^{-3/2} (|\slashed{\nabla}_v g| + |g|) \|_{L^2_v}^2,
\end{split}
\end{equation}
where $\slashed{\nabla}_v$ and $\slashed{\nabla}_v^2$ are defined by 
\begin{align}
    | \slashed{\nabla}_v g |^2 = \sum_{i=1}^3 |\srd_{v_i} g|^2, \quad 
    | \slashed{\nabla}_v^2 g |^2 = \sum_{i,j=1}^3 |\srd_{v_i} \srd_{v_j} g|^2.
\end{align}
\end{proposition}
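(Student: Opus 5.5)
We treat $L_1 g = -\bar{A}_1 g - \widetilde{A}_1 g - K_1 g$ as a (degenerate) elliptic equation for $g$ with right-hand side $F := -L_1 g - \widetilde{A}_1 g - K_1 g$. We argue separately near the origin, where $\chi$ lives, and at large $|v|$, where $1-\chi$ lives, using the spherical representation of $\bar{A}_1$ from the previous lemma.

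The lower-order pieces go into the right-hand side of \eqref{eq:main.Fredholm.est}. Rescaling \eqref{eq:sigma.bound} to $t=1$, $x=0$ gives $|\rd_{v_i}\sigma^1_i|\ls \jap{v}^{-2}$. Hence $\|\widetilde{A}_1 g\|_{L^2_v}\ls \|\jap{v}^{-3/2} g\|_{L^2_v}$ after splitting weights. The term $K_1 g$ is controlled through \eqref{eq:K.representation} (normalized) and the Hardy--Littlewood--Sobolev bound as in Proposition~\ref{prop:main.prelim.for.K}; it is Gaussian-localized and bounded by $\|\jap{v}^{-3/2}g\|_{L^2_v}$. The zeroth-order term $\sigma^1_{ij}v_iv_j g = \lambda^1_1|v|^2 g \sim \jap{v}^{-1}g$ is kept on the elliptic side.

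For the zeroth- and first-order bounds, we pair $L_1 g$ with $\jap{v}^{-2}$-type weights times $g$ (or use Lemma~\ref{lem:lower-bound-L} plus the projection onto the kernel, which is bounded by $\|\jap{v}^{-3/2}g\|$). This yields $\|\jap{v}^{-1}g\|^2$, $\|\jap{v}^{-3/2}Xg\|^2$ and $\|\jap{v}^{-1/2}\slashed\nabla g\|^2$, up to $\|L_1g\|\,\|g\|$ and the lower-order right-hand side terms. Commutators with the weight produce $\jap{v}^{-4}|Xg||g|$, which we absorb by Cauchy--Schwarz using the $\|\jap{v}^{-7/2}Xg\|$ term.

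For the second-order bounds at large $|v|$, we test the equation against $\jap{v}^{\,m}(1-\chi)^2 X^2 g$ and $\jap{v}^{\,m'}(1-\chi)^2\slashed\Delta g$, with powers chosen so that the leading terms $\lambda_1^1X^2g\sim\jap{v}^{-3}X^2g$ and $\lambda^1_2\slashed\nabla^2 g\sim \jap{v}^{-1}\slashed\nabla^2g$ square up to the norms on the left. Squaring the equation in the form
\[
\lambda^1_1X^2g+\lambda^1_2\slashed\Delta g = F - (X\lambda^1_1)Xg-\tfrac{2}{|v|}\lambda_1^1Xg-(\slashed\nabla\lambda^1_2)\cdot\slashed\nabla g+\lambda_1^1|v|^2g
\]
with the weight $(1-\chi)^2$, the cross term $2\langle \lambda_1^1 X^2 g,\lambda_2^1\slashed\Delta g\rangle$ is integrated by parts: move $X$ to $\slashed\Delta$ and $\slashed\nabla$ to $X$. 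This gives $\int\lambda_1^1\lambda_2^1|\slashed\nabla Xg|^2(1-\chi)^2$, which is positive and of size $\jap{v}^{-4}$, plus commutator errors $[X,\slashed\nabla]=-|v|^{-1}\slashed\nabla$ and weight derivatives. Using \eqref{eq:kernel.weights.in.spherical} ($|\rd\lambda^1_1|\ls\jap{v}^{-4}$ is exactly what is needed), these errors are lower order and are absorbed via Cauchy--Schwarz into the first-order quantities already controlled. Finally, $\slashed\nabla^2 g$ is recovered from $\slashed\Delta g$ by elliptic regularity on spheres of radius $|v|$: the angular $H^2$ norm is controlled by the Laplace--Beltrami operator plus $H^1$, with a factor $|v|^{-2}$ per order that is compatible with the weights. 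The $\chi\,\rd^2 g$ bound near the origin follows from standard interior elliptic $H^2$ estimates for the uniformly elliptic operator $\rd_i(\sigma^1_{ij}\rd_j\cdot)$ with smooth coefficients on $B(0,2)$. Commutators with $\chi$ produce first-order terms on the annulus $1\le|v|\le2$, which are already controlled.

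The main obstacle will be the cross term in the large-$v$ estimate, namely handling the anisotropy. We must check that every commutator $[X,\slashed\nabla]$ and every derivative of $\lambda_i^1$ costs enough powers of $\jap{v}$ to be absorbed, and that the mixed quantity $\slashed\nabla Xg$ appears with a good sign. If the direct integration by parts leaves borderline terms, the first thing to try is adjusting the power of $\jap{v}$ in the test weight to create a favourable sign.
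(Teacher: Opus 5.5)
Most of your architecture coincides with the paper's. You square $\bar A_1 g$ against $\chi^2$ and $(1-\chi)^2$, you bound $\widetilde A_1 g$ and $K_1 g$ by $\|\jap{v}^{-2}g\|_{L^2_v}$, and you integrate the radial--angular cross term by parts to produce $\int(1-\chi)^2\lambda^1_1\lambda^1_2|\slashed\nabla_v Xg|^2$. That last term is the paper's $VI$, and your treatment of it is sound.

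The gap is your zeroth/first-order step. You move $\lambda^1_1|v|^2g$ to the right-hand side and square only the principal part. So your second-order bound needs $\|\jap{v}^{-1}g\|_{L^2_v}$ as input. This step is also your only source for $\|\jap{v}^{-2}(1-\chi)Xg\|_{L^2_v}$ and $\|\jap{v}^{-1}(1-\chi)\slashed\nabla_v g\|_{L^2_v}$. As written it does not close: the error $\|L_1 g\|_{L^2_v}\|g\|_{L^2_v}$ contains the unweighted norm $\|g\|_{L^2_v}$, which neither side of \eqref{eq:main.Fredholm.est} controls. The outputs you claim cannot be rescued either, because they are not bounded by the right-hand side at all. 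Take $g=\psi(R(|v|-R))$ with $\psi\in C^\infty_c(-1,1)$ and $R\gg 1$. Then $\|L_1g\|_{L^2_v}^2\ls R^{-1}$ and all other right-hand terms are $O(R^{-2})$, whereas $\|\jap{v}^{-3/2}Xg\|_{L^2_v}^2\sim 1$. A $\jap{v}^{-2}$ weight fails in the other direction: its zeroth-order output is $\|\jap{v}^{-3/2}g\|^2_{L^2_v}$, which is already on the right.

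The weight must be exactly $\jap{v}^{-1}$. With it:
\begin{itemize}
\item $\langle L_1 g,\jap{v}^{-1}g\rangle_{L^2_v}$ controls $\|\jap{v}^{-2}Xg\|^2+\|\jap{v}^{-1}\slashed\nabla_v g\|^2+\|\jap{v}^{-1}g\|^2$ modulo $\|\jap{v}^{-3/2}g\|^2$.
\item The weight-derivative error is $\int\jap{v}^{-3}\lambda^1_1|v||g||Xg|\ls\|\jap{v}^{-3/2}g\|\,\|\jap{v}^{-7/2}Xg\|$.
\item The pairing error $\|L_1g\|\,\|\jap{v}^{-1}g\|$ can now be absorbed by Young's inequality.
\end{itemize}
Alternatively, do as the paper does and keep $\sigma^1_{ij}v_iv_jg$ inside the square. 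Then $(\lambda^1_1|v|^2g)^2$ gives $\|\jap{v}^{-1}(1-\chi)g\|^2$ directly. Its cross terms with the radial and angular second-order parts (the paper's $IV$, $V$) integrate by parts to $2\int(1-\chi)^2(\lambda^1_1)^2|v|^2|Xg|^2$ and $2\int(1-\chi)^2\lambda^1_1\lambda^1_2|v|^2|\slashed\nabla_v g|^2$, plus errors bounded by the right-hand side. These are precisely the first-order terms on the left, with a favourable sign, so no separate energy pairing is needed.
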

\begin{proof}
Since this estimate concerns only $v$-derivatives, we denote $\rd_i = \rd_{v_i}$.

Using $L_1 g = \bar{A}_1 g + \widetilde{A}_1 g + K_1 g$ and the triangle inequality, we deduce that
\begin{equation}
 \| \bar{A}_1 g\|_{L^2_v}^2  \ls \|L_1 g\|_{L^2_v}^2 + \| \widetilde{A}_1 g\|_{L^2_v}^2 + \| K_1 g \|_{L^2_v}^2 .
\end{equation}
Our argument then separates into showing coercivity of the $\bar{A}_1 g$ term (Step~1), and controlling the other terms (Step~2).

\pfstep{Step~1(a): Coercivity of $\bar{A}_1 g$ for finite $v$} For finite $v$, we have
\begin{equation}
\int_{\R^3} \chi^2 (\bar{A}_1 g)^2 \, \ud v = \int_{\R^3} \chi^2 \Big( \rd_i (\sigma^1_{ij} \rd_j g) - \sigma^1_{ij} v_i v_j g\Big)^2 \, \ud v.
\end{equation}
Integrating by parts and using that $\sigma^1_{ij}$ is uniformly positive definite on the support of $\chi$, we obtain
\begin{equation}
\sum_{i,j=1}^3 \| \chi (\rd_{ij}^2 g)\|_{L^2_v}^2 + \|\chi g\|_{L^2_v}^2 \ls \| \chi \bar{A}_1 g \|_{L^2_v}^2 + \sum_{i=1}^3 \| \rd_i g\|_{L^2_v(|v|\leq 2)}^2 + \|g\|_{L^2_v(|v|\leq 2)}^2.
\end{equation}
The last two terms are bounded by those on the right-hand side of \eqref{eq:main.Fredholm.est}, since $\sum_{i=1}^3|\rd_{v_i} g|^2 \ls |Xg|^2 + |\srd_v g|^2$ and all the weights are $\approx 1$  on $\{|v|\leq 2\}$.

\pfstep{Step~1(b): Coercivity of $\bar{A}_1 g$ for large $v$} We need some preliminary observations for our elliptic estimates.
For the vector fields $X$ and $\rd_i$, notice that
\begin{equation}\label{eq:spherical.commutator}
\begin{split}
[X,\srd_i] = &\: -\f{v_k}{|v|}\rd_k (\f{v_iv_j}{|v|^2})\rd_j - (\rd_i - \f{v_iv_j}{|v|^2} \rd_j) \f{v_k}{|v|} \rd_k\\
= &\: -\f{v_k}{|v|}(\f{\de_{ik} v_j}{|v|^2} + \f{v_i \de_{jk}}{|v|^2} - \f{2 v_iv_jv_k}{|v|^4})\rd_j  - (\f{\de_{ik}}{|v|} - \f{v_iv_k}{|v|^3} - \f{v_iv_j \de_{jk}}{|v|^3}+ \f{v_iv_j v_j v_k}{|v|^5})\rd_k \\
= &\: - \f{1}{|v|} \rd_i + \f{v_i}{|v|^2}X = -\f{1}{|v|} \srd_i.
\end{split}
\end{equation}
In addition, observe the following identities which will be used to integrate by parts:
\begin{equation}\label{eq:spherical.ibp}
\int_{\R^3} X h \ud v = -2 \int_{\R^3} |v|^{-1} h \ud v,\quad \int_{\R^3} \srd_i (h_i)  \ud v = 2  \int_{\R^3} \f{v_i}{|v|^2} h_i  \ud v
\end{equation}
for $h$ decaying suitably at infinity.
 
\begin{equation}
\begin{split}
&\: \int_{\R^3} (1-\chi)^2 (\bar{A}_1 g)^2 \, \ud v \\
= &\: \int_{\R^3}(1-\chi)^2 \Big( |v|^{-2} X (|v|^2 \lambda^1_1(v) X g) + \de^{ij} \srd_i (\lambda^1_2(v) \srd_j g) - \sigma^1_{ij} v_i v_j g \Big)^2 \ud v \\
= &\: \int_{\R^3} (1-\chi)^2\Big( (|v|^{-2} X (|v|^2\lambda^1_1(v) X g))^2 + (\de^{ij} \srd_i (\lambda^1_2(v) \srd_j g))^2 + (\sigma^1_{ij} v_i v_j g)^2 \Big) \ud v \\
&\: - 2 \int_{\R^3} (1-\chi)^2\Big( |v|^{-2} X (|v|^2 \lambda^1_1(v) X g)\sigma^1_{ij} v_i v_j g + \de^{ij} \srd_i (\lambda^1_2(v) \srd_j g) \sigma^1_{i'j'} v_{i'} v_{j'} g\Big) \ud v \\
&\: + 2 \int_{\R^3} (1-\chi)^2 |v|^{-2} X (|v|^2 \lambda^1_1(v) X g) \de^{ij} \srd_i (\lambda^1_2(v) \srd_j g) \ud v \\
=: &\: I + II +\cdots + VI.
\end{split}
\end{equation}
The terms $I$, $II$, $III$ are manifestly non-negative. Integrating by parts with \eqref{eq:spherical.ibp}, and using \eqref{eq:kernel.weights.in.spherical}, we see that they control
\begin{equation}
\begin{split}
&\: \| \jap{v}^{-3} (1-\chi) X^2 g\|_{L^2_v}^2 + \| \jap{v}^{-1} (1-\chi)( |\slashed{\nabla}{}^2_v g| + |g|) \|_{L^2_v}^2  \\
\ls &\: I + II + III + \| \jap{v}^{-4} X g\|_{L^2_v}^2 + \| \jap{v}^{-3} |\slashed{\nabla}_v g|\|_{L^2_v}^2.
\end{split}
\end{equation}
For terms $IV$, $V$ and $VI$, we integrate by parts. Using \eqref{eq:spherical.ibp}, we obtain
\begin{equation}
\begin{split}
&\: \| \jap{v}^{-2} (1-\chi) Xg \|_{L^2_v}^2 + \| \jap{v}^{-1} (1-\chi) |\slashed{\nabla}_v g| \|_{L^2_v}^2 \\
\ls &\: IV + V + \| \jap{v}^{-7/2} Xg \|_{L^2_v}^2 + \| \jap{v}^{-3/2} (|\slashed{\nabla}_v g| + |g|) \|_{L^2_v}^2.
\end{split}
\end{equation}
For $VI$, we integrate by parts using \eqref{eq:spherical.ibp} and \eqref{eq:spherical.commutator} (first integrate by parts $X$ then commute $[X,\srd_i]$, then integrate by parts $\srd_i$) to obtain
\begin{equation}
\begin{split}
&\: \|\jap{v}^{-2}(1-\chi) \slashed{\nabla}_v X g \|_{L^2_v}^2 \\
\ls &\: VI + (\|\jap{v}^{-1} (1-\chi) \slashed{\nabla}{}^2_v g\|_{L^2_v}+ \|\jap{v}^{-2}(1-\chi) \slashed{\nabla}_v X g \|_{L^2_v}) \| \jap{v}^{-3}  Xg\|_{L^2_v} \\
&\: + \| \jap{v}^{-3}  Xg\|_{L^2_v}^2 + \| \jap{v}^{-3}  \slashed{\nabla}_v g\|_{L^2_v}^2,
\end{split}
\end{equation}
where we have used the fact that on the support of $(1-\chi)$, $|v|^{-1} \sim \jap{v}^{-1}$. Combining all the above estimates and suitably absorbing terms to the left-hand side using Young's inequality, we obtain
\begin{equation}
\begin{split}
&\: \| \jap{v}^{-3} (1-\chi) X^2 g\|_{L^2_v}^2  + \|\jap{v}^{-2}(1-\chi) \slashed{\nabla}_v X g \|_{L^2_v}^2 + \| \jap{v}^{-2} (1-\chi) Xg \|_{L^2_v}^2 \\
&\: + \| \jap{v}^{-1} (1-\chi)( |\slashed{\nabla}{}^2_v g| + |\slashed{\nabla}_v g|+ g) \|_{L^2_v}^2 \\
\ls &\: \| (1-\chi) \bar{A}_1 g \|_{L^2_v}^2 + \| \jap{v}^{-7/2} Xg \|_{L^2_v}^2 + \| \jap{v}^{-3/2} (|\slashed{\nabla}_v g| + |g|) \|_{L^2_v}^2.
\end{split}
\end{equation}

\pfstep{Step~2: Control of the lower order terms $\widetilde{A}_1 g$ and $K_1 g$} We consider $\widetilde{A}_1$ given by \eqref{eq:barA.tA}. By \cite[Lemma~4]{StGu08}, $|\rd_{v_i} \sigma^1_i|\ls \jap{v}^{-2}$ and thus
\begin{equation}
    \| \widetilde{A}_1 g \|_{L^2_v} \ls \| \jap{v}^{-2} g \|_{L^2_v}.
\end{equation}

Arguing as in \eqref{eq:K.representation}, we have
\begin{equation}
\begin{split}
 K_1 g = &\: 8\pi  \mu_1 g + 4 \mu_1^{\f 12}   \int_{\R^3}\Big( \phi_{ij}(v-v_*) v_{*i} v_{*j} - |v-v_*|^{-1} \Big) \mu_{1*}^{\f 12} {g}_* \ud v_*
\end{split}
\end{equation} and
\begin{equation}
\begin{split}
\| K_1 g \|_{L^2_v}^2 \ls \|\mu_1 g \|_{L^2_v}^2 + \norm{\int_{\bbR^3} \mu^{\f 12}_1 |v-v_*|^{-1}  (\mu^{\f 12}_1 \jap{v}^2 |g|)_* \ud v_*}_{L^2_v}^2.  
\end{split}
\end{equation}
By duality and Lemma~\ref{lem:HLS} with $\lambda = 1$ and $p = \f 65$, we have
\begin{equation*}
    \begin{split}
        &\: \norm{\int_{\bbR^3} \mu^{\f 12}_1 |v-v_*|^{-1}  (\mu^{\f 12}_1 \jap{v}^2 |g|)_* \ud v_*}_{L^2_v} \ls \sup_{\| \widetilde{g}\|_{L^2_v} = 1} \int_{\bbR^3} \int_{\bbR^3} \widetilde{g} \mu^{\f 12}_1 |v-v_*|^{-1}  (\mu^{\f 12}_1 \jap{v}^2 |g|)_* \ud v_* \ud v \\
        \ls &\: \sup_{\| \widetilde{g}\|_{L^2_v} = 1}  \| \mu^{\f 12}_1 \widetilde{g}\|_{L^{6/5}_v} \| \mu^{\f 12}_1 \jap{v}^2 g\|_{L^{6/5}_v} \ls \|\mu^{\f 12}_1\|_{L^{3}_v} \| \mu^{\f 14}_1 \jap{v}^2 \|_{L^{3}_v} \|\mu_1^{\f 14} g \|_{L^2_v} \ls \|\mu_1^{\f 14} g \|_{L^2_v}.
    \end{split}
\end{equation*}
Hence, 
\begin{equation}
\begin{split}
\| K_1 g \|_{L^2_v}^2 \ls \|\mu_1^{\f 14} g \|_{L^2_v}^2 \ls \| \jap{v}^{-2} g\|_{L^2_v}^2,
\end{split}
\end{equation}
where we used $\mu_1^{\f 14} \ls \jap{v}^{-2}$. Thus, both $\widetilde{A}_1g$ and $K_1g$ obey the desired estimate. \qedhere
\end{proof}

\begin{lemma}\label{lem:Fredholm}
Suppose $\calX,\calY,\calZ$ are separable Hilbert spaces, $\calL:\calX \to \calY$ is a bounded linear map, and $\calT: \calX \to \calZ$ is a compact linear map such that the following estimate holds:
\begin{equation}\label{eq:Fredholm}
\| g \|_{\calX} \ls \|\calL g\|_{\calY} + \| \calT g \|_{\calZ}\quad \forall g \in \calX.
\end{equation}
Assume that $\mathrm{ker}(\calL) = \{0\}$. Then the following estimate holds:
\begin{equation}\label{eq:Fredholm.conclusion}
\| g \|_{\calX} \ls \|\calL g\|_{\calY} \quad \forall g \in \calX.
\end{equation}
\end{lemma}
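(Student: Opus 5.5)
This is the standard compactness--uniqueness argument, and I would argue by contradiction. Suppose \eqref{eq:Fredholm.conclusion} fails. Then there is a sequence $g_n \in \calX$ with $\|g_n\|_{\calX} = 1$ and $\|\calL g_n\|_{\calY} \to 0$.

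The first step is to extract convergent subsequences. Since $\calX$ is a Hilbert space, the bounded sequence $(g_n)$ has a weakly convergent subsequence, say $g_n \rightharpoonup g$ in $\calX$. Separability is not even needed here, since Hilbert spaces are reflexive. A compact operator maps weakly convergent sequences to norm convergent ones, so $\calT g_n \to \calT g$ in $\calZ$. In particular $(\calT g_n)$ is Cauchy in $\calZ$.

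The second step upgrades this to strong convergence of $g_n$. Apply \eqref{eq:Fredholm} to the differences $g_n - g_m$:
$$\|g_n - g_m\|_{\calX} \ls \|\calL g_n\|_{\calY} + \|\calL g_m\|_{\calY} + \|\calT g_n - \calT g_m\|_{\calZ}.$$
The right-hand side tends to $0$ as $n,m\to\infty$. Hence $(g_n)$ is Cauchy in $\calX$, and therefore $g_n \to g$ in norm. Consequently $\|g\|_{\calX} = 1$.

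The final step identifies the limit. By boundedness of $\calL$, we have $\calL g = \lim_n \calL g_n = 0$, so $g \in \mathrm{ker}(\calL) = \{0\}$. This contradicts $\|g\|_{\calX}=1$, and the estimate \eqref{eq:Fredholm.conclusion} follows.

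The only delicate point is the compactness step. One must use that $\calT$ converts weak convergence into strong convergence, or equivalently pass to a further subsequence along which $\calT g_n$ converges. Everything else is routine. The real difficulty is deferred to the application, where one must verify that the lower-order terms on the right-hand side of \eqref{eq:main.Fredholm.est} come from a compact map. This should follow from a Rellich-type argument, since those terms carry decaying weights and involve fewer derivatives than the left-hand side. One must also verify that $\ker L_1$ is trivial on the relevant space, for instance on the orthogonal complement of the kernel.
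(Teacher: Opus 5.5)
Your argument is correct and is essentially the paper's compactness--uniqueness proof. The only difference is the normalization. The paper normalizes $\|\calT g_n\|_{\calZ}=1$ to prove the intermediate bound $\|\calT g\|_{\calZ}\ls\|\calL g\|_{\calY}$, so it needs only weak convergence in $\calX$, with weak lower semicontinuity giving $\calL g_\infty=0$. You instead normalize $\|g_n\|_{\calX}=1$ and upgrade to strong convergence by applying \eqref{eq:Fredholm} to $g_n-g_m$, which is equally short.
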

\begin{proof}
If we had the estimate
\begin{equation}\label{eq:Fredholm.goal}
\| \calT g \|_{\calZ} \ls \|\calL g\|_{\calY} \quad \forall g \in \calX,
\end{equation}
we would have proven \eqref{eq:Fredholm.conclusion} after using \eqref{eq:Fredholm}. We thus assume that \eqref{eq:Fredholm.goal} is false, i.e., that there exists a sequence $\{g_n\}_{n=1}^\infty \subseteq \calX$ such that 
\begin{equation}
\| \calT g_n \|_{\calZ} \geq n \|\calL g_n \|_{\calY}.
\end{equation}
Without loss of generality, we rescale so that $\| \calT g_n \|_{\calZ} = 1$ and $\| \calL g_n \|_{\calY} \leq \f 1 n$. Plugging these two bounds into \eqref{eq:Fredholm}, we thus obtain that $g_n$ is uniformly bounded in $\calX$. In particular, by the Banach--Alaoglu theorem \cite[Theorem~3.2.1]{tBdaS2018}, there exist a subsequence $\{g_{n_k}\}_{k=1}^\infty$ and a $g_\infty \in \calX$ such that $g_{n_k} \to g_\infty$ weakly in $\calX$ as $k\to \infty$. Since $\calL$ is bounded, $\calL g_{n_k} \to \calL g_\infty$ weakly in $\calY$ as $k \to \infty$. Now by the weak convergence, it holds that $0 \leq \liminf_{k\to \infty} \| \calL g_{n_k} - \calL g_\infty \|_{\calY}^2 = (\liminf_{k\to \infty} \|\calL g_{n_k}\|_{\calY}^2) -  \|\calL g_\infty \|_{\calY}^2$, thus  $\|\calL g_\infty\|_{\calY} \leq \liminf_{k\to \infty} \| \calL g_{n_k}\|_{\calY} = 0$, i.e., $\calL g_\infty = 0$. Since $\calL$ has a trivial kernel, it follows that $g_\infty = 0$.

Since $\calT$ is compact, it maps weakly convergent sequences to norm convergent ones. Hence $\lim_{k\to \infty} \|\calT g_{n_k} \|_{\calZ} = 0$. However, this contradicts the fact that $\|\calT g_{n} \|_{\calZ} = 1$ for all $n \in \mathbb Z_{\geq 1}$. Therefore, \eqref{eq:Fredholm.goal} in fact holds and we have proven \eqref{eq:Fredholm.conclusion}. \qedhere

\end{proof}

\begin{corollary}\label{cor:elliptic.final.scale.1}
    Denote by $\Pi_1$ the $L^2$ orthogonal projection to $\mathrm{span}\{\sqrt{\mu_1}, v_i \sqrt{\mu_1}, |v|^2\sqrt{\mu_1}\}$ with respect to $\ud v$. Then the following estimate holds:
    \begin{equation}
        \| \jap{v}^{-1} (I-\Pi_1) g \|_{L^2_v} \ls \| L_1 g\|_{L^2_v}.
    \end{equation}
\end{corollary}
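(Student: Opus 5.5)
We plan to apply Lemma~\ref{lem:Fredholm} to the function $h = (I-\Pi_1) g$. Since $L_1 \Pi_1 = 0$, we have $L_1 g = L_1 h$, so it suffices to prove $\|\jap{v}^{-1} h\|_{L^2_v} \ls \|L_1 h\|_{L^2_v}$ for all $h$ in the closed subspace $\mathrm{ker}(L_1)^\perp = (I-\Pi_1)(\cdot)$.

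We set up the spaces as follows. Let $\calX$ be the closed subspace of $\mathrm{ker}(L_1)^\perp$ obtained by completing with respect to the norm given by the left-hand side of \eqref{eq:main.Fredholm.est}. This is a weighted anisotropic $H^2$-type Hilbert norm, which controls $\|\jap{v}^{-1} h\|_{L^2_v}$. Take $\calY = L^2_v$ and $\calL = L_1|_{\calX}$. Boundedness of $\calL : \calX \to \calY$ follows from the spherical representation of $\bar{A}_1$ together with the bounds \eqref{eq:kernel.weights.in.spherical}: each term of $\bar{A}_1 h$ is dominated by one of the norm pieces, for instance $\lambda^1_1 X^2 h \sim \jap{v}^{-3} X^2 h$ and $\lambda^1_2 \slashed{\nabla}^2 h \sim \jap{v}^{-1}\slashed{\nabla}^2 h$. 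The remaining parts $\widetilde{A}_1$ and $K_1$ are bounded by $\|\jap{v}^{-2} h\|_{L^2_v}$, as shown in Step~2 of Proposition~\ref{prop:main.Fredholm.est}.

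For $\calT$ we take the map $h \mapsto (\jap{v}^{-7/2} Xh,\ \jap{v}^{-3/2}\slashed{\nabla}_v h,\ \jap{v}^{-3/2} h)$ into $\calZ = (L^2_v)^{7}$. With these choices, Proposition~\ref{prop:main.Fredholm.est} is exactly the hypothesis \eqref{eq:Fredholm}. Triviality of the kernel is immediate: on $\calX \subseteq \mathrm{ker}(L_1)^\perp$, $L_1 h = 0$ forces $h \in \mathrm{ker}(L_1) \cap \mathrm{ker}(L_1)^\perp = \{0\}$. This uses the standard identification $\mathrm{ker}(L_1) = \mathrm{span}\{\sqrt{\mu_1}, v_i\sqrt{\mu_1}, |v|^2\sqrt{\mu_1}\}$, which is implicit in Lemma~\ref{lem:lower-bound-L} and \cite{cMrmS07}: $\langle h, L_1 h\rangle = 0$ gives $(I-\Pi_1)h = 0$. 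Lemma~\ref{lem:Fredholm} then yields $\|h\|_{\calX} \ls \|L_1 h\|_{L^2_v}$, and in particular the corollary.

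The main obstacle is \textbf{compactness of $\calT$}. Each component of $\calT h$ carries a weight that is strictly better than the corresponding quantity controlled by the $\calX$ norm:
\begin{itemize}
\item $\jap{v}^{-7/2} Xh$ compared with $\jap{v}^{-2} Xh$,
\item $\jap{v}^{-3/2}\slashed{\nabla} h$ compared with $\jap{v}^{-1}\slashed{\nabla} h$,
\item $\jap{v}^{-3/2} h$ compared with $\jap{v}^{-1} h$.
\end{itemize}
In each case we gain at least a factor $\jap{v}^{-1/2}$. The $\calX$ norm also controls one further derivative (all first derivatives of $Xh$, $\slashed{\nabla}h$, and $h$, with weights) locally in $v$.

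The argument is then the usual one. Given a bounded sequence in $\calX$, on each ball $\{|v| \le R\}$ we have uniform local $H^1$ bounds on $Xh$, $\slashed{\nabla} h$ and $h$. Away from $v = 0$ these bounds come from the spherical derivatives, and near $0$ they come from the $\chi\,\partial^2 h$ term together with Poincaré-type control. Rellich--Kondrachov then gives strong $L^2(\{|v|\le R\})$ convergence along a subsequence. The tail $\{|v| > R\}$ contributes at most $R^{-1/2}$ times the $\calX$ norm, uniformly in the sequence, and a diagonal argument finishes the proof. Some care is needed near $v = 0$, where $X$ and $\slashed{\nabla}$ are singular. Since the singularity is confined to a compact region, we plan to bound $|Xh| + |\slashed{\nabla}h| \ls |\nabla h|$ there and use the full local $H^2$ control supplied by the $\chi$ term, after possibly slightly enlarging $\chi$ in the norm so that its support covers the transition region.
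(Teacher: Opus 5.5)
This is correct and is essentially the paper's proof. You use the same spaces: $\calX=(I-\Pi_1)\calX_0$ with the norm on the left of \eqref{eq:main.Fredholm.est}, $\calY=L^2_v$, and $\calZ$ from its right-hand side. As in the paper, $\calT$ is the inclusion, made compact by Rellich plus the $\jap{v}^{-1/2}$ weight gain at infinity, and you conclude with Lemma~\ref{lem:Fredholm}.

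The only deviation is that you get $\mathrm{ker}(\calL)=\{0\}$ from the coercivity of Lemma~\ref{lem:lower-bound-L} instead of \cite[Lemma~4]{Guo02}. This works, but it is not literally immediate: $h\in\calX$ only has $\jap{v}^{-1}h\in L^2_v$ and need not have finite $\Delta_v$-norm. So you should apply the coercivity to $\chi_R h$, with $\chi_R:=\chi(\cdot/R)$, and let $R\to\infty$. The cutoff errors vanish in the limit, e.g.\ $\int\sigma^1_{ij}\,\rd_{v_i}\chi_R\,\rd_{v_j}\chi_R\,h^2\,\ud v\ls R^{-3}\|\jap{v}^{-1}h\|_{L^2_v}^2$. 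The paper flags the same weighted-space issue when it invokes Guo's lemma.
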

\begin{proof}
    Let $\calY = L^2_v$, and define $\calZ$ by the norms
    \begin{align*}
        \| g \|_{\calZ}^2 := &\: \| \jap{v}^{-7/2} Xg \|_{L^2_v}^2 + \| \jap{v}^{-3/2} (|\slashed{\nabla}_v g| + |g|) \|_{L^2_v}^2.
    \end{align*}
    For $\calX$, we first define $\calX_0$ by the norm
    \begin{align*}
\| g\|_{\calX_0}^2 := &\: \| \jap{v}^{-3} (1-\chi) X^2 g\|_{L^2_v}^2  + \|\jap{v}^{-2}(1-\chi) \slashed{\nabla}_v X g \|_{L^2_v}^2 + \| \jap{v}^{-2} (1-\chi) Xg \|_{L^2_v}^2 \\
&\: + \| \jap{v}^{-1} (1-\chi)( |\slashed{\nabla}{}^2_v g| + |\slashed{\nabla}_v g|) \|_{L^2_v}^2 + \|\jap{v}^{-1} g\|_{L^2_v}^2 + \sum_{i,j=1}^3 \| \chi \rd^2_{v_i v_j} g\|_{L^2_v}^2.
\end{align*}
    Observe that $\Pi_1: \calX_0 \to \calX_0$ is bounded. Define $\calX = (I-\Pi_1) \calX_0$. 
    
    We now apply Lemma~\ref{lem:Fredholm} with $\calX$, $\calY$, $\calZ$ as above, $\calL = L_1$ and $\calT: \calX \to \calZ$ be the obvious inclusion map. Since the norm $\calX$ has both higher derivatives and more weights, by using Rellich's theorem together with the weights at infinity, we deduce that $\calT$ is compact. It is also easy to check that $\calL: \calX \to \calY$ is bounded. Moreover, Proposition~\ref{prop:main.Fredholm.est} implies that \eqref{eq:Fredholm} holds. Finally, observe that \cite[Lemma~4]{Guo02} shows that $\mathrm{ker}(L_1) \cap \calX_0 = \mathrm{span}\{\sqrt{\mu_1}, v_i \sqrt{\mu_1}, |v|^2\sqrt{\mu_1}\}$. (Indeed, the weighted space $\calX_0$ is sufficient to justify the proof in \cite{Guo02}.) Thus $\mathrm{ker}(\calL) = \{0\}$. Hence, $\|g\|_{\calX} \ls \|\calL g \|_{\calY}$, which implies the desired bound. \qedhere
\end{proof}

By a rescaling argument using Proposition~\ref{prop:normalize} and Corollary~\ref{cor:elliptic.final.scale.1}, we thus obtain
\begin{corollary}\label{cor:elliptic.final}
The following holds for a sufficiently regular $g$:
    \begin{equation}
        e^{-\f{\alp \bt |x|^2}{\alp + \bt t^2}} \| \jap{z}^{-1} (I -\Pi) g \|_{L^2_v} \ls \|L g\|_{L^2_v}.
    \end{equation}
\end{corollary}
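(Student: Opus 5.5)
The plan is to transfer Corollary~\ref{cor:elliptic.final.scale.1} to the traveling frame by the rescaling in Proposition~\ref{prop:normalize}, pointwise in $(t,x)$. Fix $(t,x)$ and set $s=\sqrt{\alp+\bt t^2}$ and $v_0=\f{\bt t x}{\alp+\bt t^2}$, so that $z=s(v-v_0)$. Define $\widetilde g(w)=g(w/s+v_0)$, i.e.\ $\widetilde g(z)=g(v)$. Proposition~\ref{prop:normalize} gives
\[
Lg(v)=e^{-\f{\alp\bt|x|^2}{\alp+\bt t^2}}(L_1\widetilde g)(z).
\]
The change of variables $v\mapsto z$ has Jacobian $\ud v=s^{-3}\ud z$. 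Hence
\[
\|Lg\|_{L^2_v}=e^{-\f{\alp\bt|x|^2}{\alp+\bt t^2}}s^{-3/2}\|L_1\widetilde g\|_{L^2_z}.
\]
In the same way,
\[
\|\jap{z}^{-1}(I-\Pi)g\|_{L^2_v}=s^{-3/2}\|\jap{w}^{-1}\widetilde{(I-\Pi)g}\|_{L^2_w}.
\]

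The next step is to check that the projections intertwine, namely $\widetilde{\Pi g}=\Pi_1\widetilde g$. By \eqref{eq:mu-expan}, $\sqrt{\mu}=e^{-\f{\alp\bt|x|^2}{2(\alp+\bt t^2)}}e^{-|z|^2/2}$. For fixed $x$ the prefactor is constant, so $\mathrm{span}\{\sqrt\mu,z_i\sqrt\mu,|z|^2\sqrt\mu\}$ corresponds under the substitution exactly to $\mathrm{span}\{\sqrt{\mu_1},w_i\sqrt{\mu_1},|w|^2\sqrt{\mu_1}\}$. The map $g\mapsto s^{-3/2}\cdot$(this substitution) is unitary up to the constant factor $s^{-3/2}$. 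Orthogonal projections onto corresponding subspaces therefore commute with it, and so $\widetilde{(I-\Pi)g}=(I-\Pi_1)\widetilde g$. This can also be read off directly from the explicit formula \eqref{eq:Pi.formula}, in which the factor $(\alp+\bt t^2)^{3/2}$ exactly compensates the Jacobian.

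Applying Corollary~\ref{cor:elliptic.final.scale.1} to $\widetilde g$ gives
\[
\|\jap{w}^{-1}(I-\Pi_1)\widetilde g\|_{L^2_w}\ls\|L_1\widetilde g\|_{L^2_w}.
\]
Multiplying both sides by $e^{-\f{\alp\bt|x|^2}{\alp+\bt t^2}}s^{-3/2}$ and using the two identities above yields the claim. The implicit constant is that of Corollary~\ref{cor:elliptic.final.scale.1}, so it is independent of $t$ and $x$.

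There is no real obstacle here. The only point requiring care is the bookkeeping of the Gaussian prefactor. It multiplies $Lg$ but not the projection, since the projection is scale-invariant with respect to multiplicative constants in the kernel basis. One should also note that the regularity and decay needed to apply Corollary~\ref{cor:elliptic.final.scale.1} (membership in $\calX_0$) transfer under this affine change of variables.
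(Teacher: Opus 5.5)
Your proposal is correct and follows the same route as the paper, which simply invokes Proposition~\ref{prop:normalize} together with Corollary~\ref{cor:elliptic.final.scale.1}. You have supplied the bookkeeping the paper omits: the Jacobian factor $(\alp+\bt t^2)^{-3/2}$ appears on both sides, the Gaussian prefactor attaches only to $Lg$, and $\widetilde{\Pi g}=\Pi_1\widetilde g$ holds because the rescaling is unitary up to a constant.
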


We now apply the result to obtain improved decay estimates for $f$, which proves \eqref{eq:improved.decay.main.thm}. 
\begin{theorem}\label{thm:microscopic.improved}
Under the assumptions of Theorem~\ref{thm:main},
    \begin{equation}
        \| e^{-\f{\alp \bt |x|^2}{\alp + \bt t^2}} \jap{z}^{-1} \jap{x/t}^{K_{0}-3} (I - \Pi) f \|_{L^2_{x,v}}(t) \ls t^{-1} E_{K_{0}}^{\f 12}.
    \end{equation}
\end{theorem}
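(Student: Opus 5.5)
The plan is to combine the pointwise-in-$(t,x)$ elliptic estimate of Corollary~\ref{cor:elliptic.final} with the decay of $Lf$ from Corollary~\ref{cor:Lf.final}, that is, $\sup_t t^2\|w_{(0,K_0-3,1)}Lf\|_{L^2_{x,v}}^2\ls E_{K_0}$. For each fixed $(t,x)$, Corollary~\ref{cor:elliptic.final} gives
$$e^{-\f{\alp\bt|x|^2}{\alp+\bt t^2}}\|\jap{z}^{-1}(I-\Pi)f\|_{L^2_v}\ls \|Lf\|_{L^2_v}.$$
I will multiply this by the $v$-independent weight $\jap{x/t}^{K_0-3}$, square it, and integrate over $x\in\bbR^3$. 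This yields
$$\|e^{-\f{\alp\bt|x|^2}{\alp+\bt t^2}}\jap{z}^{-1}\jap{x/t}^{K_0-3}(I-\Pi)f\|_{L^2_{x,v}}^2\ls \|\jap{x/t}^{K_0-3}Lf\|_{L^2_{x,v}}^2.$$

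Next I compare weights. Since $\iota=1$ and $\vartheta=0$, we have $w_{(0,K_0-3,1)}=\jap{x/t}^{K_0-3}e^{\f12 Bt^{-0.1}}e^{\f{\alp\bar\bt|x|^2}{2(\alp+\bar\bt t^2)}}$, and both exponential factors are $\geq1$. Hence $\jap{x/t}^{K_0-3}\le w_{(0,K_0-3,1)}$ pointwise, with no need to split into the regions $|x|\le t^2$ and $|x|\ge t^2$. Therefore
$$\|\jap{x/t}^{K_0-3}Lf\|_{L^2_{x,v}}\le \|w_{(0,K_0-3,1)}Lf\|_{L^2_{x,v}}\ls t^{-1}E_{K_0}^{\f12}$$
by the sup part of Corollary~\ref{cor:Lf.final}. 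Taking square roots gives the claim.

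The only point requiring care is Corollary~\ref{cor:elliptic.final} itself, namely that the rescaling of Proposition~\ref{prop:normalize} really produces the weight $\jap{z}^{-1}$ and the factor $e^{-\f{\alp\bt|x|^2}{\alp+\bt t^2}}$ with no extra powers of $t$. I expect this to be the main (mild) obstacle, and I will address it as follows.

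Write $Lg=e^{-\f{\alp\bt|x|^2}{\alp+\bt t^2}}(L_1\tilde g)(z)$ with $\tilde g(z)=g(v)$. The change of variables $v\mapsto z$ multiplies both $L^2_v$ norms by the same Jacobian factor $(\alp+\bt t^2)^{-3/4}$, so these factors cancel. Moreover $\Pi$ corresponds exactly to $\Pi_1$ in the $z$ variable, so $(I-\Pi)g$ rescales to $(I-\Pi_1)\tilde g$. Corollary~\ref{cor:elliptic.final.scale.1} then gives $\|\jap{z}^{-1}(I-\Pi_1)\tilde g\|\ls\|L_1\tilde g\|$, which is exactly the stated bound after rescaling. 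With that confirmed, the theorem follows from the weight comparison above.
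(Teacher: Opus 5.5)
Your proof is correct and is essentially the paper's argument: apply Corollary~\ref{cor:elliptic.final} pointwise in $(t,x)$, insert the $v$-independent weight, and bound the result by $\|w_{(0,K_0-3,1)}Lf\|_{L^2_{x,v}}\lesssim t^{-1}E_{K_0}^{1/2}$ via Corollary~\ref{cor:Lf.final}. The paper writes the weight on the left as $w_{(0,K_0-3,0)}$, which differs from your $\langle x/t\rangle^{K_0-3}$ only by the harmless factor $e^{\frac12 Bt^{-0.1}}\geq 1$, and your check of the rescaling behind Corollary~\ref{cor:elliptic.final} (equal Jacobian factors, $\Pi$ corresponding to $\Pi_1$) is accurate.
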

\begin{proof}
Combining Corollary~\ref{cor:elliptic.final} and Corollary~\ref{cor:Lf.final}, we obtain
    \begin{equation}
        \| e^{-\f{\alp\bt |x|^2}{\alp+ \bt t^2}} \jap{z}^{-1} w_{(0,K_{0}-3,0)} (I-\Pi) f \|_{L^2_{x,v}}(t) \ls \| w_{(0,K_{0}-3,1)} Lf \|_{L^2_{x,v}}(t)  \ls t^{-1} E_{K_{0}}^{\f 12},
    \end{equation}
    which implies the desired conclusion. \qedhere
\end{proof}

\bibliographystyle{plain}
\bibliography{VPL}
\end{document}